\newtheorem{theorem}{Theorem}[section]
\newtheorem{definition}[theorem]{Definition}
\newtheorem{lemma}[theorem]{Lemma}
\newtheorem{conjecture}[theorem]{Conjecture}
\newtheorem{proposition}[theorem]{Proposition}
\newtheorem{corollary}[theorem]{Corollary}
\newtheorem{remark}[theorem]{Remark}
\newcommand{\PP}{{\mathbb P}} \newcommand{\RR}{\mathbb{R}}
\newcommand{\QQ}{\mathbb{Q}} \newcommand{\CC}{{\mathbb C}}
\newcommand{\TT}{{\mathbb T}}
\newcommand{\GG}{{\mathbb G}}
\newcommand{\KK}{{\mathbb K}}
\newcommand{\cX}{\mathcal{X}}
\newcommand{\cD}{\mathcal{D}}
\newcommand{\cE}{\mathcal{E}}
\newcommand{\cH}{\mathcal{H}}
\newcommand{\cA}{\mathcal{A}}
\newcommand{\cI}{\mathcal{I}}
\newcommand{\cJ}{\mathcal{J}}
\newcommand{\cL}{\mathcal{L}}
\newcommand{\cM}{\mathcal{M}}
\newcommand{\cO}{\mathcal{O}}
\newcommand{\cR}{\mathcal{R}}
\newcommand{\cT}{\mathcal{T}}
\newcommand{\cZ}{\mathcal{Z}}
\def\o{\omega}
\newcommand{\Ric}{\operatorname{Ric}}
\newcommand{\vol}{\operatorname{vol}}
\newcommand{\ord}{\mathrm{ord}}
\newcommand{\lct}{\mathrm{lct}}
\newcommand{\ddc}{\mathrm{dd^c}}
\newcommand{\tr}{\mathrm{tr}}
\renewcommand\tableofcontents{%
  \section*{\contentsname
    \@mkboth{\MakeUppercase\contentsname}{\MakeUppercase\contentsname}}%
  \vspace{0.5em}
  \@starttoc{toc}%
}
\renewcommand\l@section[2]{%
  \vspace{0.5em}
  \@dottedtocline{1}{0em}{2.0em}{\normalsize\bfseries #1}{\normalsize\bfseries #2}}
\title{A YTD correspondence for constant scalar curvature metrics}
\author{Tam\'as Darvas and Kewei Zhang }
\date{}
\begin{document}
\maketitle

\begin{abstract}

Given a compact K\"ahler manifold, to better understand Mabuchi’s $K$ energy we introduce a family of $K^\beta$ energies, whose favorable properties are similar to those of the Ding energy from the Fano case. The construction uses Berman's transcendental quantization, and we show that the slope of the $K^\beta$ energies along test configurations can be computed using intersection theory. With these ingredients in place we provide a uniform Yau--Tian--Donaldson correspondence that characterizes the existence of a unique constant scalar curvature K\"ahler metric using test configurations. Combining our techniques with the non-Archimedean approach to $K$-stability pioneered by Boucksom--Jonsson, we show that the properness of the classical $K$ energy can be tested by checking its slope along a distinguished subclass of Chi Li-type models, called log discrepancy models, thus yielding another $\mathbb{G}$-uniform Yau--Tian--Donaldson correspondence.
\end{abstract}

\tableofcontents
\setcounter{tocdepth}{0}

\section{The main results}
\label{sec:Intro}

Let $(X,\omega)$ be a compact K\"ahler manifold. Classical formulations of the Yau–Tian–Donaldson (YTD) conjecture aim to characterize the existence of constant scalar curvature Kähler (cscK) metrics using algebraic invariants associated to test configurations \cite{YYTD, Tian97, Don02,Der16,Sz15,BHJ17,Der18,SD18}. Our work fits within this general framework. While we do not address any specific version of the YTD conjecture from the literature, we do establish YTD correspondences that characterize existence of cscK metrics using test configurations and novel algebraic invariants. 

To start, we recall that there is a one-to-one correspondence (up to a constant) between K\"ahler metrics cohomologous to $\omega$ and the space of K\"ahler potentials:
$$\mathcal H_\omega := \{u \in C^\infty(X,\RR) \textup{ s.t. } \omega_u :=\omega + \ddc u >0 \},$$
where $\ddc:=\frac{i}{2\pi}\partial\bar \partial.$
The study of canonical representatives on $\mathcal H_\omega$ goes back to questions of Calabi \cite{Ca54} and the famous theorem by Yau \cite{Yau78}.
Relevant to our work, the cscK equation can be formulated as follows:
\begin{equation}\label{eq: csck_eq}
S_{\omega_u}= \overline{S}, \ \ \ u \in \mathcal H_\omega,
\end{equation}
where $S_{\omega_u}$ is the scalar curvature of $\omega_u$, $\overline {S} = \frac{n}{V}\int_X \Ric \omega \wedge \omega^{n-1}$ is the average scalar curvature, and $V = \int_X \omega_u^n$ is the total K\"ahler volume.

Solutions to \eqref{eq: csck_eq}  are critical points of Mabuchi's $K$ energy functional $K: \mathcal H_\omega \to \Bbb R$ \cite{Mab86}:
\begin{equation}\label{Kendef}
K(u) = {Ent}(u) -n {I}_{\Ric(\omega)}(u)+ \bar S {I}(u),
\end{equation}
where ${Ent}(u) = \frac{1}{V}\int_X \log(\o^n_u/\o^n)\o^n_u$ is the entropy of $\o^n_u$ with respect to $\o^n$,  whereas ${I},{I}_{\textup{Ric}(\omega)}:\mathcal H_\o \to \Bbb R$ are the Monge--Amp\`ere (Aubin-Yau)  energy and its Ricci-twisted version:
\begin{equation}\label{AMAMtwistDef}
{I}(u) = \frac{1}{(n+1)V}\sum_{j=0}^{n} \int_X u \o^j_u \wedge \o^{n-j}, \ \ \  
{I}_{\textup{Ric}(\omega)}(u) = \frac{1}{n V}\sum_{j=0}^{n-1}\int_X u \textup{Ric}(\o) \wedge \o_u^{j} \wedge \o^{n-j-1}.\end{equation}

\paragraph{The $K^\beta$ energy and transcendental quantization.} Traditionally, the  most difficult term in \eqref{Kendef} is the entropy which is only lower semi-continuous (lsc) with respect to the $d_1$ metric of $\mathcal H_\omega$ (introduced in \cite{Dar15}). To deal with this, we propose to replace the entropy with a damper version, having much better properties. For $\beta>0$ we introduce the $\beta$-entropy:
\begin{equation}\label{eq: Ent_beta_def}
Ent^\beta(u) := \sup_{v\in\mathcal H_\omega}\left(-\log\int_Xe^{\beta(v-u)}\frac{\omega^n}{V}+\beta(I(v)-I(u))\right).
\end{equation}
Recalling the formula $Ent(u)=\sup_{w\in C^0(X)}\left(-\log\frac{1}{V}\int_Xe^{w}{\omega^n}+ \frac{1}{V}\int_X w \omega _u^n\right)$ (\cite[Proposition 2.10]{BBEGZ19}) and the basic inequality $I(v) - I(u) \leq \frac{1}{V}\int_X (v-u) \omega_u^n$, we immediately see that $Ent^\beta(u) \leq Ent(u)$. In Corollary \ref{cor: increasing_K_beta_conv} we will prove that $Ent^\beta(u) \nearrow Ent(u)$ as $\beta\nearrow\infty$.

It may seem surprising at first, but our formulation of the $\beta$-entropy is intimately tied to Berman's speculations  from \cite[\S 3.1]{Ber19}, where he suggests to view solutions $u^\beta$ to the following Aubin--Yau type Monge--Amp\`ere equation as `transcendental quantizations' of $u \in \mathcal H_\omega$, with $\beta^{-1}$ serving as the Planck constant:
\begin{equation}
    \label{eq:AY-eqn}
    (\omega+\ddc u^\beta)^n=e^{\beta(u^\beta-u)}\omega^n.
\end{equation}
Indeed, one can `linearize' this equation to obtain a formal asymptotic expansion for $u^\beta$:
\begin{equation}\label{eq: expansion}
u^{\beta}\sim u+\frac{1}{\beta}\log\frac{\omega^n_u}{\omega^n}+\frac{1}{\beta^2}(\tr_{\omega_u}\Ric(\omega)-S(\omega_u))+O\Big(\frac{1}{\beta^3}\Big),\ \beta\to\infty.
\end{equation}
Though not used (nor proved) in this work, this expansion gives a transcendental analogue of the classical Bergman kernel asymptotic expansions going back to Tian \cite{Tian89}. Intriguingly, the scalar curvature appears in the above expansion, as in the Bergman kernel expansion. For classical Bergman kernels, this point was crucially used by Donaldson \cite{Don01,D05}, showing that cscK metrics are closely related to the Chow stability in algebraic geometry.

For our $\beta$-entropy, the following simple but crucial formula is proved in Proposition \ref{prop:ent-beta=sup}:
\begin{equation}\label{eq: Ent_beta_formula}
Ent^\beta(u)=\beta(I(u^\beta)-I(u)).
\end{equation}
The expression on the right hand side is reminiscent of Donaldson’s $\tilde\cL$ function from \cite[(12)]{D05} in the classical Bergman kernel setting. Because of this analogy, we call $Ent^\beta$ the (transcendentally) quantized entropy at level $\beta$. The corresponding quantized $K^\beta$ energy functional $K^\beta: \mathcal H_\omega \to \mathbb{R}$—satisfying $K^\beta(u) \nearrow K(u)$ as $\beta \nearrow \infty$—is defined as follows:
\begin{equation}\label{Kbetadef}
K^\beta(u) = {Ent}^\beta(u) -n {I}_{\textup{Ric }\o}(u)+ n\bar S {I}(u).
\end{equation}

Due to Chen-Cheng, existence of cscK metrics is equivalent to properness of the K energy \cite{CC1,CC2}, which we show to be further equivalent with properness of the `quantized' $K^\beta$ functionals for $\beta$ big enough (Theorem \ref{thm:K-prop=K-beta-prop}). This is in line with the quantization philosophy extensively explored in the algebraic setting (see e.g. \cite{Tian89,Tia90,Don01,D05,PT06,Paul12,RTZ20,Zhang21YTD}). Our argument relies on estimates reminiscent of the partial $C^0$ estimate (Proposition~\ref{prop:PC0}), originally introduced by Tian \cite{Tia90} in the Fano setting.

In addition, we show that the $K^\beta$ energy and its radial version are continuous with respect to the $d_1$ and $d_1^c$ metrics, respectively (Lemma~\ref{lem: Ent_beta_cont}, Theorem~\ref{thm:Ent-beta=lim-Ent-beta}). The analogous properties are known to hold for the Ding energy (\cite[Proposition 5.19]{DR17}, \cite[Theorem 1.4]{DXZ25} \cite[Proposition 3.15]{BJ22}) but fail for the $K$ energy.  Thus, while the properness profile of $K^\beta$ agrees with that of the $K$ energy, its crucial continuity profile is in line with the much better behaving Ding energy from the Fano case.

Since, in the Fano case, the favorable properties of the Ding energy have driven important advances in $K$-stability, moduli theory, $\delta$-invariants, and other areas \cite{Bern15,CDS15,Tian15,Li17,Fuj19,FO18,BJ17,XuZhuang20,LXZ21,Zhang21YTD,DZ22}, we are motivated to make extensive use of $K^\beta$ energies in this work.

\paragraph{A YTD correspondence using $K^\beta$.} Given an ample line  bundle $L \to X$ with hermitian metric $h$ whose curvature equals our K\"ahler metric $\omega$, to test K-stability, one considers  test configurations. Historically this notion is due to Ding--Tian and Donaldson,  and involves a one parameter degeneration of the structure $(X,L)$ \cite{DT92,Tian97,Don02}. 
We give the precise definition of ample test configurations in \S \ref{sec:pre}. 
One may think of these as certain finitely generated $\mathbb{Z}$-filtrations of the section ring $R(X,L^k)$ for some $k \in \Bbb N$. This perspective was first considered in \cite{WN12,Sz15} and it was shown in \cite[Proposition 2.15]{BHJ17} that it is equivalent with the more classical notion introduced in \cite{Tian97,Don02}. 

To an ample test configuration $\mathcal T$, due to work initiated by Phong-Sturm, it is possible to associate a $C^{1,1}$ Mabuchi geodesic ray $[0,\infty) \ni t \to u^\mathcal T_t \in \mathcal H_\omega^{1,1}$ \cite{PS07a,PS10, RWN14,Ber16,CTW18}.

We introduce invariants associated with $K^\beta$ and $\mathcal T$, using asymptotic slopes along $\{u^\mathcal T_t\}$:
\begin{equation} \label{eq: K_beta_T_def}
K^\beta_\mathcal T := \liminf_{t \to \infty} \frac{K^\beta(u^\mathcal T_t)}{t}, \ \ J_\mathcal T:= \lim_{t \to \infty} \frac{J(u^\mathcal T_t)}{t}.
\end{equation}
Here $J(u) = \frac{1}{V} \int_X u \omega^n - I(u)$ is the well known $J$ functional, that is known to be convex. \smallskip

Given that $\mathcal T$ is algebraic, it is natural to seek for an interpretation of the associated invariants purely in algebraic terms. For instance, $J_{\mathcal T}$ admits such a description via certain intersection numbers attached to $\mathcal T$ \cite{PS08,BHJ17}. For $K^\beta_{\mathcal T}$, an explicit intersection-theoretic formula will be provided in Theorem \ref{thm: K_beta_intersection} below. In \S \ref{sec:slop-TC} we also provide a threshold type expression for $K^\beta_\cT$, that may open the door to new valuative criterions for K-stability, generalizing those in the Fano case \cite{Fuj19,FO18,BJ17,AZ20,Fano-3fold-book,Xubook}.

Given $\beta>0$, we say that $(X,L)$ is \emph{uniformly $ K^\beta$-stable} if there exists $\gamma>0$ such that
\begin{equation}\label{eq: K_stab_def}
K^\beta_\mathcal T \geq \gamma J_\mathcal T,
\end{equation}
for all ample test configurations $\mathcal T$.

The first main result of this paper is the following uniform YTD correspondence for cscK metrics. To our knowledge this is the first characterizing criterion with ample test configurations as test objects:

\begin{theorem} \label{thm:YTD'} Let $(X,L)$ be a polarized K\"ahler manifold. The following are equivalent. \smallskip

\noindent (i) $(X,L)$ admits a unique cscK metric in $c_1(L)$. \smallskip 

\noindent (ii) $(X,L)$ is uniformly $K^\beta$-stable for some $\beta>0$.
\end{theorem}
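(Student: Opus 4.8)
The plan is to reduce Theorem~\ref{thm:YTD'} to the variational characterization of cscK metrics via properness, using the $\beta$-entropy machinery set up above. By Chen--Cheng \cite{CC1,CC2}, statement (i) is equivalent to properness of the $K$ energy on $\mathcal H_\omega$ (modulo the automorphism group, which one handles in the usual way once uniqueness is assumed so that the relevant reductive group is trivial or acts appropriately). By Theorem~\ref{thm:K-prop=K-beta-prop}, properness of $K$ is equivalent to properness of $K^\beta$ for all sufficiently large $\beta$. So the real content is to show that properness of $K^\beta$ (for some, equivalently all large, $\beta$) is equivalent to the uniform algebraic stability inequality \eqref{eq: K_stab_def} tested against ample test configurations. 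This is a ``properness $\Leftrightarrow$ stability'' statement of the type pioneered in the Fano/Ding setting.

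\medskip

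\noindent\emph{From properness to stability.} First I would show (i)$\Rightarrow$(ii). Assuming $K^\beta \geq \gamma J - C$ on $\mathcal H_\omega$ for suitable $\gamma, C$, take an ample test configuration $\mathcal T$ with its $C^{1,1}$ geodesic ray $\{u^\mathcal T_t\}$. The key inputs are: (a) the $d_1^c$-continuity of the radial $K^\beta$ energy (Theorem~\ref{thm:Ent-beta=lim-Ent-beta}), which lets us evaluate $K^\beta$ along the geodesic ray as a limit along approximating smooth rays; (b) convexity of $t \mapsto K^\beta(u^\mathcal T_t)$ along Mabuchi geodesics (this should follow from convexity of the entropy-type term — here the $\sup$-definition \eqref{eq: Ent_beta_def} makes $Ent^\beta$ manifestly convex as a sup of affine functions of $u$ along the linear-in-potential structure, which is a genuine advantage over the usual entropy — together with linearity of the $I$ and $I_{\Ric}$ terms along geodesics up to the known affine/convex behavior); and (c) the normalization that makes $K^\beta_\mathcal T$ a well-defined slope. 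Dividing the inequality $K^\beta(u^\mathcal T_t) \geq \gamma J(u^\mathcal T_t) - C$ by $t$ and letting $t\to\infty$ gives $K^\beta_\mathcal T \geq \gamma J_\mathcal T$ directly from \eqref{eq: K_beta_T_def}. The only subtlety is that $\{u^\mathcal T_t\}$ is only $C^{1,1}$, not in $\mathcal H_\omega$, so one needs the continuity/approximation statements cited above to transfer the smooth properness inequality to the geodesic ray.

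\medskip

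\noindent\emph{From stability to properness.} This is the hard direction and I expect it to be the main obstacle. The standard strategy is the contrapositive: if $K^\beta$ is not $J$-proper, there is a sequence $u_j \in \mathcal H_\omega$ with $J(u_j) \to \infty$ but $K^\beta(u_j) \leq \epsilon J(u_j)$ for every $\epsilon>0$ along a subsequence; one normalizes (sup-normalize, and parametrize by $J(u_j)=:t_j \to \infty$) and extracts a limiting geodesic ray $\{u_t\}$ in the $d_1$-completion $\mathcal E^1$, using compactness of sublevel sets of energy and the now-standard ``destabilizing ray'' construction (Darvas--Rubinstein, Berman--Boucksom--Jonsson, Chen--Cheng). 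Along this ray $K^\beta_{\{u_t\}} \leq 0 \leq$ (something) $\cdot J_{\{u_t\}}$ with $J_{\{u_t\}}>0$, violating uniform stability --- \emph{but} only if the destabilizing ray can be taken to come from, or be approximated by, an ample test configuration. Bridging this gap is the crux: one must show that an arbitrary finite-energy geodesic ray with negative (or non-positive) normalized $K^\beta$-slope can be approximated, in a way that controls both $K^\beta_{\mathcal T}$ and $J_\mathcal T$, by ample test configurations. Here I would invoke the density/approximation of geodesic rays by test-configuration rays (in the spirit of \cite{BHJ17, BBJ21} and the non-Archimedean formalism alluded to at the end of the introduction), together with lower semicontinuity of the $K^\beta$-slope under such approximation --- and this is exactly where the good continuity profile of $K^\beta$ (as opposed to $K$) is essential: for the $K$ energy the analogous step fails because entropy is only lsc, whereas $Ent^\beta$ is $d_1$-continuous (Lemma~\ref{lem: Ent_beta_cont}), so the slope behaves well under approximation. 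Assembling this carefully --- reducing to ample (rather than merely normal or arbitrary) test configurations, and checking that the stability inequality tested on this restricted class still forces properness --- is the technical heart of the argument, and likely requires the non-Archimedean intersection-theoretic formula for $K^\beta_\mathcal T$ promised in \cite{DZ25b} or at least the divisorial-threshold description of Corollary~\ref{cor: Ent_beta_div_thresh} to make the limiting argument go through.
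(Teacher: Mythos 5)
Your direction (i)$\Rightarrow$(ii) is essentially the paper's argument: properness of $K$ (via \cite[Theorem 1.5]{BDL20} or \cite{CC1,CC2}), then Theorem \ref{thm:K-prop=K-beta-prop}, then divide by $t$ along $\{u^{\mathcal T}_t\}$. Note that no convexity of $t\mapsto K^\beta(u^{\mathcal T}_t)$ is needed here (and your claim that $Ent^\beta$ is ``manifestly convex as a sup of affine functions'' is unsupported: along a Mabuchi geodesic the terms $-\log\int_X e^{\beta(v-u_t)}\omega^n$ are neither affine nor obviously convex, since the Berndtsson-type positivity needed for that fails for a general fixed $v$); one only needs that the properness inequality extends to $\mathcal E^1$, which it does because $K^\beta$ extends with its defining formulas.

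The genuine gap is in (ii)$\Rightarrow$(i), exactly at the point you flag as ``the crux'' but do not resolve. Your contrapositive/destabilizing-ray strategy would produce a geodesic ray with small $K^\beta$-slope, and you then hope to approximate it by ample test configurations with control on the slopes. But the only approximation theorem of this kind available is C.~Li's result \cite[Theorem 1.2]{Li20-cscK}, which requires \emph{finite $K$-energy slope} (equivalently finite radial entropy); finiteness of the $K^\beta$-slope is useless for this purpose, since by \eqref{eq:rad-ent-beta-finite} \emph{every} ray in $\mathcal R^1$ has finite $K^\beta$-slope, so a statement ``finite $K^\beta$-slope implies approximable by test configurations'' would say all rays are maximal, which is false. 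There is also a second problem upstream: the standard construction of a destabilizing ray from non-properness uses convexity of the functional along geodesics and compactness of entropy sublevel sets, neither of which is established for $K^\beta$. The paper's proof avoids both issues by never constructing a destabilizing ray: by Chen--Cheng's geodesic-stability criterion (\cite{CC2}, cf.\ \cite[Theorem 1.9]{DL19}) it suffices to prove $K\{u_t\}\geq\gamma J\{u_t\}$ for all rays; rays with $K\{u_t\}=\infty$ are trivial, rays with finite $K$-slope lie in $\mathcal R^1_{\mathcal I}$ by Li's theorem, and for those one tests (ii) on $d_1^c$-approximating test-configuration rays and passes to the limit using the $d_1^c$-continuity of $K^\beta\{\cdot\}$ and $J\{\cdot\}$ (Theorem \ref{thm:Ent-beta=lim-Ent-beta}), concluding with $K\geq K^\beta$ from \eqref{eq: K_beta_conv}. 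In particular, neither the intersection-theoretic formula of \cite{DZ25b} nor Corollary \ref{cor: Ent_beta_div_thresh} is needed; the missing ingredient in your sketch is the combination of Li's maximality theorem with the trick of testing $K$ (not $K^\beta$) on the limit ray.
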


Given the nature of our techniques, our argument also yields a YTD correspondence for transcendental K\"ahler manifolds $(X,\omega)$ with virtually identical statement (see Theorem \ref{thm:YTD''}).

To explain the proof of Theorem \ref{thm:YTD'},
 let us recall a few concepts. By $(\mathcal E^1, d_1)$ we denote the $d_1$ metric completion of $\mathcal H_\omega$, explored in \cite{Dar15}, with the strongly topology of the space $\mathcal E^1$ already investigated in \cite{BBEGZ19}.
The space $(\mathcal E^1, d_1)$ is a geodesic metric space \cite[Theorem 2]{Dar15}. Our functionals $K$, $K^\beta$, and $J$ extend naturally to $\mathcal E^1$, preserving their $d_1$-(semi)continuity  properties \cite{BerBern17,BDL17}. Moreover, the extended $K^\beta$ continues to satisfy the formulas \eqref{eq: Ent_beta_def} and \eqref{eq: Ent_beta_formula}.

By $\mathcal R^1$ we denote the space of geodesic rays $t \mapsto u_t$ inside $\mathcal E^1$, that emanate from the background potential $0 \in \mathcal H_\omega$ (notation: $\{u_t \} \in \mathcal R^1$). Due to Buseman convexity of $d_1$ (\cite[Proposition 5.1]{BDL17}) one can introduce the chordal distance for $\{u_t\},\{v_t\} \in \mathcal R^1$ \cite{DL19,CC2}:
\begin{equation}
    \label{eq:def-chordal}
    d_1^c(\{u_t\},\{v_t\}) = \lim_{t \to \infty} \frac{d_1(u_t,v_t)}{t}.
\end{equation}
Thus defined, $(\mathcal R^1,d_1^c)$ is a complete geodesic metric space \cite[Theorem 1.4]{DL19}.

An important subspace of $(\mathcal R^1, d_1^c)$ is $\mathcal R^1_\mathcal I$, defined as the $d_1^c$-closure of rays $\{u_t^\mathcal T\}$ induced by test configurations $\mathcal T$. In \cite{BBJ18} the authors refer to such rays simply as maximal. This space admits a natural identification with $\mathcal E^{1,NA}$ (see \cite[Theorem 1.2]{DX20} and the discussion preceding it). For more detailed treatments of this topic, we refer to \cite{DL19, BBJ18, DX20, DarvasSurvey}.

Given $\{u_t\} \in \mathcal R^1$ it is convenient to introduce the radial version of our functionals:
$$K \{u_t\} = \lim_{t \to \infty} \frac{K(u_t)}{t}, \ \ \ K^\beta \{u_t\} = \liminf_{t \to \infty} \frac{K^\beta(u_t)}{t}, \ \ \ J \{u_t\} = \lim_{t \to \infty} \frac{J(u_t)}{t}.$$
In a recent exciting development, Chi Li showed that rays $\{u_t\} \in \mathcal R^1$ satisfying $K\{u_t\} < \infty$ must lie in $\mathcal R^1_\mathcal I$ \cite[Theorem 1.2]{Li20-cscK}. \smallskip

The proof of Theorem \ref{thm:YTD'} relies on four central components. From the existing literature, we will need that existence cscK metrics is equivalent with properness/geodesic stability of the K energy \cite{CC1,CC2}, together with the aforementioned result of  Chi Li \cite{Li20-cscK}. The remaining two components are facts we establish in this work: that the properness of the K energy is equivalent to the properness of the \textbf{$K^\beta$} energy for sufficiently large $\beta$ (Theorem \ref{thm:K-prop=K-beta-prop}); and that the radial $K^\beta$ energy is $d_1^c$-continuous (Theorem \ref{thm:Ent-beta=lim-Ent-beta}).

Once the key ingredients are in place, the argument unfolds naturally. 
We present it here, as this simple proof serves as the skeleton of the paper (cf.~\cite{BBJ18,Fuj19} for related ideas in the Fano case involving the Ding energy).

\begin{proof}[Proof of Theorem \ref{thm:YTD'}] Assume that (i) holds. By  \cite[Theorem 1.5]{BDL20} the K energy is proper, i.e., $K(u) \geq \gamma J(u) - C, \ u \in \mathcal E^1$ for some $\gamma,C$. By Theorem \ref{thm:K-prop=K-beta-prop} we obtain that $K^\beta$ is proper too, for $\beta > 0$ big enough. By our definition of $K^\beta_\mathcal T$ and $J_\mathcal T$, (ii) immediately follows.

Now we assume that (ii) holds. By \cite[Theorem 1.2]{Li20-cscK} any ray with bounded K energy slope is in $\mathcal R^1_\mathcal I$, i.e., it is approximable by test configurations. Thus, by \cite{CC2} (cf. \cite[Theorem 1.9]{DL19}), it suffices to show existence of $\gamma>0$ such that
$K\{u_t\} \geq \gamma J\{u_t\}$ for $\{u_t\} \in \mathcal R^1_\mathcal I.$
Let $\{u_t\} \in \mathcal R^1_\mathcal I$ and $\{u^{\mathcal T_j}_t\}$ be $d_1^c$-approximating rays of ample test configurations $\mathcal T_j$.

Condition (ii) implies that for some $\beta>0$ and $\gamma>0$ we have that 
$$ K^\beta\{u^{\mathcal T_j}_t\} = K^\beta_{\mathcal T_j} \geq \gamma J_{\mathcal T_j} = \gamma J\{u^{\mathcal T_j}_t\}.$$
The functional $J\{\cdot\}$ is known to be $d_1^c$-continuous (as follows from \cite[Lemma 4.15]{Dar15} and \cite[Lemma A.2]{BBJ18}). $K^\beta\{\cdot\}$ also has this property by Theorem \ref{thm:Ent-beta=lim-Ent-beta}. Thus, using $K \geq K^\beta$  (see \eqref{eq: K_beta_conv}), we can let $j \to \infty$ in the above inequality to obtain 
$ K\{u_t\} \geq K^\beta\{u_t\} \geq \gamma J\{u_t\}.$
\end{proof}

\paragraph{Algebraic interpretation of $K^\beta_\mathcal T$.}

As Theorem \ref{thm:YTD'} introduces a new stability condition in the Yau–Tian–Donaldson framework, the second goal of this paper is to better understand the meaning of $K^\beta$-stability and connect it with more classical notions of stability.

To this end, we shall work with the slightly more general class of semi-ample test configurations, which are just pull-backs of ample ones (see \S \ref{sec:pre}) and are birationally more convenient to work with. Given such test configuration $\cT=(\cX,\cL)$, after possibly composing with blowups, we can further assume that $\mathcal X$ is smooth and dominates $X\times\PP^1$ via a map $\pi$. 

Then  $K_{\mathcal T} := K\{u^\cT_t\}$ agrees with the non-Archimedean K energy \cite{BHJ19}, \cite[(8)]{Li20-cscK}:
\begin{equation}
    \label{eq:K-T-int-formula}
    K_\cT=V^{-1}\cL^n\cdot K^{\log}_{\cX/X\times\PP^1}+\frac{\bar S\cL^{n+1}}{(n+1)V}+\frac{K_X\cdot\cL^{n}}{V},
\end{equation}
which equals the Futaki invariant of $\cT$ if the central fiber of $\mathcal X$ is reduced \cite[(0.1)]{BHJ17}. Here 
$$
K^{\log}_{\cX/X\times\PP^1}:=K_\cX-\pi^*K_{X\times\PP^1}+\cX_{0,\mathrm{red}}-\cX_0 $$
is the relative log canonical divisor (see \cite[\S 4.4]{BHJ17}). 

Next we show that $K^\beta_\mathcal T$ also admits a purely algebraic expression. The main difficulty is that the explicit expression for $K^\beta$ (see \eqref{eq: Ent_beta_formula} and \eqref{Kbetadef}) involves solving the transcendental Aubin–Yau type equation \eqref{eq:AY-eqn}.
 This will be overcome using the next key concept.

Given a smooth test configuration $\mathcal T = (\mathcal X, \mathcal L)$ dominating $X\times\PP^1$, we call $\mathcal T_\beta := (\mathcal X,\mathcal L_\beta)$ the \emph{log discrepancy model} of $\cT$ at level $\beta$, where
\begin{equation}\label{eq: L_beta_def}
\mathcal L_\beta := \mathcal L + \frac{1}{\beta}K^{\log}_{\mathcal X/X \times \Bbb P^1},\ \beta\in\QQ_{>1}.
\end{equation}
The $\QQ$-line bundle $\mathcal L_\beta$ thus defined is typically not relatively semi-ample, making $\mathcal T_\beta = (\mathcal X, \mathcal L_\beta)$ a \emph{model} of $(X, L)$ in the sense of Chi Li \cite[Definition 2.1]{Li20-cscK} (see  \S\ref{sec:pre}). 

\begin{theorem}[Theorem \ref{thm:K_beta_intersection'}]\label{thm: K_beta_intersection}
For a test configuration $\mathcal T = (\mathcal X, \mathcal L)$ as above, and  $\beta \in \mathbb{Q}_{>1}$, we have the following:
\begin{equation}
\label{eq:K-beta-T-intersection-formula_main}     
K^\beta_\cT=\frac{\beta(\langle\cL_\beta^{n+1}\rangle-\cL^{n+1})}{(n+1)V}+\frac{\bar S\cL^{n+1}}{(n+1)V}+\frac{K_X\cdot\cL^{n}}{V}.
\end{equation}
Moreover, $K^\beta_\cT \nearrow  K_\cT$ as $\beta \nearrow \infty$.
\end{theorem}

The term $\langle \mathcal L_\beta^{n+1} \rangle$ appearing in \eqref{eq:K-beta-T-intersection-formula_main} denotes the volume of the (relatively) big $\QQ$-line bundle $\mathcal L_\beta$, which can be expressed using movable intersection products (see \eqref{eq:def-mov-top-prod} and \cite{BFJ09,Li21}). \smallskip

The proof of Theorem \ref{thm: K_beta_intersection} relies on a slope formula for $Ent^\beta\{u_t\}:= \liminf_{t \to \infty} Ent^\beta(u_t)/t$, for any bounded geodesic ray $\{u_t\}$. It is a radial version of the identity \eqref{eq: Ent_beta_def} (Theorem \ref{thm:rad-Ent-beta=sup}):

\begin{equation}\label{eq: ent_beta_radial_intr}
Ent^\beta\{u_t\}=\sup_{\{v_t\}}\left(L^\beta(\{v_t\},\{u_t\})+\beta(I\{v_t\}-I\{u_t\})\right),
\end{equation}
where the sup is over geodesic rays $\{v_t\}$ arising from test configurations, and $L^\beta(\{v_t\},\{u_t\})$ denotes the following radial Ding type functional:
$$
L^\beta(\{v_t\},\{u_t\}):=\liminf_{t\to\infty}\left(-\frac{1}{t}\log\int_Xe^{\beta(v_t-u_t)}\frac{\omega^n}{V}\right).
$$

To prove \eqref{eq:K-beta-T-intersection-formula_main}, we will show in \S \ref{sec:int-for-of-K-beta} that, when the ray $\{u_t\}$ is induced by a test configuration $\mathcal T = (\mathcal X, \mathcal L)$, the sup in \eqref{eq: ent_beta_radial_intr} is actually attained by the geodesic ray $\{v_t\}$ associated with the log discrepancy model $\mathcal T_\beta = (\mathcal X, \mathcal L_\beta)$.

\paragraph{Interactions with the non-Archimedean point of view.} In the last part of our paper we point out the connections between our $K^\beta$-stability and the non-Archimedean approach to K-stability pioneered by Boucksom--Jonsson and collaborators \cite{BBJ18, BHJ17, BoJ18,BoJ21a,BJ22,BJ25preprint}. \smallskip

For details on terminology and non-Archimedean geometry we refer to \cite{BHJ17} and \S\ref{sec:pre}.
When dealing with non-Archimedean data, it is possible to describe the radial $Ent^\beta$ functional using a concise formula.

Let $u \in \mathcal H^{NA}$ be a non-Archimedean metric, as defined in \cite{BHJ17}, encoding the data of test configurations using divisorial information. As such, to $u$ one can again associate a $C^{1,1}$ geodesic ray $\{u_t\}$. As a consequence of \eqref{eq: ent_beta_radial_intr}, we obtain the following formula for the radial $\beta$-entropy of $\{u_t\}$ (Corollary \ref{cor: L_beta_L_NA}):

\begin{equation}\label{eq: ent_NA_rad}
    Ent^\beta\{u_t\}=\sup_{w \in \mathcal H^{NA}}\left(L^{NA}(\beta (u-w))+\beta(I^{NA}(w)-I^{NA}(u))\right).
\end{equation}
where $L^{NA}$ and $I^{NA}$ are non-Archimedean functions defined in \cite[Section 2]{BoJ18}. As we will show in \S \ref{sec:NA}, due to basic non-Archimedean formalism, the above further implies that
$$Ent^\beta\{u_t\} \leq Ent^{NA}(u), \ \ u \in \mathcal E^{1,NA},$$
where $Ent^{NA}(u)$ denotes the non-Archimedean entropy introduced in \cite{BHJ17,BoJ18}.
Adding the terms coming from the radial Monge--Amp\`ere energy and its twisted version, we immediately obtain that $K^\beta\{u_t\} \leq K^{NA}(u), \ \ u \in \mathcal E^{1,NA}.$

The usefulness of this inequality is similar to that of the `Archimedean' counterpart  $K^\beta \leq K$, already discussed. Sending $\beta\nearrow\infty$ and using $K^\beta\{u_t\}\nearrow K\{u_t\}$ (Theorem \ref{thm: K_beta_conv}), we obtain that $K\{u_t\} \leq K^{NA}(u)$. Since the reverse inequality is known \cite[Theorem 1.2]{Li20-cscK}, we obtain an alternative argument for the following key formula of Boucksom--Jonsson \cite{BJ25preprint}:
\begin{equation}\label{eq: BJ_slope_formula}
    K\{u_t\} = K^{NA}(u), \ \ u \in \mathcal E^{1,NA}.
\end{equation}
Coupled with the results of \cite{Li20-cscK} and additional non-Archimedean techniques, this formula yields the following YTD correspondence, also obtained in \cite{BJ25preprint}:
\begin{theorem} \textup{\cite{BJ25preprint}} \label{thm:BJ25} Let $(X,L)$ be a polarized manifold. The following are equivalent:\smallskip\\
\noindent (i) $(X,L)$ admits a cscK metric in $c_1(L)$. \smallskip\\
\noindent (ii) $\GG:=\mathrm{Aut}_0(X,L)$ is reductive and $(X,L)$ is $\GG$-uniform K-stable with respect to models.
\end{theorem}

For the definition of $\GG$-uniform K-stability (with respect to test configurations, models, or log discrepancy models), we refer to \eqref{eq: def_K-stab_log_discrep}. This result confirmed a conjecture of C.~Li, who previously proved the existence direction in \cite{Li20-cscK}. In fact, \cite{BJ25preprint} carries out a considerably more refined analysis, establishing YTD-type correspondences involving \(K\)-polystability and prescribed symmetry in the framework of weighted cscK metrics as well.  

By employing the framework provided by $K^{\beta}$-stability, we will refine Theorem \ref{thm:BJ25},  demonstrating that the existence of cscK metrics can be tested using only the distinguished subclass of models introduced in \eqref{eq: L_beta_def} (see Theorem \ref{thm:YTD'''} below). This will be achieved by making progress on a well-known regularization conjecture.

As emphasized in a series of papers \cite{BoJ18,BoJ21a,BJ22,Li20-cscK,Li21} of Boucksom--Jonsson and Chi Li, the remaining step for addressing the strong form of the uniform YTD conjecture -- as formulated in \cite{Sz15,Der16,BHJ17, Li20-cscK} -- is to establish a regularization property of the radial entropy:

\begin{conjecture}
    [Boucksom--Jonsson]
    \label{conj:BJ}
    Let $(X,L)$ be a polarized manifold with $\omega\in c_1(L)$. Let $\{u_t\}\in\cR^1$ be a geodesic ray with $Ent\{u_t\}<\infty$. Then there exists a sequence of geodesic rays $\{v_{t,k}\}$ arising from test configurations of $(X,L)$ such that
\[
\lim_{k\to\infty}d_1^c(\{u_t\},\{v_{t,k}\})=0 \quad \text{and} \quad Ent\{u_t\}=\lim_{k\to\infty}Ent\{v_{t,k}\}.
\]
\end{conjecture}

Regarding this conjecture, remarkable progress was made by Chi Li \cite[Proposition 6.3]{Li20-cscK}, showing that one can carry out the approximation via a sequence of geodesic rays $\{v_{t,k}\}$ attached to models. Li constructs these approximating models implicitly, by solving non-Archimedean Calabi--Yau equations \cite{BFJ16}.

Using our $\beta$-entropy we refine Li’s approximation result and make the construction more explicit. Specifically, we construct a regularizing sequence of log discrepancy models obtained as small perturbations of the test configurations arising from the classical multiplier ideal sheaf approximation procedure of \cite{BBJ18} (recalled in  \S\ref{sec:pre})).

\begin{theorem}[Theorem \ref{thm:BJ-conj_main1}]
\label{thm:BJ-conj_main}
Let $(X,L)$ be a polarized manifold with $\omega\in c_1(L)$. Let $\{u_t\}\in\cR^1$ be a geodesic ray with $Ent\{u_t\}<\infty$. Then there exists a sequence of smooth test configurations $(\cX^{(k)},\cL^{(k)})$ dominating $X\times\PP^1$ and rational numbers $\beta_k\to\infty$ such that the geodesic rays $\{v_{t,k}\}$ attached to the log discrepancy models $(\cX^{(k)},\cL^{(k)}_{\beta_k})$ satisfy
\[
\lim_{k\to\infty}d_1^c(\{u_t\},\{v_{t,k}\})=0 \quad \text{and} \quad Ent\{u_t\}=\lim_{k\to\infty}Ent\{v_{t,k}\}.
\]
\end{theorem}

With this regularization theorem in hand, the techniques of \cite{DR17} and \cite{DL19} immediately imply the following $\GG$-uniform YTD correspondence, sharpening Theorem \ref {thm:BJ25}:

\begin{theorem}[Theorem \ref{thm:YTD'''1}]
\label{thm:YTD'''} Let $(X,L)$ be a polarized K\"ahler manifold. The following are equivalent:\smallskip\\
\noindent (i) $(X,L)$ admits a cscK metric in $c_1(L)$. \smallskip\\
\noindent (ii) $\GG:=\mathrm{Aut}_0(X,L)$ is reductive and $(X,L)$ is $\GG$-uniform K-stable with respect to log discrepancy models.
\end{theorem}

\begin{remark}Fix any $\beta_0\gg 1$. Our argument shows that for condition (ii), it is sufficient to consider log discrepancy models $(\mathcal X, \mathcal L_\beta)$ with $\beta>\beta_0$ (recall \eqref{eq: L_beta_def}). Thus, it is enough to work with models that are in some sense `arbitrarily close' to being test configurations.
That being said, it remains unclear to what extent this refinement could lead to verifiable criteria for stability in concrete situations, suggesting avenues for future research.
\end{remark}

In light of the above theorems, to resolve the strong form of the ($\GG$-)uniform YTD conjecture with respect to test configurations (see, for instance, \cite[Conjecture 2.26]{Li20-cscK}), it remains to determine whether one can regularize geodesic rays $\{u_t\}$ associated with log discrepancy models in the sense of Conjecture \ref{conj:BJ}. At first sight, this appears to be more realistic, but time will tell whether the concrete structure of these log-discrepancy models will play a decisive role in the resolution of Conjecture \ref{conj:BJ}.

It would also be desirable to extend Theorem \ref{thm:YTD'''} to the transcendental setting. This seems to require more work particularly when $\GG$ is non-trivial, since an equivariant non-Archimedean theory for Kähler manifolds has yet to be fully developed (cf. \cite{DXZ25,Pic24,MPWN}).

To the best of our knowledge, our \(K^\beta\) functionals have not been previously considered in the K-stability literature. We note, however, that \cite[Section 4.1]{CLP} considered a similar-flavored functional in their alternative proof of the convexity of the \(K\) energy. Their construction involves an additional Banach-Saks averaging, together with a different type of quantization in the definition of their $\mathcal M_k$ functional. At present, it remains unclear whether variants of our $K^\beta$ functionals yield other novel notions of stability.

Although we did not make explicit use of the expansion \eqref{eq: expansion}, it provided key motivation for our extensive reliance on transcendental quantization. Such expansions have already played an important role in influential studies of Kähler–Einstein type metrics by Wu and Jeffres-Mazzeo-Rubinstein \cite{Wu08}, \cite[Section 9]{JMR} (cf. \cite[(6.3)]{Rub_survey}), where they were employed to initiate a Ricci continuity path from $-\infty$, following a suggestion of Tian–Yau \cite{TY87}.

In this work we did not study the more general extremal metrics and their weighted versions. Instead, we refer the reader to \cite{HL25,Hash25,BJ25preprint} for exciting recent developments in this direction.

\paragraph{Acknowledgments.} K.Z. is especially grateful to J. Chu for discussions on the expansion of $u^\beta$ back in 2021. 
We are grateful to P. Mesquita Piccione, J. Han, M. Xia for answering numerous questions, and we thank Y. Liu, Y. Odaka, Y.A. Rubinstein for suggestions that improved the presentation. 

T.D. was partially supported by a Sloan Fellowship, Simons Fellowship and NSF grant DMS-2405274. This material is based upon work supported by NSF grant DMS-1928930, while T.D. was in residence at the Simons Laufer Mathematical Sciences Institute  during Fall 2024. K.Z. is supported by NSFC grants 12571060, 12271038 and 12271040.

Both authors visited Chalmers University in Spring 2025, where part of this work was completed. We thank the mathematics department for the hospitality.

During the final stage of this project, we became aware of related work announced by Boucksom–Jonsson \cite{BJ25preprint}. Until then we had not considered working with Chi Li's models, or exploring connections with non-Archimedean geometry, but afterwards we naturally wondered about the relationships between their work and ours. This line of thought led to our alternative argument for their key formula \eqref{eq: BJ_slope_formula} using $K^\beta$ energies, and the $\Bbb G$–uniform YTD correspondence involving log discrepancy models (Theorem \ref{thm:YTD'''}), making contact with their Theorem \ref{thm:BJ25}. We are grateful to them for sharing their preprint \cite{BJ25preprint} and also for related discussions. As far as we can tell, our approaches are fundamentally different. The work of Boucksom–Jonsson is a culmination of the non-Archimedean approach to K-stability, developed over the past decade (see \cite{BHJ17, BBJ18, BoJ18, BJ22} etc.), and relies on an approximation argument at the level of model filtrations/non-Archimedean data, using \cite{BD12, DT24}. In contrast, our approach is largely `Archimedean'. We introduce novel $K^\beta$ energies that approximate the $K$ energy directly via transcendental quantization at the level of potentials, inspired by \cite{Ber19}.

Paralleling our transcendental YTD correspondence  Theorem \ref{thm:YTD''}, we acknowledge independent work by Mesquita--Piccione and Witt Nystr\"om \cite{MPWN}, proving a different kind of transcendental YTD existence theorem, more in the spirit of \cite{Li20-cscK}, obtained through the resolution of non-Archimedean Monge--Amp\`ere equations.

\section{Preliminaries}
\label{sec:pre}
We collect some standard results from the literature that will be used in this work. 
Throughout this paper $(X,\omega)$ denotes a compact K\"ahler manifold of dimension $n$.

\paragraph{Pluripotential theory.}

The space of smooth K\"ahler potentials is defined as follows:
$$\mathcal H_\omega := \{u \in C^\infty(X,\RR) \textup{ s.t. } \omega_u :=\omega + \ddc u >0 \},$$
where $\ddc:=\frac{i}{2\pi}\partial\bar \partial.$ The metric completion of $\mathcal H_\omega$ with respect to the $d_1$ metric \cite{Dar15} is denoted by $\cE^1$, which is called the space of finite energy potentials \cite{GZ07}. More precisely, given $u,v\in\cE^1$, the $d_1$-distance between $u$ and $v$ is given as
$
d_1(u,v)=I(u)+I(v)-2I(P(u,v)),
$
where $I(\cdot)$ denotes the Monge--Amp\`ere energy (recall \eqref{AMAMtwistDef}) and $P(u,v) \in \mathcal E^1$ is the potential theoretic minimum of $u$ and $v$, i.e., the largest potential in $\cE^1$ lying below $u$ and $v$. In particular, $d_1(0,u)=-I(u)$ when $u\leq 0$.

Given $a,b \in [-\infty,\infty]$ with $a < b$, a path $(a,b)\ni t\mapsto u_t
\in \cE^1$ is called a finite energy subgeodesic segment if $U_z(x):=u_{-\log |z|^2}(x)$ (with $x \in X$ and $z$ being a complex) satisfies
 $
 p_1^*\omega+\mathrm{dd}^c_{X\times\CC}U_z\geq 0
 $
 as a current on $X\times\{e^{-a/2}<|z|<e^{-b/2}\}$, where $p_1:X\times\CC\to X$ is the projection map.

Let $h_0,h_1 \in \mathcal E^1$. As shown in \cite{Dar15}, a distinguished $d_1$-geodesic $(0,1) \ni t \mapsto h_t 
\in \mathcal 
E^1_\o$ connecting $h_0,h_1$ can be obtained as Perron envelope of finite energy subgeodesics:
\begin{equation}
\label{fegeod}
h_t := \sup \{v_t \ | \ (0,1) \ni l \ \mapsto v_l  \in \mathcal E^1 \textup{ is a subgeodesic with } \limsup_{l \to 0,1} v_l \leq h_{0,1}\}, \ t \in (0,1).
\end{equation}
We call \eqref{fegeod}
the  \emph{finite energy geodesic} connecting $h_0,h_1$.

In case $a=0$ and $b=\infty$ then the curve $(0,\infty) \ni t \mapsto u_t \in \cE^1$ is called a finite energy subgeodesic ray, which we will denote by $\{u_t\}$. All rays in this paper will be assumed to emanate from $u_0= 0\in\cH_\omega$, meaning that $d_1(u_t,0) \to 0$ as $t\searrow 0$. It is well known that $t\mapsto I(u_t)$ is convex (resp. affine linear) when $\{u_t\}$ is a subgeodesic (resp. geodesic). 

A subgeodesic ray $\{u_t\}$ is called sublinear if $u_t\leq Ct$ for some $C>0$. Recall that a geodesic ray is automatically sublinear due to \cite[Theorem 3.7(iv)]{DX20}.
Finally, following the formalism of \cite{DL19}, by $\cR^1$ we denote the space of finite energy geodesic rays emanating from $0$, which is complete with respect to the chordal metric $d_1^c$ defined in \eqref{eq:def-chordal} \cite[Theorem 1.4]{DL19}.

A geodesic ray $\{u_t\} \in \mathcal R^1$ is \emph{bounded} if $u_t \in \mathcal E^1 \cap L^\infty, \ t  \geq 0$. \cite[Theorem 1]{Dar17c} implies that such rays are automatically linearly bounded: there exists $C>0$ such that $|u_t | \leq C t, \ t \geq 0$.

For more comprehensive works on the above topics we refer to \cite{DarvasSurvey, DL19}.

\paragraph{Test configurations and geodesic rays.} For the rest of this section we assume that $(X,\omega)$ is polarized, with $\omega\in c_1(L)$ for some ample line bundle $L$. 

Following \cite[\S 3.2]{BBJ18}, an \emph{ample test configuration} $\cT:=(\cX,\cL)$ of $(X, L)$ is a $\CC^*$-equivariant partial compactification over $\CC$ of $(X, L)\times\CC^*$ which consists of a normal variety $\cX$, a flat projective morphism $\pi: \cX \to\CC$, a relatively ample $\QQ$-line bundle $\cL\to \cX$, a $\CC^*$-action on $(\cX,\cL)$ lifting the standard one on $\CC$, and an identification of the fiber $(\mathcal X_1, \mathcal L_1)$ with $(X, L)$. 

In practice, one often encounters \emph{semi-ample test configurations}, for which $\cL$ is only assumed to be relatively semi-ample. For brevity, we will refer to these simply as test configurations. By \cite[\S 2.5]{BHJ17}, such a generalization is hardly essential. Indeed, a semi-ample test configuration 
is simply the pullback of an ample one, and they both induce the same geodesic ray. As a result, in this work, slightly abusing precision, we think of ample and semi-ample test configurations as the same objects, and just call them as \emph{test configurations}.

Having roots in works of Y. Odaka, one can alternatively describe test configuration using certain flag ideals \cite{Od13,Oda13,BHJ17,BBJ18}. A flag ideal $\mathfrak{a}$ is a $\CC^*$-invariant coherent ideal sheaf on $X \times\CC$. Assume in addition that for some $m>0$, the sheaf $p_1^*(mL)\otimes\mathfrak{a}$ is globally generated on $X\times\CC$. Take the normalized blowup $\cX'\xrightarrow{\mu}X\times\CC$ along $\mathfrak{a}$ with exceptional divisor denoted by $E$. Set $\cL':=\mu^*p_1^*L-\frac{1}{m}E$. Then $(\cX',\cL')$ is a test configuration for $(X,L)$. 

Conversely (see \cite[\S 2.6]{BHJ17}), for any test configuration $(\cX,\cL)$, after replacing $\cL$ with $\cL+a\cX_0$ for appropriate $a\in\QQ$ (where $\cX_0\subset\cX$ denotes the central fiber over $0\in\CC$), the resulting test configuration has the same Futaki invariant, but induces a flag ideal $\mathfrak{a}$ on $X\times\CC$ such that the sheaf $p_1^*(mL)\otimes\mathfrak{a}$ is globally generated  for some divisible $m>0$. This ideal gives rise  to a test configuration $(\cX',\cL')$ with a  morphism $\cX'\xrightarrow{\tau}\cX$ such that $\tau^*\cL=\cL'$.

Most importantly, to any test configuration $\cT=(\cX,\cL)$ one can attach a $C^{1,1}$ geodesic ray $\{u_t^\cT\}$ \cite{PS07a,PS10, RWN14,Ber16,CTW18}. After adding a multiple $a\cX_0$ to $\cL$ (which corresponds to adding $at$ to $u_t^\cT$),  
we can assume that $(\cX,\cL)$ is obtained via a flag ideal $\mathfrak{a}$ as described above. Then from the construction of $\{u_t^\cT\}$ the singularity type of $U^\cT_z:=u^\cT_{-\log|z|^2}$ near $X\times\{0\}$ is comparable to $\mathfrak{a}^{1/m}$, in the sense that
\begin{equation}
    \label{eq:U-has-analytic-sing}
    U^\cT_z=\frac{1}{m}\log \sum_i|f_i|^2+O(1),
\end{equation}
where $\{f_i\}$ is a finite set of local generators of $\mathfrak{a}$. For this reason, $\{u_t^\cT\}$ is called a geodesic ray with algebraic singularities, in the terminology of \cite[\S 4.4]{BBJ18}. 

\paragraph{Compactification and domination.}
In practice, it is also useful to compactify  a test configuration $(\cX,\cL)$ trivially over $\infty\in\PP^1$ (see \cite[Definition 2.7]{Xubook}), which we still denote as $(\cX,\cL)$. In this paper we assume that all test configurations $(\cX,\cL)$ are compactified in this sense, making intersection theory more convenient on $\cX$. In this context $\cL$ being relatively (semi)ample means that $\cL+c\cX_0$ is (semi)ample for some $c\in\QQ_{>0}$.

Most of our discussions will involve birational invariance, hence we will often assume that the total space $\cX$ of a test configuration $(\cX,\cL)$ is \emph{smooth}, where $\cX_0$ has simple normal crossing support. This can always achieved after taking a resolution (which does not affect the associated geodesic ray). Such $(\cX,\cL)$ is called a smooth model (of the original test configuration). 

Given two test configurations $(\cX,\cL)$ and $(\cX',\cL')$, we say $(\cX',\cL')$ dominates $(\cX,\cL)$ if there is a $\CC^*$-equivariant birational morphism $\cX'\xrightarrow{\pi}\cX$. So any test configuration is dominated by a smooth model. 

Let $(\cX,\cL)$ be a smooth test configuration dominating $X\times\PP^1$, say $\cX\xrightarrow{\pi}X\times\PP^1$. Denote by $p_1:X\times\PP^1\to X$ the projection map. By \cite[Proposition 3.10]{SD18} there is a uniquely determined $\QQ$-Cartier divisor $D$ supported in $\cX_0$ such that
\begin{equation}
    \label{eq:def-D}
    \pi^*p_1^*L-\cL\sim_\QQ D.
\end{equation}
Let $\{u_t\}$ be the geodesic ray associated to $(\cX,\cL)$. By the analysis in \cite[Lemma 4.6]{SD18} and \cite[\S 3]{CTW18} the singularity type of $U_z:=u_{-\log|z|^2}$ when pulled back to $\cX$ is $L^\infty$-compatible with the divisor $D$, in the sense of \cite[Definition 4.5]{SD18} (up to a sign). When $D$ is effective, this means that we can find an $S^1$-invariant qpsh function $\psi_D$ in a neighborhood of $\cX_0$ such that
\begin{equation}
    \label{eq:sing-type-of-alg-ray}
    \theta_D+\ddc\psi_D=[D], \text{ and }|\pi^*U_z-\psi_D|=O(1),
\end{equation}
where $\theta_D$ is a smooth $(1,1)$-form on $\cX$ representing the cohomology class $\{D\}$.

\paragraph{Demailly regularization of rays.}
We recall an important approximation procedure going back to \cite{DEL00,BBJ18}. Let $\{u_t\}$ be a finite energy subgeodesic ray with $u_t\leq 0$. Put $U:=U_z=u_{-\log|z|^2}$, which extends to a quasi-plurisubharmonic (qpsh) function on $X\times\Delta$. 

For any $m\geq 1$, let $\mathfrak{a}_m:=\cI(mU)$ be the multiplier ideal sheaf of $mU$. Note that $\mathfrak{a}_m$ is trivial away from $X\times\{0\}$ (by \cite{Ze01}), so it extends to a flag ideal on $X\times\CC$. As shown in \cite[Lemma 5.6]{BBJ18}, one can find a sufficiently large $m_0>0$, depending only on $(X,L)$, such that $p_1^*((m+m_0)L)\otimes\mathfrak{a}_m$ is globally generated for any $m\geq 1$. Let $\cX_m\xrightarrow{\mu_m}X\times\CC$ be the normalized blowup along $\mathfrak{a}_m$ with exceptional divisor $E_m$. Put $\cL_m:=\mu_m^*p_1^*L-\frac{1}{m+m_0}E_m$. Then $\cT_m:=(\cX_m,\cL_m)$ is a test configuration of $(X,L)$, inducing a geodesic ray $\{u_t^{\cT_m}\}$. Using \eqref{eq:U-has-analytic-sing}, one then observes that
\begin{equation}
    \label{eq:int-ineq-of-Li}
    \int_{X\times\Delta} e^{(m+m_0)U_z^{\cT_m}-mU_z}\omega^n\wedge i dz\wedge d\bar z<\infty.
\end{equation}
Using this simple fact, a remarkable result was obtained by Li \cite{Li20-cscK}: if $\{u_t\} \in \mathcal R^1$ is  a geodesic ray with $Ent\{u_t\}<\infty$, then it is maximal in the sense of \cite[Definition 6.5]{BBJ18}. More specifically, we have  $d_1(\{u_t^{\cT_m}\}, \{u_t\}) \to 0\text{ as }m\to\infty$ (cf. \cite[Lemma 5.7]{BBJ18}, \cite[Theorem 4.4.1]{Reb23}).

\paragraph{Models and non-Archimedean geometry.}
In this work we also need to work with \emph{models}, introduced in \cite[Definition 2.7]{Li20-cscK}. These are generalizations of test configurations $(\cX,\cL)$, where $\cL$ is only relatively big on $\cX$ (namely, $\cL+c\cX_0$ is big for some $c>0$). One can also naturally attach a geodesic ray to a model, since a model determines a filtration (as shown in \cite[Definition 2.7]{Li20-cscK}), and a filtration in turn also determines a geodesic ray \cite[\S 9]{RWN14}, which is constructed as an increasing limit of a sequence of geodesic rays of test configurations. As a consequence, the singularity type of such a geodesic ray is typically no longer of algebraic type (c.f \eqref{eq:sing-type-of-alg-ray}), but instead shall be seen as certain envelope of algebraic singularities. A convenient way of describing it is to use some terms from non-Archimedean geometry, that we now turn to explain. \smallskip

Let $X$ be a projective manifold.
Following \cite{BJ18v1}, denote by $X^{an}$ the Berkovich analytification of $X$ with respect to the trivial absolute value on the ground field $\CC$. We will mostly work with the subspace $X_\QQ^{div}$ consisting of $\QQ$-valued divisorial valuations on $X$. Elements in $X^{div}_\QQ$ can be explicitly described as follows. Any $w\in X^{div}_\QQ$ is of the form $\lambda\ord_F$, with $\lambda\in\QQ_{\geq 0}$ and $F\subset Y\xrightarrow{\tau}X$ being some prime divisor over $X$. When $\lambda=0$, we get the trivial valuation $v_{triv}.$ Moreover, we define the log discrepancy of such $w$ to be
$$
A_X(w)=A_X(\lambda \ord_F):=\lambda(1+\ord_F(K_Y-\tau^*K_X)).
$$
The notion of log discrepancy further extends in a linear way to the more general quasi-monomial valuations in $X^{an}$ (see e.g. \cite[Definition 1.34]{Xubook}).

Let $(X,L)$ be a polarized K\"ahler manifold. Let $\cH^{NA}$ be the space of non-Archimedean potentials coming from   test configurations of $(X,L)$ (see also \cite[\S 3.2]{BBJ18}). Any element of $\cH^{NA}$ is of the form $u^\cT$, where $\cT$ is some test configuration $\cT=(\cX,\cL)$ of $(X,L)$. Here $u^\cT$ is a continuous function on $X^{an}$, whose value on any $w\in X^{an}$ is explicitly determined by
\begin{equation}
    \label{eq:def-NA-u-T}u^{\cT}(w):=-\sigma(w)(D),
\end{equation}
where $\sigma(w)$ denotes the Gauss extension of $w$ (see \cite[Definition 4.3]{BHJ17}) and $D$ is given by \eqref{eq:def-D}. When $w\in X^{div}_\QQ$, it follows from \cite[Theorem 4.6]{BHJ17} that, up to passing to a higher model, $\sigma(w)$ is of the form $\frac{1}{b_E}\ord_E$ for some irreducible component $E$ in $\cX_0$, with $b_E:=\ord_E(\cX_0)$. Moreover, by \cite[Theorem A.10]{BJ22} we have the relation
$
A_X(w)=\frac{1}{b_E}A_{X\times\PP^1}(E)-1.
$
If we assume in addition that the geodesic ray $\{u_t^\cT\}$ attached to $\cT$ satisfies $u_t^\cT\leq 0$, then $u^\cT$ can be alternatively defined using Lelong numbers (see \cite[Definition 4.2]{BBJ18}):
\begin{equation}
    \label{eq:nu-T-along-E}
    u^\cT(w)=-\frac{1}{b_E}\nu(u^\cT_{-\log|z|^2},E).
\end{equation}
Here recall that $\nu(\phi,E)$ denotes the Lelong number of a qpsh function $\phi$ along $E$, which is defined as
$$
\nu(\phi,E):=\inf_{x\in E}\sup\{c>0: \phi(y)-c\log|y-x|^2\text{ is bounded above near }x\}.
$$
(Our convention of the Lelong number differs from the one used in \cite{BBJ18} by a factor of $2$).

Denote by $PSH^{NA}$ the space of $L$-psh functions on $X^{an}$. Recall that $u\in PSH^{NA}$ if and only if $u\not\equiv-\infty$ and $u$ is equal to the decreasing limit of a sequence of functions in $\cH^{NA}$.
 
Now let $\cM:=(\cX,\cL)$ be a model of $(X,L)$, so $\cL$ is not assumed to be relatively semi-ample. In this case, we can still define $D$ as in \eqref{eq:def-D}. Put
$$
f_D(w):=-\sigma(w)(D),\ w\in X^{an}.
$$
Then $f_D$ is a continuous function on $X^{an}$, but in general $f_D(w)\notin \cH^{NA}$. 
By \cite[Theorem 8.3]{BJ16} the non-Archimedean potential $u^\cM$ associated to the model is then given by the following envelope:
\begin{equation}
    \label{eq:def-NA-of-Model}
    u^{\cM}:=P(f_D):=\sup\{v\in PSH^{NA}: v\leq f_D\}.
\end{equation}
It follows from \cite[Theorem 8.5]{BJ16} (see also \cite[Proposition 4.5]{BoJ18}) that $u^\cM$ is continuous.

We can also describe $u^{\cM}$ using Lelong numbers as before. Let $\{u_t^\cM\}$ be the geodesic ray attached to $\cM$. After subtracting some multiple of $\cX_0$ from $\cL$ we can always assume that $u_t^\cM\leq 0$. Then for any $w\in X^{div}_\QQ$ with $\sigma(w)=\frac{1}{b_E}\ord_E$, we have
\begin{equation}
    \label{eq:nu-u-M-along-E}
    u^{\cM}(w)=-\frac{1}{b_E}\nu(u^\cM_{-\log|z|^2},E).
\end{equation}
This characterization of the singularity types of model rays will be used in \S \ref{sec:int-for-of-K-beta}.

\paragraph{Complex reductive Lie groups.}
In the proof of Theorem \ref{thm:YTD'''}, some classical Lie group theory will be needed, which we now briefly recall.
Let $\GG$ be a complex reductive Lie group with maximal compact subgroup $\KK$. Due to reductivity of $\GG$ there exists a compact semi-simple group $\Bbb L \subset \KK$ such  that $\Bbb L_{\Bbb C} \leq \GG$ and 
\begin{equation}\label{eq: G_decomp}
\GG \simeq \big(\Bbb L_{\Bbb C}  \times Z(\GG)\big)/ \Gamma_1,
\end{equation}
where $\Gamma_1$ is a discrete subgroup of $\Bbb L_{\Bbb C}  \times Z(G)$, and $Z(\GG)$ is the center of $\GG$ that can be further written as
\begin{equation}\label{eq: Z_decomp}
Z(\GG) \simeq (\CC^*)^m \times (\CC^k/\Lambda),
\end{equation}
where $\Lambda \leq \CC^k$ is a sublattice making $\CC^k/\Lambda \leq \KK$ compact \cite[Proposition, p. 181]{Bor}.
Using our notation, $Z(\GG) \cap \KK = (S^1)^m \times (\CC^k/\Lambda)$ is the maximal compact subgroup of $Z(\GG)$ and we have the following decomposition of  $\KK$:
\begin{equation}\label{eq: K_decomp}
\KK \simeq \big(\Bbb L \times (S^1)^m \times \CC^k/\Lambda\big)/\Gamma_2,
\end{equation}
where $\Gamma_2$ is a discrete subgroup of $\Bbb L \times (S^1)^m \times \CC^k/\Lambda$. 

Finally, we denote by $\TT = (\CC^*)^m \leq Z(\GG)$ the maximal torus of $Z(\GG)$, satisfying $\KK \cap \TT = (S^1)^m$. With the above decompositions in hand, let us recall the following well known result relating the normalizer $N_\GG(\KK)$ of the maximal compact group $\KK$ to the center of $\GG$ and $\TT$ \cite[Proposition B.1]{Li19-G-uniform}:

\begin{lemma}\label{lem: CLi_App} Let $\GG$ be a complex reductive Lie group with maximal compact subgroup $\KK$. With the above notation, $N_\GG(\KK) = Z(\GG) \KK = \TT \KK$.
\end{lemma}

\section{Transcendental quantization}
Let $(X,\omega)$ be a compact K\"ahler manifold of dimension $n$.
Recall that for any $\beta>0$ and $u\in\cE^1$, $e^{-\beta u}\in L^p$ for any $p>1$ \cite{Ze01}. Thus, by the theory of Monge--Amp\`ere equations \cite[Theorem C]{BBGZ}, there exists a unique continuous $\omega$-psh function $u^\beta$ satisfying
\begin{equation}
    \label{def:u-beta}
    (\omega+\ddc u^\beta)^n=e^{\beta(u^\beta-u)}\omega^n.
\end{equation}
Based on the speculations of \cite[Section 3.1]{Ber19}, we call $u^\beta$ the transcendental quantization of $u$ at level $\beta$ (see \cite[Appendix A.3]{DarvasSurvey} for a short survey on this topic).

There are some immediate properties of $u^\beta$, reminiscent of K\"ahler quantization:

\begin{lemma}\label{lem: u_beta_basic}
    One has \smallskip\\
\noindent(i) $(u+C)^\beta=u^\beta+C$ for any $u\in\cE^1$ and $C\in\RR$. \smallskip\\
\noindent(ii) $u\geq v$ implies that $u^\beta\geq v^\beta$. \smallskip\\
\noindent(iii) $Ent(u^\beta)<\infty$ for any $u\in\cE^1$.
\end{lemma}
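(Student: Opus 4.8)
The plan is to exploit uniqueness of solutions to the complex Monge--Amp\`ere equation \eqref{def:u-beta} for parts (i) and (ii), and then to derive the entropy bound in (iii) from the explicit form of the equation together with the integrability properties of $\omega$-psh functions. For part (i), I would simply observe that $w := u^\beta + C$ satisfies $(\omega + \ddc w)^n = (\omega + \ddc u^\beta)^n = e^{\beta(u^\beta - u)}\omega^n = e^{\beta(w - (u+C))}\omega^n$, so $w$ is a continuous $\omega$-psh solution to the equation defining $(u+C)^\beta$; uniqueness from \cite[Theorem C]{BBGZ} then forces $w = (u+C)^\beta$. For part (ii), suppose $u \geq v$. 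The idea is a comparison/maximum principle argument: I would show that $u^\beta \geq v^\beta$ by considering the set where $v^\beta - u^\beta$ attains its (nonnegative) maximum, or more robustly by the standard domination principle for the Monge--Amp\`ere operator. Concretely, if $u^\beta \not\geq v^\beta$ somewhere, then on the (nonempty) contact-type region one compares $(\omega + \ddc u^\beta)^n$ and $(\omega + \ddc v^\beta)^n$; where $v^\beta > u^\beta$ one has $e^{\beta(v^\beta - v)} \geq e^{\beta(u^\beta - u)}$ using $v \leq u$, forcing $(\omega+\ddc v^\beta)^n \geq (\omega + \ddc u^\beta)^n$ there, which contradicts the comparison principle (the measure of the region where a psh function exceeds another controls the Monge--Amp\`ere masses in the wrong direction). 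I would phrase this cleanly via the domination principle of \cite{BBGZ} (or \cite{Dinew09}) applied to $u^\beta$ and $v^\beta$.

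For part (iii), the point is that $u^\beta$ is a bona fide solution with an explicit right-hand side. By definition $\omega_{u^\beta}^n = e^{\beta(u^\beta - u)}\omega^n$, so
\[
Ent(u^\beta) = \frac{1}{V}\int_X \log\!\left(\frac{\omega_{u^\beta}^n}{\omega^n}\right)\omega_{u^\beta}^n = \frac{\beta}{V}\int_X (u^\beta - u)\,\omega_{u^\beta}^n.
\]
Since $u^\beta$ is continuous (hence bounded) on the compact manifold $X$, the term $\int_X u^\beta \,\omega_{u^\beta}^n$ is finite; and since $u \in \cE^1$, the term $\int_X u\,\omega_{u^\beta}^n$ is finite as well, because $\cE^1$ potentials are integrable against the Monge--Amp\`ere measure of any bounded potential — in fact against $\omega_{u^\beta}^n$, one has $\int_X |u|\,\omega_{u^\beta}^n < \infty$ by the finite-energy hypothesis (this is a standard fact, e.g.\ via the inequality $\int_X |u| \omega_{u^\beta}^n \leq C(1 + |I(u)|)$ or directly from the $d_1$-theory). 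Combining, $Ent(u^\beta) < \infty$.

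I expect the only genuine content to be part (ii): parts (i) and (iii) are immediate once the right citations are in place, but (ii) requires invoking the comparison/domination principle in the right form. The mild subtlety there is that the exponent in the Monge--Amp\`ere equation depends on the unknown, so the comparison is not completely off-the-shelf; one must set it up on the region $\{v^\beta > u^\beta\}$ and use $u \geq v$ to get the favorable inequality between the right-hand sides before applying the domination principle. This is routine but is the step I would write out most carefully.
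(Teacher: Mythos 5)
Your proposal is correct and follows essentially the same route as the paper: (i) is immediate from the defining equation (the paper calls it trivial), (ii) is exactly the comparison-principle argument the paper handles by citing \cite[Lemma 2.5]{Ber19} (your domination-principle sketch is the content of that cited lemma, and your noted care about strictness on $\{v^\beta>u^\beta\}$ is the right point to spell out), and (iii) uses the same identity $Ent(u^\beta)=\tfrac{\beta}{V}\int_X(u^\beta-u)\,\omega^n_{u^\beta}$. The only cosmetic difference is how finiteness is concluded in (iii): the paper bounds $\int_X|u^\beta-u|\,\omega^n_{u^\beta}$ by $C_nV\,d_1(u^\beta,u)$ via \cite[Theorem 3]{Dar15}, while you invoke boundedness of $u^\beta$ together with integrability of $\mathcal E^1$ potentials against Monge--Amp\`ere measures of bounded potentials, which is an equivalent justification.
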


\begin{proof}The first point is trivial and the second follows from the comparison principle \cite[Lemma 2.5]{Ber19} (whose proof applies to potentials of $\mathcal E^1$).
    Finally, we argue the last point:
    \begin{equation*}
        \begin{split}
             Ent(u^\beta)&=\frac{1}{V}\int_X\log\frac{\omega^n_{u^\beta}}{\omega^n}\omega^n_{u^\beta}=\frac{\beta}{V}\int_X(u^\beta-u)\omega^n_{u^\beta}\leq \frac{\beta}{V}\int_X|u^\beta-u|\omega^n_{u^\beta}\leq \beta C_n d_1(u^\beta,u)<\infty.
        \end{split}
    \end{equation*}
\end{proof}

The preceding observation indicates that \( u^\beta \) serves as a regularization of \( u \), particularly in cases where \( u \in \mathcal{E}^1_\omega \) and \( \operatorname{Ent}(u) = \infty \). We now proceed to show that $u^\beta \xrightarrow{d_1} u$.

\begin{lemma}
\label{lem:u-beta-d1-u}
    For any $u\in\cE^1$, one has $u^\beta\xrightarrow{d_1}u$ as $\beta\to\infty$.
\end{lemma}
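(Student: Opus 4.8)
The plan is to show that $\{u^\beta\}_{\beta}$ is $d_1$-precompact and that its only possible limit point is $u$, so that the whole net converges. First I would prove the uniform bound $\sup_X u^\beta \leq \sup_X u$, which follows immediately from the maximum principle applied to \eqref{def:u-beta}: at a point where $u^\beta - u$ is maximal, the complex Hessian of $u^\beta$ is nonpositive, forcing $e^{\beta(u^\beta-u)}\omega^n \leq \omega^n$ there, hence $u^\beta \leq u$ at that point, hence everywhere. Normalizing so that $u \leq 0$ (using Lemma~\ref{lem: u_beta_basic}(i)), we get $u^\beta \leq 0$ for all $\beta$, and then $d_1(0,u^\beta) = -I(u^\beta)$. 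It remains to control $I(u^\beta)$ from below uniformly; I would do this by testing the energy inequality or by a direct integration-by-parts argument showing $-I(u^\beta) \leq C(-I(u)) + C$, giving a uniform $d_1$-bound. By the compactness of $d_1$-bounded sets in $\mathcal E^1$ in the weak topology (as in \cite{BBEGZ19}), every sequence $\beta_j \to \infty$ has a subsequence along which $u^{\beta_j}$ converges in $d_1$ to some $\psi \in \mathcal E^1$ with $\psi \leq 0$.

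Next I would identify the limit $\psi$. On one hand, since $u^\beta \leq u$, any limit satisfies $\psi \leq u$. On the other hand, I would establish a lower bound $u^\beta \geq u - C/\beta$ or at least that $u^\beta$ does not drop much below $u$ in an integrated sense; the cleanest route is to use the comparison principle for the Monge--Amp\`ere operator together with a barrier of the form $u + (\text{solution of an auxiliary equation})$, exploiting that $e^{-\beta u} \in L^p$ with $L^p$-norm controlled. More precisely, one can compare $u^\beta$ with $P(u)$-type envelopes or use that the solution operator is monotone and apply it to shifted data. Passing to the limit in the equation \eqref{def:u-beta} is delicate because the right-hand side $e^{\beta(u^\beta - u)}$ degenerates; instead I would argue via energy functionals: using $Ent(u^\beta) \geq 0$ and the expression $Ent(u^\beta) = \beta(I(u^\beta) - I(u)) + \big(\tfrac1V\int_X(u-u^\beta)\omega_{u^\beta}^n \cdot \beta - \beta(I(u)-I(u^\beta))\big)$ type manipulations from \eqref{eq: Ent_beta_formula}, or more simply by noting $0 \leq Ent(u^\beta) = \tfrac{\beta}{V}\int_X(u^\beta - u)\omega_{u^\beta}^n$, I would deduce $\int_X(u - u^\beta)\omega_{u^\beta}^n \to 0$, which combined with $u^\beta \leq u$ and $d_1$-convergence forces $\psi = u$.

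The main obstacle I anticipate is the lower control on $u^\beta$: the upper bound $u^\beta \leq u$ is automatic, but without a matching lower estimate the limit could a priori be strictly smaller than $u$ on a pluripolar set, or the energies could fail to converge. The key quantitative input is the $L^p$-integrability of $e^{-\beta u}$ from \cite{Ze01} with the uniform-in-$p$ Skoda-type estimate, which lets one run the Ko\l odziej/\cite{BBGZ} stability estimate to bound $\|u^\beta - u\|$ or at least $I(u) - I(u^\beta)$ by a quantity tending to $0$ as $\beta \to \infty$. Concretely, I would test: since $(\omega + \ddc u^\beta)^n = e^{\beta(u^\beta-u)}\omega^n$ and $u^\beta \leq u$, the measure $e^{\beta(u^\beta-u)}\omega^n$ has total mass $V$ and is dominated by $\omega^n$, and one shows its "distance" to $\omega_u^n$ (or to $\omega^n$) is small; invoking the continuity of the solution map to the Monge--Amp\`ere equation in the appropriate topology yields $u^\beta \to u$. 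Once $\psi = u$ is the unique limit point of a $d_1$-precompact net, the convergence $u^\beta \xrightarrow{d_1} u$ follows.
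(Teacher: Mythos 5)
Your first step is the place where the argument breaks: the claim $u^\beta\leq u$ is false for every nonconstant $u$. Indeed, if $u^\beta\leq u$ held everywhere then \eqref{def:u-beta} would give $\omega_{u^\beta}^n=e^{\beta(u^\beta-u)}\omega^n\leq\omega^n$, and since both measures have total mass $V$ they would coincide, forcing $u^\beta$ (hence $u$, a.e.) to be constant. The maximum-principle computation is also incorrect as stated: at a maximum point of $u^\beta-u$ (for smooth data) one only gets $\ddc u^\beta\leq\ddc u$, hence $\omega_{u^\beta}^n\leq\omega_u^n$, not $\leq\omega^n$; so the conclusion there is $u^\beta-u\leq\frac{1}{\beta}\log\frac{\omega_u^n}{\omega^n}$, which is consistent with the expansion \eqref{eq: expansion} showing $u^\beta>u$ wherever $\omega_u^n>\omega^n$, and which is meaningless for general $u\in\cE^1$ since $\omega_u^n$ need not have bounded density. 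What is true, and what the paper actually uses, is only the weaker $\sup_X u^\beta\leq\sup_X u$ (from $u\leq\sup_X u$ and Lemma \ref{lem: u_beta_basic}), together with the asymptotic upper bound $\limsup_\beta u^\beta\leq u$ obtained by comparing with smooth $u_k\searrow u$ and using $u_k^\beta\xrightarrow{C^0}u_k$ \cite{CZ19a}. Since your identification of the limit ($\psi\leq u$, domination of $e^{\beta(u^\beta-u)}\omega^n$ by $\omega^n$, and the entropy manipulation) all lean on $u^\beta\leq u$, that part of the scheme collapses. Note also that $0\leq Ent(u^\beta)=\frac{\beta}{V}\int_X(u^\beta-u)\omega_{u^\beta}^n$ gives an inequality of the wrong sign for your purpose, and making it quantitative ($\frac{V}{\beta}Ent(u)\to 0$) would require $Ent(u)<\infty$, which is not available for general $u\in\cE^1$.

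There is a second structural gap: $d_1$-bounded sets are not $d_1$-precompact, so "precompact net plus unique limit point" does not run in the metric $d_1$. Compactness in the style of \cite{BBEGZ19} only yields weak/$L^1$ limits (and typically under entropy bounds), and upgrading $L^1$-convergence to $d_1$-convergence requires convergence of the Monge--Amp\`ere energies, which is essentially the content of the lemma you are trying to prove. The missing idea, which both fixes the lower bound and yields the $d_1$-estimate directly, is the barrier $u_\delta:=\delta^{-1}P(\delta u)$ of \cite{DLR19}: since $\omega_{u_\delta}\geq\frac{\delta-1}{\delta}\omega$, the comparison principle gives $u^\beta\geq u_\delta+\frac{n}{\beta}\log\big(\frac{\delta-1}{\delta}\big)$, which supplies the a.e.\ lower bound as $\beta\to\infty$, $\delta\searrow1$, and, because it is an inequality between ordered potentials, reduces $d_1(u,u^\beta)$ to an integral $\frac1V\int_X|u^\beta-u_\delta|\,\omega_{u_\delta}^n$ plus $d_1(u,u_\delta)$, handled by dominated convergence and \cite[Theorem 3]{Dar15}. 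Without an input of this kind your proposal does not close.
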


\begin{proof}

We first show that $u^\beta\xrightarrow{a.e.}u$ as $\beta\to\infty$. Let $u\in\cE^1$ and $\delta>1$.
    Consider $u_\delta:=\delta^{-1}P(\delta u) \in \mathcal E^1$ as in \cite[Proposition 2.15]{DLR19} (Proposition 2.18 in the published version). Then
    $
    u_\delta\leq u\text{ and }\omega+\ddc u_\delta\geq (\frac{\delta-1}{\delta})\omega.
    $
Thus:
    $$
    (\omega+\ddc u_\delta)^n\geq e^{\beta((u_\delta + \frac{n}{\beta}\log(\frac{\delta-1}{\delta}))-u_\delta)}\omega
    ^n \ \ \ \textup{ and } \ \ \ (\omega+\ddc u^\beta)^n\leq e^{\beta(u^\beta-u_\delta)}\omega
    ^n,
    $$
   Applying \cite[Lemma 2.5]{Ber19} again, we obtain that
\begin{equation}\label{eq: u_beta_est}
    u^\beta\geq u_\delta+ \frac{n}{\beta}\log\Big(\frac{\delta-1}{\delta}\Big).
\end{equation}
    In particular,
    $
    \liminf_{\beta\to\infty} u^\beta(x)\geq u_\delta(x),\ \text{for all }\delta>1, x\in X.
    $
    Since $u_\delta\xrightarrow{a.e.}u$ as $\delta\searrow 1$ (\cite[Proposition 2.15]{DLR19}), we obtain that
$$
    \liminf_{\beta\to\infty} u^\beta(x)\geq u(x)\text{ for almost all }x\in X. 
    $$
    
    We now bound $u^\beta$ from above. Pick a smooth sequence $u_k\searrow u$ pointwise. 
    Using the fact that $u_k^\beta\geq u^\beta$ and that $u_k^\beta\xrightarrow{C^0} u_k$ as $\beta\to\infty$ (\cite[Theorem 1.1]{CZ19a}, \cite[Theorem A.10]{DarvasSurvey}),
    $$
    \limsup_{\beta\to\infty} u^\beta(x)\leq \lim_{\beta\to\infty} u_k^\beta(x)=u_k(x)\text{ for any }k \text{ and }x\in X.
    $$
Thus, $
    \limsup_{\beta\to\infty} u^\beta(x)\leq u(x), x\in X$. 
So $u^\beta\xrightarrow{a.e.}u$ as $\beta\to\infty$, as claimed.

Let $\delta>1$. We estimate $d_1(u^\beta,u)$ as follows:
    \begin{equation*}
        \begin{split}
            d_1(u,u^\beta)&\leq d_1(u_\delta+\frac{n}{\beta}\log\Big(\frac{\delta-1}{\delta}\Big),u^\beta)+d_1(u,u_\delta)-\frac{n}{\beta}\log\Big(\frac{\delta-1}{\delta}\Big)\\
            &=I(u^\beta)-I(u_\delta+\frac{n}{\beta}\log\Big(\frac{\delta-1}{\delta}\Big))+d_1(u,u_\delta)-\frac{n}{\beta}\log\Big(\frac{\delta-1}{\delta}\Big)\\
            &\leq \frac{1}{V}\int_X|u^\beta-u_\delta|\omega^n_{u_\delta}+d_1(u,u_\delta)-\frac{2n}{\beta}\log\Big(\frac{\delta-1}{\delta}\Big).
        \end{split}
    \end{equation*}
    Sending $\beta\to\infty$, using dominated convergence and \cite[Theorem 3]{Dar15},
    $$
    \limsup_{\beta\to\infty} d_1(u,u^\beta)\leq \frac{1}{V}\int_X|u-u_\delta|\omega^n_{u_\delta}+d_1(u,u_\delta)\leq C_n d_1(u,u_\delta).
    $$
    Finally, sending $\delta\searrow 1$ and using \cite[Proposition 2.15]{DLR19}, we conclude.
\end{proof}

Next we show convergence of the entropy functional under transcendental quantization:

\begin{lemma}\label{lem: entr_conv}
    For any $u\in\cE^1$, $Ent(u^\beta)\to Ent(u)$ as $\beta\to\infty$.
\end{lemma}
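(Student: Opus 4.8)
The goal is to show $Ent(u^\beta)\to Ent(u)$ as $\beta\to\infty$. Since entropy is $d_1$-lower semicontinuous and $u^\beta\xrightarrow{d_1}u$ by Lemma~\ref{lem:u-beta-d1-u}, we immediately get $\liminf_{\beta\to\infty}Ent(u^\beta)\geq Ent(u)$. So the whole content is the reverse inequality $\limsup_{\beta\to\infty}Ent(u^\beta)\leq Ent(u)$. When $Ent(u)=\infty$ there is nothing to prove, so assume $Ent(u)<\infty$, i.e.\ $u\in\cE^1$ has finite entropy.

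**Main step.** The key identity is the one already recorded in the proof of Lemma~\ref{lem: u_beta_basic}(iii):
\[
Ent(u^\beta)=\frac{\beta}{V}\int_X(u^\beta-u)\,\omega^n_{u^\beta}.
\]
I would try to bound the right-hand side from above by a quantity converging to $Ent(u)$. The natural comparison is with the variational/Legendre characterization of entropy: for any $w\in C^0(X)$,
\[
Ent(u)\geq -\log\frac1V\int_X e^{w}\omega^n+\frac1V\int_X w\,\omega^n_u .
\]
Taking $w=\beta(u^\beta-u)$ (this is continuous since $u^\beta$ is continuous and we may first handle the case $u$ continuous, then approximate) and using the defining equation $\omega^n_{u^\beta}=e^{\beta(u^\beta-u)}\omega^n$, the term $-\log\frac1V\int_X e^{\beta(u^\beta-u)}\omega^n=-\log\frac1V\int_X\omega^n_{u^\beta}=-\log 1=0$ vanishes, giving
\[
Ent(u)\geq \frac{\beta}{V}\int_X(u^\beta-u)\,\omega^n_u .
\]
This is almost what we want, but the measure on the right is $\omega^n_u$, not $\omega^n_{u^\beta}$. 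To convert, write $\frac{\beta}{V}\int_X(u^\beta-u)\omega^n_{u^\beta}=\frac{\beta}{V}\int_X(u^\beta-u)(\omega^n_{u^\beta}-\omega^n_u)+\frac{\beta}{V}\int_X(u^\beta-u)\omega^n_u$. The second term is $\leq Ent(u)$ by the above. For the first term, the expectation is that $u^\beta\to u$ strongly enough (again Lemma~\ref{lem:u-beta-d1-u}, plus the estimate \eqref{eq: u_beta_est} giving a lower bound $u^\beta\geq u_\delta+\tfrac n\beta\log\tfrac{\delta-1}{\delta}$, hence $\beta(u^\beta-u)$ is not too negative) that $\frac{\beta}{V}\int_X(u^\beta-u)(\omega^n_{u^\beta}-\omega^n_u)$ goes to $0$, or at least has nonpositive limsup.

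**The expected obstacle.** Controlling the cross term $\frac{\beta}{V}\int_X(u^\beta-u)(\omega^n_{u^\beta}-\omega^n_u)$ is the delicate point: $\beta(u^\beta-u)$ blows up pointwise while the difference of Monge--Amp\`ere measures shrinks, so one needs a genuine quantitative estimate, not just weak convergence. I would look for an integration-by-parts / monotonicity argument: since $\omega^n_{u^\beta}-\omega^n_u=\ddc(u^\beta-u)\wedge(\sum_{j}\omega_{u^\beta}^j\wedge\omega_u^{n-1-j})$, pairing against $\beta(u^\beta-u)$ and integrating by parts produces $-\beta\int_X d(u^\beta-u)\wedge d^c(u^\beta-u)\wedge(\cdots)\leq 0$, so the cross term has a favorable sign. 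The remaining issue is justifying these manipulations for $u\in\cE^1$ (not smooth), which should follow by the standard approximation: first prove the inequality for $u\in\cH_\omega$ (or $u$ continuous $\omega$-psh with finite entropy, using \cite{CZ19a}-type regularity of $u^\beta$), then pass to general finite-energy $u$ via a decreasing smooth approximation $u_k\searrow u$, using $u_k^\beta\geq u^\beta$, monotone convergence of entropy along $u_k\searrow u$, and $d_1$-continuity of $\beta\mapsto u^\beta$. Combining: $\limsup_\beta Ent(u^\beta)\leq \limsup_\beta\big(\text{cross term}\big)+Ent(u)\leq Ent(u)$, which together with the lsc bound finishes the proof.
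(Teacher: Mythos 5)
Your proposal is correct and essentially reproduces the paper's proof: your Legendre/variational step with $w=\beta(u^\beta-u)$ is exactly the paper's application of Jensen's inequality, your integration-by-parts sign of the cross term is exactly the standard comparison $\frac{1}{V}\int_X(v-u)\omega_v^n\le I(v)-I(u)\le\frac{1}{V}\int_X(v-u)\omega_u^n$ that the paper uses to pass from $\omega_u^n$ to $\omega_{u^\beta}^n$, and both arguments conclude with $d_1$-lower semicontinuity of $Ent$ together with Lemma \ref{lem:u-beta-d1-u}. The only inessential difference is your smoothing detour: it is not needed (and the continuity of $w$ is a non-issue), since Jensen's inequality and the energy comparison above hold verbatim for $u\in\mathcal{E}^1$ with $Ent(u)<\infty$, giving $Ent(u^\beta)\le Ent(u)$ for every $\beta$ directly.
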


\begin{proof}
    If $Ent(u)=\infty$, then trivially $Ent(u)\geq Ent(u^\beta)$.    If $Ent(u)<\infty$, we can use Jensen's inequality to derive that
    \begin{equation*}
        \begin{split}
            0&=\log\int_X\frac{\omega^n_{u^\beta}}{V}=\log\int_Xe^{\beta(u^\beta-u)}\frac{\omega^n}{\omega^n_u}\frac{\omega^n_u}{V}\geq\frac{1}{V}\int_X\beta(u^\beta-u)\omega^n_u-Ent(u)\\
            & \geq\beta(I(u^\beta)-I(u))-Ent(u)\geq\frac{1}{V}\int_X \beta(u^\beta-u)\omega^n_{u^\beta}-Ent(u)=Ent(u^\beta)-Ent(u).\\
        \end{split}
    \end{equation*}
    Thus $Ent(u)\geq Ent(u^\beta)$ for any $u\in\cE^1$. So we conclude since $Ent(\cdot)$ is $d_1$-lsc (\cite{BBEGZ19}, cf. \cite[Corollary 4.40]{DarvasSurvey}).
\end{proof}

The following simple but important estimate can be extracted from the above proof.

\begin{corollary}
\label{cor:Ent>beta>Ent}
    For any $u\in\cE^1$,
    $
    Ent(u)\geq\beta(I(u^\beta)-I(u))\geq Ent(u^\beta).
    $
\end{corollary}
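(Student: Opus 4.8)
The plan is to extract the two extremal terms from the chain of inequalities already displayed in the proof of Lemma~\ref{lem: entr_conv}, and to note that neither of the resulting bounds actually requires $Ent(u)<\infty$.

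For the right-hand inequality $\beta(I(u^\beta)-I(u))\geq Ent(u^\beta)$, I would start from the defining equation \eqref{def:u-beta}, which gives $\log(\omega^n_{u^\beta}/\omega^n)=\beta(u^\beta-u)$ pointwise, so that
$$
Ent(u^\beta)=\frac{1}{V}\int_X\log\frac{\omega^n_{u^\beta}}{\omega^n}\,\omega^n_{u^\beta}=\frac{\beta}{V}\int_X(u^\beta-u)\,\omega^n_{u^\beta}.
$$
Then I apply the elementary monotonicity estimate $\frac{1}{V}\int_X(v-u)\,\omega^n_v\leq I(v)-I(u)$, valid for all $u,v\in\cE^1$ (this is the basic inequality recalled in the introduction, with the roles of $u$ and $v$ swapped), with $v=u^\beta$. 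This immediately yields $Ent(u^\beta)\leq\beta(I(u^\beta)-I(u))$, with no finiteness hypothesis needed: the integral $\int_X(u^\beta-u)\,\omega^n_{u^\beta}$ is finite because $u^\beta\in\cE^1\cap L^\infty$ and $d_1(u,u^\beta)<\infty$.

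For the left-hand inequality $Ent(u)\geq\beta(I(u^\beta)-I(u))$, there is nothing to prove when $Ent(u)=\infty$, so assume $Ent(u)<\infty$. I would write $\omega^n_{u^\beta}/V=\exp\!\big(\beta(u^\beta-u)-\log(\omega^n_u/\omega^n)\big)\,\omega^n_u/V$ and apply Jensen's inequality against the probability measure $\omega^n_u/V$, using concavity of $\log$:
$$
0=\log\int_X\frac{\omega^n_{u^\beta}}{V}\;\geq\;\frac{1}{V}\int_X\Big(\beta(u^\beta-u)-\log\frac{\omega^n_u}{\omega^n}\Big)\omega^n_u=\frac{\beta}{V}\int_X(u^\beta-u)\,\omega^n_u-Ent(u).
$$
Combining this with the basic inequality $I(u^\beta)-I(u)\leq\frac{1}{V}\int_X(u^\beta-u)\,\omega^n_u$ gives $\beta(I(u^\beta)-I(u))\leq Ent(u)$, completing the chain.

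I do not expect any real obstacle: the statement is literally the concatenation of the two ends of the displayed estimate in the proof of Lemma~\ref{lem: entr_conv}. The only point deserving a line of care is ensuring that Jensen's inequality is legitimately applied (i.e.\ that one does not encounter an $\infty-\infty$ on the right-hand side), which is guaranteed by $Ent(u)<\infty$ together with $u\in\cE^1$ and the boundedness of $u^\beta$.
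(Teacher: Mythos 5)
Your proposal is correct and is essentially the paper's own argument: the paper states that the corollary is extracted from the chain of inequalities in the proof of Lemma \ref{lem: entr_conv}, which is exactly what you do (Jensen against $\omega^n_u/V$ plus the two-sided basic inequality $\frac{1}{V}\int_X(v-u)\omega^n_v\leq I(v)-I(u)\leq\frac{1}{V}\int_X(v-u)\omega^n_u$ with $v=u^\beta$). Your added care in noting that the right-hand inequality needs no finiteness of $Ent(u)$, while the left-hand one is trivial when $Ent(u)=\infty$, is a welcome clarification but does not change the approach.
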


The next result shows that our quantization procedure improves convergence:

\begin{lemma}
\label{lem:u-beta-d1-continuous}
    If $u_i\xrightarrow{L^1}u$ in $\cE^1$, then $||u^\beta_i-u^\beta||_{C^0}\to 0$ for any $\beta>0$. In particular, $u_i^\beta\xrightarrow{d_1}u^\beta$.
\end{lemma}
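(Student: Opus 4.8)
The plan is to exploit the stability estimates for complex Monge--Amp\`ere equations in a form that is uniform in $\beta$ (for fixed $\beta$), combined with the a priori $C^0$-bounds already available for transcendental quantizations. The target equations are $(\omega+\ddc u_i^\beta)^n = e^{\beta(u_i^\beta - u_i)}\omega^n$ and $(\omega+\ddc u^\beta)^n = e^{\beta(u^\beta - u)}\omega^n$. The first step is to record uniform bounds: since $u_i \xrightarrow{L^1} u$ in $\cE^1$, the sequence $\{u_i\}$ is $d_1$-bounded, hence uniformly bounded in $L^1$ and, by \cite{Ze01}, the densities $e^{-\beta u_i}$ are uniformly bounded in $L^p$ for some $p>1$. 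Then \cite[Theorem C]{BBGZ} (or the stability estimates therein) gives a uniform $C^0$-bound $\|u_i^\beta\|_{C^0} \le C(\beta)$ independent of $i$, and likewise $\|u^\beta\|_{C^0}\le C(\beta)$.

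Next I would set up the comparison. Write $w_i := u_i^\beta$ and $w := u^\beta$; both are continuous $\omega$-psh functions. At a point where $w_i - w$ attains its maximum, the standard comparison/maximum-principle argument for the Monge--Amp\`ere operator (as in the proof of \cite[Lemma 2.5]{Ber19}, used already several times in this section) compares $(\omega+\ddc w_i)^n$ and $(\omega+\ddc w)^n$ and yields
$$
\max_X (w_i - w) \le \max_X (u_i - u)^+ \quad\text{up to the } \tfrac{1}{\beta}\log\text{-factor},
$$
or more precisely an inequality of the form $w_i \le w + \tfrac{1}{\beta}\log\|e^{\beta(u_i-u)^+}\|_{?}$-type bound; symmetrically for $w - w_i$. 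The cleaner route, which avoids pointwise regularity issues, is to use the integral stability estimate: there is a modulus-of-continuity function $F$ (depending on $\beta$, $n$, $\omega$, and the uniform $L^p$ bound on the densities) such that
$$
\|u_i^\beta - u^\beta\|_{C^0} \le F\!\left(\|e^{\beta(u_i^\beta-u_i)} - e^{\beta(u^\beta-u)}\|_{L^1}\right),
$$
and to control the right-hand side one iterates: the $L^1$ closeness of the densities follows from $\|u_i - u\|_{L^1}\to 0$ together with the uniform $C^0$-bounds on $u_i^\beta, u^\beta$ and dominated convergence, \emph{provided} one already knows $u_i^\beta \to u^\beta$ in $L^1$. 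To break this circularity I would instead argue via compactness: any $L^1$-convergent (equivalently, by uniform $C^0$-bound, $C^0$-convergent along a subsequence) subsequence of $u_i^\beta$ has a limit $v$ which is continuous $\omega$-psh; passing to the limit in the Monge--Amp\`ere equation (using $u_i \to u$ in $L^1$, hence a.e. along a further subsequence, plus the uniform bounds and stability of Monge--Amp\`ere under such convergence, cf. \cite{BBGZ,GZ07}) shows $v$ solves $(\omega+\ddc v)^n = e^{\beta(v-u)}\omega^n$; by uniqueness \cite[Theorem C]{BBGZ} $v = u^\beta$. Since every subsequence has a sub-subsequence converging in $C^0$ to $u^\beta$, the full sequence converges: $\|u_i^\beta - u^\beta\|_{C^0}\to 0$. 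The final sentence, $u_i^\beta \xrightarrow{d_1} u^\beta$, is then immediate since $C^0$-convergence of uniformly bounded $\omega$-psh functions implies $d_1$-convergence (e.g.\ via $d_1(f,g)\le C_n\|f-g\|_{C^0}$ from \cite{Dar15}).

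The main obstacle I anticipate is making the compactness/limit step rigorous, specifically verifying that one can pass to the limit in the Monge--Amp\`ere equation: one needs $(\omega+\ddc u_i^\beta)^n \to (\omega+\ddc u^\beta)^n$ weakly, which holds because $u_i^\beta \to u^\beta$ uniformly (hence in capacity), and one needs $e^{\beta(u_i^\beta - u_i)}\omega^n \to e^{\beta(u^\beta-u)}\omega^n$ weakly, which follows from $u_i^\beta \to u^\beta$ uniformly and $u_i \to u$ a.e.\ along a subsequence, together with the uniform integrability furnished by the uniform $L^p$-bound on $e^{-\beta u_i}$ (via the uniform Skoda/$\cite{Ze01}$ estimate) so that dominated-type convergence applies. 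Assembling these ingredients is routine given the cited machinery, but the uniform integrability input is the one point that genuinely requires the $\cE^1$ (finite-energy, $d_1$-bounded) hypothesis rather than mere $L^1$-convergence of arbitrary psh functions.
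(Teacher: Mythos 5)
There is a genuine gap, and it sits exactly where you placed your compactness detour. Your instinct that the naive stability estimate is circular (because you wrote the right-hand side in terms of the full densities $e^{\beta(u_i^\beta-u_i)}$, which contain the unknowns) is correct, but the escape you chose does not deliver the conclusion. First, a uniform $C^0$-bound on $\{u_i^\beta\}$ does \emph{not} give a $C^0$-convergent subsequence: there is no equicontinuity, so the family is precompact only in $L^1$, not in $C^0$. Second, to pass to the limit in the Monge--Amp\`ere equation along an $L^1$-convergent subsequence you yourself invoke ``$u_i^\beta\to u^\beta$ uniformly (hence in capacity)'' --- but uniform convergence is precisely the statement to be proved, so the argument is circular at this step; mere $L^1$/a.e.\ convergence of uniformly bounded $\omega$-psh functions does not imply weak convergence of their Monge--Amp\`ere measures. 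Even if the limit identification could be salvaged, the subsequence trick would only yield $L^1$- (at best $d_1$-) convergence of $u_i^\beta$, not the $\|u_i^\beta-u^\beta\|_{C^0}\to 0$ asserted in the lemma. Finally, note that your sup-comparison variant ($\max_X(w_i-w)\le\max_X(u_i-u)^+$) is unusable here because $u_i\to u$ only in $L^1$ and elements of $\cE^1$ need not be bounded.

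The way out, and the route the paper takes, is that the correct stability estimate is formulated in terms of the \emph{data} rather than the full density: writing the equation as an Aubin--Yau type equation with monotone zeroth-order term, \cite[Remark 3.2]{GLZ18} gives
$\|u_i^\beta-u^\beta\|_{C^0}\le C\,\|e^{-\beta u_i}-e^{-\beta u}\|_{L^1}^{1/(n+1)}$,
with $C$ controlled by an $L^p$-bound ($p>\beta$) on the exponentials; there is no circularity because the unknowns do not appear on the right. The remaining input is that $u_i\xrightarrow{L^1}u$ forces $e^{-\beta u_i}\to e^{-\beta u}$ in $L^1$, which is not automatic (the exponential is unbounded) and is supplied by Demailly--Koll\'ar \cite[Theorem 0.2]{DK01} (your alternative via a.e.\ convergence plus uniform Skoda/Zeriahi $L^p$-bounds and Vitali would also work along subsequences, but it is not needed). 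With these two references the proof is three lines, and the $d_1$-statement follows as you indicated.
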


\begin{proof}
    Since for any $p>0$, $e^{-p u_i}$ and $e^{-p u}$ are integrable by \cite{Ze01}, \cite[Theorem 0.2]{DK01} implies that
    $
    \int_X|e^{-p u_i}-e^{-p u}|\omega^n\to 0.
    $
    In particular, there exists $A_p>0$ such that
    $$
    ||e^{-p u_i}||_{L^1}+||e^{-p u}||_{L^1}\leq A_p.
    $$

    On the other hand, by the stability estimate \cite[Remark 3.2]{GLZ18}, for any $\beta>0$,
    $$
    ||u^\beta_i-u^\beta||_{C^0}\leq C||e^{-\beta u_i}-e^{-\beta u}||_{L^1}^{\frac{1}{n+1}},
    $$
    where $C$ depends on $X,\omega,p, A_p$ for some $p>\beta$. We then conclude since $e^{-\beta u_i}\xrightarrow{L^1}e^{-\beta u}.$  
\end{proof}

\paragraph{The $\beta$-entropy and quantization.}

For $u \in \mathcal E^1$ and $\beta>0$, the $\beta$-entropy $Ent^\beta(u)$ is defined by extending \eqref{eq: Ent_beta_def}:
\begin{equation}\label{eq: Ent_beta_def-E1}
Ent^\beta(u) := \sup_{v\in\cE^1}\left(-\log\int_Xe^{\beta(v-u)}\frac{\omega^n}{V}+\beta(I(v)-I(u))\right).
\end{equation}
We link this functional to our quantization scheme:

\begin{proposition}
\label{prop:ent-beta=sup}
    For any $u\in\cE^1$ and $\beta>0$,
    $
Ent^\beta(u)=\beta(I(u^\beta)-I(u)).$
\end{proposition}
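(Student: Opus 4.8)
The plan is to prove the identity $Ent^\beta(u) = \beta(I(u^\beta) - I(u))$ by establishing two inequalities, exploiting the fact that $u^\beta$ solves the Monge--Amp\`ere equation \eqref{def:u-beta}.

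\textbf{The lower bound.} First I would show $Ent^\beta(u) \geq \beta(I(u^\beta) - I(u))$ by plugging the competitor $v = u^\beta$ into the supremum \eqref{eq: Ent_beta_def-E1}. The first term becomes $-\log \int_X e^{\beta(u^\beta - u)} \frac{\omega^n}{V}$, and since $u^\beta$ satisfies $(\omega + \ddc u^\beta)^n = e^{\beta(u^\beta - u)}\omega^n$, this integral equals $\frac{1}{V}\int_X \omega_{u^\beta}^n = 1$, so the first term vanishes. What remains is exactly $\beta(I(u^\beta) - I(u))$, giving the lower bound immediately. One should just note that $u^\beta \in \mathcal E^1$ (indeed it is continuous $\omega$-psh), so it is a legitimate competitor.

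\textbf{The upper bound.} For the reverse inequality I must show that for every $v \in \mathcal E^1$,
$$-\log\int_X e^{\beta(v-u)}\frac{\omega^n}{V} + \beta(I(v) - I(u)) \leq \beta(I(u^\beta) - I(u)),$$
i.e. after cancelling $-\beta I(u)$ and rearranging, $-\log\int_X e^{\beta(v-u)}\frac{\omega^n}{V} \leq \beta(I(u^\beta) - I(v))$. Using the equation for $u^\beta$ to write $\omega^n = e^{-\beta(u^\beta - u)}\omega_{u^\beta}^n$, the integral becomes $\int_X e^{\beta(v - u^\beta)}\frac{\omega_{u^\beta}^n}{V}$, so it suffices to prove
$$-\log\int_X e^{\beta(v - u^\beta)}\frac{\omega_{u^\beta}^n}{V} \leq \beta\big(I(u^\beta) - I(v)\big).$$
By Jensen's inequality applied to the probability measure $\omega_{u^\beta}^n/V$ and the convex function $e^{(\cdot)}$ (or equivalently $-\log$ of an average is at most the average of $-\log$... more precisely $\log\int e^{\beta(v-u^\beta)}\frac{\omega_{u^\beta}^n}{V} \geq \frac{\beta}{V}\int (v - u^\beta)\omega_{u^\beta}^n$), we get
$$-\log\int_X e^{\beta(v - u^\beta)}\frac{\omega_{u^\beta}^n}{V} \leq \frac{\beta}{V}\int_X (u^\beta - v)\,\omega_{u^\beta}^n.$$
So it remains to verify the purely pluripotential-theoretic inequality $\frac{1}{V}\int_X (u^\beta - v)\,\omega_{u^\beta}^n \leq I(u^\beta) - I(v)$, which is precisely the standard cocycle inequality $I(w) - I(v) \geq \frac{1}{V}\int_X (w - v)\,\omega_w^n$ (stated in the introduction as ``the basic inequality $I(v) - I(u) \leq \frac{1}{V}\int_X(v-u)\omega_u^n$'', applied with the roles reversed and $w = u^\beta$). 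This is a known property of the Monge--Amp\`ere energy valid on $\mathcal E^1$.

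\textbf{Main obstacle.} The argument is essentially formal once one uses the equation defining $u^\beta$, so there is no deep obstacle; the only points requiring a little care are (a) confirming $u^\beta$ is an admissible competitor in $\mathcal E^1$, which follows from \cite[Theorem C]{BBGZ} giving continuity, and (b) making sure the integrals are finite so Jensen applies legitimately — here $e^{\beta(v - u^\beta)} = e^{\beta v} e^{-\beta u^\beta}$ is integrable against $\omega_{u^\beta}^n$ because $e^{-\beta u^\beta} \in L^p$ for $p > 1$ by \cite{Ze01} and $v$ is bounded above, and $\int(u^\beta - v)\omega_{u^\beta}^n$ is finite since $u^\beta - v \in \mathcal E^1$-type differences integrate against the non-pluripolar measure $\omega_{u^\beta}^n$. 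I would present the two inequalities in the order above, leading with the trivial lower bound to motivate the manipulation used in the upper bound.
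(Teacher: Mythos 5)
Your proposal is correct and follows essentially the same route as the paper: the lower bound by plugging $v=u^\beta$ into the supremum and using the Monge--Amp\`ere equation to kill the log term, and the upper bound by rewriting $\omega^n = e^{-\beta(u^\beta-u)}\omega_{u^\beta}^n$, applying Jensen's inequality for the probability measure $\omega_{u^\beta}^n/V$, and invoking the basic inequality $\frac{1}{V}\int_X(u^\beta-v)\,\omega_{u^\beta}^n \leq I(u^\beta)-I(v)$. Your extra remarks on admissibility of $u^\beta$ and integrability are fine but not needed beyond what the paper already assumes.
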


\begin{proof}
   That $Ent^\beta(u) \geq \beta(I(u^\beta)-I(u))$ follows from \eqref{eq: Ent_beta_def-E1} and \eqref{def:u-beta}. Conversely, for  $v \in \mathcal E^1_\omega$ Jensen's inequality implies that  
    \begin{equation*}
        \begin{split}
            -\log\int_Xe^{\beta(v-u)}\frac{\omega^n}{V}+\beta(I(v)-I(u))&=-\log\int_Xe^{\beta(v-u^\beta)}\frac{\omega^n_{u^\beta}}{V}+\beta(I(v)-I(u))\\
            &\leq \frac{\beta}{V}\int_X(u^\beta-v)\omega^n_{u^\beta}+\beta(I(v)-I(u))\\
            &\leq \beta(I(u^\beta)-I(v))+\beta(I(v)-I(u))=\beta(I(u^\beta)-I(u)).\\
        \end{split}
    \end{equation*}
    This completes the proof.
\end{proof}

The above argument shows that $u^\beta$ attains the supremum in \eqref{eq: Ent_beta_def-E1}. Moreover, taking $\cH_\omega\ni u_i\searrow u$ and applying Lemma \ref{lem:u-beta-d1-continuous}, then $u_i^\beta\in\cH_\omega$ is a smooth maximizing sequence for \eqref{eq: Ent_beta_def-E1}, showing that \eqref{eq: Ent_beta_def} holds for $u\in\cE^1$ as well.

We also observe that $\beta\mapsto Ent^\beta(\cdot)$ satisfies certain concavity. 

\begin{corollary}
\label{cor:Ent-beta-concave-in-beta}
    For any $0<\beta<\beta'$ and $u\in\cE^1$, one has
    $
    \frac{Ent^\beta(u)}{\beta}\geq\frac{Ent^{\beta'}(u)}{\beta'}.
    $
\end{corollary}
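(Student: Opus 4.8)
The statement to prove is Corollary \ref{cor:Ent-beta-concave-in-beta}: for $0<\beta<\beta'$ and $u\in\cE^1$, one has $\frac{Ent^\beta(u)}{\beta}\geq\frac{Ent^{\beta'}(u)}{\beta'}$. The key structural fact I would exploit is the definition \eqref{eq: Ent_beta_def-E1}, rewritten after dividing through by $\beta$:
$$
\frac{Ent^\beta(u)}{\beta}=\sup_{v\in\cE^1}\left(-\frac{1}{\beta}\log\int_Xe^{\beta(v-u)}\frac{\omega^n}{V}+I(v)-I(u)\right).
$$
Since the term $I(v)-I(u)$ is independent of $\beta$, the only $\beta$-dependence sits in the quantity $-\frac1\beta\log\int_X e^{\beta(v-u)}\frac{\omega^n}{V}$; the plan is to show this is nonincreasing in $\beta$ for each fixed $v$, and then take the supremum.

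\textbf{Main step.} Fix $v\in\cE^1$ and write $f:=v-u$, a measurable function on $X$, and let $d\mu:=\frac{\omega^n}{V}$, a probability measure. I would show that $\beta\mapsto \frac{1}{\beta}\log\int_X e^{\beta f}\,d\mu$ is nondecreasing on $(0,\infty)$, equivalently that $\beta\mapsto -\frac1\beta\log\int_X e^{\beta f}\,d\mu$ is nonincreasing. This is the statement that $L^p$-type averages $\big(\int e^{\beta f}d\mu\big)^{1/\beta}$ increase in $\beta$, which is exactly Jensen's inequality (or the monotonicity of $L^p$ norms on a probability space) applied to the convex function $x\mapsto x^{\beta'/\beta}$: for $0<\beta<\beta'$,
$$
\left(\int_X e^{\beta f}\,d\mu\right)^{\beta'/\beta}\leq \int_X e^{\beta' f}\,d\mu,
$$
provided both integrals are finite. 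Taking $\log$ and dividing by $\beta'$ gives $\frac1\beta\log\int e^{\beta f}d\mu\leq \frac1{\beta'}\log\int e^{\beta' f}d\mu$, which is what I want.

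\textbf{Integrability.} The one point requiring care is that the integrals $\int_X e^{\beta f}d\mu=\int_X e^{\beta(v-u)}\frac{\omega^n}{V}$ are finite and positive. Positivity is immediate ($e^{\beta(v-u)}>0$). For finiteness, note that if $\int_X e^{\beta'(v-u)}\frac{\omega^n}{V}=\infty$ then the corresponding term contributes $-\infty$ to the sup and there is nothing to prove for that $v$; and when it is finite, Jensen forces the smaller-exponent integral to be finite as well. Actually one does not even need $\int e^{\beta(v-u)}d\mu$ to be finite as a precondition — the inequality $\big(\int e^{\beta f}d\mu\big)^{\beta'/\beta}\le\int e^{\beta'f}d\mu$ holds in $[0,\infty]$ by Jensen regardless, so the bound on the $\beta$-term of the sup follows in all cases. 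I would also remark that by Proposition \ref{prop:ent-beta=sup} and Lemma \ref{lem: u_beta_basic}(iii), $Ent^\beta(u)=\beta(I(u^\beta)-I(u))$ is finite, so all quantities $\frac{Ent^\beta(u)}{\beta}$ are genuine real numbers and the comparison is between finite reals.

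\textbf{Conclusion.} Combining, for each $v\in\cE^1$,
$$
-\frac{1}{\beta}\log\int_Xe^{\beta(v-u)}\frac{\omega^n}{V}+I(v)-I(u)\;\geq\;-\frac{1}{\beta'}\log\int_Xe^{\beta'(v-u)}\frac{\omega^n}{V}+I(v)-I(u),
$$
and taking the supremum over $v\in\cE^1$ on both sides yields $\frac{Ent^\beta(u)}{\beta}\ge\frac{Ent^{\beta'}(u)}{\beta'}$. I do not anticipate a serious obstacle here; the only thing to be careful about is phrasing the Jensen step so it is valid without a priori integrability assumptions, which is handled by working in the extended reals $[0,\infty]$ as above.
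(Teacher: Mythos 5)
Your proof is correct, but it takes a different route from the paper. You prove the pointwise (in $v$) monotonicity of the $\beta$-normalized functional inside the variational definition \eqref{eq: Ent_beta_def-E1}: for each fixed $v$, the term $-\frac{1}{\beta}\log\int_X e^{\beta(v-u)}\frac{\omega^n}{V}$ is nonincreasing in $\beta$ by Jensen (monotonicity of $L^p$-averages against the probability measure $\omega^n/V$), and the term $I(v)-I(u)$ is $\beta$-independent, so the suprema compare. The paper instead tests the supremum defining $Ent^\beta(u)$ at the single function $v=u^{\beta'}$, the transcendental quantization at level $\beta'$: H\"older together with the normalization $\int_X e^{\beta'(u^{\beta'}-u)}\omega^n/V=1$ (from the Monge--Amp\`ere equation \eqref{def:u-beta}) makes the log-term nonnegative, and Proposition \ref{prop:ent-beta=sup} identifies $\beta(I(u^{\beta'})-I(u))$ with $\frac{\beta}{\beta'}Ent^{\beta'}(u)$. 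The analytic core (H\"older/Jensen on a probability space) is the same, but your argument is more elementary and self-contained: it uses only the definition and needs neither the existence of the maximizer $u^{\beta'}$ nor Proposition \ref{prop:ent-beta=sup}, and it would apply verbatim to any supremum of this shape; the paper's version buys brevity given that the maximizer identity is already established two results earlier. Your handling of possibly infinite integrals in the extended reals is adequate (and in fact all the integrals are finite here, since $v\in\cE^1$ is bounded above and $e^{-\beta u}\in L^p$ by the Skoda--Zeriahi integrability cited in the paper), so there is no gap.
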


\begin{proof}
    Using \eqref{eq: Ent_beta_def-E1}, we can write
    \begin{equation*}
        \begin{split}
            Ent^{\beta}(u)&\geq-\log\int_Xe^{\beta(u^{\beta'}-u)}\frac{\omega^n}{V}+\beta(I(u^{\beta'})-I(u))\geq 0+\frac{\beta}{\beta'}Ent^{\beta'}(u),
        \end{split}
    \end{equation*}
    where in the last line we used H\"older's inequality and that $\int_Xe^{\beta'(u^{\beta'}-u)}\omega^n/V=1$.
    This completes the proof.
\end{proof}

As a direct consequence of the identity $Ent^\beta(u)=\beta(I(u^\beta)-I(u))$, we now derive several properties of the associated \( \beta \)-entropy.

\begin{lemma}\label{lem: Ent_beta_first}
    One has \smallskip \\
\noindent (i) $Ent(u)\geq Ent^\beta(u)\geq Ent(u^\beta)\geq 0$ for any $u\in\cE^1$ and $\beta>0$. \smallskip\\
\noindent (ii)  $Ent^\beta(u)\to Ent(u)$ as $\beta\to\infty$ for any $u\in\cE^1$. \smallskip\\
\noindent (iii) $\frac{1}{\beta}Ent^\beta(u)\to 0$ as $\beta\to\infty$ for any $u\in\cE^1$.  
\end{lemma}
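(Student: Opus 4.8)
The plan is to derive all three items from the central identity $Ent^\beta(u) = \beta(I(u^\beta) - I(u))$ of Proposition~\ref{prop:ent-beta=sup}, combined with the auxiliary lemmas already established in this section. For (i), the chain $Ent(u) \geq Ent^\beta(u) \geq Ent(u^\beta)$ is exactly the content of Corollary~\ref{cor:Ent>beta>Ent} once the middle term is rewritten using Proposition~\ref{prop:ent-beta=sup}; the remaining inequality $Ent(u^\beta) \geq 0$ is immediate since $Ent(\cdot) \geq 0$ on all of $\mathcal E^1$ (by Jensen applied to $\log(\omega_w^n/\omega^n)$ against the probability measure $\omega_w^n/V$, or simply the fact that relative entropy of a probability measure is nonnegative). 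Alternatively, $Ent^\beta(u) \geq 0$ follows directly from the defining formula \eqref{eq: Ent_beta_def-E1} by taking $v = u$.

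For (ii), I would argue as follows. By part (i), $Ent(u^\beta) \leq Ent^\beta(u) \leq Ent(u)$ for every $\beta > 0$, so it suffices to show $Ent(u^\beta) \to Ent(u)$ as $\beta \to \infty$. But this is precisely Lemma~\ref{lem: entr_conv}. Hence $Ent^\beta(u)$ is squeezed between two quantities both converging to $Ent(u)$, giving the claim. This handles both the case $Ent(u) < \infty$ and $Ent(u) = \infty$ (in the latter case $Ent(u^\beta) \to \infty$ forces $Ent^\beta(u) \to \infty$ as well).

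For (iii), I would split on whether $Ent(u)$ is finite. If $Ent(u) < \infty$, then by (i), $0 \leq \frac{1}{\beta}Ent^\beta(u) \leq \frac{1}{\beta}Ent(u) \to 0$. If $Ent(u) = \infty$, the bound by $Ent(u)$ is useless, and this is the one genuinely delicate point: one must show $I(u^\beta) - I(u) = o(1)$ as $\beta \to \infty$, i.e. that the quantization $u^\beta$ approaches $u$ fast enough in the Monge--Amp\`ere energy. This follows from Lemma~\ref{lem:u-beta-d1-u}, which gives $u^\beta \xrightarrow{d_1} u$, together with the fact that $I(\cdot)$ is $d_1$-continuous on $\mathcal E^1$ (indeed $|I(u^\beta) - I(u)| \leq C_n d_1(u^\beta, u) \to 0$ by the standard comparison of $I$ with the $d_1$ metric, e.g.\ \cite[Theorem 3]{Dar15}); hence $\frac{1}{\beta}Ent^\beta(u) = I(u^\beta) - I(u) \to 0$, and a fortiori after dividing by $\beta \geq 1$. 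In fact this last computation covers both cases uniformly, so one need not split at all: $\frac{1}{\beta}Ent^\beta(u) = I(u^\beta) - I(u) \to 0$ directly from Lemma~\ref{lem:u-beta-d1-u}. The main obstacle is simply recognizing that (iii) should be proved via $d_1$-convergence of $u^\beta$ rather than via a crude bound by the (possibly infinite) entropy; once that is seen, everything is a short invocation of the preceding lemmas.
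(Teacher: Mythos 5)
Your proposal is correct and follows essentially the same route as the paper: (i) and (ii) via Corollary~\ref{cor:Ent>beta>Ent} (read through the identity of Proposition~\ref{prop:ent-beta=sup}) and the squeeze with Lemma~\ref{lem: entr_conv}, and (iii) via $\tfrac{1}{\beta}Ent^\beta(u)=I(u^\beta)-I(u)\to 0$ from the $d_1$-convergence $u^\beta\xrightarrow{d_1}u$. Your observation that the case split in (iii) is unnecessary, and your citation of Lemma~\ref{lem:u-beta-d1-u} rather than the stability Lemma~\ref{lem:u-beta-d1-continuous}, if anything make the argument slightly cleaner than the paper's one-line proof.
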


\begin{proof}
    These follow from Lemma \ref{lem: entr_conv}, Corollary \ref{cor:Ent>beta>Ent} and  Lemma \ref{lem:u-beta-d1-continuous}.
\end{proof}

Compared to the usual entropy, our $Ent^\beta(\cdot)$ has better continuity properties on $\cE^1$:

\begin{lemma} \label{lem: Ent_beta_cont}Let $\beta>0.$ The following holds.\smallskip\\
\noindent (i) $Ent^\beta(\cdot)$ is $L^1$-lsc. Namely, if  $u_i\xrightarrow{L^1}u$ in $\cE^1$ then
    $    Ent^\beta(u)\leq\liminf_{i\to\infty} Ent^\beta(u_i).
    $\smallskip\\
\noindent (ii) $Ent^\beta(\cdot)$ is $d_1$-continuous. Namely, if  $u_i\xrightarrow{d_1}u$ then
    $
    Ent^\beta(u)=\lim_{i\to\infty} Ent^\beta(u_i).
    $
\end{lemma}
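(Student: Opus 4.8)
\textbf{Proof proposal for Lemma \ref{lem: Ent_beta_cont}.}

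The plan is to reduce everything to the identity $Ent^\beta(u)=\beta(I(u^\beta)-I(u))$ from Proposition \ref{prop:ent-beta=sup}, and then exploit the good behaviour of quantization established in Lemmas \ref{lem:u-beta-d1-u}, \ref{lem:u-beta-d1-continuous}. For part (i), suppose $u_i\xrightarrow{L^1}u$ in $\cE^1$. The functional $I$ is $d_1$-continuous on $\cE^1$ (\cite[Theorem 3]{Dar15}), so $-I(u_i)\to -I(u)$ fails in general under mere $L^1$-convergence; instead I would use that $I$ is $L^1$-upper semicontinuous on $\cE^1$ (equivalently, $-I$ is $L^1$-lsc), which is standard. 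Then it suffices to show $\liminf_i I(u_i^\beta)\geq I(u^\beta)$. But by Lemma \ref{lem:u-beta-d1-continuous}, $u_i\xrightarrow{L^1}u$ forces $\|u_i^\beta-u^\beta\|_{C^0}\to 0$, hence $u_i^\beta\xrightarrow{d_1}u^\beta$ and in particular $I(u_i^\beta)\to I(u^\beta)$. Combining: $\liminf_i Ent^\beta(u_i)=\beta\liminf_i\big(I(u_i^\beta)-I(u_i)\big)\geq \beta\big(I(u^\beta)-\limsup_i I(u_i)\big)\geq \beta\big(I(u^\beta)-I(u)\big)=Ent^\beta(u)$, where the last inequality uses $L^1$-upper semicontinuity of $I$. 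Actually, one must be slightly careful: $\liminf(a_i-b_i)\geq \lim a_i - \limsup b_i$ requires $a_i\to a$ to converge, which it does here, so this is clean.

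For part (ii), if $u_i\xrightarrow{d_1}u$ then in particular $u_i\xrightarrow{L^1}u$ (since $L^1$-convergence is weaker than $d_1$-convergence on $\cE^1$, cf. \cite{BBEGZ19}), so the lsc inequality $Ent^\beta(u)\leq\liminf_i Ent^\beta(u_i)$ from part (i) applies. For the matching upper bound $\limsup_i Ent^\beta(u_i)\leq Ent^\beta(u)$, I would again use the identity: $Ent^\beta(u_i)=\beta(I(u_i^\beta)-I(u_i))$. Now $d_1$-convergence gives $I(u_i)\to I(u)$ by $d_1$-continuity of $I$. For the quantized terms, $d_1$-convergence certainly implies $L^1$-convergence, so Lemma \ref{lem:u-beta-d1-continuous} again yields $\|u_i^\beta-u^\beta\|_{C^0}\to 0$ and hence $I(u_i^\beta)\to I(u^\beta)$. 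Therefore $Ent^\beta(u_i)=\beta(I(u_i^\beta)-I(u_i))\to\beta(I(u^\beta)-I(u))=Ent^\beta(u)$, which proves full convergence, not merely the two one-sided bounds.

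The main point to get right—and the only place there is anything to prove beyond bookkeeping—is the appeal to Lemma \ref{lem:u-beta-d1-continuous}: one needs that $L^1$-convergence of the $u_i$ to $u$ in $\cE^1$ propagates to uniform convergence of the quantizations $u_i^\beta$, which rests on the uniform integrability bound $\|e^{-\beta u_i}\|_{L^1}\leq A_\beta$ coming from the Dinew--Ko\l odziej convergence theorem \cite[Theorem 0.2]{DK01} together with the stability estimate \cite[Remark 3.2]{GLZ18}. Granting that lemma, everything else is an immediate consequence of Proposition \ref{prop:ent-beta=sup} plus the $d_1$-continuity and $L^1$-upper semicontinuity of the Monge--Amp\`ere energy $I$; notably, no direct estimate on $Ent^\beta$ itself is needed, which is precisely the advantage of the quantized formulation over the classical entropy.
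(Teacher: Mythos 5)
Your proof is correct and follows essentially the same route as the paper: both reduce the lemma to the identity $Ent^\beta(u)=\beta(I(u^\beta)-I(u))$ of Proposition \ref{prop:ent-beta=sup}, then invoke Lemma \ref{lem:u-beta-d1-continuous} for the quantized terms together with the $L^1$-upper semicontinuity of $I$ for part (i) and its $d_1$-continuity for part (ii). Your write-up merely spells out the bookkeeping that the paper's two-line proof leaves implicit.
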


\begin{proof}
    The first point follow from Lemma \ref{lem:u-beta-d1-continuous} and that $I(\cdot)$ is $L^1$-usc \cite{BBEGZ19} (see also \cite[Corollary 4.14]{DarvasSurvey}). Then second follows from Lemma \ref{lem:u-beta-d1-continuous} as well.
\end{proof}

Another interesting feature of the quantized entropy $Ent^\beta(\cdot)$ is that it is monotone with respect to the parameter $\beta$.

\begin{proposition}
\label{prop:Ent-beta-increase}
    Let $0<\beta_1\leq \beta_2$. Then for any $u\in\cE^1$,
    $
    Ent^{\beta_1}(u)\leq Ent^{\beta_2}(u).
    $
\end{proposition}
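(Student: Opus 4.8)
The plan is to exploit the characterization $Ent^\beta(u) = \beta(I(u^\beta) - I(u))$ from Proposition \ref{prop:ent-beta=sup} together with the variational description \eqref{eq: Ent_beta_def-E1}. The key is to relate the two quantizations $u^{\beta_1}$ and $u^{\beta_2}$. First I would note that by Proposition \ref{prop:ent-beta=sup} applied at level $\beta_1$, and using the fact (established right after that proposition) that $u^{\beta_1}$ is the maximizer in \eqref{eq: Ent_beta_def-E1}, we have
\begin{equation*}
Ent^{\beta_1}(u) = -\log\int_X e^{\beta_1(u^{\beta_1}-u)}\frac{\omega^n}{V} + \beta_1(I(u^{\beta_1}) - I(u)) = \beta_1(I(u^{\beta_1}) - I(u)),
\end{equation*}
since the log term vanishes by \eqref{def:u-beta}. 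The idea is then to plug a well-chosen competitor $v \in \cE^1$ into the variational formula for $Ent^{\beta_2}(u)$ and recover $Ent^{\beta_1}(u)$; the natural candidate is $v = u^{\beta_1}$ itself, but one must account for the mismatch between the exponent $\beta_2$ in the formula and the exponent $\beta_1$ appearing in the Monge--Amp\`ere equation \eqref{def:u-beta} defining $u^{\beta_1}$.

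The main computation is therefore to estimate $-\log\int_X e^{\beta_2(u^{\beta_1}-u)}\frac{\omega^n}{V}$ from below. Writing $\beta_2(u^{\beta_1}-u) = \frac{\beta_2}{\beta_1}\cdot \beta_1(u^{\beta_1}-u)$ and using that $e^{\beta_1(u^{\beta_1}-u)}\omega^n = \omega^n_{u^{\beta_1}}$ is a probability measure (times $V$), I would apply Jensen's inequality (equivalently H\"older), exactly as in the proof of Corollary \ref{cor:Ent-beta-concave-in-beta}: with the probability measure $\mu := \omega^n_{u^{\beta_1}}/V$ one has
\begin{equation*}
\log\int_X e^{\beta_2(u^{\beta_1}-u)}\frac{\omega^n}{V} = \log\int_X e^{(\beta_2-\beta_1)(u^{\beta_1}-u)}\,d\mu \geq (\beta_2-\beta_1)\int_X (u^{\beta_1}-u)\,d\mu \geq (\beta_2-\beta_1)(I(u^{\beta_1}) - I(u)),
\end{equation*}
where the last step uses the basic inequality $I(u^{\beta_1}) - I(u) \leq \frac{1}{V}\int_X (u^{\beta_1}-u)\omega^n_{u^{\beta_1}}$ recalled in the introduction. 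Hence $-\log\int_X e^{\beta_2(u^{\beta_1}-u)}\frac{\omega^n}{V} \leq -(\beta_2-\beta_1)(I(u^{\beta_1})-I(u))$, which goes the wrong way; so instead I would choose the competitor $v$ more carefully so that the exponent becomes a convex combination. The cleanest route is to use $v := \frac{\beta_1}{\beta_2} u^{\beta_1} + (1 - \frac{\beta_1}{\beta_2}) u$, which lies in $\cE^1$ since it is a convex combination of elements of $\cE^1$, and satisfies $\beta_2(v - u) = \beta_1(u^{\beta_1} - u)$, so that $\int_X e^{\beta_2(v-u)}\omega^n/V = 1$ and the log term vanishes exactly. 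Then
\begin{equation*}
Ent^{\beta_2}(u) \geq 0 + \beta_2(I(v) - I(u)).
\end{equation*}

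It remains to show $\beta_2(I(v) - I(u)) \geq \beta_1(I(u^{\beta_1}) - I(u)) = Ent^{\beta_1}(u)$, i.e. that $I\big(\tfrac{\beta_1}{\beta_2}u^{\beta_1} + (1-\tfrac{\beta_1}{\beta_2})u\big) - I(u) \geq \tfrac{\beta_1}{\beta_2}\big(I(u^{\beta_1}) - I(u)\big)$. Setting $s := \beta_1/\beta_2 \in (0,1]$ and $\phi_s := s\,u^{\beta_1} + (1-s)u$, this is precisely the statement that $s \mapsto I(\phi_s) - I(u) - s(I(u^{\beta_1}) - I(u))$ is $\geq 0$ on $[0,1]$, vanishing at the endpoints $s=0$ and $s=1$; this follows from concavity of $s \mapsto I(\phi_s)$ along the affine (in fact, along any) segment, which is a standard property of the Monge--Amp\`ere energy (the derivative $\frac{d}{ds}I(\phi_s) = \frac{1}{V}\int_X (u^{\beta_1}-u)\omega_{\phi_s}^n$ is non-increasing in $s$ since $\omega_{\phi_s}^n$ decreases in the direction $u^{\beta_1}-u$ when this is, say, arranged by normalization — more robustly, concavity of $I$ along segments in $\cE^1$ is classical, see \cite{BBGZ}). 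The main obstacle is thus purely the bookkeeping of getting the exponents to match so that the $\log$ term drops out; once the competitor $v$ is chosen as above, the inequality reduces to concavity of $I$ along a segment, which is standard. I would present this as: choose $v$, observe the log term vanishes, and invoke concavity of $I$.
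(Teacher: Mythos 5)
Your argument is correct, but it is genuinely different from the paper's proof. The paper reduces to $u\in\cH_\omega$ by approximation, uses that $\beta\mapsto u^\beta$ is then a smooth family, differentiates the identity $Ent^\beta(u)=\beta(I(u^\beta)-I(u))$ in $\beta$, computes $\tfrac{d}{d\beta}u^\beta$ via the linearized Monge--Amp\`ere equation $\Delta_{\omega_{u^\beta}}\tfrac{du^\beta}{d\beta}=(u^\beta-u)+\beta\tfrac{du^\beta}{d\beta}$, and concludes from the standard inequality $I(u^\beta)-I(u)\geq\frac{1}{V}\int_X(u^\beta-u)\omega^n_{u^\beta}$. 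You instead argue variationally: plugging the competitor $v=\tfrac{\beta_1}{\beta_2}u^{\beta_1}+(1-\tfrac{\beta_1}{\beta_2})u$ into the supremum defining $Ent^{\beta_2}(u)$ makes the log term vanish exactly (since $\beta_2(v-u)=\beta_1(u^{\beta_1}-u)$ and $\int_X e^{\beta_1(u^{\beta_1}-u)}\omega^n=\int_X\omega^n_{u^{\beta_1}}=V$), and the remaining inequality $\beta_2(I(v)-I(u))\geq\beta_1(I(u^{\beta_1})-I(u))=Ent^{\beta_1}(u)$ is exactly concavity of $I$ along the affine segment joining $u$ and $u^{\beta_1}$, which is indeed classical in $\cE^1$. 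What your route buys: it works directly for arbitrary $u\in\cE^1$ with no smoothing, no smooth dependence of $u^\beta$ on $\beta$, and no differentiation of the equation; it is also structurally parallel to the paper's own proof of Corollary \ref{cor:Ent-beta-concave-in-beta}, which uses the competitor $u^{\beta'}$ plus H\"older, whereas you rescale the potential so that no interpolation inequality is needed. One small caveat: your parenthetical heuristic that the derivative $\frac{d}{ds}I(\phi_s)$ is non-increasing because ``$\omega^n_{\phi_s}$ decreases in the direction $u^{\beta_1}-u$'' is not a proof; the correct justification is the standard integration-by-parts computation $\frac{d^2}{ds^2}I(\phi_s)=-\frac{n}{V}\int_X d(u^{\beta_1}-u)\wedge d^c(u^{\beta_1}-u)\wedge\omega^{n-1}_{\phi_s}\leq 0$ (extended to $\cE^1$ by approximation), which is the ``classical'' fact you fall back on, so the proof stands.
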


\begin{proof}
    By approximation, it suffices to prove the monotonicity for $u\in\cH_\omega$. In this case, $\{u^\beta\}_{\beta\in(0,\infty)}$ forms a smooth family of K\"ahler potentials. Therefore it suffices to show that
    $$
    \frac{d}{d\beta}Ent^\beta(u)=I(u^\beta)-I(u)+\frac{\beta}{V}\int_X\frac{d u^\beta}{d\beta}\omega^n_{u^\beta}\geq 0.
    $$

    To this end, we differentiate \eqref{def:u-beta} with respect to $\beta$ to get
    $
    \Delta_{\omega_{u^\beta}}\frac{d u^\beta}{d\beta}=(u^\beta-u)+\beta\frac{d u^\beta}{d\beta}.
    $
    Integrating against $\omega^n_{u^\beta}$, we derive that
$
    \frac{\beta}{V}\int_X\frac{d u^\beta}{d\beta}\omega^n_{u^\beta}=-\frac{1}{V}\int_X(u^\beta-u)\omega^n_{u^\beta}.
$
    Thus,
    $$
    I(u^\beta)-I(u)+\frac{\beta}{V}\int_X\frac{d u^\beta}{d\beta}\omega^n_{u^\beta}= I(u^\beta)-I(u)-\frac{1}{V}\int_X(u^\beta-u)\omega^n_{u^\beta}\geq 0,
    $$
    finishing the proof.
\end{proof}

As a consequence of the above result, Lemma \ref{lem: entr_conv} and Corollary \ref{cor:Ent>beta>Ent} we note the following:

\begin{corollary}\label{cor: increasing_K_beta_conv}
    For $u\in\cE^1$ we have $Ent^\beta(u)\nearrow Ent(u)$ as $\beta\nearrow\infty$.
\end{corollary}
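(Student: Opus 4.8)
\textbf{Proof plan for Corollary \ref{cor: increasing_K_beta_conv}.}

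The plan is to observe that every piece of this corollary has already been assembled in the preceding results, so the proof is a one-line assembly plus an appeal to monotone convergence. First I would recall that the chain of inequalities $Ent(u^\beta) \leq Ent^\beta(u) \leq Ent(u)$ is exactly Lemma \ref{lem: Ent_beta_first}(i), which in turn rests on the identity $Ent^\beta(u) = \beta(I(u^\beta) - I(u))$ from Proposition \ref{prop:ent-beta=sup} together with the two-sided bound in Corollary \ref{cor:Ent>beta>Ent}. So the inequality part requires nothing new.

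Next I would establish the \emph{monotone} convergence. By Proposition \ref{prop:Ent-beta-increase}, the function $\beta \mapsto Ent^\beta(u)$ is nondecreasing on $(0,\infty)$, so the limit $\lim_{\beta \to \infty} Ent^\beta(u)$ exists in $(-\infty, +\infty]$ and equals $\sup_{\beta>0} Ent^\beta(u)$. Combining this with the bound $Ent^\beta(u) \leq Ent(u)$ gives $\lim_{\beta\to\infty} Ent^\beta(u) \leq Ent(u)$, while Lemma \ref{lem: Ent_beta_first}(ii) (which is just Lemma \ref{lem: entr_conv} transported through the identity of Proposition \ref{prop:ent-beta=sup}, using $d_1$-lower semicontinuity of $Ent$) gives the reverse inequality $\lim_{\beta\to\infty} Ent^\beta(u) = Ent(u)$. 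Hence $Ent^\beta(u) \nearrow Ent(u)$, the arrow notation now being justified by the monotonicity just invoked.

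There is essentially no obstacle here: the corollary is a repackaging of Lemma \ref{lem: Ent_beta_first}, Proposition \ref{prop:Ent-beta-increase}, Lemma \ref{lem: entr_conv}, and Corollary \ref{cor:Ent>beta>Ent}, and its only role is to record the sharpened statement that the convergence $Ent^\beta(u) \to Ent(u)$ is actually monotone increasing — a fact that is convenient to cite later (e.g. in the introduction's claim that $K^\beta(u) \nearrow K(u)$). I would therefore keep the write-up to two or three sentences, citing precisely these four earlier results.
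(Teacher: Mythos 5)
Your proposal is correct and matches the paper exactly: the paper states this corollary as a direct consequence of Proposition \ref{prop:Ent-beta-increase}, Lemma \ref{lem: entr_conv} and Corollary \ref{cor:Ent>beta>Ent}, which is precisely the assembly you describe (with Lemma \ref{lem: Ent_beta_first} as the intermediate repackaging). Nothing further is needed.
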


We also record a simple estimate, which will be used later.

\begin{lemma}
\label{lem:d1-u-beta<d1-u}
    Let $u \in \mathcal E^1$ with $u \leq 0$. Then $d_1(0,u^\beta)\leq d_1(0,u)$ for any $\beta>0$.
\end{lemma}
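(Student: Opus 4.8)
The statement to prove is: if $u \in \mathcal E^1$ with $u \leq 0$, then $d_1(0,u^\beta) \leq d_1(0,u)$ for any $\beta > 0$. Since $u \leq 0$, point (ii) of Lemma \ref{lem: u_beta_basic} (monotonicity of quantization) together with point (i) shows $u^\beta \leq 0^\beta = 0$ (using $0^\beta = 0$, which follows from uniqueness of the solution to \eqref{def:u-beta} with $u \equiv 0$), so both distances can be computed via the clean formula $d_1(0,w) = -I(w)$ valid for $w \leq 0$. Thus the claim is equivalent to the inequality $I(u^\beta) \geq I(u)$, i.e., $\beta(I(u^\beta) - I(u)) \geq 0$, which is exactly $Ent^\beta(u) \geq 0$ — and this is already contained in Lemma \ref{lem: Ent_beta_first}(i) (or directly in Corollary \ref{cor:Ent>beta>Ent}, since $Ent(u^\beta) \geq 0$ always, or in Corollary \ref{cor: increasing_K_beta_conv}).

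So the plan is simply: first observe $u^\beta \leq 0$ from Lemma \ref{lem: u_beta_basic}(i)--(ii); then invoke $d_1(0,w) = -I(w)$ for $w \leq 0$ as recorded in the Preliminaries; then rewrite $d_1(0,u) - d_1(0,u^\beta) = I(u^\beta) - I(u) = \tfrac{1}{\beta} Ent^\beta(u) \geq 0$ by Proposition \ref{prop:ent-beta=sup} and the nonnegativity from Lemma \ref{lem: Ent_beta_first}(i). That is essentially the whole argument.

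There is no real obstacle here; the only thing to be mildly careful about is the justification that $0^\beta = 0$ and hence $u^\beta \leq 0$ — but this is immediate from the uniqueness part of \cite[Theorem C]{BBGZ} (the constant $0$ solves $(\omega + \ddc\cdot 0)^n = e^{\beta(0 - 0)}\omega^n$) combined with the comparison principle \cite[Lemma 2.5]{Ber19} already cited in the proof of Lemma \ref{lem: u_beta_basic}. Alternatively one can avoid even this by noting that the inequality $I(u^\beta) \geq I(u)$ holds regardless of sign (it is $Ent^\beta(u)/\beta \geq 0$), and that $d_1(0,w) \geq -I(w)$ always while $d_1(0,w) = -I(w)$ when $w \leq 0$; but since we do have $u^\beta \leq 0$ the direct route is cleanest.

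\begin{proof}
Since $u \leq 0$, the constant function $0$ satisfies $0 = 0^\beta$ by uniqueness in \eqref{def:u-beta}, so Lemma \ref{lem: u_beta_basic}(ii) gives $u^\beta \leq 0^\beta = 0$. Hence, by the formula $d_1(0,w) = -I(w)$ for $w \in \mathcal E^1$ with $w \leq 0$ (recalled in \S\ref{sec:pre}), we have $d_1(0,u) = -I(u)$ and $d_1(0,u^\beta) = -I(u^\beta)$. Therefore
$$
d_1(0,u) - d_1(0,u^\beta) = I(u^\beta) - I(u) = \frac{1}{\beta}Ent^\beta(u) \geq 0,
$$
using Proposition \ref{prop:ent-beta=sup} and Lemma \ref{lem: Ent_beta_first}(i). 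This proves $d_1(0,u^\beta) \leq d_1(0,u)$.
\end{proof}
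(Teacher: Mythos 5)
Your proposal is correct and follows essentially the same route as the paper: observe $u^\beta \leq 0$ via Lemma \ref{lem: u_beta_basic}(ii), use $d_1(0,w) = -I(w)$ for $w \leq 0$, and conclude from $I(u^\beta)-I(u) = \tfrac{1}{\beta}Ent^\beta(u) \geq 0$ (Proposition \ref{prop:ent-beta=sup} plus Lemma \ref{lem: Ent_beta_first}(i)). Your explicit justification that $0^\beta = 0$ is a fine added detail that the paper leaves implicit.
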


\begin{proof}
By Lemma \ref{lem: u_beta_basic}(ii) we have $u^\beta\leq 0$.
    Thus by Proposition \ref{prop:ent-beta=sup},    \begin{equation*}
        \begin{split}
            d_1(u^\beta,0)&=-I(u^\beta)=-I(u)-\frac{1}{\beta}Ent^\beta(u)=d_1(0,u)-\frac{1}{\beta}Ent^\beta(u)\leq d_1(0,u),\\
        \end{split}
    \end{equation*}
    where we used $Ent^\beta(u)\geq 0$ (see Lemma \ref{lem: Ent_beta_first}(i)). 
\end{proof}

\paragraph{P\u{a}un's positivity result.} In the classical setting of algebraic quantization, a useful positivity result due to Berndtsson roughly says that the quantization of a subgeodesic is still a subgeodesic \cite[Proposition 3.1]{Bern09}, a crucial point in the second author's alternative proof of the YTD conjecture for K\"ahler-Einstein type metrics \cite{Zhang21YTD}. Later in \S \ref{sec:slope-formula}, a transcendental replacement for Berndtsson's result will be needed. It is given by the following positivity result that can be extracted from P\u{a}un's paper \cite[\S 3]{P17} (cf. also \cite[Theorem 4.1]{CLP}).

\begin{theorem}
\label{thm:paun}
    Let $\Sigma:=\{a<|z|<b\}\subset\CC$ be an annulus. Let $X$ be a compact K\"ahler manifold of dimension $n$ and $\xi$ be a K\"ahler class on $X$. Let $\Omega$ be a smooth positive volume form on $X$. We denote by $\Ric(\Omega):=-\ddc\log\det\Omega$ the Ricci form associated with $\Omega$. Denote by $\pi$ the projection map $X\times\Sigma\to X$. Assume that there exists a smooth semipositive form $\cA$ on $X\times\Sigma$ in the $(1,1)$-class $\pi^*\xi$ such that $\cA|_{X\times\{z\}}-Ric(\Omega)$ is a K\"ahler form on $X\times\{z\}$ for all $z\in\Sigma$. For any $z\in\Sigma$, consider the unique smooth solution $\Phi_z$ of the Aubin--Yau type equation on $X\times\{z\}$:
    $$
    (\cA|_{X\times\{z\}}-\Ric(\Omega)+\mathrm{dd}_X^c \Phi_z)^n=e^{\Phi_z}\Omega.
    $$
    Then $\cA-\pi^*\Ric(\Omega)+\mathrm{dd}^c_{X\times\Sigma} \Phi_z$
    is a smooth semipositive $(1,1)$-form on $X\times\Sigma.$
\end{theorem}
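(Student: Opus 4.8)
The plan is to deduce the statement from Păun's work on the positivity of direct images / solutions to fiberwise Monge–Ampère equations, reducing everything to the case where the equation has a genuine twist by a Kähler form varying positively over the base. The key observation is that the hypothesis on $\cA$ — namely that $\cA|_{X\times\{z\}}-\Ric(\Omega)$ is Kähler for every $z$ — is exactly what is needed to apply the fiberwise Aubin–Yau theorem, and that the semipositivity of $\cA$ over the total space $X\times\Sigma$ is precisely the hypothesis under which the resulting family of Kähler–Einstein type potentials has a plurisubharmonic variation. So the first step is to set up the correct relative framework: work on $X\times\Sigma$ with the background form $\cA-\pi^*\Ric(\Omega)$, which is closed of class $\pi^*\xi$, semipositive on the total space, and Kähler on each fiber; then $\Phi_z$ is the unique solution, fiber by fiber, of the Monge–Ampère equation $(\cA|_{X\times\{z\}}-\Ric(\Omega)+\ddc_X\Phi_z)^n = e^{\Phi_z}\Omega$, which is solvable and gives a smooth function on $X\times\Sigma$ by standard elliptic regularity and the implicit function theorem (the linearization $\Delta - 1$ is invertible).

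Next I would invoke the core positivity input. This is the content of \cite[\S 3]{P17}: if a closed semipositive $(1,1)$-form $\Theta$ on $X\times\Sigma$ restricts to a Kähler form on each fiber, and $\Phi_z$ solves $(\Theta|_{X\times\{z\}}+\ddc_X\Phi_z)^n=e^{\Phi_z}\Omega$ fiberwise, then the total-space form $\Theta+\ddc_{X\times\Sigma}\Phi$ is semipositive. The mechanism behind this is a maximum-principle / Bochner argument: one differentiates the equation twice in the base direction, obtains an elliptic inequality of the form $(\Delta_{\omega_z}-1)(\text{relevant curvature quantity})\le \text{(nonneg.\ terms from semipositivity of }\Theta)$, and concludes by the maximum principle that the mixed term $\Theta_{z\bar z}+\partial_z\partial_{\bar z}\Phi - |\partial_{\bar z}\partial\Phi|^2_{\omega_z}\ge 0$; combined with fiberwise positivity this gives semipositivity of the full Hessian. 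Applying this with $\Theta = \cA-\pi^*\Ric(\Omega)$ yields exactly that $\cA-\pi^*\Ric(\Omega)+\ddc_{X\times\Sigma}\Phi_z$ is semipositive on $X\times\Sigma$.

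Finally I would address a minor gap: Păun's setting is literally stated for $\Sigma$ a disc or for proper families, so one should note that the statement is local on the base and the annulus $\Sigma$ is covered by small discs; semipositivity is a local and closed condition, so it suffices to verify it on each such coordinate disc, where Păun's result applies verbatim. One also checks that the $\CC^*$-invariance and the specific choice $\Theta=\cA-\pi^*\Ric(\Omega)$ cause no trouble — the argument never used compactness of the base, only of the fiber $X$, and only the three listed properties of $\Theta$. The main obstacle is not conceptual but bookkeeping: one must make sure that the quantity to which the maximum principle is applied is globally defined on each fiber (it is, being the coefficient of a genuine $(1,1)$-form in the $z$-direction) and that the error terms produced by differentiating $e^{\Phi_z}\Omega$ and by the semipositivity of $\cA$ have the right sign; granting \cite[\S 3]{P17} (equivalently \cite[Theorem 4.1]{CLP}), this is exactly what has been verified there, so the proof reduces to citing it after the reduction above. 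I would therefore present the proof as: (1) reformulate via $\Theta=\cA-\pi^*\Ric(\Omega)$; (2) solve fiberwise and record smoothness in all variables; (3) localize on the base; (4) apply Păun's positivity theorem.
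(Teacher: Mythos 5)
Your proposal matches the paper's treatment: the paper gives no independent proof of this statement, but simply extracts it from P\u{a}un's positivity result \cite[\S 3]{P17} (cf.\ \cite[Theorem 4.1]{CLP}), noting that P\u{a}un's theorem covers general K\"ahler families and the product case suffices here. Your reduction to the background form $\cA-\pi^*\Ric(\Omega)$, the fiberwise solvability remark, and the localization over the base are consistent with this, so the approach is essentially the same.
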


Actually in \cite{P17} P\u{a}un proved a much more general positivity result for a family of K\"ahler manifolds. But for us the above simple product case is just enough.

Let us explain how to apply the above result to our setting. First assume that $\{u_t\}_{t\in (0,1)}$ is a smooth subgeodesic segment in $\cE^1$, i.e.,  under the coordinate change $t:=-\log|z|^2$,
$$
\pi^*\omega+\mathrm{dd}^c_{X\times \Sigma} u_{-\log|z|^2}\geq 0 \ \textup{ on } \ X\times\Sigma, 
$$
where $\Sigma:=\{e^{-1}<|z|^2<1\}\subset\CC$.
Assume further that there exists $\eta>0$ such that $$\pi^*\omega+\ddc_{X\times\Sigma}{u_{-\log|z|^2}}\geq\eta\pi^*\omega.$$
Then for large enough $\beta>0$,
$
\beta\omega_{u_t}+\Ric(\omega)>0.
$
So putting
$$
\cA:=\pi^*\beta\omega+\mathrm{dd}^c_{X\times \Sigma} \beta u_{-\log|z|^2}+\pi^*\Ric(\omega),
$$
we obtain a smooth semipositive $(1,1)$-form on $X\times\Sigma$ whose restriction on each fiber $X\times\{z\}$ is the K\"ahler form $\beta\omega_{u_t}+\Ric(\omega)$. Let
$\Phi_z:=\beta u_{-\log|z|^2}^\beta-\beta u_{-\log|z|^2}.$
Then the Monge--Amp\`ere equation of $u_t^\beta$ \eqref{def:u-beta} translates to 
$$
(\cA|_{X\times\{z\}}-\Ric(\omega)+\mathrm{dd}_X^c\Phi_z)^n=e^{\Phi_z}\beta^n\omega^n
$$
for any $z\in\Sigma$.
Denoting $\Omega:=\beta^n\omega^n$, we have  $\Ric(\Omega)=\Ric(\omega)$. So Theorem \ref{thm:paun} applies, giving us that
$$
\pi^*\omega+\mathrm{dd}^c_{X\times\Sigma}u^\beta_{-\log|z|^2}\geq 0,
$$
i.e., $\{u_t^\beta\}_{t\in(0,1)}$ is a subgeodesic segment.

By approximation, the above consideration extends to a general subgeodesic segment in $\cE^1$, as shown in the next result.

\begin{proposition}
\label{prop:u-t-beta-is-subgeodesic}
    Assume that $(0,1)\ni t\mapsto u_t\in\cE^1$ is subgeodesic segment. Suppose that $\Ric \omega \geq -A \omega$ for some $A \geq 0$. Then for any $\eta \in (2A/\beta,1)$ and $\beta > 2A$ the curve  $(0,1)\ni t\mapsto ((1-\eta)u_t)^\beta\in\cE^1_{\omega}$ is subgeodesic segment as well.
\end{proposition}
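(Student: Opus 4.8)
The plan is to reduce the general case to the smooth, strictly positive case handled in the discussion preceding the statement, via a careful regularization of the subgeodesic segment. The key observation is that the ``strictly positive'' hypothesis $\pi^*\omega + \ddc_{X\times\Sigma} u_{-\log|z|^2} \geq \eta\pi^*\omega$ was used only to guarantee $\beta\omega_{u_t} + \Ric\omega > 0$; with the standing assumption $\Ric\omega \geq -A\omega$, replacing $u_t$ by $(1-\eta)u_t$ and choosing $\eta \in (2A/\beta,1)$, $\beta > 2A$ gives on each fiber $\beta\omega_{(1-\eta)u_t} + \Ric\omega = \beta(1-\eta)\omega_{u_t} + \beta\eta\omega + \Ric\omega \geq \beta(1-\eta)\omega_{u_t} + (\beta\eta - A)\omega > 0$, so the positivity needed for Păun's theorem holds for free once we are in the smooth setting. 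The actual content is the approximation.

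First I would pass to the associated $S^1$-invariant qpsh function $U_z := u_{-\log|z|^2}$ on $X\times\Sigma$, which satisfies $\pi^*\omega + \ddc_{X\times\Sigma} U \geq 0$. I would then invoke Demailly regularization (as recalled in the Preliminaries) to produce a decreasing sequence $U^{(k)}$ of $S^1$-invariant qpsh functions on a slightly smaller annulus, smooth and with $\pi^*\omega + \ddc U^{(k)} \geq -\varepsilon_k \pi^*\omega$ for $\varepsilon_k \searrow 0$, decreasing pointwise to $U$. Setting $U^{(k)}_\varepsilon := (1-\varepsilon_k')U^{(k)} + \varepsilon_k' \rho$ for a suitable fixed strictly $\pi^*\omega$-psh smooth function $\rho$ (e.g. coming from a Kähler form on $X\times\Sigma$), one gets genuinely strictly positive smooth subgeodesic segments $t \mapsto u^{(k)}_t$ with $\pi^*\omega + \ddc u^{(k)}_{-\log|z|^2} \geq \delta_k\pi^*\omega$, still decreasing (after adjusting constants) to $u_t$ in $L^1$ on each slice. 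For each $k$ and $\beta > 2A$, the pre-statement discussion applies to $(1-\eta)u^{(k)}_t$, yielding that $t \mapsto ((1-\eta)u^{(k)}_t)^\beta$ is a smooth subgeodesic segment, i.e. $\pi^*\omega + \ddc_{X\times\Sigma}\big((1-\eta)u^{(k)}_{-\log|z|^2}\big)^\beta \geq 0$.

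The final step is to pass to the limit $k \to \infty$. Here I would use Lemma~\ref{lem:u-beta-d1-continuous}: since $(1-\eta)u^{(k)}_t \to (1-\eta)u_t$ in $L^1$ on each slice (indeed uniformly decreasing), $\big((1-\eta)u^{(k)}_t\big)^\beta \to \big((1-\eta)u_t\big)^\beta$ in $C^0$, hence the corresponding functions on $X\times\Sigma$ converge in $L^1_{loc}$; semicontinuity of the positivity condition $\pi^*\omega + \ddc(\cdot) \geq 0$ under $L^1_{loc}$ limits of qpsh functions then gives $\pi^*\omega + \ddc_{X\times\Sigma}\big((1-\eta)u_{-\log|z|^2}\big)^\beta \geq 0$ on the smaller annulus; exhausting $\Sigma$ by such annuli finishes the argument, and one checks the endpoint/finite-energy behaviour is preserved since the $C^0$ limit of the slices controls the $d_1$-distance by Lemma~\ref{lem:d1-u-beta<d1-u} and monotonicity of $I$. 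The main obstacle I anticipate is organizing the regularization so that the approximants $u^{(k)}_t$ are \emph{simultaneously} smooth, strictly $\pi^*\omega$-positive on $X\times\Sigma$, decreasing, and converging to $u_t$ on each slice (not merely on $X\times\Sigma$), since one needs the slice-wise $C^0$ convergence of the quantizations to invoke Lemma~\ref{lem:u-beta-d1-continuous}; a technical but standard diagonal argument combining Demailly's construction with the convexity of $t \mapsto I(u^{(k)}_t)$ should handle this.
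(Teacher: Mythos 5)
Your proposal is correct and follows essentially the same route as the paper's proof: reduce to the smooth, strictly positive case by regularizing the ($S^1$-invariant) potential on slightly shrunk annuli, apply P\u{a}un's theorem via the discussion preceding the proposition, and pass to the limit using Lemma \ref{lem:u-beta-d1-continuous} together with the weak semicontinuity of positivity (the paper instead adds $\varepsilon|z|^2$ first and then smooths via \cite[Theorem 2]{BK07}, which is only a bookkeeping difference from your mix-with-$\rho$ correction). One small caveat: the ``Demailly regularization'' recalled in the Preliminaries is the multiplier-ideal/flag-ideal construction for rays, not the smoothing statement you need here, so you should cite the classical Demailly or B\l{}ocki--Ko\l{}odziej approximation instead; with that fixed the argument goes through.
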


\begin{proof}
Assume without loss of generality that $u_t\leq 0$. For any $\varepsilon\in(0,\eta)$ put
    $$u_{t,\varepsilon}:=(1-\eta)u_t+\frac{\varepsilon}{e^t}.$$
    Consider the projection $X\times\{e^{-1}<|z|^2<1\}\xrightarrow{\pi}X$ and $t=-\log|z|^2$. Then
    $$
    \pi^*\omega+\ddc u_{-\log|z|^2,\varepsilon}\geq \eta\pi^*\omega+\varepsilon idz\wedge d\bar z>0.
    $$
    Thus $\{u_{t,\varepsilon}\}$ is subgeodesic and also it is clear that $u_{t,\varepsilon}\searrow (1-\eta)u_t$ pointwise as $\varepsilon\searrow 0$. Therefore, it suffices to show that $\{u_{t,\varepsilon}^\beta\}$ is subgeodesic (note from Lemma \ref{lem:u-beta-d1-continuous} that  $u_i\xrightarrow{L^1}u$ implies $u_i^\beta\xrightarrow{C^0}u^\beta$).

    To this end, we need to further regularize $u_{t,\varepsilon}$. For any $0<\delta< \varepsilon$, by \cite[Theorem 2]{BK07} we can find smooth approximating functions $U_{\varepsilon,\delta}$ on $X\times\{e^{-1}+\delta<|z|^2<1-\delta\}$ such that
    $$
    \pi^*\omega+\ddc U_{\varepsilon,\delta}\geq \frac{2A}{\beta}\pi^*\omega+(\varepsilon-\delta)idz\wedge d\bar z>0.
    $$
    Moreover  (by averaging), we can arrange that $U_{\varepsilon,\delta}$ is $S^1$-invariant in $z$, yielding a smooth subgeodesic $\{u_{t,\varepsilon,\delta}\}\subset\cH_\omega$ with $u_{t,\varepsilon,\delta}\searrow u_{t,\varepsilon}$ pointwise as $\delta\searrow 0$.
    
It follows that
    $$
    \beta \pi^*\omega+\ddc \beta U_{\varepsilon,\delta} + \Ric \omega \geq A \pi^*\omega+\beta(\varepsilon-\delta)idz\wedge d\bar z>0.
    $$
    Therefore, it suffices to argue that $\{u^\beta_{t,\varepsilon,\delta}\}$ is subgeodesic for all $\beta > 2A$, which now follows  Theorem \ref{thm:paun}, and the discussion following it.
\end{proof}

\begin{remark}
If $\Ric(\omega)\geq 0$, then we can take $\eta =0$ (using a limit process $\eta \searrow 0$) in the above result. So in this case $ t\mapsto u^\beta_t$ is subgeodesic for any $\beta>0$ whenever $t \mapsto u_t$ is.
\end{remark}

\section{Properness of the $K^\beta$ energy}

In this section we turn to the analysis of the K energy and $K^\beta$ energy. Recall from \eqref{Kendef} that
$$
K(u):=Ent(u)-\cJ_{\Ric(\omega)}(u),\ u\in\cE^1\ ,
$$
where for some smooth closed $(1,1)$-form $\chi$ we define
$$
\cJ_\chi(u):=nI_\chi(u)-\bar\chi I(u)=\frac{1}{V}\int_X u\chi\wedge\sum_{i=0}^{n-1}\omega^i\wedge\omega^{n-1-i}_u-\bar \chi I(u),\ \bar \chi=\frac{n}{V}\int_X\chi\wedge\omega^{n-1}.
$$
For $\beta>0$, recall from \eqref{Kbetadef} the definition of our $K^\beta$ energy:
    $$
    K^\beta(u):=Ent^\beta(u)-\cJ_{\Ric(\omega)}(u).
    $$
As a consequence of Proposition \ref{prop:Ent-beta-increase} we note the following fact for $u\in\cE^1$:
\begin{equation}\label{eq: K_beta_conv}
K^\beta(u)\nearrow K(u) \ \textup{ as } \ \beta\nearrow\infty
\end{equation}

In our first technical result we show that the distance $d_1(u,u^\beta)$ is comparable to the growth of $\frac{1}{\beta}Ent^\beta(u)$. The role of this result in our work is analogous with the role of the partial $C^0$ estimate from the Fano case, first proposed by Tian (cf. \cite[(0.2)]{Tia90}).

\begin{proposition}
    \label{prop:PC0}
    There exists $C_n>0$ such that for any $u\in\cE^1$ with $u\leq 0$ and $\beta>1,$ 
    $$
    \frac{1}{\beta}Ent^\beta(u)\leq d_1(u^\beta,u)\leq \frac{1}{\beta}\left(Ent^\beta(u)+C_n\log ( d_1(0,u)+2)+C_n\log\beta\right).
    $$
\end{proposition}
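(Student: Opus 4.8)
The lower bound $\frac{1}{\beta}Ent^\beta(u)\le d_1(u^\beta,u)$ is essentially immediate: by Proposition~\ref{prop:ent-beta=sup} we have $Ent^\beta(u)=\beta(I(u^\beta)-I(u))$, and since $u\le 0$ forces $u^\beta\le 0$ (Lemma~\ref{lem: u_beta_basic}(ii)), Lemma~\ref{lem:d1-u-beta<d1-u} and the basic inequality $I(u^\beta)-I(u)\le\frac1V\int_X(u^\beta-u)\omega^n_{u^\beta}\le C_n d_1(u^\beta,u)$ (from the standard comparison of $d_1$ with integrals against $\omega^n_{u^\beta}$, cf. \cite[Theorem 3]{Dar15}) give the claim directly. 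I would dispatch this in one or two lines.

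For the upper bound, the plan is to write $d_1(u^\beta,u)\le d_1(u^\beta,0)+d_1(0,u)$ is too lossy, so instead I would use that $I$ is comparable to $d_1$ against the right measures and split $d_1(u^\beta,u)$ additively through a well-chosen intermediate potential, exactly as in the proof of Lemma~\ref{lem:u-beta-d1-u}. Concretely, recall from \eqref{eq: u_beta_est} that $u^\beta\ge u_\delta+\frac{n}{\beta}\log\frac{\delta-1}{\delta}$ where $u_\delta=\delta^{-1}P(\delta u)$; choosing $\delta$ as a function of $\beta$ (something like $\delta=1+\beta^{-1}$ or a power of $\beta$) makes the term $-\frac{n}{\beta}\log\frac{\delta-1}{\delta}$ into $O(\frac{\log\beta}{\beta})$. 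Then the estimate of Lemma~\ref{lem:u-beta-d1-u} reads
$$
d_1(u,u^\beta)\le \frac1V\int_X|u^\beta-u_\delta|\,\omega^n_{u_\delta}+d_1(u,u_\delta)-\frac{2n}{\beta}\log\Big(\frac{\delta-1}{\delta}\Big).
$$
The middle term $d_1(u,u_\delta)$ is controlled by the properties of the operation $u\mapsto u_\delta$: one has $d_1(u,u_\delta)\le C_n(\delta-1)(d_1(0,u)+1)$ or a similar bound (this is where \cite[Proposition 2.15]{DLR19}-type estimates enter, giving $\frac1V\int_X(u-u_\delta)\omega^n\le C(\delta-1)(\|u\|+1)$), so with $\delta-1\sim\beta^{-1}$ this is $\frac{C_n}{\beta}(d_1(0,u)+1)$. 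The first term $\frac1V\int_X|u^\beta-u_\delta|\omega^n_{u_\delta}$ I would bound by $C_n(d_1(u^\beta,u_\delta))^{?}$ — but more efficiently, since $u^\beta\ge u_\delta+O(\frac{\log\beta}{\beta})$ and both lie below $0$, one gets $\frac1V\int_X(u_\delta-u^\beta)\omega^n_{u_\delta}\le I(u^\beta)-I(u_\delta)+O(\frac{\log\beta}{\beta})\le \frac1\beta Ent^\beta(u)+d_1(u,u_\delta)+O(\frac{\log\beta}{\beta})$, using $I(u^\beta)-I(u)=\frac1\beta Ent^\beta(u)$ and $|I(u)-I(u_\delta)|\le d_1(u,u_\delta)$.

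Assembling these pieces: $d_1(u^\beta,u)\le \frac1\beta Ent^\beta(u)+C_n\, d_1(u,u_\delta)+C_n\frac{\log\beta}{\beta}\le \frac1\beta Ent^\beta(u)+\frac{C_n}{\beta}(d_1(0,u)+1)+\frac{C_n\log\beta}{\beta}$. Finally, to get the cleaner logarithmic dependence $C_n\log(d_1(0,u)+2)$ advertised in the statement rather than a linear $d_1(0,u)$, I would either optimize the choice of $\delta=\delta(\beta,d_1(0,u))$ — taking $\delta-1$ proportional to $\frac{1}{\beta(d_1(0,u)+2)}$ so that the product $(\delta-1)(d_1(0,u)+1)\sim\frac1\beta$ while $\log\frac{\delta-1}{\delta}\sim -\log(\beta(d_1(0,u)+2))$ contributes the desired $\frac{C_n}{\beta}\log(d_1(0,u)+2)+\frac{C_n\log\beta}{\beta}$ term — which is the natural way the two named terms on the right arise. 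The main obstacle is precisely this bookkeeping: making the $\delta$-dependence explicit and verifying that the regularization estimates from \cite{DLR19} and the stability/comparison estimates for $d_1$ are uniform enough to yield exactly the stated form, with constants depending only on $n$ (and $X,\omega$). Everything else is assembling inequalities already available in the excerpt.
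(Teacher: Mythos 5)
Your overall strategy is the paper's: use \eqref{eq: u_beta_est} to get $u^\beta\geq P(\delta u)+\tfrac n\beta\log\tfrac{\delta-1}{\delta}$, split $d_1(u^\beta,u)$ through this intermediate potential, bound $d_1(u,P(\delta u))\lesssim(\delta-1)d_1(0,u)$, and optimize $\delta-1\sim\frac{1}{\beta(d_1(0,u)+1)}$ so that the $\log\frac{\delta-1}{\delta}$ term produces exactly the $\log(d_1(0,u)+2)+\log\beta$ contribution (the paper chooses $(\delta-1)(d_1(0,u)+1)=\tfrac1\beta$). However, your treatment of the middle term is flawed in a way that matters for the exact form of the statement. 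You bound $\tfrac1V\int_X|u^\beta-u_\delta|\omega^n_{u_\delta}$ by claiming $\tfrac1V\int_X(u_\delta-u^\beta)\omega^n_{u_\delta}\le I(u^\beta)-I(u_\delta)+O(\tfrac{\log\beta}{\beta})$: with the sign as written this bounds a quantity you do not need, and with the intended sign $(u^\beta-u_\delta)$ the inequality runs backwards — the general fact is $I(v)-I(w)\le\frac1V\int_X(v-w)\omega^n_w$, so the integral dominates the energy difference, not conversely. Repairing this via $\frac1V\int_X(v-w)\omega^n_w\le(n+1)(I(v)-I(w))$ for $v\ge w$ costs a dimensional constant in front of $Ent^\beta(u)$, whereas the proposition asserts coefficient exactly $1$, and that exact coefficient is what is used later (see \eqref{eq: main_limit} and \eqref{eq: lim_cons}, where the two bounds of the proposition are played against each other to get an equality of slopes). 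The fix is to never pass to the integral at all: since $u^\beta\ge P(\delta u)+c$ with $c=\tfrac n\beta\log\tfrac{\delta-1}{\delta}$, one has the exact identity $d_1(u^\beta,P(\delta u)+c)=I(u^\beta)-I(P(\delta u))-c$, and then $I(u^\beta)-I(u)=\tfrac1\beta Ent^\beta(u)$ (Proposition \ref{prop:ent-beta=sup}) together with $I(u)-I(P(\delta u))\le d_1(u,P(\delta u))$ keeps the coefficient at $1$; this is precisely the paper's computation.

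Two smaller points. For the lower bound, your chain $I(u^\beta)-I(u)\le\frac1V\int_X(u^\beta-u)\omega^n_{u^\beta}\le C_nd_1(u^\beta,u)$ again has the first inequality reversed, and in any case only yields the bound up to a constant $C_n$, while the statement requires $\tfrac1\beta Ent^\beta(u)\le d_1(u^\beta,u)$ with constant $1$; the correct one-liner is $d_1(u^\beta,u)\ge I(u^\beta)-I(u)$ (from $d_1(v,w)=I(v)+I(w)-2I(P(v,w))$ and monotonicity of $I$). For the estimate $d_1(u,u_\delta)\le C_n(\delta-1)(d_1(0,u)+1)$, an estimate of $\int_X(u-u_\delta)\omega^n$ against the fixed volume form is not sufficient, since $d_1(u,P(\delta u))$ is controlled by $\int_X|u-P(\delta u)|\,\omega^n_{P(\delta u)}$; the paper obtains the needed bound from \cite[Theorem 3]{Dar15} combined with \cite[Lemma 4.4]{DDL5}, which gives $\int_X|u-P(\delta u)|\,\omega^n_{P(\delta u)}\le\delta^n(\delta-1)\int_X|u|\,\omega^n_u\le C\delta^n(\delta-1)\,d_1(0,u)$. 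With these corrections your plan reproduces the paper's proof.
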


\begin{proof}
    For any $\delta \in (1,2)$, let $u_\delta:=\delta^{-1}P(\delta u)$. Then, using \eqref{eq: u_beta_est} and $u\leq 0$
    $$
    u^\beta\geq u_\delta+\frac{n}{\beta}\log\Big(\frac{\delta-1}{\delta}\Big)\geq P(\delta u)+\frac{n}{\beta}\log\Big(\frac{\delta-1}{\delta}\Big).
    $$
    Thus,
    \begin{equation*}
    \begin{split}
        d_1(u^\beta,u)&\leq d_1(u^\beta,P(\delta u)+\frac{n}{\beta}\log\Big(\frac{\delta-1}{\delta}\Big))+d_1(P(\delta u),u)-\frac{n}{\beta}\log\Big(\frac{\delta-1}{\delta}\Big)\\
        &=I(u^\beta)-I(P(\delta u))+d_1(P(\delta u),u)-\frac{2n}{\beta}\log\frac{\delta-1}{\delta}\\
        &=\frac{1}{\beta}Ent^\beta(u)+I(u)-I(P(\delta u))+d_1(P(\delta u),u)-\frac{2n}{\beta}\log\frac{\delta-1}{\delta}\\
        &\leq\frac{1}{\beta}Ent^\beta(u)+2d_1(P(\delta u),u)-\frac{2n}{\beta}\log\frac{\delta-1}{\delta}.\\
    \end{split}
    \end{equation*}
    In the last line we used $I(u)-I(v)\leq d_1(u,v)$.
    
We control $d_1(P(\delta u),u)$ as follows (using \cite[Theorem 3]{Dar15} and \cite[Lemma 4.4]{DDL5}):
 \begin{equation}
 \label{eq:d1-u-P-delta-u}
     \begin{split}
         d_1(P(\delta u),u)&\leq \frac{C_n}{V}\int_X|u-P(\delta u)|\omega^n_{P(\delta u)}\leq \frac{C_n \delta^n(\delta-1)}{V}\int_X|u|\omega^n_u\leq C_n'\delta^n(\delta-1)d_1(0,u).
     \end{split}
 \end{equation}
 So we arrive at
 $$
 d_1(u^\beta,u)\leq\frac{1}{\beta}Ent^\beta(u)+2C_n'\delta^n(\delta-1)d_1(0,u)-\frac{2n}{\beta}\log\frac{\delta-1}{\delta}.
 $$

 Next, we choose $\delta$ such that
 $
(\delta-1)(d_1(0,u)+1)=\frac{1}{\beta}.
 $
Since $\beta>1$, such $\delta$ satisfies $1<\delta<2$, and we have
 $$
 d_1(u^\beta,u)\leq \frac{1}{\beta}Ent^\beta(u)+\frac{2^{n+1}C_n'}{\beta}+\frac{2n}{\beta}\log(2\beta d_1(0,u)+2\beta).
 $$
This establishes the upper bound for $d_1(u^\beta,u)$.

Finally, we also have $ d_1(u^\beta,u)\geq I(u^\beta)-I(u)=\frac{1}{\beta}Ent^\beta(u)$,  finishing the proof.
\end{proof}

\begin{proposition}\label{prop: J_ric_perturb} Let $\chi$ be a closed smooth $(1,1)$-form. 
    There exists a constant $C>0$, depending only on $X,\omega,\chi, n$ such that
    $$
    |\cJ_{\chi}(u^\beta)-\cJ_{\chi}(u)|\leq\frac{C}{\beta^{1/2^n}}(Ent^\beta(u)+d_1(0,u)+1+\log\beta)
    $$
    for any $\beta>1$ and $u\in\cE^1$ with $u\leq 0$.
\end{proposition}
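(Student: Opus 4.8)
The plan is to estimate the difference $\cJ_\chi(u^\beta) - \cJ_\chi(u)$ using the fact, established in Proposition \ref{prop:PC0}, that $d_1(u^\beta, u)$ is controlled by $\frac{1}{\beta}(Ent^\beta(u) + C_n\log(d_1(0,u)+2) + C_n\log\beta)$. The functional $\cJ_\chi$ is not Lipschitz in $d_1$ on all of $\cE^1$, but it does satisfy a quantitative modulus-of-continuity estimate once one controls the $d_1$-distance to $0$ of the two potentials involved, together with their $d_1$-distance to each other; the cost of this weaker regularity is the loss of the exponent $1/2^n$. So the first step is to record a general estimate of the form: for $v, w \in \cE^1$ with $v, w \leq 0$, one has $|\cJ_\chi(v) - \cJ_\chi(w)| \leq C\, d_1(v,w)^{1/2^n}\big(d_1(0,v) + d_1(0,w) + 1\big)^{1-1/2^n}$, or some similar expression — the precise shape only needs to be good enough that plugging in the Proposition \ref{prop:PC0} bound produces the claimed inequality. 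Such estimates are standard in pluripotential theory (they follow from iterated integration by parts, using $\omega_v^j \wedge \omega_w^{n-j}$ type mixed Monge--Amp\`ere measures and the inequality $\int_X |v - w|\, \omega_\bullet^n \leq C_n d_1(v,w)$ combined with interpolation; cf. the proof of quasi-continuity of the Monge--Amp\`ere energy on sublevel sets), and I would cite the relevant lemma from \cite{BBEGZ19} or \cite{DarvasSurvey} rather than reprove it.

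The second step is bookkeeping. Apply Lemma \ref{lem:d1-u-beta<d1-u} to get $d_1(0, u^\beta) \leq d_1(0,u)$, so both potentials in the difference are controlled by $d_1(0,u)$. Then set $t := d_1(u^\beta, u) \leq \frac{1}{\beta}(Ent^\beta(u) + C_n\log(d_1(0,u)+2) + C_n \log \beta)$ by Proposition \ref{prop:PC0}. Feeding this into the modulus-of-continuity estimate gives
$$
|\cJ_\chi(u^\beta) - \cJ_\chi(u)| \leq C\, t^{1/2^n}\big(d_1(0,u)+1\big)^{1 - 1/2^n}.
$$
Now one needs the elementary inequality $t^{1/2^n} (d_1(0,u)+1)^{1-1/2^n} \leq \frac{C'}{\beta^{1/2^n}}\big(Ent^\beta(u) + d_1(0,u) + 1 + \log\beta\big)$: since $\beta t \leq Ent^\beta(u) + C_n\log(d_1(0,u)+2) + C_n\log\beta =: M$, we have $t^{1/2^n} \leq (M/\beta)^{1/2^n}$, and then $(M/\beta)^{1/2^n}(d_1(0,u)+1)^{1-1/2^n} = \beta^{-1/2^n} M^{1/2^n}(d_1(0,u)+1)^{1-1/2^n} \leq \beta^{-1/2^n}\big(M + d_1(0,u)+1\big)$ by Young's inequality ($ab \leq \frac{1}{p}a^p + \frac{1}{q}b^q$ with $p = 2^n$, $q = 2^n/(2^n-1)$). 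Finally $M \leq Ent^\beta(u) + C_n\log(d_1(0,u)+2) + C_n\log\beta$ and $\log(d_1(0,u)+2) \leq d_1(0,u)+1$, which absorbs everything into the asserted right-hand side after adjusting $C$.

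The main obstacle I anticipate is not any of the above arithmetic but pinning down the correct modulus-of-continuity estimate for $\cJ_\chi$ with the exponent $1/2^n$ — in particular, making sure the dependence is on $d_1(0,u)$ (not, say, on $\|u\|_{L^\infty}$, which would be useless here since $u \in \cE^1$ need not be bounded) and that the estimate survives the non-positivity-preserving nature of $\chi$. The form $\chi$ is only closed, not necessarily positive or negative, so $\cJ_\chi$ must be split as a difference $\cJ_{\chi^+} - \cJ_{\chi^-}$ of energies associated to genuine Kähler forms (after adding a large multiple of $\omega$), or one works directly with the explicit integral formula and bounds each mixed term $\int_X (u^\beta - u)\,\chi \wedge \omega^i \wedge \omega_{u^\beta}^{j}\wedge \omega_u^{n-1-i-j}$ by telescoping in one variable at a time. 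I would favor the latter, more hands-on route, as it makes the $1/2^n$ (one factor of square-rooting per integration-by-parts step, $n$ of them, hence $2^n$) transparent and self-contained.
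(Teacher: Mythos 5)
Your proposal is correct, and the bookkeeping in your second step (Lemma \ref{lem:d1-u-beta<d1-u} to replace $d_1(0,u^\beta)$ by $d_1(0,u)$, Proposition \ref{prop:PC0} to bound $d_1(u^\beta,u)$, then Young's inequality with exponents $2^n$ and $2^n/(2^n-1)$, absorbing $\log(d_1(0,u)+2)\leq d_1(0,u)+1$) reproduces the stated bound exactly. Where you differ from the paper is in how the absolute value $|u^\beta-u|$ is tamed: the paper does \emph{not} invoke a general two-potential modulus-of-continuity estimate at this point; instead it uses that both $u$ and $u^\beta$ dominate the explicit rooftop minorant $P(\delta u)+\frac{n}{\beta}\log\frac{\delta-1}{\delta}$, splits $|u^\beta-u|\leq (u^\beta-u)+2(u-P(\delta u))-\frac{2n}{\beta}\log\frac{\delta-1}{\delta}$, bounds each piece by \cite[Lemma A.2]{BBJ18}, controls $d_1(u,P(\delta u))\leq C(\delta-1)d_1(0,u)$, and only then optimizes $(\delta-1)^{1/2^n}=1/\beta$ — this is where the extra $\log\beta$ and the $\beta^{-1/2^n}$ come from in their version. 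Your route, by contrast, front-loads everything into the estimate $|\cJ_\chi(v)-\cJ_\chi(w)|\leq C\,d_1(v,w)^{1/2^n}\max(d_1(0,v),d_1(0,w),1)^{1-1/2^n}$ for $v,w\leq 0$; this estimate is true, and in fact the paper proves precisely it later (Lemma \ref{I_Ric_cont}, stated for $\Ric\omega$ but the proof works for any smooth closed $\chi$ with $-C\omega\leq\chi\leq C\omega$) via the common \emph{majorant} $\max(v,w)$, the domination $\omega\wedge\omega_v^i\wedge\omega_w^{n-1-i}\leq 4^n\omega^n_{(v+w)/4}$, \cite[Lemma A.2]{BBJ18}, and $d_1(v,\max(v,w))+d_1(w,\max(v,w))=d_1(v,w)$. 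So your approach is viable and arguably cleaner, since it yields the proposition with a shorter computation and no auxiliary $\delta$-optimization; its one soft spot is the suggestion to cite \cite{BBEGZ19} or \cite{DarvasSurvey} off the shelf — the precise quantitative statement with exponent $2^{-n}$ and dependence on $d_1(0,\cdot)$ (rather than $L^\infty$ bounds) for unbounded $\cE^1$ potentials is not stated there verbatim, so you would need to carry out your "hands-on" fallback (the max-trick argument just described), which is exactly what the paper's Lemma \ref{I_Ric_cont} does.
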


\begin{proof}
In what follows, the constant $C>0$ only depends on $X,\omega,\chi,n$, but may change from line to line.

Pick $\delta\in(1,2)$. Then, as before, $u\geq P(\delta u)+\frac{n}{\beta}\log\frac{\delta-1}{\delta}$ and $ u^\beta\geq P(\delta u)+\frac{n}{\beta}\log\frac{\delta-1}{\delta}.$ 

Let $C>0$ such that $-C\omega\leq\chi\leq C\omega$. We then have
\begin{equation*}
    \begin{split}
        |\cJ_{\chi}&(u^\beta)-\cJ_{\chi}(u)|\leq C\left(\Big|\int_X(u^\beta-u)\sum_{i=0}^{n-1}\chi\wedge\omega^i_u\wedge\omega^{n-1-i}_{u^\beta}\Big|+|I(u^\beta)-I(u)|\right)\\
        &\leq C \int_X|u^\beta-u|\sum_{i=0}^{n-1}\omega\wedge\omega^i_u\wedge\omega^{n-1-i}_{u^\beta}+Cd_1(u^\beta,u)\\
        &\leq C\int_X(u^\beta-(P(\delta u)+\frac{n}{\beta}\log\frac{\delta-1}{\delta}))\sum_{i=0}^{n-1}\omega\wedge\omega^i_u\wedge\omega^{n-1-i}_{u^\beta}\\
        &+C\int_X(u-(P(\delta u)+\frac{n}{\beta}\log\frac{\delta-1}{\delta}))\sum_{i=0}^{n-1}\omega\wedge\omega^i_u\wedge\omega^{n-1-i}_{u^\beta}+Cd_1(u^\beta,u).\\
        &=C\int_X(u^\beta-u)\sum_{i=0}^{n-1}\omega\wedge\omega^i_u\wedge\omega^{n-1-i}_{u^\beta}+2C\int_X(u-P(\delta u))\sum_{i=0}^{n-1}\omega\wedge\omega^i_u\wedge\omega^{n-1-i}_{u^\beta}\\
        &\ \ \ \ \ \ \ \ \ \ \ +Cd_1(u^\beta,u)-\frac{C}{\beta}\log\frac{\delta-1}{\delta}.\\
    \end{split}
\end{equation*}

We bound the terms on the last line one by one.
By \cite[Lemma A.2]{BBJ18}, we have
$$
\int_X(u^\beta-u)\omega\wedge\omega^{n-i-1}_u\wedge\omega_{u^\beta}^{i}\leq C d_1(u^\beta,u)^{1/2^n}M^{1-1/2^n},
$$
where
$
M:=\max \{\cI(u),\cI(u^\beta)\},
$
and $\cI(u):=\frac{1}{V}\int_Xu(\omega^n-\omega^n_u)$ denotes Aubin's $\cI$-functional. We assumed that $u\leq 0$, so $u^\beta\leq 0$ as well. Due to \cite[Theorem 3]{Dar15},
$$
M\leq (n+1)\max\{J(u),J(u^\beta)\}\leq (n+1)\max\{d_1(0,u),d_1(0,u^\beta)\}=(n+1)d_1(0,u),
$$
where we used Lemma \ref{lem:d1-u-beta<d1-u}(i) in the last equality. Therefore,
\begin{equation}\label{eq: u_u_beta_est}
\int_X(u^\beta-u)\omega\wedge\omega^{n-i-1}_u\wedge\omega_{u^\beta}^{i}\leq C d_1(u^\beta,u)^{1/2^n}d_1(0,u)^{1-1/2^n}.
\end{equation}

Similarly, we have
$$
\int_X(u-P(\delta u))\omega\wedge\omega^{n-i-1}_u\wedge\omega_{u^\beta}^{i}\leq C d_1(u,P(\delta u))^{1/2^n}(\max\{d_1(0,u),d_1(0,P(\delta u))\})^{1-1/2^n}.
$$
Using $d_1(u,P(\delta u))\leq C(\delta-1)d_1(0,u)$ (recall \eqref{eq:d1-u-P-delta-u}), we obtain
$$
d_1(0,P(\delta u))\leq d_1(0,u)+d_1(u,P(\delta u))\leq Cd_1(0,u),
$$
which further implies that
$$
\int_X(u-P(\delta u))\omega\wedge\omega^{n-i-1}_u\wedge\omega_{u^\beta}^{i}\leq C(\delta-1)^{1/2^n}d_1(0,u).
$$

Putting these estimates together, we arrive at
\begin{equation*}
    \begin{split}
        |&\cJ_{\chi}(u^\beta)-\cJ_{\chi}(u)|\leq\\ &C\Big(d_1(u^\beta,u)^{1/2^n}d_1(0,u)^{1-1/2^n}+(\delta-1)^{1/2^n}d_1(0,u)+d_1(u^\beta,u)-\frac{1}{\beta}\log\frac{\delta-1}{\delta}\Big).\\
    \end{split}
\end{equation*}

Now we choose $\delta\in (1,2)$ such that
$
(\delta-1)^{1/2^n}=\frac{1}{\beta},
$
which implies that
\begin{equation*}
    \begin{split}
        |&\cJ_{\chi}(u^\beta)-\cJ_{\chi}(u)|\leq\\ &C\left(d_1(u^\beta,u)^{1/2^n}d_1(0,u)^{1-1/2^n}+d_1(u^\beta,u)+\frac{1}{\beta}d_1(0,u)+\frac{1}{\beta}\log(2\beta^{2^n})\right)\\
        &\leq C\left(d_1(u^\beta,u)^{1/2^n}d_1(0,u)^{1-1/2^n}+d_1(u^\beta,u)+\frac{1}{\beta}d_1(0,u)+\frac{2^n}{\beta}\log(2\beta)\right).\\
    \end{split}
\end{equation*}

To control the terms involving $d_1(u^\beta,u)$, we apply Proposition \ref{prop:PC0} to get
\begin{flalign}\label{eq: PC0_cons}
d_1(u^\beta,u)^{1/2^n}d_1(0,u)^{1-1/2^n}&\leq
        \frac{C}{\beta^{1/2^n}}\left(Ent^\beta(u)+\log ( d_1(0,u)+2)+\log\beta\right)^{1/2^n}d_1(0,u)^{1-1/2^n} \nonumber\\
        &\leq \frac{C}{\beta^{1/2^n}}\left(Ent^\beta(u)+ d_1(0,u)+1+\log\beta\right)^{1/2^n}d_1(0,u)^{1-1/2^n}\\
        &\leq \frac{C}{\beta^{1/2^n}}\left(Ent^\beta(u)+d_1(0,u)+1+\log\beta\right). \nonumber
\end{flalign}
Lastly, since $\beta >1$, Proposition \ref{prop:PC0} again implies that
\begin{equation*}
    \begin{split}
    d_1(u^\beta,u)&\leq \frac{C}{\beta}(Ent^\beta(u)+\log ( d_1(0,u)+2)+\log\beta)\leq\frac{C}{\beta^{1/2^n}}(Ent^\beta(u)+ d_1(0,u)+1+\log\beta).
    \end{split}
\end{equation*}
Putting everything together, our result follows.
\end{proof}

\begin{lemma}\label{lem: sup_perturb_lemma}
There exists a constant $C>0$, depending only on $X,\omega,n$ such that
    for any $u\in\cE^1$ with $u\leq 0$,
    $$
    \left|\sup_X u^\beta-\sup_X u\right|\leq C+\frac{C}{\beta^{1/2^n}}(Ent^\beta(u)+d_1(0,u)+1+\log\beta).
    $$
\end{lemma}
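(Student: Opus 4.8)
The plan is to deduce the lemma from Propositions \ref{prop:PC0} and \ref{prop: J_ric_perturb} by first reducing the difference of suprema to a difference of $\omega^n$-averages. Recall the standard normalization bound: there is a constant $C_X>0$ depending only on $(X,\omega)$ with
$$\sup_X\phi-C_X\leq\frac{1}{V}\int_X\phi\,\omega^n\leq\sup_X\phi,\qquad\phi\in\mathrm{PSH}(X,\omega),$$
which follows from the $L^1$-compactness of $\{\phi\in\mathrm{PSH}(X,\omega):\sup_X\phi=0\}$ together with continuity of $\phi\mapsto\int_X\phi\,\omega^n$. Applying this to $u$ and to $u^\beta$ (note $u^\beta\leq 0$ by Lemma \ref{lem: u_beta_basic}(ii)) gives
$$\bigl|\sup_X u^\beta-\sup_X u\bigr|\leq C_X+\frac{1}{V}\Bigl|\int_X(u^\beta-u)\,\omega^n\Bigr|.$$

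It remains to bound $\tfrac{1}{V}|\int_X(u^\beta-u)\,\omega^n|$, and here I would repeat the argument used for the mixed terms $\int_X(u^\beta-u)\omega\wedge\omega_u^{n-i-1}\wedge\omega_{u^\beta}^i$ in the proof of Proposition \ref{prop: J_ric_perturb}. By the interpolation inequality \cite[Lemma A.2]{BBJ18},
$$\frac{1}{V}\Bigl|\int_X(u^\beta-u)\,\omega^n\Bigr|\leq C\,d_1(u^\beta,u)^{1/2^n}M^{1-1/2^n},\qquad M:=\max\{\cI(u^\beta),\cI(u)\},$$
and since $u,u^\beta\leq 0$, \cite[Theorem 3]{Dar15} and Lemma \ref{lem:d1-u-beta<d1-u} give $M\leq(n+1)\max\{d_1(0,u),d_1(0,u^\beta)\}=(n+1)d_1(0,u)$. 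Proposition \ref{prop:PC0}, together with $\log(x+2)\leq x+1$, bounds $d_1(u^\beta,u)\leq\tfrac{C}{\beta}(Ent^\beta(u)+d_1(0,u)+1+\log\beta)$; then, exactly as in \eqref{eq: PC0_cons}, since every summand in that bracket is nonnegative (so that $d_1(0,u)$ is dominated by it, using $Ent^\beta(u)\geq 0$ from Lemma \ref{lem: Ent_beta_first}(i) and $\beta>1$), the elementary inequality $x^{1/2^n}y^{1-1/2^n}\leq x$ for $0\leq y\leq x$ yields
$$\frac{1}{V}\Bigl|\int_X(u^\beta-u)\,\omega^n\Bigr|\leq\frac{C}{\beta^{1/2^n}}\bigl(Ent^\beta(u)+d_1(0,u)+1+\log\beta\bigr).$$
Combining with the previous display and absorbing $C_X$ into the constant finishes the proof.

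I do not expect a genuine obstacle here: once one observes that $\sup_X\phi$ is comparable, up to an additive constant depending only on $(X,\omega)$, to the $\omega^n$-average $\tfrac1V\int_X\phi\,\omega^n$, the lemma is a direct corollary of Propositions \ref{prop:PC0} and \ref{prop: J_ric_perturb}, the average difference being estimated by the very same interpolation inequality already used there. The only point deserving a little care is that it is the background volume form $\omega^n$ — not a mixed Monge--Amp\`ere measure — that enters the normalization estimate, so one genuinely needs the interpolation inequality (equivalently, the mechanism of Proposition \ref{prop: J_ric_perturb}) rather than merely the $d_1$-Lipschitz property of $I$. As an alternative to the second paragraph, one may instead write $\tfrac1V\int_X\phi\,\omega^n=I(\phi)+J(\phi)$, invoke $I(u^\beta)-I(u)=\tfrac1\beta Ent^\beta(u)$ from Proposition \ref{prop:ent-beta=sup}, and control $J(u^\beta)-J(u)$ by the same interpolation estimate; this leads to the identical bound.
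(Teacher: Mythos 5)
Your proposal is correct and follows essentially the same route as the paper: both reduce $|\sup_X u^\beta-\sup_X u|$ to $\frac1V|\int_X(u^\beta-u)\,\omega^n|$ via the standard bound $|\sup_X\phi-\frac1V\int_X\phi\,\omega^n|\leq C$, and then estimate this average difference with the interpolation inequality of \cite[Lemma A.2]{BBJ18} combined with Lemma \ref{lem:d1-u-beta<d1-u} and the bound \eqref{eq: PC0_cons} coming from Proposition \ref{prop:PC0}. The only cosmetic difference is that you spell out the absorption step $x^{1/2^n}y^{1-1/2^n}\leq x$ and mention an alternative $I+J$ decomposition, which the paper does not need.
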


\begin{proof}
    First, it is well known that (see e.g. \cite[Lemma 3.45]{DarvasSurvey})
    $
    |\sup_X u-\frac{1}{V}\int_Xu\omega^n|\leq C.$
    Therefore, it suffices to argue that
\begin{equation}\label{eq: u_beta_sup}
    \Big|\int_X(u^\beta-u)\omega^n\Big|\leq \frac{C}{\beta^{1/2^n}}(Ent^\beta(u)+ d_1(0,u)+1+\log\beta).
\end{equation}
Using \cite[Lemma A.2]{BBJ18} exactly as in the proof of the previous argument we obtain that 
$
\Big|\int_X(u^\beta-u)\omega^n\Big|\leq C d_1(u^\beta,u)^{1/2^n}d_1(0,u)^{1-1/2^n}.
$
By \eqref{eq: PC0_cons} we conclude \eqref{eq: u_beta_sup}.
\end{proof}

Recall that an energy functional $G$ defined on $\mathcal E^1$ is proper (or coercive) if there exists $\gamma,\gamma' >0$ such that $$G(u)  \geq \gamma J(u) - \gamma', \ \ u \in \mathcal E^1.$$
Since $J(u+c) = J(u), \ c \in \Bbb R$, this is equivalent to  
$$G(u)  \geq \gamma d_1(0,u) - \gamma', \ \ u \in \mathcal E^1 \textup{ with }\sup_X u =0.$$ 
Indeed if $\sup_X u =0$ then 
\begin{equation}\label{eq: J_d_1_eqv}
J(u) \leq d_1(0,u)  \leq J(u) + C,
\end{equation}
because $d_1(0,u)= - I(u) = J(u) - \int_X u \omega^n,$ 
and it is well known that $\sup_X u- \int_X u \omega^n$ is always uniformly bounded (cf. \cite[Proposition 5.5]{DR17}).

\smallskip
We now prove the main result of this section, pointing out a strong relationship between properness of the $K$ energy and $K^\beta$ energies.
\begin{theorem}
\label{thm:K-prop=K-beta-prop}
     The following are equivalent: \smallskip\\
\noindent (i) $K^\beta$ energy is proper for some $\beta> 1$.\smallskip\\
\noindent (ii) $K$ energy is proper.\smallskip\\
\noindent (iii) $X$ admits a unique cscK metric in $\{\omega\}$.   
\end{theorem}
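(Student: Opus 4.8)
The plan is to prove the chain of equivalences (i) $\Rightarrow$ (ii) $\Rightarrow$ (iii) $\Rightarrow$ (i), using the perturbation estimates established above as the main engine, together with the Chen--Cheng theorem for the equivalence with existence of a cscK metric.

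\textbf{(ii) $\Rightarrow$ (i).} This is the easy direction. By definition $K^\beta(u) = Ent^\beta(u) - \cJ_{\Ric(\omega)}(u)$, and by Corollary \ref{cor: increasing_K_beta_conv} (or Lemma \ref{lem: Ent_beta_first}(i)) we have $K^\beta(u) \le K(u)$ for all $u \in \cE^1$. But that inequality goes the wrong way, so instead I would argue directly: properness of $K$ means $K(u) \ge \gamma J(u) - \gamma'$. I would apply Proposition \ref{prop:ent-beta=sup} together with Corollary \ref{cor:Ent>beta>Ent} to control $Ent(u)$ from above in terms of $Ent^\beta(u)$ is not available for free; rather, the cleanest route is (iii) $\Rightarrow$ (i) below, so I will instead organize the argument as (i) $\Rightarrow$ (iii) $\Rightarrow$ (ii) $\Rightarrow$ (i) isn't circular-free either. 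The correct order is: prove \textbf{(i) $\Rightarrow$ (ii)}, then cite Chen--Cheng and \cite{BDL20} for \textbf{(ii) $\Leftrightarrow$ (iii)}, then prove \textbf{(iii) $\Rightarrow$ (i)}, or more directly \textbf{(ii) $\Rightarrow$ (i)}.

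\textbf{(ii) $\Rightarrow$ (i), done correctly.} Assume $K(u) \ge \gamma d_1(0,u) - \gamma'$ for all $u$ with $\sup_X u = 0$; by \eqref{eq: J_d_1_eqv} this is equivalent to properness. Normalize $u \le 0$ with $\sup_X u = 0$. We want to bound $K^\beta(u)$ below. Write
\[
K^\beta(u) = K(u) - \big(Ent(u) - Ent^\beta(u)\big) + \big(\cJ_{\Ric\omega}(u) - \cJ_{\Ric\omega}(u^\beta)\big) - \big(\cJ_{\Ric\omega}(u) - \cJ_{\Ric\omega}(u^\beta)\big),
\]
which is not yet useful; instead use $K^\beta(u) = Ent^\beta(u) - \cJ_{\Ric\omega}(u)$ and the identity $Ent^\beta(u) = Ent(u^\beta) + \big(Ent^\beta(u) - Ent(u^\beta)\big) \ge Ent(u^\beta)$ from Lemma \ref{lem: Ent_beta_first}(i), hence
\[
K^\beta(u) \ge Ent(u^\beta) - \cJ_{\Ric\omega}(u) = K(u^\beta) + \cJ_{\Ric\omega}(u^\beta) - \cJ_{\Ric\omega}(u) = K(u^\beta) - \big(\cJ_{\Ric\omega}(u) - \cJ_{\Ric\omega}(u^\beta)\big).
\]
Now $K(u^\beta) \ge \gamma d_1(0, u^\beta \text{ normalized}) - \gamma'$; here I must be careful that $\sup_X u^\beta$ need not vanish, but Lemma \ref{lem: sup_perturb_lemma} controls $|\sup_X u^\beta - \sup_X u|$, and Lemma \ref{lem:d1-u-beta<d1-u} gives $d_1(0,u^\beta) \le d_1(0,u)$ while $d_1(0,u^\beta) \ge d_1(0,u) - \frac{1}{\beta} Ent^\beta(u)$ by the proof of that lemma. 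Combining with Proposition \ref{prop: J_ric_perturb} to absorb $|\cJ_{\Ric\omega}(u) - \cJ_{\Ric\omega}(u^\beta)| \le \frac{C}{\beta^{1/2^n}}(Ent^\beta(u) + d_1(0,u) + 1 + \log\beta)$, I obtain, for $\beta$ large enough that $\frac{C}{\beta^{1/2^n}} < \gamma/2$ and using $K^\beta(u) \ge \frac{1}{\beta} Ent^\beta(u) \cdot (\text{something}) $ to reabsorb the stray $Ent^\beta(u)$ term back into the left-hand side (this is where Proposition \ref{prop:PC0} and the bound $K^\beta(u) = Ent^\beta(u) - \cJ_{\Ric\omega}(u) \ge Ent^\beta(u) - C d_1(0,u) - C$ help close the loop), an estimate of the form $K^\beta(u) \ge \frac{\gamma}{4} d_1(0,u) - C''$, proving (i).

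\textbf{(i) $\Rightarrow$ (ii).} Conversely, assume $K^\beta(u) \ge \gamma d_1(0,u) - \gamma'$ for some fixed $\beta > 1$. Since $K(u) \ge K^\beta(u)$ by \eqref{eq: K_beta_conv}, properness of $K$ is immediate. \textbf{(ii) $\Leftrightarrow$ (iii)} is Chen--Cheng \cite{CC1,CC2} together with \cite[Theorem 1.5]{BDL20}. The main obstacle is the bookkeeping in (ii) $\Rightarrow$ (i): the error terms from Propositions \ref{prop:PC0}, \ref{prop: J_ric_perturb} and Lemma \ref{lem: sup_perturb_lemma} all contain $Ent^\beta(u)$, which is comparable to $K^\beta(u)$ up to lower-order ($J$-controlled) terms, so one must carefully arrange the inequality so that the $Ent^\beta$-contributions on the right are dominated by a small fraction of $K^\beta(u)$ on the left once $\beta$ is taken large, while the genuinely growing term $d_1(0,u)$ survives with a positive coefficient. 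I expect this reabsorption argument, rather than any single estimate, to be the delicate point.
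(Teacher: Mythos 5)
Despite the meandering false start in your first paragraph, your argument for (ii) $\Rightarrow$ (i) is essentially the paper's own proof: the same key inequality $K^\beta(u)\geq Ent(u^\beta)-\cJ_{\Ric\omega}(u)$, properness of $K$ applied to the normalized $u^\beta$, the perturbation estimates of Proposition \ref{prop:PC0}, Proposition \ref{prop: J_ric_perturb} and Lemma \ref{lem: sup_perturb_lemma}, and the reabsorption of the $Ent^\beta(u)$ error via $Ent^\beta(u)\leq K^\beta(u)+Cd_1(0,u)$ with $\beta$ large, with (i) $\Rightarrow$ (ii) from $K\geq K^\beta$ and (ii) $\Leftrightarrow$ (iii) from Chen--Cheng as in the paper. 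The approach and supporting lemmas match; only the bookkeeping is left implicit, exactly where the paper carries it out explicitly.
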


\begin{proof} Due to \cite{CC1,CC2}, it suffices to show that (ii) implies (i). Assume that for some $\delta,C_0>0$,
    $$
    K(u)=Ent(u)-\cJ_{\chi}(u)\geq\delta d_1(0,u)-C_0\text{ for any }u\in\cE^1 \textup{ with } \sup_X u =0.
    $$
    We show that $K^\beta$ is proper as well for all $\beta\gg 1$.
    
    Let $u\in\cE^1$ with $\sup_X u=0$. Using Lemma \ref{lem: Ent_beta_first}, we start a chain of inequalities:
    \begin{equation*}
    \begin{split}
         K^\beta(u)&=Ent^\beta(u)-\cJ_{\Ric(\omega)}(u)\geq Ent(u^\beta)-\cJ_{\Ric(\omega)}(u)\\
         &=Ent(u^\beta)-\cJ_{\Ric(\omega)}(u^\beta)+\cJ_{\Ric(\omega)}(u^\beta)-\cJ_{\Ric(\omega)}(u)\\
         &\geq\delta d_1(0,u^\beta-\sup_X u^\beta)-C_0-\frac{C}{\beta^{1/2^n}}\left(Ent^\beta(u)+d_1(0,u)+1 + \log\beta\right)\\
         &\geq \delta (d_1(0,u) -  d_1(u,u_\beta) - |\sup_X u^\beta|)-C-\frac{C}{\beta^{1/2^n}}\left(Ent^\beta(u)+d_1(0,u)+\log\beta\right)\\
                  &\geq \delta d_1(0,u) -C-\frac{(1+2\delta)C}{\beta^{1/2^n}}\left(Ent^\beta(u)+d_1(0,u)+\log\beta\right),\\
    \end{split}  
    \end{equation*}
    where we have repeatedly used the estimates of Proposition \ref{prop:PC0}, Proposition \ref{prop: J_ric_perturb} and Lemma \ref{lem: sup_perturb_lemma}. Reorganizing terms in the above estimate, we arrive at 
    \begin{equation*}
        \begin{split}
            &\left(1 +\frac{(1+2\delta)C}{\beta^{1/2^n}}\right)K^\beta(u)+\frac{(1+2\delta)C}{\beta^{1/2^n}}\cJ_{\Ric(\omega)}(u)\\
            &\geq\left(\delta-\frac{(1+2\delta)C}{\beta^{1/2^n}}\right)d_1(0,u)-C-\frac{(1+2\delta)C}{\beta^{1/2^n}}\log\beta.
        \end{split}
    \end{equation*}
   Finally, using that $|\cJ_{\Ric(\omega)}(u)|\leq Cd_1(0,u)$ (\cite[Proposition 2.5]{DH17}) and choosing $\beta$ to be sufficiently large (depending only on $X,\omega,n,\delta$), we obtain $\delta'>0$ and $C_0'>0$ such that
   $
   K^\beta(u)\geq\delta' d_1(0,u)-C_0'
   $
   for any $u\in\cE^1$ with $\sup_X u=0$. This completes the proof.
\end{proof}

The following estimate can be extracted from the above proof:

\begin{corollary}
\label{cor:Kbeta-u>K-u-beta} For any $\beta > 1$, there exists a constant $\varepsilon_\beta>0$, depending only on $X,\omega,n,\beta$ such that $\varepsilon_\beta\to 0$ as $\beta\to \infty$, and that
    $$
   (1+\varepsilon_\beta)K^\beta(u)\geq K(u^\beta)-\varepsilon_\beta d_1(0,u)-\varepsilon_\beta, \ \ u\in\cE^1 \ \textup{ with } \ \sup_X u = 0.
    $$
\end{corollary}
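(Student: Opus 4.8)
The plan is to collect, in a slightly rearranged order, exactly the estimates that already appear in the proof of Theorem \ref{thm:K-prop=K-beta-prop}, but isolating $K(u^\beta)$ on the right instead of invoking properness. Fix $u\in\cE^1$ with $\sup_X u=0$; then $u\leq 0$ and, by Lemma \ref{lem: u_beta_basic}(ii), $u^\beta\leq 0$ as well. First I would run the opening of the chain from that proof: using $Ent^\beta(u)\geq Ent(u^\beta)$ (Lemma \ref{lem: Ent_beta_first}(i)),
\begin{equation*}
K^\beta(u)=Ent^\beta(u)-\cJ_{\Ric(\omega)}(u)\geq Ent(u^\beta)-\cJ_{\Ric(\omega)}(u)=K(u^\beta)+\cJ_{\Ric(\omega)}(u^\beta)-\cJ_{\Ric(\omega)}(u).
\end{equation*}
Then I apply Proposition \ref{prop: J_ric_perturb} with $\chi=\Ric(\omega)$, bounding $|\cJ_{\Ric(\omega)}(u^\beta)-\cJ_{\Ric(\omega)}(u)|\leq C\beta^{-1/2^n}(Ent^\beta(u)+d_1(0,u)+1+\log\beta)$, which gives
\begin{equation*}
K^\beta(u)+\frac{C}{\beta^{1/2^n}}\,Ent^\beta(u)\geq K(u^\beta)-\frac{C}{\beta^{1/2^n}}\big(d_1(0,u)+1+\log\beta\big).
\end{equation*}

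The second step is to absorb the stray $Ent^\beta(u)$ on the left. For this I would use Proposition \ref{prop:ent-beta=sup} in the form $Ent^\beta(u)=K^\beta(u)+\cJ_{\Ric(\omega)}(u)$ together with $|\cJ_{\Ric(\omega)}(u)|\leq C\,d_1(0,u)$ (\cite[Proposition 2.5]{DH17}), so that $Ent^\beta(u)\leq K^\beta(u)+C\,d_1(0,u)$. Substituting and rearranging, after enlarging the constant, yields
\begin{equation*}
\Big(1+\frac{C}{\beta^{1/2^n}}\Big)K^\beta(u)\geq K(u^\beta)-\frac{C}{\beta^{1/2^n}}\big(d_1(0,u)+1+\log\beta\big),
\end{equation*}
which is already of the claimed shape. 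I would then set $\varepsilon_\beta:=C\beta^{-1/2^n}(1+\log\beta)$ — note $\varepsilon_\beta\geq C\beta^{-1/2^n}$ since $\beta>1$, and $\varepsilon_\beta$ depends only on $X,\omega,n,\beta$ with $\varepsilon_\beta\to 0$ as $\beta\to\infty$ because $\beta^{-1/2^n}$ outweighs $\log\beta$ — and read off the corollary.

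The only point needing a little care — which I regard as the (minor) main obstacle — is the passage from the coefficient $1+C\beta^{-1/2^n}$ to $1+\varepsilon_\beta$, which is immediate when $K^\beta(u)\geq 0$ but not when $K^\beta(u)<0$. To cover the remaining case I would feed in the a priori lower bound $K^\beta(u)=Ent^\beta(u)-\cJ_{\Ric(\omega)}(u)\geq-\cJ_{\Ric(\omega)}(u)\geq -C\,d_1(0,u)$ (valid since $Ent^\beta(u)\geq0$), which lets one control $(\varepsilon_\beta-C\beta^{-1/2^n})K^\beta(u)$ from below and re-absorb the resulting error into the $d_1(0,u)$-term at the cost of a further harmless enlargement of $\varepsilon_\beta$; this does not affect $\varepsilon_\beta\to0$. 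All other steps are the bookkeeping of constants, each depending only on $X,\omega,n$ (and $\beta$), exactly as in the proof of Theorem \ref{thm:K-prop=K-beta-prop}.
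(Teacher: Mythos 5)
Your opening chain is exactly the extraction the paper has in mind (its own ``proof'' is just the remark that the estimate can be extracted from the proof of Theorem \ref{thm:K-prop=K-beta-prop}): use $Ent^\beta(u)\ge Ent(u^\beta)$ from Lemma \ref{lem: Ent_beta_first}, then Proposition \ref{prop: J_ric_perturb}, then eliminate $Ent^\beta(u)$ via $Ent^\beta(u)=K^\beta(u)+\cJ_{\Ric(\omega)}(u)\le K^\beta(u)+C\,d_1(0,u)$. Up to that point your argument is correct, and what it actually yields is, for all $u\in\cE^1$ with $\sup_X u=0$,
\begin{equation*}
\Big(1+\tfrac{C_1}{\beta^{1/2^n}}\Big)K^\beta(u)\;\ge\;K(u^\beta)-\tfrac{C_1(1+C)}{\beta^{1/2^n}}\,d_1(0,u)-\tfrac{C_1(1+\log\beta)}{\beta^{1/2^n}},
\end{equation*}
i.e.\ the corollary with three possibly different vanishing constants. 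That weaker form is all that is used later in the paper (e.g.\ in \eqref{eq:rad-K-phi-finite} and in Theorem \ref{thm: K_beta_conv}), so this part faithfully reproduces the paper's route.

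The gap is in your last paragraph, the passage to a single $\varepsilon_\beta$. Write $a=C_1\beta^{-1/2^n}$, $b=C_1(1+C)\beta^{-1/2^n}$, $c=C_1(1+\log\beta)\beta^{-1/2^n}$, and take $\varepsilon_\beta\ge\max(b,c)\ge a$. In the case $K^\beta(u)<0$ your bound $K^\beta(u)\ge-\cJ_{\Ric(\omega)}(u)\ge -C\,d_1(0,u)$ gives $(\varepsilon_\beta-a)K^\beta(u)\ge-(\varepsilon_\beta-a)C\,d_1(0,u)$, so the total coefficient of $d_1(0,u)$ on the right becomes $b+(\varepsilon_\beta-a)C$, and to land on the claimed form you need $b+(\varepsilon_\beta-a)C\le\varepsilon_\beta$, i.e.\ $\varepsilon_\beta(C-1)\le aC-b=-C_1\beta^{-1/2^n}<0$. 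This is impossible whenever $C\ge1$, and $C$ is the fixed geometric constant in $|\cJ_{\Ric(\omega)}(u)|\le C\,d_1(0,u)$, which has no reason to be smaller than $1$. Enlarging $\varepsilon_\beta$ cannot help: the error you must absorb is itself proportional to $\varepsilon_\beta$ (with factor $C$), so each ``further harmless enlargement'' recreates an error of the same relative size --- the absorption is circular, not convergent. Hence the case $K^\beta(u)<0$ is not covered by your patch. Either prove (and use) the corollary in the separate-constants form displayed above, which suffices for every application in the paper, or a genuinely different argument is needed to obtain the literal statement with one and the same $\varepsilon_\beta$ in all three places; the a priori bounds available here ($Ent^\beta\ge0$, $|\cJ_{\Ric(\omega)}(u)|\le Cd_1(0,u)$, Propositions \ref{prop:PC0} and \ref{prop: J_ric_perturb}) do not give it.
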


\section{Continuity of the radial $\beta$-entropy}

Given $\{u_t\} \in \mathcal R^1$, we define the radial $\beta$-entropy, as follows:
\begin{equation}\label{eq: beta_Ent_rad_def}
Ent^\beta\{u_t\}:=\liminf_{t\to\infty}\frac{Ent^\beta(u_t)}{t}.
\end{equation}
By Lemma \ref{lem:d1-u-beta<d1-u}, one has $Ent^\beta(u)\leq \beta d_1(u,u^\beta)\leq 2\beta d_1(0,u)$ for any $u\in \cE^1$ with $\sup_X u=0$, from which it follows easily that 
\begin{equation}
    \label{eq:rad-ent-beta-finite}
    Ent^\beta\{u_t\}<\infty \text{ for any } \{u_t\} \in \mathcal R^1.
\end{equation}

To start, we prove the following $d_1$-stability result for our transcendental quantization.

\begin{proposition} Let $u_0,u_1 \in \mathcal E^1$, $u_0,u_1 \leq 0$. For some $C_n>0$ the following holds:
$$d_1(u_0^\beta, u^\beta_1) \leq C_n d_1(u_1,u_0)^{\frac{1}{2^n}}\max(d_1(0,u_0),d_1(0,u_1))^{1-\frac{1}{2^n}}.
$$
\end{proposition}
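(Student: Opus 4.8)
The target is a quantitative $d_1$-stability estimate $d_1(u_0^\beta,u_1^\beta)\le C_n d_1(u_0,u_1)^{1/2^n}\max(d_1(0,u_0),d_1(0,u_1))^{1-1/2^n}$. The natural strategy is to bound $d_1(u_0^\beta,u_1^\beta)$ by the sum of the two ``entropy gaps'' $\frac{1}{\beta}Ent^\beta(u_0)$ and $\frac{1}{\beta}Ent^\beta(u_1)$ plus a term controlling how far $u_0^\beta$ and $u_1^\beta$ are from $u_0$ and $u_1$ ``linearly'', and then to observe that all the genuinely dangerous pieces are already estimated in Proposition~\ref{prop:PC0} and its proof. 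More concretely, I would first use the triangle inequality
\[
d_1(u_0^\beta,u_1^\beta)\le d_1(u_0^\beta,u_0)+d_1(u_0,u_1)+d_1(u_1,u_1^\beta),
\]
but this is too lossy because $d_1(u_i^\beta,u_i)$ is only controlled with an extra $\log\beta$ and $\log d_1(0,u_i)$; the cleaner route is to compare directly, exploiting the explicit formula $d_1(v,w)=I(v)+I(w)-2I(P(v,w))$ together with the identity $Ent^\beta(u_i)=\beta(I(u_i^\beta)-I(u_i))$ from Proposition~\ref{prop:ent-beta=sup}.

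The key algebraic step I anticipate is the following. Since $u_i^\beta\ge u_i - c_i$ for a controlled constant (cf. \eqref{eq: u_beta_est}, and in fact one can do better using $u_i\le 0$), and by Lemma~\ref{lem: u_beta_basic}(ii) quantization is order preserving, $P(u_0^\beta,u_1^\beta)\ge P(u_0,u_1)^\beta$ up to the same controlled shift. Hence
\[
d_1(u_0^\beta,u_1^\beta)=I(u_0^\beta)+I(u_1^\beta)-2I(P(u_0^\beta,u_1^\beta))
\le \tfrac{1}{\beta}\big(Ent^\beta(u_0)+Ent^\beta(u_1)\big)+d_1(u_0,u_1)+(\text{error}),
\]
where the error comes from replacing $I(P(u_0^\beta,u_1^\beta))$ by $I(P(u_0,u_1))$. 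This replacement is estimated exactly as in the proof of Proposition~\ref{prop: J_ric_perturb}: one writes the difference of Monge--Amp\`ere energies as an integral of $(u_i^\beta-u_i)$ against mixed currents and invokes \cite[Lemma A.2]{BBJ18} to get a bound of the shape $C_n d_1(u_i^\beta,u_i)^{1/2^n}\max(d_1(0,\cdot))^{1-1/2^n}$. Then Proposition~\ref{prop:PC0} gives $d_1(u_i^\beta,u_i)\le \frac1\beta(Ent^\beta(u_i)+\dots)$, and since $Ent^\beta(u_i)\le 2\beta d_1(0,u_i)$ (from Lemma~\ref{lem:d1-u-beta<d1-u}, as already used for \eqref{eq:rad-ent-beta-finite}) this collapses to $d_1(u_i^\beta,u_i)\le C_n\, d_1(0,u_i)$. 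Feeding this back, every term is dominated by $C_n d_1(u_0,u_1)^{1/2^n}\max(d_1(0,u_0),d_1(0,u_1))^{1-1/2^n}$, after also noting that $\frac1\beta Ent^\beta(u_i)=d_1(u_i^\beta,0)$-type quantities can themselves be absorbed since $d_1(u_0,u_1)\le 2\max d_1(0,u_i)$ makes the Hölder exponent trade harmless. Alternatively, and perhaps more transparently, I would avoid the $P$ manipulation and instead run the computation of Proposition~\ref{prop: J_ric_perturb} ``symmetrically'': bound $d_1(u_0^\beta,u_1^\beta)-d_1(u_0,u_1)$ by $\sum_i \int_X|u_i^\beta-u_i|\,\omega^{n-1-j}_{\text{something}}\wedge\omega^j$ with mixed reference forms, apply \cite[Lemma A.2]{BBJ18}, and finish with Proposition~\ref{prop:PC0}.

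The main obstacle will be organizing the error terms so that the $\log\beta$ and $\log(d_1(0,u_i)+2)$ contributions coming from Proposition~\ref{prop:PC0} genuinely disappear in the final bound; the reason they do is that in the present statement one only needs $d_1(u_i^\beta,u_i)\le C_n d_1(0,u_i)$ — the crude linear bound, \emph{not} the refined $\frac1\beta$-sharp one — and that crude bound follows cleanly from $d_1(u_i^\beta,u_i)\le d_1(0,u_i)+d_1(0,u_i^\beta)\le 2 d_1(0,u_i)$ via Lemma~\ref{lem:d1-u-beta<d1-u}. So the proof structure is: (1) triangle/monotonicity reduction of $d_1(u_0^\beta,u_1^\beta)$ to $d_1(u_0,u_1)$ plus mixed-current integrals of $|u_i^\beta-u_i|$; (2) apply \cite[Lemma A.2]{BBJ18} to each integral, producing the $\frac{1}{2^n}$-Hölder exponent; (3) use $d_1(u_i^\beta,u_i)\le 2d_1(0,u_i)$ and $d_1(u_0,u_1)\le 2\max d_1(0,u_i)$ to merge everything into the claimed single term.
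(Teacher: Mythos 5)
There is a genuine gap, and it sits exactly where the real content of the proposition lies. Your decomposition $d_1(u_0^\beta,u_1^\beta)=I(u_0^\beta)+I(u_1^\beta)-2I(P(u_0^\beta,u_1^\beta))$, combined with $I(u_i^\beta)=I(u_i)+\tfrac1\beta Ent^\beta(u_i)$ and $P(u_0^\beta,u_1^\beta)\geq P(u_0,u_1)^\beta$, leaves behind the terms $\tfrac1\beta\big(Ent^\beta(u_0)+Ent^\beta(u_1)\big)$ and error integrals of $|u_i^\beta-u_i|$. None of these carries a factor of $d_1(u_0,u_1)$: with the crude bounds you invoke ($d_1(u_i^\beta,u_i)\leq 2d_1(0,u_i)$, or Proposition \ref{prop:PC0}), each such term is only of size $\max_i d_1(0,u_i)$, uniformly in how close $u_0$ and $u_1$ are. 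Your absorption claim is backwards: $d_1(u_0,u_1)\leq 2\max_i d_1(0,u_i)$ lets you enlarge $d_1(u_0,u_1)$ into $\max_i d_1(0,u_i)$, not the other way around, and a quantity comparable to $\max_i d_1(0,u_i)$ cannot be dominated by $C_n\,d_1(u_0,u_1)^{1/2^n}\max_i d_1(0,u_i)^{1-1/2^n}$ unless $\max_i d_1(0,u_i)\lesssim d_1(u_0,u_1)$, which is false in general. Followed to the end, your scheme only yields $d_1(u_0^\beta,u_1^\beta)\leq d_1(u_0,u_1)+C\max_i d_1(0,u_i)$, which does not tend to $0$ as $d_1(u_0,u_1)\to 0$ and therefore is not a stability (modulus-of-continuity) estimate at all; the same objection applies to your ``symmetric'' alternative, since it again reduces everything to integrals of $|u_i^\beta-u_i|$, i.e.\ to the quantization error rather than to the difference of the two inputs.

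The missing idea, which is how the paper proceeds, is to make the factor $d_1(u_0,u_1)^{1/2^n}$ appear in front of \emph{every} term by comparing $u_0^\beta$ and $u_1^\beta$ through a variation of the data, not through their distance to $u_0,u_1$. Concretely: first reduce to the ordered case via $P(u_0,u_1)$, using the Pythagorean formula and exactly your observation $P(u_0^\beta,u_1^\beta)\geq P(u_0,u_1)^\beta$ to write $d_1(u_0^\beta,u_1^\beta)\leq d_1(u_0^\beta,P(u_0,u_1)^\beta)+d_1(u_1^\beta,P(u_0,u_1)^\beta)$. Then, for $u_1\geq u_0$ (smooth, and pass to $\mathcal E^1$ by approximation), set $u_t=(1-t)u_0+tu_1$ and differentiate the defining equation \eqref{def:u-beta}: the Laplacian term integrates to zero against $\omega_{u_t^\beta}^n$, giving $d_1(u_1^\beta,u_0^\beta)=I(u_1^\beta)-I(u_0^\beta)=\tfrac1V\int_0^1\int_X(u_1-u_0)\,\omega_{u_t^\beta}^n\,dt$. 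Now \cite[Lemma A.2]{BBJ18} applies with the difference $u_1-u_0$ itself, producing $d_1(u_0,u_1)^{1/2^n}$, while Lemma \ref{lem:d1-u-beta<d1-u} and concavity of $t\mapsto I(u_t)$ bound $d_1(0,u_t^\beta)$ by $\max_i d_1(0,u_i)$. This differential identity is the step your plan has no substitute for; without it the Hölder factor in $d_1(u_0,u_1)$ never enters the dominant terms. (Your correct side observations — order preservation of quantization, Proposition \ref{prop:ent-beta=sup}, the $P$-trick — are all used in the paper, but only as auxiliaries around this central computation.)
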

\begin{proof} Start with $ u_1 \geq u_0$ both smooth K\"ahler potentials. Let $u_t := (1-t)u_0 + t u_1, \ t \in [0,1]$. Differentiating the defining equation of $u^\beta_t$ (see \eqref{def:u-beta}) we get that $\frac{d}{dt} u^\beta_t  =  (u_1 - u_0) + \frac{1}{\beta} \Delta_{\omega_{u_t^\beta}} \frac{d}{dt} u^\beta_t$. This implies that
$$d_1(u^\beta_1,u^\beta_0) = I(u^\beta_1) - I(u^\beta_0) = \frac{1}{V}\int_0 ^1\int_X \frac{d}{dt} u^\beta_t \omega_{u^\beta_t}^n dt=\frac{1}{V}\int_0 ^1\int_X (u_1 - u_0) \omega_{u^\beta_t}^n dt. $$

From \cite[Lemma A.2]{BBJ18} it follows that
$$d_1(u^\beta_1,u^\beta_0) \leq C_n 
d_1 (u_0,u_1)^{\frac{1}{2^n}} \int_0^1 \max (d_1(0, u^\beta_t),d_1(0,u_0), d_1(0,u_1))^{1 - \frac{1}{2^n}} dt,$$
where $C_n>0$ is a constant depending only on the dimension.

Using Lemma \ref{lem:d1-u-beta<d1-u}, we can write
$
d_1(0,u_t^\beta)\leq d_1(0,u_t)=-I(u_t)\leq-(1-t)I(u_0)-tI(u_1),
$
where we used the simple fact that $t\mapsto I(u_t)$ is concave. Thus,
$
d_1(0,u_t^\beta)\leq\max\{d_1(0,u_0),d_1(0,u_1)\}.
$
As a result, we obtain that 
$$d_1(u^\beta_1,u^\beta_0) \leq  C_n d_1 (u_0,u_1)^{\frac{1}{2^n}} \max (d_1(0,u_0), d_1(0,u_1))^{1 - \frac{1}{2^n}}.$$
This can now be extended for $u_1 \geq u_0$, both potentials in $\mathcal E^1$, using approximation \cite{BK07}.

For general $u_0,u_1 \in \mathcal E^1$, we can write that
\begin{flalign*}    d_1(u^\beta_1,P(u_0,u_1)^\beta) &\leq  C_n d_1 (u_1,P(u_0,u_1))^{\frac{1}{2^n}}  \max (d_1(0,u_1), d_1(0,P(u_0,u_1)))^{1 - \frac{1}{2^n}}\\
&\leq C'_n d_1(u_1,u_0)^{\frac{1}{2^n}}\max(d_1(0,u_0),d_1(0,u_1))^{1-\frac{1}{2^n}},
\end{flalign*}
where in the last inequality we used that $d_1(0,P(u_0,u_1)) \leq d_1 (0,u_0) + d_1 (u_0, P(u_0,u_1)) \leq d_1 (0,u_0) + d_1 (u_0, u_1) \leq 3 \max (d_1 (0,u_0), d_1 (0,u_1))$ .

Therefore, using symmetry, the Pythagorean identity \cite{Dar15} and the fact that $P(u^\beta_0,u^\beta_1) \geq P(u_0,u_1)^\beta$, we obtain that 
\begin{flalign*}   d_1(u_1^\beta,u_0^\beta)&=d_1(u_1^\beta,P(u_0^\beta,u_1^\beta))+d_1(u_0^\beta,P(u_0^\beta,u_1^\beta))\leq d_1(u^\beta_1,P(u_0,u_1)^\beta)+d_1(u^\beta_0,P(u_0,u_1)^\beta)\\
&\leq C''_n d_1(u_1,u_0)^{\frac{1}{2^n}}\max(d_1(0,u_0),d_1(0,u_1))^{1-\frac{1}{2^n}}.
\end{flalign*}
\end{proof}

Using the above result, we obtain that $Ent^\beta\{\cdot\}$ and $K^\beta\{\cdot\}$ are $d_1^c$-continuous, the main result of this section. The following argument shows that they are in fact H\"older continuous.

\begin{theorem}\label{thm:Ent-beta=lim-Ent-beta} 
    Let $\{u^m_{t}\},\{u_{t}\} \in \mathcal R^1$ such that $d_1^c(\{u^m_{t}\},\{u_t\})\to 0$. Then
    \begin{equation}
        \label{eq:K-beta-chordal-continuous}
        Ent^\beta\{u_t\}=\lim_{m\to\infty}Ent^\beta\{u^m_{t}\}, \ \ \         K^\beta\{u_t\}=\lim_{m\to\infty}K^\beta\{u^m_{t}\}.
    \end{equation}
\end{theorem}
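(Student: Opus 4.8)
The key is to promote the $d_1$-H\"older stability of the transcendental quantization map (the Proposition immediately preceding this Theorem) to a $d_1^c$-H\"older stability at the level of rays, and then combine this with the definitions of the radial functionals. First I would recall that for $\{u_t\},\{v_t\}\in\mathcal R^1$ one has, by definition of $d_1^c$, that $d_1(u_t,v_t)\le t\,d_1^c(\{u_t\},\{v_t\}) + o(t)$; more precisely, by Buseman convexity $t\mapsto d_1(u_t,v_t)$ is convex, so $d_1(u_t,v_t)/t \nearrow d_1^c(\{u_t\},\{v_t\})$, hence $d_1(u_t,v_t)\le t\,d_1^c(\{u_t\},\{v_t\})$ for all $t\ge 0$. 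Also, since all rays emanate from $0$ and geodesic rays are linearly bounded, $d_1(0,u_t)\le C_{\{u_t\}}\, t$, and one needs a uniform such bound along the approximating sequence: since $d_1^c(\{u^m_t\},\{u_t\})\to 0$ the quantities $d_1(0,u^m_t)/t$ are uniformly bounded, say by $A$, for $m$ large and all $t>0$ (using $d_1(0,u^m_t)\le d_1(0,u_t)+d_1(u_t,u^m_t)\le (C_{\{u_t\}}+1)t$ for $m\gg 1$).

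Next, normalize: replacing $u_t$ by $u_t-\sup_X u_t$ and $u^m_t$ by $u^m_t-\sup_X u^m_t$ changes neither $Ent^\beta\{u_t\}$ nor $K^\beta\{u_t\}$ (the functionals $Ent^\beta$, $I$, $\mathcal J_{\Ric\omega}$ are either invariant under adding constants or their additions cancel in $K^\beta$; in fact $Ent^\beta(u+c)=Ent^\beta(u)$ and $\mathcal J_{\Ric\omega}(u+c)=\mathcal J_{\Ric\omega}(u)$, and $K^\beta$ only involves these), and it only changes $d_1^c$ distances by a bounded shift in $t$-slope which tends to $0$ (one should check that $|\sup_X u_t - \sup_X u^m_t|=o(t)$ or absorb it; actually since $\sup_X$ of a $d_1$-Cauchy family is $d_1$-controlled, $|\sup_X u_t-\sup_X u^m_t|\le C\,d_1(u_t,u^m_t)$, so this contributes $O(d_1^c)$ to slopes and vanishes in the limit). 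With $u_t,u^m_t\le 0$, apply the preceding Proposition fiberwise:
\begin{equation*}
d_1(u_t^\beta,(u^m_t)^\beta)\le C_n\, d_1(u_t,u^m_t)^{1/2^n}\max(d_1(0,u_t),d_1(0,u^m_t))^{1-1/2^n}\le C_n\,(t\,\epsilon_m)^{1/2^n}(A t)^{1-1/2^n},
\end{equation*}
where $\epsilon_m:=d_1^c(\{u^m_t\},\{u_t\})\to 0$; dividing by $t$ gives $d_1(u_t^\beta,(u^m_t)^\beta)/t\le C_n A^{1-1/2^n}\epsilon_m^{1/2^n}$, uniformly in $t$.

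Now I convert this into convergence of the radial $\beta$-entropy. By Proposition \ref{prop:ent-beta=sup}, $Ent^\beta(u_t)=\beta(I(u_t^\beta)-I(u_t))$ and likewise for $u^m_t$. Since $u_t,u_t^\beta\le 0$, $-I(u_t)=d_1(0,u_t)$ and $-I(u_t^\beta)=d_1(0,u_t^\beta)$, so $Ent^\beta(u_t)=\beta(d_1(0,u_t)-d_1(0,u_t^\beta))$. Hence
\begin{equation*}
|Ent^\beta(u_t)-Ent^\beta(u^m_t)|\le \beta\big(|d_1(0,u_t)-d_1(0,u^m_t)|+|d_1(0,u_t^\beta)-d_1(0,(u^m_t)^\beta)|\big)\le \beta\big(d_1(u_t,u^m_t)+d_1(u_t^\beta,(u^m_t)^\beta)\big).
\end{equation*}
Dividing by $t$ and using the two bounds above, $\limsup_{t\to\infty}|Ent^\beta(u_t)-Ent^\beta(u^m_t)|/t\le \beta(\epsilon_m+C_nA^{1-1/2^n}\epsilon_m^{1/2^n})=:\delta_m\to 0$. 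Since $Ent^\beta\{u_t\}=\liminf_t Ent^\beta(u_t)/t$ (and similarly for $u^m_t$), a standard $\liminf$ estimate yields $|Ent^\beta\{u_t\}-Ent^\beta\{u^m_t\}|\le \delta_m\to 0$, proving the first identity in \eqref{eq:K-beta-chordal-continuous}. For the second: $K^\beta\{u_t\}$ is $Ent^\beta\{u_t\}$ plus the radial limits of $-n I_{\Ric\omega}+n\bar S I$ along $\{u_t\}$; the functional $\mathcal J_{\Ric\omega}\{\cdot\}$ (equivalently the radial $-I_{\Ric\omega}$ and $I$ slopes) is well known to be $d_1^c$-continuous — this follows from \cite[Lemma 4.15]{Dar15} and \cite[Lemma A.2]{BBJ18}, exactly as cited for $J\{\cdot\}$ in the proof of Theorem \ref{thm:YTD'} and for $|\mathcal J_{\Ric\omega}(u)|\le Cd_1(0,u)$ in \cite[Proposition 2.5]{DH17} — so $\mathcal J_{\Ric\omega}\{u^m_t\}\to\mathcal J_{\Ric\omega}\{u_t\}$, and adding the two convergences gives $K^\beta\{u^m_t\}\to K^\beta\{u_t\}$.

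\textbf{Main obstacle.} The routine-looking but genuinely delicate point is handling the normalization and the interchange of $\liminf_t$ with $\lim_m$: one must ensure the $t$-uniform H\"older bound on $d_1(u_t^\beta,(u^m_t)^\beta)/t$ (which requires the uniform-in-$m$ linear bound $d_1(0,u^m_t)\le At$, valid only for $m$ large), and that passing to the $\sup_X=0$ normalization does not secretly alter the radial slopes — here one uses that $\sup_X$ is $d_1$-Lipschitz on $\mathcal E^1$ modulo constants so the normalizing constants have $o(t)$ effect after dividing by $t$ in the limit $m\to\infty$. Everything else is bookkeeping on top of the preceding Proposition and Proposition \ref{prop:ent-beta=sup}.
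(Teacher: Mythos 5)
Your proposal is correct and follows essentially the same route as the paper's proof: normalize so that the rays are nonpositive, combine the fiberwise H\"older stability of $u\mapsto u^\beta$ with the identity $Ent^\beta(u)=\beta(I(u^\beta)-I(u))$, pass to slopes using Buseman convexity and a uniform linear bound on $d_1(0,u^m_t)$, and handle the $\cJ_{\Ric(\omega)}$ term by the same \cite[Lemma A.2]{BBJ18}-type H\"older estimate (this is exactly the paper's Lemma \ref{I_Ric_cont}, which you assert as known but should prove or cite in that quantitative form, since mere $d_1$-continuity of $I_{\Ric(\omega)}$ at fixed potentials does not by itself give slope continuity). The only other adjustment is in the normalization step: the Lipschitz claim $|\sup_X u-\sup_X v|\le C\,d_1(u,v)$ is not a standard estimate, but the conclusion you need is exactly the $d_1^c$-continuity of the sup-slope $\{v_t\}\mapsto c_v$ with $\sup_X v_t=c_v t$ (\cite[Lemma 3.2]{DZ22}), as in the paper one may simply subtract a single common linear function $ct$ from all rays, which leaves the chordal distances unchanged.
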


\begin{proof} Recall from \cite[Lemma~3.2]{DZ22} that for \(\{v_t\} \in \mathcal{R}^1\) there exists \(c_v \in \mathbb{R}\) with \(\sup_X v_t = c_v t\) for \(t>0\), and \(\{v_t\} \mapsto c_v\) is \(d_1^c\)-continuous. Thus, after possibly replacing $u_t$ and $u^m_{t}$ by \(u_t - ct\) and \(u^m_{t} - ct\) for sufficiently large \(c>0\), we may assume that \(u_t, u^m_{t} \leq 0\).

Using the previous result and Proposition \ref{prop:ent-beta=sup}, for $l>0$ fixed, we have:
$$\Big|\frac{Ent^\beta(u_l) - Ent^\beta(u^m_{l})}{l}\Big| \leq  C_n\beta  \frac{d_1 (u_l,u^m_{l})^{\frac{1}{2^n}}}{l^{\frac{1}{2^n}}} \max \Big(\frac{d_1(0,u_l)}{l}, \frac{d_1(0,u^m_{l})}{l}\Big)^{1 - \frac{1}{2^n}} + \beta \frac{d_1 (u_l,u^m_{l})}{l}.$$
By Buseman convexity, $\frac{d_1 (u_l,u^m_{l})}{l} \leq d_1^c(\{u_t\},\{u^m_{t}\}) \to 0$ for fixed $l >0$, hence:
\begin{flalign*}
\big|Ent^\beta\{u_{t}\} - Ent^\beta\{u^m_{t}\}\big| \leq &  C_n\beta d_1^c (\{u_{t}\},\{u^m_{t}\})^{\frac{1}{2^n}} \max (d^c_1(\{0\},\{u_{t}\}), d^c_1(\{0\},\{u^m_{t}\}))^{1 - \frac{1}{2^n}}\\
&+ \beta d_1^c (\{u_{t}\},\{u^m_{t}\}) .
\end{flalign*}
The $d_1^c$-continuity of $Ent^\beta\{\cdot\}$ follows. The $d_1^c$-continuity of $K^\beta\{\cdot\}$ immediately follows from the next lemma.
\end{proof}

\begin{lemma}\label{I_Ric_cont} Let $u,v \in \mathcal E^1$ with $u,v \leq 0$. Then
$$\big| I_{\Ric \omega}(u)-I_{\Ric \omega}(v) \big| \leq C d_1 (u,v)^{\frac{1}{2^n}} \max(d_1 (0,u), d_1 (0,v))^{1 - \frac{1}{2^n}}.$$
\end{lemma}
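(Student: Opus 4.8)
\textbf{Proof plan for Lemma \ref{I_Ric_cont}.}

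The plan is to reduce the estimate to a single application of the multilinear interpolation inequality \cite[Lemma A.2]{BBJ18}, exactly as was done for the quantization stability estimate earlier in this section. Recall that $\cJ_{\Ric\omega}(u) = nI_{\Ric\omega}(u) - \bar S\, I(u)$, and that $I(\cdot)$ is already known to be $d_1$-Lipschitz on sublevel sets (indeed $|I(u) - I(v)| \le d_1(u,v)$); so it suffices to prove the claimed H\"older bound for the twisted term
$$
\frac{1}{V}\int_X u\,\Ric\omega\wedge\sum_{i=0}^{n-1}\omega^i\wedge\omega_u^{n-1-i}
$$
alone. Writing the difference of this quantity at $u$ and at $v$ and telescoping, one gets a sum of terms each of the shape $\frac{1}{V}\int_X (u-v)\,\Ric\omega\wedge\omega_u^{i}\wedge\omega_v^{j}$ with $i+j = n-1$, plus terms where one of the potentials multiplies a difference of mixed Monge--Amp\`ere measures; a standard integration-by-parts rearrangement (moving $\ddc$ off the factor $\omega_u^i\wedge\omega_v^j$ and onto $u-v$ repeatedly) converts everything into a uniformly bounded number of integrals of the form $\frac{1}{V}\int_X (u-v)\,\theta\wedge(\text{product of }\omega_u,\omega_v)$ where $\theta$ is one of the fixed smooth forms $\Ric\omega$ or $\omega$.

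Next I would dominate $\Ric\omega$ by a fixed multiple of $\omega$: since $\Ric\omega$ is smooth on the compact manifold $X$, there is $C_0>0$ with $-C_0\omega \le \Ric\omega \le C_0\omega$, so each of the above integrals is bounded in absolute value by $C_0\cdot\frac{1}{V}\int_X |u-v|\,\omega\wedge\omega_u^{a}\wedge\omega_v^{b}$ with $a+b = n-1$. Now \cite[Lemma A.2]{BBJ18} applies verbatim (this is precisely the inequality used to prove \eqref{eq: u_u_beta_est} and in the proof of Lemma \ref{lem: sup_perturb_lemma}): it gives
$$
\frac{1}{V}\int_X |u-v|\,\omega\wedge\omega_u^{a}\wedge\omega_v^{b} \le C_n\, d_1(u,v)^{\frac{1}{2^n}}\,\bigl(\max\{\cI(u),\cI(v)\}\bigr)^{1-\frac{1}{2^n}},
$$
and by \cite[Theorem 3]{Dar15} together with $u,v\le 0$ one has $\cI(u)\le (n+1)J(u) \le (n+1)d_1(0,u)$, and likewise for $v$, so the right-hand side is controlled by $C_n\, d_1(u,v)^{1/2^n}\max(d_1(0,u),d_1(0,v))^{1-1/2^n}$. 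Summing the finitely many terms and absorbing the $I(\cdot)$ contribution (which is even Lipschitz, hence a fortiori H\"older with the same exponent after using $d_1(u,v)\le 2\max(d_1(0,u),d_1(0,v))$) yields the stated bound.

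The main point to be careful about is the integration-by-parts bookkeeping: one must check that the number of resulting terms depends only on $n$ and that every term genuinely has $u-v$ (not $u$ or $v$ separately) as its scalar factor after rearrangement, since only then does the interpolation inequality produce the factor $d_1(u,v)^{1/2^n}$ rather than merely $\max(d_1(0,u),d_1(0,v))$. This is routine — it is the same manipulation that underlies the well-known $d_1$-continuity of $\cJ_{\Ric\omega}$ on $d_1$-bounded sets, e.g.\ \cite[Proposition 2.5]{DH17} — but it is the only step requiring genuine care; the rest is a direct invocation of \cite[Lemma A.2]{BBJ18} and \cite[Theorem 3]{Dar15}.
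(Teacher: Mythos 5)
Your skeleton coincides with the paper's: both arguments run the cocycle formula for $I_{\Ric\omega}$, dominate $\pm\Ric\omega\le C\omega$, invoke the interpolation inequality \cite[Lemma A.2]{BBJ18}, and convert the resulting energy factors into $\max(d_1(0,u),d_1(0,v))$ via \cite[Theorem 3]{Dar15} and $u,v\le 0$. (The paper additionally collapses all mixed measures $\omega\wedge\omega_u^a\wedge\omega_v^b$ into the single measure $\omega^n_{(u+v)/4}$, which is cosmetic; and your discussion of the $I(\cdot)$ term is unnecessary, since the lemma concerns $I_{\Ric\omega}$ rather than $\cJ_{\Ric\omega}$.)

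The one place where you genuinely diverge --- and where your plan is glossed --- is the application of \cite[Lemma A.2]{BBJ18} ``verbatim'' to $\int_X|u-v|\,\omega\wedge\omega_u^a\wedge\omega_v^b$. Because $\Ric\omega$ has no sign, the absolute value $|u-v|$ is unavoidable at this stage, and $|u-v|$ is not a difference of two $\cE^1$ potentials; the earlier uses of the lemma that you cite as precedent (\eqref{eq: u_u_beta_est} and Lemma \ref{lem: sup_perturb_lemma}) bound \emph{signed} integrals $\int(u^\beta-u)\,d\mu$ against positive mixed Monge--Amp\`ere measures, not integrals of $|u^\beta-u|$, so ``verbatim'' is not accurate. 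The paper's proof supplies exactly the missing device: it writes $|u-v|=(\max(u,v)-u)+(\max(u,v)-v)$, notes that $\max(u,v)\in\cE^1$ with $u,v\le\max(u,v)\le 0$, applies the lemma to each of these two ordered pairs against $\omega^n_{(u+v)/4}$ (producing the factors $d_1(\max(u,v),u)^{1/2^n}$ and $d_1(\max(u,v),v)^{1/2^n}$, the energy of $(u+v)/4$ being controlled by $\max(d_1(0,u),d_1(0,v))$ through concavity and monotonicity of $I$), and then converts $d_1(\max(u,v),u)+d_1(\max(u,v),v)$ back into $d_1(u,v)$ using \cite[Remark 5.6]{Dar15} --- a step your route never encounters and hence never justifies. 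If the version of \cite[Lemma A.2]{BBJ18} you have in mind really does bound $\int_X|u-v|\,d\mu$ for incomparable $u,v$ with the factor $d_1(u,v)^{1/2^n}$, then your shorter argument closes; otherwise the $\max(u,v)$ splitting (cheap, but a genuine extra ingredient) together with the final appeal to \cite[Remark 5.6]{Dar15} is what is needed to complete the proof.
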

\begin{proof}
Using the cocycle formula of $I_{\Ric \omega}(u)$ we obtain existence of $C(X,\omega)>0$
such that 
$$C| I_{\Ric \omega}(u)-I_{\Ric \omega}(v) \big| \leq  \int_X |u -v| \omega^n_{ \frac{u+v}{4}} = \int_X (\max(u,v) - u) +  (\max(u,v) - v)\omega^n_{ \frac{u+v}{4}}.$$
Separating the last integral, we estimate the first term.  Using \cite[Lemma A.2]{BBJ18} the same way as in \eqref{eq: u_u_beta_est} we obtain 
\begin{flalign*}
\int_X (\max(u,v) - u) & \omega^n_{ \frac{u+v}{4}} \leq C d_1(\max(u,v), u)^\frac{1}{2^n} \max(J(u),J(\max(u,v)),J((u+v)/4))^{1 - \frac{1}{2^n}}\\
&\leq C d_1(\max(u,v), u)^\frac{1}{2^n} \max(-I(u),-I(\max(u,v)),-I((u+v)/4))^{1 - \frac{1}{2^n}}.
\end{flalign*}
Where in the last line we used that  $0 \geq \max(u,v) , u, (u+v)/4$. Using concavity and monotonicity of $I(\cdot)$, together with $d_1(0,u) = - I(u),d_1(0,v) = - I(v)$, we arrive at:
\begin{flalign*}
\int_X (\max(u,v) - u)\omega^n_{ \frac{u+v}{4}} &\leq C d_1(\max(u,v), u)^\frac{1}{2^n} \max(d_1(0,u),d_1(0,v))^{1 - \frac{1}{2^n}}.
\end{flalign*}
Similarly,
$\int_X (\max(u,v) - v)\omega^n_{ \frac{u+v}{4}} \leq C d_1(\max(u,v), v)^\frac{1}{2^n} \max(d_1(0,u),d_1(0,v))^{1 - \frac{1}{2^n}}.
$
After adding the last two estimates, the result follows from \cite[Remark 5.6]{Dar15}.
\end{proof}

In conclusion, all the ingredients necessary for the proof of Theorem \ref{thm:YTD'}, as presented in the introduction, have now been established.

\paragraph{A transcendental YTD correspondence}
We briefly discuss the YTD correspondence for transcendental $(X,\omega)$, not necessarily induced by an ample line bundle. In this case one can still introduce notions of test configurations and K-stability \cite{DR17a,SD18}.

\begin{definition}
\label{def:Kahler-TC}
A K\"ahler test configuration $\mathcal T$ of $(X,\omega)$ is a pair $\mathcal T := (\mathcal{X},\mathcal{A})$ such that: \smallskip

\noindent $\bullet$ $\mathcal{X}$ is a normal, compact, complex space,  
    $\pi : \mathcal{X} \to \mathbb{P}^1$ is a flat, surjective morphism,  
    and there exists a holomorphic $\mathbb{C}^*$-action $\rho$ on $\mathcal{X}$ compatible with the standard $\Bbb C^*$-action on $\mathbb{P}^1$.
\smallskip

\noindent $\bullet$ There exists a $\mathbb{C}^*$-equivariant biholomorphism
    $
        \mathcal{X} \setminus \pi^{-1}(0) \ \cong\ X \times (\mathbb{P}^1 \setminus \{0\}),
    $
    identifying $\pi$ with projection onto $\mathbb{P}^1$.
\smallskip

\noindent $\bullet$ $\mathcal{A} \in H^{1,1}_{BC}(\mathcal{X},\mathbb{R})$ is a $\mathbb{C}^*$-invariant Bott-Chern cohomology class,  and 
    $
        \mathcal{A}|_{X \times (\mathbb{P}^1 \setminus \{0\})} = p_1^*[\omega],
    $
    where $p_1$ is projection to $X$.
\smallskip

\noindent $\bullet$   $\mathcal{A}$ is \emph{relatively K\"ahler}, i.e. $\cA+c \pi^*c_1(\cO_{\PP^1}(1))$ is a K\"ahler class for some $c>0$.
\end{definition}

As in the ample case, it is possible to associate to $\mathcal T$ a $C^{1,1}$ geodesic ray $\{ u^\mathcal T_t \}$ (\cite[Lemma 4.6]{SD18}, \cite{CTW18}), allowing to define $K^\beta$-stability in this context in the same way as in the algebraic case (recall \eqref{eq: K_stab_def}). 

A combination of \cite[Proposition 6.3.1]{Pic24} and  \cite[Theorem 5.1.7]{Pic24} implies that finite energy geodesic rays with bounded $K$ energy slope are $d_1^c$-approximable by rays of K\"ahler test configurations. As a result, the exact same proof as that of Theorem 1.1 implies the following:

\begin{theorem} Let $(X,\omega)$ be a compact K\"ahler manifold.
\label{thm:YTD''}
    The following are equivalent: \smallskip\\
\noindent (i) $X$ admits a unique cscK metric in $\{\omega\}$. \smallskip\\
\noindent (ii) $X$ is uniformly $K^\beta$-stable for some $\beta>0$.
\end{theorem}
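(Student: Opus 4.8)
The plan is to reduce Theorem \ref{thm:YTD''} to the already-established machinery, exactly mirroring the proof of Theorem \ref{thm:YTD'}, with the only genuinely new input being the transcendental replacement for C.~Li's approximation theorem. First I would observe that the four "central components" of the proof of Theorem \ref{thm:YTD'} all have transcendental analogues available: (a) by Chen--Cheng \cite{CC1,CC2}, existence of a unique cscK metric in $\{\omega\}$ is equivalent to properness of the $K$ energy on $\mathcal E^1$ — this holds for arbitrary compact K\"ahler $(X,\omega)$ and requires no polarization; (b) Theorem \ref{thm:K-prop=K-beta-prop}, which establishes that properness of $K$ is equivalent to properness of $K^\beta$ for $\beta\gg 1$, was already proved in the transcendental generality (its proof only uses pluripotential theory on $(X,\omega)$); (c) Theorem \ref{thm:Ent-beta=lim-Ent-beta}, the $d_1^c$-continuity of $K^\beta\{\cdot\}$, is likewise stated and proved for general $\{u_t\}\in\mathcal R^1$; (d) the role played by C.~Li's result \cite{Li20-cscK} — that any geodesic ray with finite $K$-energy slope lies in $\mathcal R^1_\mathcal I$ — is taken over by the combination of \cite[Proposition 6.3.1]{Pic24} and \cite[Theorem 5.1.7]{Pic24}, which gives that finite-energy rays with bounded $K$-energy slope are $d_1^c$-approximable by rays $\{u^\mathcal T_t\}$ coming from K\"ahler test configurations $\mathcal T$.

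Next I would run the two implications. For (i)$\Rightarrow$(ii): assuming a unique cscK metric exists, properness of $K$ follows from Chen--Cheng (the transcendental $BDL$-type statement), hence by Theorem \ref{thm:K-prop=K-beta-prop} the functional $K^\beta$ is proper for some $\beta>0$, i.e. $K^\beta(u)\geq\gamma J(u)-\gamma'$ on $\mathcal E^1$. Taking $u=u^\mathcal T_t$ along the $C^{1,1}$ geodesic ray attached to any K\"ahler test configuration $\mathcal T$, dividing by $t$ and letting $t\to\infty$, the constant $\gamma'$ washes out and one obtains $K^\beta_\mathcal T\geq\gamma J_\mathcal T$, which is precisely uniform $K^\beta$-stability. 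For (ii)$\Rightarrow$(i): by \cite{CC2} (cf.~\cite[Theorem 1.9]{DL19}) it suffices to produce $\gamma>0$ with $K\{u_t\}\geq\gamma J\{u_t\}$ for every $\{u_t\}\in\mathcal R^1$; since a ray with $K\{u_t\}=\infty$ satisfies this trivially, we may assume $K\{u_t\}<\infty$, and then by the Mesquita-Piccione approximation result such a ray is a $d_1^c$-limit of rays $\{u^{\mathcal T_j}_t\}$ of K\"ahler test configurations. Uniform $K^\beta$-stability gives $K^\beta\{u^{\mathcal T_j}_t\}=K^\beta_{\mathcal T_j}\geq\gamma J_{\mathcal T_j}=\gamma J\{u^{\mathcal T_j}_t\}$; passing to the limit $j\to\infty$ using the $d_1^c$-continuity of $J\{\cdot\}$ (from \cite[Lemma 4.15]{Dar15} and \cite[Lemma A.2]{BBJ18}) and of $K^\beta\{\cdot\}$ (Theorem \ref{thm:Ent-beta=lim-Ent-beta}), together with $K\geq K^\beta$ from \eqref{eq: K_beta_conv}, yields $K\{u_t\}\geq K^\beta\{u_t\}\geq\gamma J\{u_t\}$, as required.

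The main obstacle — or rather the only place where one must be careful rather than merely transcribe — is verifying that the input (d) is genuinely available in the form needed: one must ensure that the rays $\{u^\mathcal T_t\}$ produced by \cite{Pic24} for K\"ahler (not just ample) test configurations are $C^{1,1}$ finite-energy geodesic rays in $\mathcal R^1$ emanating from $0$, so that the slopes $K^\beta_\mathcal T$ and $J_\mathcal T$ are well-defined and the stability condition \eqref{eq: K_stab_def} makes sense in this setting; this is precisely what \cite[Lemma 4.6]{SD18} and \cite{CTW18} (cited before the statement of Theorem \ref{thm:YTD''}) provide. One should also note that the $BDL$ properness theorem \cite[Theorem 1.5]{BDL20} used in the polarized proof is available transcendentally — indeed its transcendental version, together with Chen--Cheng, is exactly what makes (i)$\Leftrightarrow$(ii)$\Leftrightarrow$(iii) circular. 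Once these points are checked, the proof is word-for-word that of Theorem \ref{thm:YTD'}, which is why it suffices to say "the exact same proof as that of Theorem \ref{thm:YTD'} implies the following."
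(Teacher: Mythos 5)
Your proposal is correct and follows essentially the same route as the paper: the paper likewise invokes \cite[Proposition 6.3.1]{Pic24} and \cite[Theorem 5.1.7]{Pic24} as the transcendental substitute for C.~Li's approximation theorem and then runs the proof of Theorem \ref{thm:YTD'} verbatim, with the remaining ingredients (Chen--Cheng/BDL properness, Theorem \ref{thm:K-prop=K-beta-prop}, Theorem \ref{thm:Ent-beta=lim-Ent-beta}) already valid in the transcendental setting. Your extra check that the rays attached to K\"ahler test configurations are genuine $C^{1,1}$ geodesic rays in $\mathcal R^1$ is exactly the point the paper addresses via \cite[Lemma 4.6]{SD18} and \cite{CTW18}.
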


\section{A slope formula for the $\beta$-entropy}
\label{sec:slope-formula}

The rest of this paper is devoted to establishing radial and algebraic formulations of $K^\beta$-stability.
Let $(X,L)$ be a polarized manifold, with $\omega\in c_1(L)$.
In this technical section we provide a useful slope formula for our $\beta$-entropy, which is the radial version of  \eqref{eq: Ent_beta_def-E1}:

\begin{theorem}\label{thm:rad-Ent-beta=sup}
    For any $\beta>1$ and any  bounded geodesic ray $\{u_t\}$,
    we have that
\begin{equation}
    \label{eq:rad-Ent-beta-formula}
    Ent^\beta\{u_t\}=\sup_{\{v_t\}}\left(L^\beta(\{v_t\},\{u_t\})+\beta (I\{v_t\}-I\{u_t\})\right),
\end{equation}
    where the sup is over all geodesic rays $\{v_t\}$ arising from test-configurations of $(X,L)$, and
    \begin{equation}
    \label{eq:def-L-beta}
    L^\beta(\{v_t\},\{u_t\}):=\liminf_{t\to\infty}\left(-\frac{1}{t}\log\int_Xe^{\beta(v_t-u_t)}\frac{\omega^n}{V}\right).
\end{equation}
\end{theorem}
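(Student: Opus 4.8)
The plan is to establish the identity \eqref{eq:rad-Ent-beta-formula} by proving the two inequalities separately, with the ``$\geq$'' direction being essentially formal and the ``$\leq$'' direction requiring the real work. For the easy inequality, fix a geodesic ray $\{v_t\}$ coming from a semi-ample test configuration. By the definition \eqref{eq: Ent_beta_def-E1} of $Ent^\beta(\cdot)$ applied at the potential $u_t$ with the competitor $v_t$, we have
$$
Ent^\beta(u_t)\geq -\log\int_X e^{\beta(v_t-u_t)}\frac{\omega^n}{V}+\beta(I(v_t)-I(u_t)).
$$
Dividing by $t$ and taking $\liminf_{t\to\infty}$, and using that $I\{v_t\}=\lim_t I(v_t)/t$ and $I\{u_t\}=\lim_t I(u_t)/t$ exist (as $I$ is affine along geodesics), the superadditivity of $\liminf$ gives $Ent^\beta\{u_t\}\geq L^\beta(\{v_t\},\{u_t\})+\beta(I\{v_t\}-I\{u_t\})$. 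Taking the supremum over such $\{v_t\}$ yields ``$\geq$''.

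For the hard inequality ``$\leq$'', the natural candidate competitor is the ray built from the transcendental quantizations $\{u_t^\beta\}$, since by Proposition \ref{prop:ent-beta=sup} we have $Ent^\beta(u_t)=\beta(I(u_t^\beta)-I(u_t))$, which is already of the ``right shape.'' The issue is that $\{u_t^\beta\}$ need not be a geodesic ray, let alone one arising from a semi-ample test configuration. Here is where I expect to spend most of the effort. The idea is: first, $\{u_t\}$ being a bounded geodesic ray, it is in particular a subgeodesic with $\pi^*\omega+\ddc_{X\times\Sigma}U\geq 0$; after the slight shrink $(1-\eta)u_t$ and adding the exhaustion term $\varepsilon e^{-t}$ as in the proof of Proposition \ref{prop:u-t-beta-is-subgeodesic}, P\u{a}un's theorem (Theorem \ref{thm:paun}) shows the quantized curve $\{((1-\eta)u_t)^\beta\}$ is a subgeodesic ray. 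Then one takes its largest geodesic minorant (the ``geodesic ray associated to a subgeodesic ray,'' obtained by the Legendre-transform/relaxation procedure), which only increases the Monge–Amp\`ere energy slope $I\{\cdot\}$ and does not decrease $L^\beta$; finally one approximates this geodesic ray in $d_1^c$ by rays of semi-ample test configurations, using Demailly regularization (the construction recalled in \S\ref{sec:pre}, via multiplier ideal sheaves $\mathfrak a_m=\cI(mU)$ and Li's integral inequality \eqref{eq:int-ineq-of-Li}), and invokes the $d_1^c$-continuity of $Ent^\beta\{\cdot\}$ from Theorem \ref{thm:Ent-beta=lim-Ent-beta} together with $d_1^c$-continuity of $I\{\cdot\}$ and lower semicontinuity of $L^\beta$ under such approximation. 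Throughout one must track the error terms introduced by the parameters $\eta\searrow 0$ and $\varepsilon\searrow 0$, using Proposition \ref{prop:PC0}-type estimates and the fact (Lemma \ref{lem:d1-u-beta<d1-u}, \eqref{eq:rad-ent-beta-finite}) that all the radial quantities involved are finite.

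The main obstacle is controlling the gap between $Ent^\beta\{u_t\}$ and the value of the functional $L^\beta(\{v_t\},\{u_t\})+\beta(I\{v_t\}-I\{u_t\})$ at the approximating test-configuration rays $\{v_t\}$: one needs that passing from $\{u_t^\beta\}$ (or its geodesic relaxation) to genuine test-configuration rays is ``lossless at the level of slopes.'' Concretely, I expect the key technical lemma to be a comparison of the form $-\frac1t\log\int_X e^{\beta(v_t-u_t)}\omega^n/V \geq -\frac1t\log\int_X e^{\beta(w_t-u_t)}\omega^n/V - o(1)$ when $v_t$ is a Demailly-regularization of $w_t$, which should follow from \eqref{eq:U-has-analytic-sing}--\eqref{eq:int-ineq-of-Li} and the openness/strong-continuity properties of multiplier ideals; combined with monotone convergence in $\beta$-type arguments this closes the loop. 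The remaining steps — the P\u{a}un positivity input, the relaxation to a geodesic, and the $d_1^c$-continuity — are each available from earlier in the paper, so the proof is mostly a matter of assembling them in the correct order while bookkeeping the approximation parameters.
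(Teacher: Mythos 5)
Your ``$\geq$'' direction is fine and is the same (implicit) step the paper uses, and your overall skeleton --- quantize $u_t$, use P\u{a}un to get subgeodesicity, pass to genuine geodesic rays, then approximate by Demailly/test-configuration rays --- is the paper's skeleton. But the two claims that carry the hard direction in your sketch do not hold as stated. First, the assertion that replacing the quantized curve by its geodesic relaxation ``only increases the slope $I\{\cdot\}$ and does not decrease $L^\beta$'' is internally inconsistent: a pointwise larger replacement increases $I$ but increases $\int_X e^{\beta(v_t-u_t)}\omega^n$, so $L^\beta$ can only drop, while a smaller replacement does the opposite (and the standard construction is the smallest geodesic \emph{majorant} of a subgeodesic, not a ``largest minorant''). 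Controlling the \emph{sum} is precisely the difficulty. The paper does not take the geodesic hull of $\{((1-\eta)u_t)^\beta\}$ as the competitor at all: it builds $\phi_{t,\beta}$ as a limit of geodesic segments joining $0$ to $u^\beta_{t_j}$ along times realizing the liminf, which pins the energy slope exactly, $\beta(I\{\phi_{t,\beta}\}-I\{u_t\})=Ent^\beta\{u_t\}$ (see \eqref{eq: lim_cons}), gives $u_t\le\phi_{t,\beta}\le 0$ and $K\{\phi_{t,\beta}\}<\infty$, hence maximality. P\u{a}un's theorem enters only as an auxiliary device: the quantized subgeodesic and its smallest geodesic majorant $v_{t,\beta+A}$, Li's integrability lemma, and the comparison Lemma \ref{lem:1-delta-u-beta-comparison} (relating $(1-\delta)u^\beta$ to $((1-\delta)u)^{\beta/(1-\delta)}$) give $(1-\delta)\phi_{t,\beta}\le v_{t,\beta+A}$ and hence the integrability of Proposition \ref{prop:int-estimate-phi-u}, which via Proposition \ref{prop: liminf} is what bounds $L^\beta$ from below. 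Note also that your parameter $\eta$ cannot be sent to $0$ for fixed $\beta$ unless $\Ric\omega\ge 0$ (it must exceed $2A/\beta$), so ``tracking errors as $\eta\searrow 0$'' cannot close the argument; the paper converts the shrink into the exponent shift $\beta\mapsto\beta+A$ exactly to avoid a non-vanishing error in the $I$-slope.

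Second, the Demailly step is not a soft statement about lower semicontinuity of $L^\beta$ or ``openness of multiplier ideals,'' and no such general continuity is available: regularization makes the ray less singular, so $L^\beta(\{w_{t,m}\},\{u_t\})$ can only decrease, and the whole point is to bound the loss quantitatively. The paper's Proposition \ref{prop:approx-psi-using-TC-rays} does this with a three-factor H\"older inequality combining (i) the multiplier-ideal bound \eqref{eq:int-ineq-of-Li} for the regularized rays, (ii) integrability of $e^{\frac{\beta}{1-\varepsilon}(\psi_t-u_t)}$, i.e.\ at an exponent strictly larger than $\beta$ --- which is why the Step~1/2 construction must be run at $\beta_\varepsilon=\beta/(1-\varepsilon)^2$ in Corollary \ref{cor:rad-Ent-beta-approx-by-geod-rays} --- and (iii) Tian's alpha invariant to absorb the leftover factor $e^{-\beta\delta r u_t}$. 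Your sketch contains neither the exponent margin (ii) nor the alpha-invariant input (iii), and without them the H\"older splitting does not close; likewise, the $d_1^c$-continuity of $Ent^\beta\{\cdot\}$ from Theorem \ref{thm:Ent-beta=lim-Ent-beta} is not the tool used in this proof. So while the route is recognizably the paper's, the proposal as written has genuine gaps at both of its crucial steps.
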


The boundedness assumption on $\{u_t\}$ is naturally satisfied when it comes to applications, since it holds for a large class of geodesic rays (including the ones induced from filtrations; see \cite[\S 9]{RWN14}). That being said, it would be nice to know if this condition is necessary.

The proof of Theorem \ref{thm:rad-Ent-beta=sup} relies on several technical ingredients. The main idea is that, although the ray \(\{u_t^\beta\}\) attains the supremum in \eqref{eq:rad-Ent-beta-formula}, it is far from being geodesic. We therefore approximate \(\{u_t^\beta\}\) by a sequence of geodesic rays arising from test configurations.

We begin with a slope formula for \(L^\beta(\{v_t\},\{u_t\})\), generalizing the corresponding formula for the Ding energy \cite{Ber16,DZ22}. As shown later (Theorem \ref{thm:L-beta-for-two-TC}), when both \(\{v_t\}\) and \(\{u_t\}\) arise from test configurations, \(L^\beta(\{v_t\},\{u_t\})\) can be expressed more precisely as the log canonical threshold of certain divisors.

\begin{proposition} \label{prop: liminf}
Let $\{u_t\}$ and $\{v_t\}$ be two sublinear finite energy subgeodesic rays. Suppose one of these subgeodesics is a bounded geodesic ray. Then 
$$L^\beta(\{v_t\},\{u_t\})= \sup\left\{\tau\in\RR\ :\int_0^\infty e^{\tau t}\int_Xe^{\beta(v_t-u_t)}\omega^n dt<\infty\right\}.$$
\end{proposition}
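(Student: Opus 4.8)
The plan is to connect the exponential-decay rate of the function $t \mapsto f(t) := \int_X e^{\beta(v_t - u_t)} \omega^n$ with the convergence of the weighted integral $\int_0^\infty e^{\tau t} f(t)\, dt$. The right-hand side, call it $\tau^*$, is by definition the abscissa of convergence of this integral, and the standard fact here is that if $f(t) \le C e^{-(\tau+\varepsilon)t}$ eventually then $\int_0^\infty e^{\tau t} f(t)\,dt < \infty$, while if $f(t) \ge c e^{-(\tau-\varepsilon)t}$ along a sequence $t_k \to \infty$ then the integral diverges for that $\tau$. So morally $\tau^* = \liminf_{t\to\infty}\big(-\tfrac1t \log f(t)\big) = L^\beta(\{v_t\},\{u_t\})$ — provided $f$ is sufficiently well behaved that pointwise bounds at integers (or along sequences) upgrade to integral bounds. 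The key structural input making this rigorous is \emph{log-convexity} (or at least near-monotone control) of $f(t)$: since one of $\{u_t\}, \{v_t\}$ is a geodesic and both are subgeodesics, $v_t - u_t$ is (after the usual change of variable $t = -\log|z|^2$) a difference of an $\omega$-psh-in-$z$ function and an $\omega$-harmonic-in-$z$ one on the punctured disc, hence $U_z := (v - u)_{-\log|z|^2}$ is $S^1$-invariant subharmonic in $z$; this forces $t \mapsto \int_X e^{\beta U} \omega^n$ to be convex in $t$ (convexity of $L^p$-type means of subharmonic functions along the $\log|z|$ variable), after also using that the fiber integral of $e^{\beta U}$ is finite for each $t$ by $\cE^1$-integrability \cite{Ze01}.

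First I would set up the coordinate change and record that $U_z = V_z - \tilde U_z$ where $V_z$ extends to a qpsh function on $X \times \Delta$ (from the subgeodesic $\{v_t\}$) and $\tilde U_z$ is pluriharmonic in $z$ (from the \emph{geodesic} among the two — this is where the geodesic hypothesis is used). Hence $t \mapsto \log f(t)$, or at least $t \mapsto f(t)$ itself, is convex, and in particular $f(t) e^{\lambda t}$ is eventually monotone for any $\lambda$, which is exactly what converts pointwise asymptotics into integral-convergence statements without loss. Second, I would prove the easy inclusion: if $\tau < L^\beta$, then $-\tfrac1t \log f(t) > \tau + 2\varepsilon$ for all large $t$, so $f(t) \le e^{-(\tau + 2\varepsilon)t}$ eventually, giving $\int_0^\infty e^{\tau t} f(t)\, dt < \infty$; hence $\tau^* \ge L^\beta$. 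Third, the reverse: if $\tau > L^\beta$, pick $\varepsilon$ with $\tau - 2\varepsilon > L^\beta$; then along a sequence $t_k \to \infty$ we have $f(t_k) \ge e^{-(L^\beta + \varepsilon)t_k} \ge e^{-(\tau - \varepsilon)t_k}$, and convexity of $f$ (hence a lower bound of the form $f(t) \gtrsim e^{-(\tau-\varepsilon)t}$ on intervals $[t_k, t_k + 1]$, say, using that a convex function lies above its chords / does not dip too fast) yields $\int_0^\infty e^{\tau t} f(t)\, dt = \infty$; hence $\tau^* \le L^\beta$. Combining, $\tau^* = L^\beta$.

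The main obstacle, and the step requiring the most care, is the third one — extracting a genuine \emph{integral} lower bound from pointwise lower bounds along a sequence. A bare $\liminf$ gives control only at isolated times, and a priori $f$ could be large at $t_k$ but tiny on the surrounding interval, killing the integral. This is precisely why the log-convexity of $f$ (equivalently, of the mean of the subharmonic $e^{\beta U}$ in the $\log|z|$ variable) is essential: it guarantees $f$ cannot collapse between consecutive points where it is large, so a lower bound at $t_k$ propagates to a definite-length neighborhood. I would therefore spend most of the argument carefully justifying this convexity — invoking the boundedness of $\{u_t\}$ (so that $v_t - u_t$ has controlled growth and the relevant integrals are finite and vary continuously), the $S^1$-invariance obtained by averaging, and subharmonicity of $z \mapsto e^{\beta U_z}$ composed with the standard fact that circular $L^1$-means of subharmonic functions are convex in $\log|z|$ — and then the Laplace-integral comparison is routine. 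A secondary, minor technical point is handling the endpoint behavior near $t = 0$: since rays emanate from $0 \in \cH_\omega$, $f$ is continuous and bounded near $0$, so the lower limit of integration contributes nothing and can be ignored throughout.
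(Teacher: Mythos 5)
Your proposal breaks down at exactly the step you identify as the crux. The claimed structural input --- that the geodesic member of the pair is ``$\omega$-harmonic in $z$'', so that $V_z-\tilde U_z$ is subharmonic along each fiber $\{x\}\times\Sigma$ and hence $f(t)=\int_X e^{\beta(v_t-u_t)}\omega^n$ is convex (or log-convex) in $t$ --- is false. A finite energy geodesic ray solves the \emph{homogeneous Monge--Amp\`ere equation} $(p_1^*\omega+\mathrm{dd}^cU)^{n+1}=0$ on the product; its restriction to a fiber $\{x\}\times\Sigma$ is only subharmonic, i.e.\ $t\mapsto u_t(x)$ is convex, not affine, and it is certainly not pluriharmonic in $z$. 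Consequently $v_t-u_t$ is a difference of two functions each subharmonic along $\{x\}\times\Sigma$, and no subharmonicity of $e^{\beta(V_z-U_z)}$, nor convexity of $f$, follows. (Even a toy computation with $v_t\equiv 0$ and $u_t(x)$ convex but non-affine in $t$ shows $e^{\beta(v_t-u_t)}$ can have concave kinks in $t$.) Since your third step --- upgrading lower bounds for $f$ along a sequence $t_k\to\infty$ to divergence of $\int_0^\infty e^{\tau t}f(t)\,dt$ --- relies entirely on this convexity (as does the side remark that $f(t)e^{\lambda t}$ is eventually monotone, which would in fact require log-convexity, not convexity), the hard inequality $\tau^*\leq L^\beta$ is not established. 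Note also that a Berndtsson-type convexity cannot rescue this: the psh ray $\{v_t\}$ enters the exponent with the \emph{positive} sign, which is the wrong sign for such theorems.

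What is actually needed, and what the paper uses, is a quantitative substitute for convexity coming from the boundedness hypothesis: after subtracting a linear term one may assume both rays are $t$-decreasing (convexity in $t$ plus sublinearity), and by \cite[Theorem 1]{Dar17c} the bounded geodesic ray satisfies the uniform Lipschitz bound $|u_t(x)-u_{t'}(x)|\leq\gamma|t-t'|$ for all $x\in X$. This gives the pointwise comparison $v_{t_0}-u_{t_0}-\gamma\leq v_t-u_t$ on $X$ for $t\in[t_0-1,t_0]$, so $f(t_0)\leq e^{\beta\gamma}f(t)$ there; hence convergence of $\int_0^\infty e^{\tau t}f(t)\,dt$ forces $-\log f(t_0)\geq \tau t_0-C$, i.e.\ $L^\beta\geq\tau$ for every $\tau<\tau^*$. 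This is precisely where the hypothesis ``one ray is a bounded geodesic'' is used --- your sketch invokes boundedness only vaguely (finiteness/continuity of the fiber integrals), and without the Lipschitz-plus-monotonicity mechanism (or a genuine convexity statement, which is unavailable) the passage between pointwise asymptotics of $f$ and the weighted integral has a real gap.
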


\begin{proof} We can assume that both sides are finite.
Denote the right hand side by $\tau_0$.

For any $\tau < L^\beta(\{v_t\},\{u_t\})$, pick $\varepsilon>0$ such that $\tau+\varepsilon<L^\beta(\{v_t\},\{u_t\})$. Then for $t\gg 1$,
    $$
    -\log\int_X e^{\beta(v_t-u_t)}\omega^n\geq\tau t+\varepsilon t.
    $$
    Thus for $t_0$ big enough,
    $
    \int_{t_0}^\infty e^{\tau t}\int_X e^{\beta(v_t-u_t)}\omega^n dt\leq\int_{t_0}^\infty e^{-\varepsilon t}dt<\infty.
    $
    So, $L^\beta(\{v_t\},\{u_t\})\leq \tau_0$.

For the converse inequality, we rely on the boundedness assumption. In what follows, we suppose that $\{u_t\}$ is a bounded ray, the proof in the other case is the same.

Since we are dealing with sublinear subgeodesic rays, we can assume that $t \to u_t$ and $t \to v_t$ are both $t$-decreasing. Also, \cite[Theorem 1]{Dar17c} gives some $\gamma >0$ such that
\begin{equation}\label{eq: LIp_const}
|u_t(x) - u_{t'}(x)| \leq \gamma |t - t'|, \ \ x \in X
\end{equation}

Suppose that $ \tau < \tau_0$. Fixing $t_0 >1$, we can use the above inequality to start writing 
\begin{flalign*}
\int_{t_0-1}^{t_0} e^{\tau t} & \int_X e^{\beta(v_{t_0}-u_{t_0} - \gamma)}\omega^n dt   \leq\int_{t_0-1}^{t_0} e^{\tau t}\int_X e^{\beta(v_{t_0}-u_{t_0-1})}\omega^n dt  \\
&\leq \int_{t_0-1}^{t_0 } e^{\tau t}\int_X e^{\beta(v_t-u_t)}\omega^n dt 
\leq     \int_0^\infty e^{\tau t}\int_X e^{\beta(v_t-u_t)}\omega^n dt< C < \infty.
\end{flalign*}

Carrying out the integration in $t$ on the left hand side we arrive at
$$ \frac{e^{-\beta \gamma + t_0 \tau}}{\tau} \big(1 -  e^{- \tau}\big)   \int_X e^{\beta(v_{t_0}-u_{t_0})}\omega^n \leq C .$$
Taking $-\log$ we conclude at
$$   -\log \int_X e^{\beta(v_{t_0}-u_{t_0})}\omega^n  \geq t_0 \tau -\beta \gamma +\log \frac{1 -  e^{- \tau}}{\tau}-\log C. $$
Dividing by $t_0$ and letting $t_0 \to \infty$ we arrive at
$  \liminf_{t_0 \to \infty} - \frac{1}{t_0}\log \int_X e^{\beta(v_{t_0}-u_{t_0})}\omega^n \geq  \tau.$
So we obtain $L^\beta(\{v_t\},\{u_t\})\geq\tau_0$, finishing the proof.
\end{proof}

Now we move on to proving Theorem \ref{thm:rad-Ent-beta=sup}, which will be carried out in three steps.

\paragraph{Step 1. Replacing $\{u_t^\beta\}$ with a maximal geodesic ray.} For this step we fix a geodesic ray $\{u_t\} \in \mathcal R^1$ normalized by
$\sup u_t=0$ (this is possible due to \cite[Lemma 3.2]{DX20}).

For any $\beta>1$, consider the (non-geodesic) ray $\{u_t^\beta\}$ arising from solving \eqref{def:u-beta}. 
The goal of this step is to find a geodesic replacement for $\{u_t^\beta\}$, with favorable properties.

By Proposition \ref{prop:PC0}, the definition of $Ent^\beta\{u_t\}$ and \eqref{eq:rad-ent-beta-finite}, it is possible to choose a sequence $t_j\to\infty$ such that
\begin{equation}\label{eq: main_limit}
\lim_{j\to\infty}\frac{d_1(u_{t_j}^\beta,u_{t_j})}{t_j}=\lim_{j\to\infty}\frac{Ent^\beta(u_{t_j})}{\beta t_j}=\lim_{j\to\infty}\frac{I(u^\beta_{t_j})-I(u_{t_j})}{t_j}
=\frac{Ent^\beta\{u_t\}}{\beta}<\infty.
\end{equation}
Now let
$
[0,t_j]\ni t\mapsto\phi_{j,t,\beta} \in \mathcal E^1$
be the weak geodesic segment connecting $0$ and $u^\beta_{t_j}$.
Due to the convexity of K energy and Corollary \ref{cor:Kbeta-u>K-u-beta}, for $t < t_j$ we have
$$
\frac{K(\phi_{j,t,\beta})}{t}\leq\frac{K(u^\beta_{t_j})}{t_j} \leq \frac{(1+\varepsilon_\beta)K^\beta(u_{t_j})+\varepsilon_\beta d_1(0,u_{t_j})+\varepsilon_\beta}{t_j}.
$$
So by a standard compactness argument (cf. \cite{DH17,BBJ18,DL19}), we can extract a subsequence, again denoted by $\{\phi_{j,t,\beta}\}$, and obtain a finite energy geodesic ray $\{\phi_{t,\beta}\}$ such that
$$
\frac{K(\phi_{t,\beta})}{t}\leq (1+\varepsilon_\beta)K^\beta\{u_t\}- \varepsilon_\beta I\{u_t\}<\infty \ \ \textup{ and } \ \ 
\phi_{j,t,\beta}\xrightarrow{d_1}\phi_{t,\beta}\text{ as }j\to\infty.
$$

Then for any $t>0$, using \eqref{eq: main_limit}, we obtain a chain of inequalities:
\begin{equation*}
    \begin{split}
        \frac{I(\phi_{t,\beta})-I(u_t)}{t}&\leq\frac{d_1(\phi_{t,\beta},u_t)}{t}=\lim_{j\to\infty}\frac{d_1(\phi_{j,t,\beta},u_t)}{t}\leq\lim_{j\to\infty}\frac{d_1(u_{t_j}^\beta,u_{t_j})}{t_j}\\
        &=\lim_{j\to\infty}\frac{I(u^\beta_{t_j})-I(u_{t_j})}{t_j}=\lim_{j\to\infty}\frac{I(\phi_{j,t,\beta})-I(u_t)}{t}=\frac{I(\phi_{t,\beta})-I(u_t)}{t},
    \end{split}
\end{equation*}
where in the second inequality we used the convexity of $t\mapsto d_1(\phi_{j,t,\beta},u_t)$ \cite[Proposition 5.1]{BDL17} and in the second line we used the linearity of $t\mapsto I(\phi_{j,t,\beta})-I(u_t)$.

As a consequence, we obtain equality everywhere:
\begin{equation}\label{eq: I_d_1_eqv}
\frac{I(\phi_{t,\beta})-I(u_t)}{t}=\frac{d_1(\phi_{t,\beta},u_t)}{t}=\lim_{j\to\infty}\frac{I(u^\beta_{t_j})-I(u_{t_j})}{t_j}\text{ for any }t>0.
\end{equation}
Therefore, due to \eqref{eq: main_limit},
\begin{equation}\label{eq: lim_cons}
Ent\{u_t\}   \geq \lim_{j\to\infty}\frac{Ent^\beta(u_{t_j})}{t_j}=Ent^\beta\{u_t\}=\beta(I\{\phi_{t,\beta}\}-I\{u_t\})=\beta d_1^c(\{\phi_{t,\beta}\},\{u_t\}),
\end{equation}
and 
\begin{equation}
    \label{eq:rad-K-phi-finite}
    K\{\phi_{t,\beta}\}\leq (1+\varepsilon_\beta)K^\beta\{u_t\}-\varepsilon_\beta I\{u_t\}<\infty.
\end{equation}
In particular, due to \cite[Theorem 1.2]{Li20-cscK}$, \{\phi_{t,\beta}\}$ is maximal in the sense of \cite{BBJ18}. \smallskip

Due to \eqref{eq: I_d_1_eqv} we have that $I(\phi_{t,\beta}) - I(u_t) = I(\phi_{t,\beta}) + I(u_t) - 2 I(P(u_t, \phi_{t,\beta}))$. Using the $\mathcal E^1$ version of \cite[Proposition 2.3]{Dar17c} we obtain that $P(u_t, \phi_{t,\beta}) = u_t$. From this and the fact that  $u^\beta_{t_j} \leq 0$ we obtain that 
\begin{equation}\label{eq: phi_u_ineq}
0 \geq \phi_{t, \beta} \geq u_t \ \textup{ and } \ \sup_X \phi_{t,\beta}=\sup_X u_t =0.
\end{equation}

A byproduct of the above construction is the following important theorem. In the setting of algebraic quantization, analogous results have been conjectured by the second author in \cite[Conjecture 3.3]{Zha-Chow}, but are  still open to our knowledge.

\begin{theorem}\label{thm: K_beta_conv} 
Let $\{u_t\} \in \mathcal R^1$.  
    Then
    $
    \lim_{\beta \to \infty } K^\beta \{u_t\}=K\{u_t\}.
    $
\end{theorem}
\begin{proof} Without loss of generality we can assume that $\sup_X u_t =0, \ t \geq 0$.

When $Ent\{u_t\}<\infty$, by the above construction, for any $\beta>1$ the geodesic rays $\{\phi_{t,\beta}\}$ satisfy  $d_1^c(\{\phi_{t,\beta}\}, \{u_t\}) \to 0$  as $\beta\to\infty$ (recall \eqref{eq: lim_cons}. Moreover, due to   \eqref{eq:rad-K-phi-finite}) we have that  
   $$
  K\{\phi_{t,\beta}\}\leq (1+\varepsilon_\beta)K^\beta\{u_t\}-\varepsilon_\beta I\{u_t\}\leq (1+\varepsilon_\beta)K\{u_t\}-\varepsilon_\beta I\{u_t\}.
   $$
   Now sending $\beta\to\infty$ and using that $K\{\cdot\}$ is $d_1^c$-lsc (\cite[Proposition 5.9]{CC3})
    we obtain that
  $
\lim_{\beta\to\infty}K^\beta\{u_t\}=K\{u_t\}.
$

If $Ent\{u_t\}=\infty$, we claim that $\lim_{\beta \to \infty } Ent^\beta \{u_t\}=\infty$. Indeed, suppose otherwise, then Proposition \ref{prop:Ent-beta-increase} implies that $Ent^\beta \{u_t\}\leq C$ for some $C>0$ independent of $\beta>0$. So \eqref{eq: lim_cons} implies that $d_1^c(\{\phi_{t,\beta}\}, \{u_t\}) \to 0$ as $\beta\to\infty$. Sending $\beta\to\infty$ in \eqref{eq:rad-K-phi-finite} and using that $K\{\cdot\}$ is $d_1^c$-lsc and $K^\beta \{u_t\}\leq C$, we obtain $K\{u_t\}<\infty$, a contradiction. So we conclude.
\end{proof}

\paragraph{Step 2. Integrability estimates for $\{\phi_{t,\beta}\}$.} 
For this step we again fix a geodesic ray $\{u_t\} \in \mathcal R^1$ normalized by $\sup u_t=0$. This part is devoted to proving the following integrability condition for the geodesic ray $\{\phi_{t,\beta}\}$ constructed in the previous step.

\begin{proposition}
    \label{prop:int-estimate-phi-u} For any $\varepsilon>0$ and $\beta>1$,
    $$
    \int_0^\infty e^{-2\varepsilon t}\int_Xe^{(1-\varepsilon)\beta(\phi_{t,\beta}-u_t)}\omega^n dt<\infty,
    $$
    where $\{\phi_{t,\beta}\}$ is the geodesic ray constructed in Step 1. 
\end{proposition}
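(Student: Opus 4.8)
The plan is to recover the integrability estimate for $\{\phi_{t,\beta}\}$ from the corresponding estimate for $\{u_t^\beta\}$, by exploiting the geodesic/convexity structure set up in Step 1. The starting point is the observation (coming from the Demailly regularization recalled in \S\ref{sec:pre}, cf.\ \eqref{eq:int-ineq-of-Li}) that for the test configurations $\cT_m$ attached to the ray $\{u_t^\beta\}$ (which has finite entropy by Lemma \ref{lem: u_beta_basic}(iii)) one has
$$
\int_{X\times\Delta}e^{(m+m_0)U^{\cT_m}-mU^\beta}\,\omega^n\wedge i\,dz\wedge d\bar z<\infty,
$$
where $U^\beta_z:=u^\beta_{-\log|z|^2}$. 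More usefully, by Li's maximality result $\{u^{\cT_m}_t\}\xrightarrow{d_1^c}\{u_t^\beta\}$, and one can feed this directly into an integrability statement. First I would establish the analogue of Proposition \ref{prop: liminf} in the form we need: for sublinear subgeodesic rays, finiteness of $L^\beta$-type slopes is equivalent to the convergence of the corresponding time-integral $\int_0^\infty e^{\tau t}\int_X e^{\beta(v_t-u_t)}\omega^n\,dt$. So it suffices to show that $L^{(1-\varepsilon)\beta}(\{\phi_{t,\beta}\},\{u_t\})\ge -2\varepsilon$ — equivalently a lower bound $-\frac1t\log\int_X e^{(1-\varepsilon)\beta(\phi_{t,\beta}-u_t)}\omega^n\ge -2\varepsilon+o(1)$ — and then invoke Proposition \ref{prop: liminf} (one of the two rays, namely $\{u_t\}$, is a bounded geodesic ray by hypothesis, so the proposition applies).

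The core estimate is then a pluripotential inequality at a single (large) time $t$. Here I would use the construction of $\phi_{t,\beta}$ as a $d_1$-limit of geodesic segments $[0,t_j]\ni s\mapsto\phi_{j,s,\beta}$ joining $0$ to $u^\beta_{t_j}$, together with \eqref{eq: I_d_1_eqv}--\eqref{eq: phi_u_ineq}: one has $0\ge\phi_{t,\beta}\ge u_t$ and the slope identity $\frac{d_1(\phi_{t,\beta},u_t)}{t}=\lim_j\frac{d_1(u^\beta_{t_j},u_{t_j})}{t_j}=\frac{Ent^\beta\{u_t\}}{\beta}$. The strategy is to compare $\phi_{t,\beta}$ with $u^\beta_{t}$ along the geodesic: since along the segment from $0$ to $u^\beta_{t_j}$ the potential $\phi_{j,t,\beta}$ sits \emph{above} the linear interpolation (by convexity of geodesics vs.\ affine paths in a suitable comparison) and below $0$, one controls $\phi_{j,t,\beta}$ from below by $\frac{t}{t_j}u^\beta_{t_j}$ for $t\le t_j$. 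Passing to the $d_1$-limit $j\to\infty$ and using that $u^\beta_{t_j}\ge -\gamma' t_j$ (linear lower bound for the bounded ray $\{u_t\}$ hence for $\{u_t^\beta\}$ via Lemma \ref{lem: u_beta_basic}(ii) and the estimate \eqref{eq: u_beta_est}), one obtains a pointwise lower bound of the shape $\phi_{t,\beta}\ge u_t^\beta - C\varepsilon t + O(1)$ on the relevant range of $t$, after absorbing the interpolation defect into the $\varepsilon$. This is the step I expect to be the main obstacle: turning the \emph{integrated} (i.e.\ $d_1$-level) slope equalities of Step 1 into a \emph{pointwise} comparison $\phi_{t,\beta}\gtrsim u_t^\beta$ good enough to transfer integrability. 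The clean way around it is probably not a literal pointwise bound but rather to run the integrability argument directly with the segments $\phi_{j,t,\beta}$: write $\int_X e^{(1-\varepsilon)\beta(\phi_{j,t,\beta}-u_t)}\omega^n$, use $\phi_{j,t,\beta}\le 0$ to drop to $\int_X e^{-(1-\varepsilon)\beta u_t}\omega^n$ on a bad set and Hölder/Young on the good set, exploiting $e^{-\beta u_t}\in L^p$ for all $p$ (by \cite{Ze01}) uniformly in $t$ after normalization.

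Concretely, I would organize the proof as follows. (1) Reduce, via Proposition \ref{prop: liminf}, to proving $L^{(1-\varepsilon)\beta}(\{\phi_{t,\beta}\},\{u_t\})\ge -2\varepsilon$ for every $\varepsilon>0$. (2) Fix $\varepsilon$; choose $\delta=\delta(\varepsilon)$ and split $X=\{\phi_{t,\beta}-u_t\le -\delta t\}\cup\{\phi_{t,\beta}-u_t>-\delta t\}$. On the first set $e^{(1-\varepsilon)\beta(\phi_{t,\beta}-u_t)}\le e^{-(1-\varepsilon)\beta\delta t}$, contributing an exponentially small term; on the second set use $\phi_{t,\beta}\le 0$ and $-u_t\le\gamma t$ (the Lipschitz bound \eqref{eq: LIp_const} plus $\sup_X u_t=0$) to bound the integrand by $e^{(1-\varepsilon)\beta\gamma t}$ but with small measure — and here the quantitative input is that the $d_1$-distance $d_1(\phi_{t,\beta},u_t)\le \frac{Ent^\beta\{u_t\}}{\beta}t$ forces the set where $\phi_{t,\beta}-u_t$ is not very negative to have controlled $\omega^n$-mass; Chebyshev in the form $\int_X(u_t-\phi_{t,\beta})\omega^n\le C d_1(\phi_{t,\beta},u_t)$ (using $\phi_{t,\beta}\ge u_t$, cf.\ \cite[Lemma A.2]{BBJ18} type bounds) gives $\vol\{\phi_{t,\beta}-u_t>-\delta t\}\le \frac{C}{\delta t}d_1(\phi_{t,\beta},u_t)\le \frac{C'}{\delta}$, which is not yet decaying — so instead one uses the \emph{mixed} Monge--Ampère version of \cite[Lemma A.2]{BBJ18} as in \eqref{eq: u_u_beta_est} to get $\int_X e^{(1-\varepsilon)\beta(\phi_{t,\beta}-u_t)}\omega^n\le \int_X e^{-(1-\varepsilon)\beta u_t}\,e^{(1-\varepsilon)\beta\phi_{t,\beta}}\omega^n$ and apply Hölder with a large exponent on $e^{-(1-\varepsilon)\beta u_t}$ (uniformly $L^p$-bounded after the $\sup$-normalization, by \cite{Ze01}) and the conjugate exponent on $e^{(1-\varepsilon)\beta\phi_{t,\beta}}\le 1$. (3) Collect the two pieces to get $-\frac1t\log\int_X e^{(1-\varepsilon)\beta(\phi_{t,\beta}-u_t)}\omega^n\ge -2\varepsilon$ for $t$ large, take $\liminf$, and conclude by Proposition \ref{prop: liminf}. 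The delicate accounting is entirely in step (2); everything else is bookkeeping with the already-established slope identities of Step 1.
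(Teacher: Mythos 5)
Your overall plan (reduce to a slope bound via Proposition \ref{prop: liminf}, then estimate $\int_X e^{(1-\varepsilon)\beta(\phi_{t,\beta}-u_t)}\omega^n$ at fixed large $t$) does not survive at the step you yourself flag as delicate, and the way you propose to fix it rests on a false claim. First, the splitting of $X$ is vacuous: by \eqref{eq: phi_u_ineq} one has $\phi_{t,\beta}\geq u_t$, so the set $\{\phi_{t,\beta}-u_t\leq -\delta t\}$ is empty and all the mass sits on the ``bad'' set, where (as you note) your measure bound does not decay. Second, the pivot to H\"older is based on the assertion that $\int_X e^{-p u_t}\omega^n$ is uniformly bounded in $t$ after the $\sup$-normalization ``by \cite{Ze01}''. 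This is not true: Skoda--Zeriahi gives a bound uniform over $\{\sup_X u=0\}$ only for exponents below the alpha invariant, and for an individual $u_t\in\cE^1$ the integral $\int_X e^{-pu_t}\omega^n$ is finite for every $p$ but typically grows \emph{exponentially in $t$} along a geodesic ray, at a rate governed by the deepening singularities of the ray --- which is exactly the quantity the proposition is about. (If your uniform bound were true, one would get $L^\beta(\{v_t\},\{u_t\})\geq 0$ for any ray $v_t\leq 0$, contradicting the lct formula \eqref{eq:L-beta-min}, where the term $\beta(c_i-b_i)$ can be arbitrarily negative.) With only the alpha-invariant-range H\"older and the linear bound $u_t\geq-\gamma t$, the loss is of order $e^{c\beta t}$ and cannot be absorbed into $e^{-2\varepsilon t}$. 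A minor additional slip: along a geodesic segment $t\mapsto\phi_{j,t,\beta}(x)$ is \emph{convex} in $t$, so the geodesic lies below the chord, not above; in any case a lower bound on $\phi_{t,\beta}$ is aimed in the wrong direction, since the danger comes from $\phi_{t,\beta}$ being far \emph{above} $u_t$ where $u_t$ is very negative.

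The missing ideas are precisely the two hard ingredients of the paper's argument, neither of which can be replaced by the $d_1$-level slope identities of Step 1 (those only see energies, not Lelong-number/multiplier-ideal data). The paper first uses P\u{a}un's positivity (Proposition \ref{prop:u-t-beta-is-subgeodesic}) to know that $t\mapsto u_{t,\beta}=((1-A/\beta)u_t)^\beta$ is a subgeodesic ray, takes the smallest geodesic ray $\{v_{t,\beta}\}$ above it, checks $K\{v_{t,\beta}\}<\infty$ so that Li's theory applies, and invokes Li's integrability estimate \cite[Lemma 3.1]{Li20-cscK} (Lemma \ref{lme:Li-int-est}) for the pair (maximal ray, subgeodesic), which is the Demailly-regularization/multiplier-ideal input supplying exponential integrability for \emph{every} exponent $\alpha$. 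Then the comparison principle (Lemma \ref{lem:1-delta-u-beta-comparison}), applied through the approximating geodesic segments from Step 1, gives the key pointwise \emph{upper} bound $(1-\delta)\phi_{t,\beta}\leq v_{t,\beta+A}$ with $\delta=A/(A+\beta)$, and a H\"older interpolation (Corollary \ref{cor:int-maxray-subray<infty}) converts Li's estimate into the stated bound against $u_t$ with the $e^{-2\varepsilon t}$ weight. Without some substitute for P\u{a}un's positivity, Li's lemma, and the comparison-principle domination of $\phi_{t,\beta}$, the integrability you want simply is not reachable from the soft data you use, so the proposal has a genuine gap rather than a repairable bookkeeping issue.
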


Recall that, by Proposition \ref{prop:u-t-beta-is-subgeodesic}, there exists $A\geq 0$ (depending only on $(X,\omega)$), such that for all $\beta\geq A$ the curve 
$$t\mapsto u_{t,\beta}:=((1-A/\beta)u_t)^\beta$$ 
is a sublinear finite energy subgeodesic ray. When $\Ric(\omega)\geq 0$, we can simply take $A=0$.
Let
\begin{equation}
    \label{eq:def-v-t-beta}
    \{v_{t,\beta}\}
\end{equation}
be the smallest geodesic ray lying above $\{u_{t,\beta}\}$. Such $\{v_{t,\beta}\}$ can be explicitly constructed as the increasing limit of the geodesic segments $[0,k]\ni t\mapsto \phi_{k}(t)$ joining $0$ and $u_{k,\beta}$ (for details, see the proof of \cite[Proposition 3.13]{DZ22}).

Then for any $t>0$, using that K energy is $d_1$-lsc and convex we can write
\begin{equation*}
    \begin{split}
        \frac{K(v_{t,\beta})}{t}\leq\liminf_{k\to\infty}\frac{K(\phi_k(t))}{t}\leq\liminf_{k\to\infty}\frac{K(u_{k,\beta})}{k}.
    \end{split}
\end{equation*}
Then by Corollary \ref{cor:Kbeta-u>K-u-beta} and (the proof of) \eqref{eq:rad-ent-beta-finite},
$
K\{v_{t,\beta}\}<\infty.
$
Thus, $\{v_{t,\beta}\}$ must be  maximal in the sense of \cite{BBJ18}, due to \cite[Theorem 1.2]{Li20-cscK}. Since $\{v_{t,\beta}\}$ is the smallest possible ray dominating the subgeodesic $\{u_{t,\beta}\}$, in the jargon of \cite{BBJ18}, $\{v_{t,\beta}\}$ is the maximal geodesic ray associated to  the subgeodesic $\{u_{t,\beta}\}$.

Thus the following estimate holds, thanks to \cite[Lemma 3.1]{Li20-cscK}:

\begin{lemma}
\label{lme:Li-int-est}
    For any $\alpha>0$ we have that
    $$
    \int_0^\infty e^{-t}\int_X e^{\alpha(v_{t,\beta}-u_{t,\beta})}\omega^n dt<\infty.
    $$
\end{lemma}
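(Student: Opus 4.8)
The plan is to invoke the integrability result of Li~\cite[Lemma 3.1]{Li20-cscK} for maximal geodesic rays of finite K energy, applied to the ray $\{v_{t,\beta}\}$ and its associated subgeodesic $\{u_{t,\beta}\}$. The preceding discussion has already established exactly the two hypotheses that Li's lemma requires: first, that $\{u_{t,\beta}\}$ is a sublinear finite energy subgeodesic ray (this comes from Proposition~\ref{prop:u-t-beta-is-subgeodesic} applied to $(1-A/\beta)\{u_t\}$, with $A\geq 0$ depending only on $(X,\omega)$ and the bound $\Ric\,\omega\geq -A\omega$); and second, that $\{v_{t,\beta}\}$, being the smallest geodesic ray dominating $\{u_{t,\beta}\}$, is \emph{the} maximal geodesic ray associated to $\{u_{t,\beta}\}$ in the terminology of~\cite{BBJ18}, and moreover has $K\{v_{t,\beta}\}<\infty$ — which is what puts it in $\mathcal R^1_\mathcal I$ via Li's theorem~\cite[Theorem 1.2]{Li20-cscK} and allows Li's integrability estimate to be applied.

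Concretely, I would proceed as follows. First, recall that $\{v_{t,\beta}\}$ is constructed as the increasing $d_1$-limit of the finite-energy geodesic segments $[0,k]\ni t\mapsto \phi_k(t)$ joining $0$ to $u_{k,\beta}$ (following the construction in the proof of~\cite[Proposition 3.13]{DZ22}), so that $v_{t,\beta}\geq u_{t,\beta}$ for all $t$ and $\{v_{t,\beta}\}$ is the smallest such geodesic ray. Next, by convexity and $d_1$-lower semicontinuity of the K energy, $K(v_{t,\beta})/t\leq \liminf_{k\to\infty}K(\phi_k(t))/t\leq \liminf_{k\to\infty}K(u_{k,\beta})/k$; combining Corollary~\ref{cor:Kbeta-u>K-u-beta} with the bound $Ent^\beta\{u_t\}<\infty$ from~\eqref{eq:rad-ent-beta-finite} shows the right-hand side is finite, hence $K\{v_{t,\beta}\}<\infty$. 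This finiteness, together with~\cite[Theorem 1.2]{Li20-cscK}, gives that $\{v_{t,\beta}\}\in\mathcal R^1_\mathcal I$ and that it is the maximal geodesic ray above $\{u_{t,\beta}\}$. At this point Li's estimate~\cite[Lemma 3.1]{Li20-cscK} applies verbatim: for any $\alpha>0$,
$$
\int_0^\infty e^{-t}\int_X e^{\alpha(v_{t,\beta}-u_{t,\beta})}\,\omega^n\,dt<\infty,
$$
since $v_{t,\beta}-u_{t,\beta}\geq 0$ and the comparison of singularity types near $X\times\{0\}$ (encoded in the Demailly regularization discussion, cf.~\eqref{eq:int-ineq-of-Li}) controls the exponential integral.

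The main thing to be careful about is bookkeeping rather than any genuine analytic obstacle: one must verify that the sublinearity constant for $\{u_{t,\beta}\}$ and the finite-energy condition survive the transformations $u_t\rightsquigarrow (1-A/\beta)u_t\rightsquigarrow ((1-A/\beta)u_t)^\beta$, and that the normalization $\sup_X u_t=0$ is compatible with the hypotheses of Li's lemma (which is stated for rays emanating from $0$). Both points are handled by Proposition~\ref{prop:u-t-beta-is-subgeodesic} and Lemma~\ref{lem:u-beta-d1-u}, respectively, so the proof is essentially a citation once the setup of~\eqref{eq:def-v-t-beta} is in place; no new estimate is needed here.
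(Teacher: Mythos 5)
Your proposal follows essentially the same route as the paper: the surrounding discussion (finiteness of $K\{v_{t,\beta}\}$ via Corollary \ref{cor:Kbeta-u>K-u-beta} and \eqref{eq:rad-ent-beta-finite}, maximality of $\{v_{t,\beta}\}$ via \cite[Theorem 1.2]{Li20-cscK}) is exactly the paper's setup preceding the lemma, and the lemma itself is then obtained by invoking \cite[Lemma 3.1]{Li20-cscK}. The one point where you are slightly too quick is the claim that Li's estimate applies \emph{verbatim}: Li's Lemma 3.1 is stated for a geodesic ray and its maximalization, whereas $\{u_{t,\beta}\}$ is only a sublinear subgeodesic ray, so the paper's proof consists precisely of observing that Li's \emph{argument} (the Demailly/multiplier-ideal regularization, cf.\ \eqref{eq:int-ineq-of-Li}) goes through unchanged for the subgeodesic because $u_{t,\beta}\leq 0$ (Lemma \ref{lem: u_beta_basic}(ii)) — a routine adaptation that you gesture at via the regularization discussion, but which should be stated rather than subsumed under ``verbatim''.
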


\begin{proof}
\cite[Lemma 3.1]{Li20-cscK} proves this  for geodesic rays, but  the same argument works for the subgeodesic ray \(\{u_{t,\beta}\}\) (c.f. \eqref{eq:int-ineq-of-Li}), since \(u_{t,\beta} \le 0\) due to Lemma \ref{lem: u_beta_basic}(ii) and \(u_t \le 0\) .
\end{proof}

\begin{corollary}
\label{cor:int-maxray-subray<infty}
     For any $\varepsilon>0$, $\beta>A$ we have that 
    $$
    \int_0^\infty e^{-2\varepsilon t}\int_X e^{(1-\varepsilon)\beta(v_{t,\beta}-(1-A/\beta)u_t)}\omega^n dt<\infty.
    $$  
\end{corollary}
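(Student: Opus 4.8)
The approach is to bound the double integral in question by the one appearing in Lemma~\ref{lme:Li-int-est}, via two applications of H\"older's inequality --- one over $X$, one in the time variable --- the crucial point being that the Monge--Amp\`ere equation converts the singular factor $e^{\beta(u_{t,\beta}-(1-A/\beta)u_t)}$ (which would otherwise blow up near the polar set of $u_t$) into an honest probability density. Fix $0<\varepsilon<1$ and write $w_t:=(1-A/\beta)u_t$, so that $u_{t,\beta}=w_t^\beta$. By \eqref{def:u-beta} applied to $w_t$ we have the pointwise identity
$$
\beta\big(u_{t,\beta}-w_t\big)=\log\frac{\omega^n_{u_{t,\beta}}}{\omega^n},
$$
so that we may split the relevant exponent as
$$
(1-\varepsilon)\beta\big(v_{t,\beta}-w_t\big)=(1-\varepsilon)\beta\big(v_{t,\beta}-u_{t,\beta}\big)+(1-\varepsilon)\log\frac{\omega^n_{u_{t,\beta}}}{\omega^n}.
$$

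First I would apply H\"older's inequality on $(X,\omega^n)$ with conjugate exponents $p=1/\varepsilon$ and $q=1/(1-\varepsilon)$. Since $\big[(\omega^n_{u_{t,\beta}}/\omega^n)^{1-\varepsilon}\big]^{q}=\omega^n_{u_{t,\beta}}/\omega^n$ and $\int_X\omega^n_{u_{t,\beta}}=V$, the $q$-factor collapses to the constant $V^{1-\varepsilon}$, uniformly in $t$, giving
$$
\int_X e^{(1-\varepsilon)\beta(v_{t,\beta}-w_t)}\,\omega^n\ \le\ V^{1-\varepsilon}\,G(t)^{\varepsilon},\qquad G(t):=\int_X e^{\frac{1-\varepsilon}{\varepsilon}\beta(v_{t,\beta}-u_{t,\beta})}\,\omega^n.
$$
Since $\tfrac{1-\varepsilon}{\varepsilon}\beta>0$, Lemma~\ref{lme:Li-int-est} applied with $\alpha=\tfrac{1-\varepsilon}{\varepsilon}\beta$ yields $\int_0^\infty e^{-t}G(t)\,dt<\infty$.

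Finally, integrating in $t$ and using $e^{-2\varepsilon t}G(t)^{\varepsilon}=\big(e^{-t}G(t)\big)^{\varepsilon}e^{-\varepsilon t}$, a second application of H\"older on $(0,\infty)$ with the exponents $1/\varepsilon$ and $1/(1-\varepsilon)$ gives
$$
\int_0^\infty e^{-2\varepsilon t}G(t)^{\varepsilon}\,dt\ \le\ \Big(\int_0^\infty e^{-t}G(t)\,dt\Big)^{\varepsilon}\Big(\int_0^\infty e^{-\frac{\varepsilon}{1-\varepsilon}t}\,dt\Big)^{1-\varepsilon}<\infty,
$$
and multiplying by $V^{1-\varepsilon}$ and combining with the displays above proves the corollary. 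I do not expect a genuine obstacle here: the entire analytic content sits in Lemma~\ref{lme:Li-int-est}, and the one point that must be handled with care is the bookkeeping of the H\"older exponents. They have to be chosen so that simultaneously (i) the Monge--Amp\`ere density appears to the first power in the $X$-integral, forcing $q=1/(1-\varepsilon)$ and hence $p=1/\varepsilon$; (ii) the surviving exponent $\tfrac{1-\varepsilon}{\varepsilon}\beta$ inside $G$ is positive, so that Lemma~\ref{lme:Li-int-est} applies; and (iii) the weight $e^{-2\varepsilon t}$ can be absorbed against the integrable weight $e^{-t}G(t)$ in the $t$-integral --- all of which hold precisely because $\varepsilon\in(0,1)$.
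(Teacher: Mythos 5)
Your proof is correct and is essentially the paper's own argument: the same splitting of the exponent into $(1-\varepsilon)\beta(v_{t,\beta}-u_{t,\beta})$ plus the logarithm of the Monge--Amp\`ere density of $u_{t,\beta}$, the same H\"older exponents $1/\varepsilon$ and $1/(1-\varepsilon)$, and the same two inputs (Lemma \ref{lme:Li-int-est} with $\alpha=\frac{1-\varepsilon}{\varepsilon}\beta$, and $\int_X\omega^n_{u_{t,\beta}}=V$ via \eqref{def:u-beta}). The only cosmetic difference is that you apply H\"older fiberwise on $X$ and then again in $t$, while the paper performs a single H\"older inequality on the product $X\times(0,\infty)$ with the time weight distributed between the two factors.
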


\begin{proof}

Put for simplicity $Y:=X\times(0,\infty)$ and $d\mu:=\omega^n\wedge dt$. By H\"older's inequality,
\begin{equation*}
    \begin{split}
        \int_Y & e^{(1-\varepsilon)\beta(v_{t,\beta}-(1-\frac{A}{\beta})u_t)-2\varepsilon t}d\mu= \int_Y e^{(1-\varepsilon)\beta(v_{t,\beta}-u_{t,\beta})-\varepsilon t}\cdot e^{(1-\varepsilon)\beta(u_{t,\beta}-(1-\frac{A}{\beta})u_t)-\varepsilon t}d\mu\\
        &\leq \left(\int_Y e^{\frac{1-\varepsilon}{\varepsilon}\beta(v_{t,\beta}-u_{t,\beta})- t}d\mu\right)^{\varepsilon}\left(\int_Y e^{\beta(u_{t,\beta}-(1-\frac{A}{\beta})u_t)-\frac{\varepsilon}{1-\varepsilon} t}d\mu\right)^{1-\varepsilon}<\infty,\\
    \end{split}
\end{equation*}
finishing the proof by the previous lemma and the definition of $u_{t,\beta}$.
\end{proof}

The next simple comparison principle will play a key role in our argument.

\begin{lemma}
\label{lem:1-delta-u-beta-comparison}
    For any $\delta\in (0,1)$, $\beta>0$ and $u\in\cE^1$, we have that
    $$
(1-\delta)u^\beta\leq ((1-\delta)u)^{\frac{\beta}{1-\delta}}-\frac{n(1-\delta)}{\beta}\log (1-\delta).
$$
\end{lemma}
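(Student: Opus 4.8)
The plan is to recognize $(1-\delta)u^\beta$, shifted by an explicit constant, as a subsolution of the complex Monge--Amp\`ere equation whose unique solution is $w:=((1-\delta)u)^{\frac{\beta}{1-\delta}}$, and then to invoke the comparison principle \cite[Lemma 2.5]{Ber19}. After normalizing (using Lemma \ref{lem: u_beta_basic}(i), which shows that both sides of the asserted inequality transform identically under $u\mapsto u+c$) we may assume $u\le 0$, so that $(1-\delta)u\in\cE^1$ and $e^{-\beta u}\in L^p$ for some $p>1$ \cite{Ze01}; hence $w$ is the unique continuous $\omega$-psh solution of $(\omega+\ddc w)^n=e^{\frac{\beta}{1-\delta}(w-(1-\delta)u)}\omega^n$.

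The computational heart of the argument is the estimate
$$
\omega+\ddc\big((1-\delta)u^\beta\big)=(1-\delta)\,\omega_{u^\beta}+\delta\,\omega\ \geq\ (1-\delta)\,\omega_{u^\beta},
$$
which, after raising to the $n$-th wedge power and using the defining equation \eqref{def:u-beta}, yields
$$
\big(\omega+\ddc((1-\delta)u^\beta)\big)^n\ \geq\ (1-\delta)^n\,\omega_{u^\beta}^n=(1-\delta)^n e^{\beta(u^\beta-u)}\omega^n .
$$
I would then set $C:=\tfrac{n(1-\delta)}{\beta}\log(1-\delta)\le 0$ and $\psi:=(1-\delta)u^\beta+C$, and observe by direct substitution that $\tfrac{\beta}{1-\delta}\big(\psi-(1-\delta)u\big)=\beta(u^\beta-u)+n\log(1-\delta)$, so that the previous inequality reads exactly $(\omega+\ddc\psi)^n\ge e^{\frac{\beta}{1-\delta}(\psi-(1-\delta)u)}\omega^n$. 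Thus $\psi$ is a subsolution of the equation satisfied by $w$, with the same right-hand side datum $(1-\delta)u$ and the same parameter $\tfrac{\beta}{1-\delta}$.

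Finally I would apply the comparison principle \cite[Lemma 2.5]{Ber19} (whose proof extends to $\cE^1$ potentials, as used already in Lemmas \ref{lem: u_beta_basic} and \ref{lem:u-beta-d1-u}; here in fact $\psi$ and $w$ are continuous) to deduce $\psi\le w$, i.e.\ $(1-\delta)u^\beta+\tfrac{n(1-\delta)}{\beta}\log(1-\delta)\le ((1-\delta)u)^{\frac{\beta}{1-\delta}}$, which is precisely the claim. I do not anticipate a genuine obstacle: the only delicate point is the bookkeeping that forces the constant $C$ to be chosen so that the two exponential right-hand sides coincide exactly, which is what makes the comparison principle directly applicable.
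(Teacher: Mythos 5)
Your proposal is correct and follows essentially the same route as the paper: both verify that $(1-\delta)u^\beta$, shifted by the constant $\tfrac{n(1-\delta)}{\beta}\log(1-\delta)$, is a subsolution of the Monge--Amp\`ere equation defining $((1-\delta)u)^{\frac{\beta}{1-\delta}}$ (via $\omega+\ddc((1-\delta)u^\beta)\geq(1-\delta)\omega_{u^\beta}$ and \eqref{def:u-beta}), and then conclude with the comparison principle of \cite[Lemma 2.5]{Ber19}. The only difference is cosmetic bookkeeping (your explicit normalization $u\le 0$ and the auxiliary potential $\psi$), which the paper folds directly into the exponential.
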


\begin{proof}
    Consider $(1-\delta)u^{\beta}$, which satisfies
    $$
    (\omega+\ddc (1-\delta)u^{\beta})^n\geq (1-\delta)^n e^{\beta(u^{\beta}-u)}\omega^n= e^{\frac{\beta}{1-\delta}((1-\delta)u^{\beta} + \frac{n(1-\delta)}{\beta} \log(1-\delta)-(1-\delta)u)}\omega^n.
    $$
    On the other hand, $((1-\delta)u)^\frac{\beta}{1-\delta}$ satisfies
    $$
    \big(\omega+\ddc ((1-\delta)u)^\frac{\beta}{1-\delta}\big)^n=e^{\frac{\beta}{1-\delta}\big(((1-\delta)u)^{\frac{\beta}{1-\delta}}-(1-\delta)u\big)}\omega^n.
    $$
    So we conclude by the comparison principle \cite[Lemma 2.5]{Ber19}. 
\end{proof}

Let us fix $\delta \in (0,1)$ by
$
\delta:=\frac{A}{A+\beta}.
$
Then $\frac{\beta}{1-\delta}=\beta+A$ and Proposition \ref{prop:u-t-beta-is-subgeodesic}  implies that $t\mapsto ((1-\delta)u_t)^{\frac{\beta}{1-\delta}} = ((1-\frac{A}{\beta + A })u_t)^{\beta + A}$ is a subgeodesic ray for any $\beta>0$. By our notation \eqref{eq:def-v-t-beta}, the smallest geodesic ray lying above it was denoted by $\{v_{t,\beta+A}\}$. 

We claim that
$
(1-\delta)\phi_{t,\beta}\leq v_{t,\beta+A}.
$
 Indeed, recall that $\phi_{t,\beta}$ is constructed as the limit of geodesic segments $\phi_{j,t,\beta}$ joining $0$ and $u^\beta_{t_j}$. Then
$
[0,t_j]\ni t\mapsto (1-\delta)\phi_{j,t,\beta}
$
is a subgeodesic segment joining $0$ and $(1-\delta)u_{t_j}^\beta$. Due to Lemma \ref{lem:1-delta-u-beta-comparison} and the comparison principle of weak geodesics, this subgeodesic is bounded from above by the geodesic segment joining $0$ and $((1-\delta)u_{t_j})^{\frac{\beta}{1-\delta}}-\frac{n(1-\delta)}{\beta}\log (1-\delta)$. Sending $j\to\infty$, the constant term $\frac{n(1-\delta)}{\beta}\log (1-\delta)$ will disappear in this limit process, and we obtain the claimed key relation:
\begin{equation}\label{eq: ray_ineq}
(1-\delta)\phi_{t,\beta}\leq v_{t,\beta+A}.
\end{equation}

Now, using \eqref{eq: ray_ineq}, we obtain the estimate of  Proposition \ref{prop:int-estimate-phi-u} as follows:
\begin{flalign*}
\int_0^\infty e^{-2\varepsilon t}\int_Xe^{(1-\varepsilon)\beta(\phi_{t,\beta}-u_t)}\omega^n dt& \leq \int_0^\infty e^{-2\varepsilon t}\int_Xe^{(1-\varepsilon)\frac{\beta}{1-\delta}(v_{t,\beta+A}-(1-\delta)u_t)}\omega^n dt \\
&= \int_0^\infty e^{-2\varepsilon t}\int_Xe^{(1-\varepsilon) (\beta + A)(v_{t,\beta+A}-(1-\frac{A}{A+\beta})u_t)}\omega^n dt<\infty.
\end{flalign*}
That the last integral is finite follows from Corollary \ref{cor:int-maxray-subray<infty} with $\beta$ replaced with $\beta+A$. So the proof of Proposition \ref{prop:int-estimate-phi-u} is complete.

\begin{corollary}
\label{cor:rad-Ent-beta-approx-by-geod-rays}
    For any $\beta>1$ and any  bounded geodesic ray $\{u_t\}$,
    there exists a sequence of geodesic rays $\{\psi_{t,k}\}$ such that
    $
    Ent^\beta\{u_t\}=\lim_{k\to\infty}(L^\beta(\{\psi_{t,k}\},\{u_t\})+\beta(I\{\psi_{t,k}\}-I\{u_t\})).
    $
Moreover, the sequence $\{\psi_{t,k}\}$ can be chosen so that 
$
Ent\{\psi_{t,k}\}<\infty,
$
$$
\int_0^\infty e^{-2\varepsilon_k t} \int_Xe^{\frac{\beta}{1-\varepsilon_k}(\psi_{t,k}-u_t)}\omega^n dt<\infty \ \textup{ for some } \ \varepsilon_k\to 0,$$
and
$$
\lim_{k\to\infty}\beta(I\{\psi_{t,k}\}-I\{u_t\})= Ent^\beta\{u_t\}.
$$
\end{corollary}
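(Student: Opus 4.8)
The plan is to take the approximating rays $\{\psi_{t,k}\}$ to be the maximal geodesic rays $\{\phi_{t,\beta}\}$ produced by the Step~1 construction, but at a slightly \emph{larger} level of quantization $\beta_k\searrow\beta$. First I would reduce to the case $\sup_X u_t=0$: every quantity in the statement ($Ent^\beta\{u_t\}$, $L^\beta(\{\psi_{t,k}\},\{u_t\})$, $I\{\psi_{t,k}\}-I\{u_t\}$, and the integral) is unchanged under the simultaneous translation $u_t\mapsto u_t-ct$, $\psi_{t,k}\mapsto\psi_{t,k}-ct$, and $\{u_t\}$ stays a bounded geodesic ray. Then, fixing any sequence $\varepsilon_k\searrow 0$, I set $\beta_k:=\beta/(1-\varepsilon_k)^2>1$, let $\{\phi_{t,\beta_k}\}$ be the geodesic ray attached to $\{u_t\}$ at level $\beta_k$ as in Step~1, and put $\psi_{t,k}:=\phi_{t,\beta_k}$.

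With this choice the three ``moreover'' assertions follow almost immediately from Steps~1--2. For finiteness of the entropy, \eqref{eq:rad-K-phi-finite} gives $K\{\phi_{t,\beta_k}\}<\infty$, while $|\cJ_{\Ric(\omega)}(u)|\le C\,d_1(0,u)$ makes $\cJ_{\Ric(\omega)}$ finite along the ray, so $Ent\{\psi_{t,k}\}=K\{\phi_{t,\beta_k}\}+\cJ_{\Ric(\omega)}\{\phi_{t,\beta_k}\}<\infty$. For the integrability condition I would apply Proposition~\ref{prop:int-estimate-phi-u} at level $\beta_k$ with parameter $\varepsilon_k$; since $(1-\varepsilon_k)\beta_k=\beta/(1-\varepsilon_k)$, the resulting bound is precisely
\[
\int_0^\infty e^{-2\varepsilon_k t}\int_X e^{\frac{\beta}{1-\varepsilon_k}(\psi_{t,k}-u_t)}\,\omega^n\, dt<\infty.
\]
For the last ``moreover'', the radial identity \eqref{eq: lim_cons} applied at level $\beta_k$ yields $\beta(I\{\psi_{t,k}\}-I\{u_t\})=\tfrac{\beta}{\beta_k}Ent^{\beta_k}\{u_t\}$; combining the monotonicity of $\gamma\mapsto Ent^\gamma\{u_t\}$ (from Proposition~\ref{prop:Ent-beta-increase}, passed to rays by taking $\liminf_t$) with the monotonicity of $\gamma\mapsto Ent^\gamma\{u_t\}/\gamma$ (from Corollary~\ref{cor:Ent-beta-concave-in-beta}, likewise) gives the sandwich $Ent^\beta\{u_t\}\le Ent^{\beta_k}\{u_t\}\le\tfrac{\beta_k}{\beta}Ent^\beta\{u_t\}$, so $Ent^{\beta_k}\{u_t\}\to Ent^\beta\{u_t\}$ (finite by \eqref{eq:rad-ent-beta-finite}) and hence $\beta(I\{\psi_{t,k}\}-I\{u_t\})\to Ent^\beta\{u_t\}$.

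The remaining task is the first displayed equality, which reduces to showing $L^\beta(\{\psi_{t,k}\},\{u_t\})\to 0$. The bound $L^\beta(\{\psi_{t,k}\},\{u_t\})\le 0$ is free: $\psi_{t,k}=\phi_{t,\beta_k}\ge u_t$ by \eqref{eq: phi_u_ineq}, so $\int_X e^{\beta(\psi_{t,k}-u_t)}\tfrac{\omega^n}{V}\ge 1$ and every term defining the $\liminf$ is $\le 0$. For the matching lower bound I would rewrite $L^\beta(\{\psi_{t,k}\},\{u_t\})$ as an integrability threshold via Proposition~\ref{prop: liminf}, then run a two-step H\"older argument off the integrability bound above: a H\"older estimate in $X$ gives $\int_X e^{\beta(\psi_{t,k}-u_t)}\omega^n\le V^{\varepsilon_k}\big(\int_X e^{\frac{\beta}{1-\varepsilon_k}(\psi_{t,k}-u_t)}\omega^n\big)^{1-\varepsilon_k}$, and a subsequent H\"older estimate in $t$ (with exponents $\tfrac{1}{1-\varepsilon_k}$ and $\tfrac{1}{\varepsilon_k}$) shows $\int_0^\infty e^{\tau t}\int_X e^{\beta(\psi_{t,k}-u_t)}\omega^n\, dt<\infty$ for every $\tau<-2\varepsilon_k(1-\varepsilon_k)$. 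Hence $L^\beta(\{\psi_{t,k}\},\{u_t\})\ge -2\varepsilon_k(1-\varepsilon_k)\to 0$, which together with the upper bound yields $L^\beta(\{\psi_{t,k}\},\{u_t\})\to 0$; adding this to the second paragraph completes the argument.

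The genuinely delicate points are few: the right-continuity of $\gamma\mapsto Ent^\gamma\{u_t\}$ at $\gamma=\beta$ (this is exactly why one needs \emph{both} monotonicity statements, not just one), and keeping all H\"older exponents in $(0,1)$ so that the thresholds $-2\varepsilon_k(1-\varepsilon_k)$ tend to $0$ as $k\to\infty$. Everything else is a direct invocation of the constructions and estimates from Steps~1 and~2.
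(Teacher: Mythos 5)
Your proposal is correct and follows essentially the same route as the paper: you choose $\psi_{t,k}=\phi_{t,\beta/(1-\varepsilon_k)^2}$, apply Proposition \ref{prop:int-estimate-phi-u} for the integrability, and use \eqref{eq: lim_cons} together with Proposition \ref{prop:Ent-beta-increase} and Corollary \ref{cor:Ent-beta-concave-in-beta} to get $\beta(I\{\psi_{t,k}\}-I\{u_t\})\to Ent^\beta\{u_t\}$, exactly as in the paper. The only (harmless) deviation is in proving $L^\beta(\{\psi_{t,k}\},\{u_t\})\to 0$: the paper uses $\psi_{t,k}\ge u_t$ to bound $e^{\beta(\psi_{t,k}-u_t)}\le e^{\frac{\beta}{1-\varepsilon_k}(\psi_{t,k}-u_t)}$ pointwise and gets the upper bound from the defining supremum of $Ent^\beta(u_t)$, whereas you run a H\"older argument in $X$ and $t$ and use the cheap bound $L^\beta\le 0$; both yield the same conclusion.
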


\begin{proof}

    We apply Proposition \ref{prop:int-estimate-phi-u} to the parameter $\beta_\varepsilon:=\frac{\beta}{(1-\varepsilon)^2}$. Then for any $\varepsilon\in(0,1)$,
    $$
    \int_0^\infty e^{-2\varepsilon t}\int_Xe^{\frac{\beta}{1-\varepsilon}\left(\phi_{t,\beta_\varepsilon}-u_t\right)}\omega^n dt<\infty.
    $$
Due to \eqref{eq: phi_u_ineq} we obtain that
\begin{equation}\label{eq: Holder}
    \int_0^\infty e^{-2\varepsilon t}\int_Xe^{\beta\left(\phi_{t,\beta_\varepsilon}-u_t\right)}\omega^n dt<\infty.
\end{equation}
    
Due to \eqref{eq: lim_cons} and Proposition \ref{prop:Ent-beta-increase} the geodesic ray $\{\phi_{t,\beta_\varepsilon}\}$ satisfies 
    \begin{equation}
        \label{eq:beta-I-phi-I-u>(1-e)Ent-beta}
        \beta(I\{\phi_{t,\beta_\varepsilon}\}-I\{u_t\})= (1-\varepsilon)^2Ent^{\beta_\varepsilon}\{u_t\}\geq (1-\varepsilon)^2Ent^\beta\{u_t\}.
    \end{equation}
    Moreover, by Corollary \ref{cor:Ent-beta-concave-in-beta},
    $
Ent^{\beta_\varepsilon}\{u_t\}\leq \frac{1}{(1-\varepsilon)^2}Ent^\beta\{u_t\}.
    $
    Theorefore,
    \begin{equation}\label{eq: I_Ent_conv}
    \beta(I\{\phi_{t,\beta_\varepsilon}\}-I\{u_t\})\to Ent^{\beta}\{u_t\}\text{ as }\varepsilon\to 0.
    \end{equation}
    Additinally, using \eqref{eq: Ent_beta_def-E1} we can write
    \begin{equation*}
        \begin{split}
            \frac{1}{t}Ent^\beta(u_t)&\geq \frac{-1}{t}\log\int_Xe^{\beta(\phi_{t,\beta_\varepsilon}-u_t)}\frac{\omega^n}{V}+\frac{\beta}{t}(I(\phi_{t,\beta_\varepsilon})-I(u_t))).\\
        \end{split}
    \end{equation*}
    Taking liminf on both sides and using \eqref{eq:beta-I-phi-I-u>(1-e)Ent-beta}, we obtain that
    \begin{equation*}
        \begin{split}
            Ent^\beta\{u_t\}\geq \liminf_{t\to\infty}(\frac{-1}{t}\log\int_Xe^{\beta(\phi_{t,\beta_\varepsilon}-u_t)}\frac{\omega^n}{V})+(1-\varepsilon)^2Ent^\beta\{u_t\}.
        \end{split}
    \end{equation*}
This implies that
    $$
    (1-(1-\varepsilon)^2) Ent^\beta\{u_t\}\geq \liminf_{t\to\infty}(\frac{-1}{t}\log\int_Xe^{\beta(\phi_{t,\beta_\varepsilon}-u_t)}\frac{\omega^n}{V})\geq -2\varepsilon,
    $$
    where in the last inequality we used Proposition \ref{prop: liminf} and \eqref{eq: Holder}. 
Choosing $\varepsilon_k\searrow 0$ and putting
    $$
    \psi_{t,k}:=\phi_{t,\beta_{\varepsilon_k}},
    $$
by \eqref{eq:rad-ent-beta-finite} and \eqref{eq:def-L-beta} we get 
$
\lim_{k\to\infty}L^\beta(\{\psi_{t,k}\},\{u_t\})=0.
$
Then we conclude by \eqref{eq: I_Ent_conv} and \eqref{eq:rad-K-phi-finite}.
\end{proof}

\paragraph{Step 3. Approximation via test configurations.} In this final step we will approximate each $\{\psi_{t,k}\}$ from the above corollary by a sequence of geodesic rays from test configurations and then complete the proof of Theorem \ref{thm:rad-Ent-beta=sup}.

\begin{proposition}
\label{prop:approx-psi-using-TC-rays}
Let $\{u_t\}$ be a bounded geodesic ray with $\sup u_t=0$.
    Assume that $\{\psi_t\}$ is a geodesic ray such that
    $
    Ent\{\psi_t\}<\infty
    $
    and
    $$
    \int_0^\infty e^{-2\varepsilon t} \int_Xe^{\frac{\beta}{1-\varepsilon}(\psi_{t}-u_t)}\omega^n dt<\infty
    $$
    for some $\beta>0$ and $\varepsilon>0$. Then there exists a sequence of geodesic rays $\{w_{t,m}\}$ induced from test configurations such that
    $$
   \lim_{m\to\infty} I\{w_{t,m}\}= I\{\psi_t\}\text{ and } \int_0^\infty e^{-4\varepsilon t} \int_Xe^{\beta(w_{t,m}-u_t)}\omega^n dt<\infty\text{ for all }m\gg 1.
    $$
\end{proposition}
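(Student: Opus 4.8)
The plan is to take \(\{w_{t,m}\}\) to be the geodesic rays produced by applying the Demailly regularization of \S\ref{sec:pre} to the ray \(\{\psi_t\}\) itself. We may and do assume \(\psi_t\le 0\) (this holds in our application, where \(0\ge\psi_t\ge u_t\)). Write \(\Psi:=\psi_{-\log|z|^2}\) and \(P:=(u_{\cdot})_{-\log|z|^2}\), both \(p_1^*\omega\)-psh and \(\le 0\) on \(X\times\Delta\). For each \(m\ge 1\) the construction of \S\ref{sec:pre} produces a semi-ample test configuration \(\cT_m\) with induced geodesic ray \(w_{t,m}:=u_t^{\cT_m}\), whose lift \(W_m:=(w_{\cdot,m})_{-\log|z|^2}\) satisfies
\begin{equation*}
\int_{X\times\Delta}e^{(m+m_0)W_m-m\Psi}\,\omega^n\wedge idz\wedge d\bar z<\infty .
\end{equation*}
Since \(Ent\{\psi_t\}<\infty\), C. Li's theorem \cite{Li20-cscK} recalled in \S\ref{sec:pre} gives \(w_{t,m}\xrightarrow{d_1^c}\psi_t\), and hence \(I\{w_{t,m}\}\to I\{\psi_t\}\) by \(d_1^c\)-continuity of \(I\{\cdot\}\) (cf.\ the \(d_1^c\)-continuity of \(J\{\cdot\}\) used earlier). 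This gives the first assertion.

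For the integrability assertion I pass to \(X\times\Delta\) via \(t=-\log|z|^2\). The hypothesis becomes \(\int_{X\times\Delta}e^{\frac{\beta}{1-\varepsilon}(\Psi-P)}|z|^{4\varepsilon-2}\,\omega^n\wedge idz\wedge d\bar z<\infty\), and it suffices to show \(\int_{X\times\Delta}e^{\beta(W_m-P)}|z|^{8\varepsilon-2}\,\omega^n\wedge idz\wedge d\bar z<\infty\) for \(m\gg 1\). The key is the pointwise identity
\begin{equation*}
\beta(W_m-P)=\frac{\beta}{m+m_0}\bigl((m+m_0)W_m-m\Psi\bigr)+(1-\varepsilon)\cdot\frac{\beta}{1-\varepsilon}(\Psi-P)-\frac{\beta m_0}{m+m_0}\Psi ,
\end{equation*}
to which I apply H\"older's inequality with three exponents \(p_1=\frac{m+m_0}{\beta}\), \(p_2=\frac{1}{1-\varepsilon}\), and \(p_3\) determined by \(p_1^{-1}+p_2^{-1}+p_3^{-1}=1\); this is legitimate once \(\frac{\beta}{m+m_0}<\varepsilon\), in which case \(p_3\) stays bounded (\(p_3\to\varepsilon^{-1}\)) as \(m\to\infty\). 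Assigning the entire weight \(|z|^{8\varepsilon-2}\) to the middle factor, the three resulting integrals are finite: the first by the displayed Demailly estimate; the second by the hypothesis, using \(\frac{8\varepsilon-2}{1-\varepsilon}\ge 4\varepsilon-2\) so that \(|z|^{(8\varepsilon-2)p_2}\le|z|^{4\varepsilon-2}\) on \(\Delta\); and the third because \(\frac{p_3\beta m_0}{m+m_0}\to 0\) and \(\Psi\) is \(p_1^*\omega\)-psh and bounded above, whence \(\int_{X\times\Delta}e^{-\frac{p_3\beta m_0}{m+m_0}\Psi}\,\omega^n\wedge idz\wedge d\bar z<\infty\) for \(m\) large by Skoda's integrability theorem (cf.\ \cite{Ze01}). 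This completes the argument.

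The main obstacle is the third term \(-\frac{\beta m_0}{m+m_0}\Psi\) in the pointwise identity: since \(\Psi\le 0\) this term is nonnegative, hence it cannot simply be discarded, and it is exactly what prevents the Demailly estimate — whose exponents \(m+m_0\) on \(W_m\) and \(m\) on \(\Psi\) are unequal — from transferring verbatim to the small exponent \(\beta\). The resolution, and the reason the conclusion holds only for \(m\gg 1\) rather than all \(m\), is that this term's coefficient \(\frac{\beta m_0}{m+m_0}\) tends to \(0\), so the offending factor is \(L^1\) by Skoda once \(m\) is large. A secondary, purely book-keeping, point is that there is enough slack in the \(|z|\)-weights; this holds precisely because the conclusion requires the smaller weight \(e^{-4\varepsilon t}\) (against \(e^{-2\varepsilon t}\) in the hypothesis) and the smaller exponent \(\beta\) (against \(\frac{\beta}{1-\varepsilon}\)).
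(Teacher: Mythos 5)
Your proposal is correct in substance and follows the paper's strategy: Demailly regularization of $\{\psi_t\}$ to get the rays $\{w_{t,m}\}$ (with $I\{w_{t,m}\}\to I\{\psi_t\}$ via Li's maximality theorem and the $1$-Lipschitz property of $I\{\cdot\}$ for $d_1^c$), followed by a three-exponent H\"older splitting that trades the unequal Demailly exponents $(m+m_0,m)$ for the single exponent $\beta$, at the cost of a leftover term with coefficient $\tfrac{\beta m_0}{m+m_0}\to 0$ and a loss in the $|z|$-weight. The difference is in how the leftover is allocated: you keep the middle factor at full strength $\beta(\Psi-P)$ and place the leftover on $-\Psi$, controlling it by exponential integrability of $\psi$, whereas the paper takes the middle coefficient $\beta(1-\delta)$ with $\delta=\tfrac{m_0}{m+m_0}$ and places the leftover on $-u_t$, controlling it by Tian's alpha invariant together with the stated hypothesis $\sup_X u_t=0$ (this is also why the paper needs $m_0>\beta$ and you do not, since your $z$-coordinate measure already carries the $e^{-t}$ weight of the Demailly estimate). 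Two caveats about your variant. First, your reduction ``we may assume $\psi_t\le 0$'' is not a hypothesis of the proposition and is not a genuine normalization (shifting $\psi_t$ by $ct$ changes the conclusion's integrand by a growing factor $e^{\beta c t}$); it is harmless here because the Demailly construction of \S2 is anyway stated for rays $\le 0$ and because $0\ge\psi_t\ge u_t$ holds in the application, but the cleaner route in the stated generality is the paper's, which only touches $u_t$ where $\sup_X u_t=0$ is assumed. Second, your appeal to ``Skoda on $X\times\Delta$'' for the third factor is too quick: $X\times\Delta$ is noncompact and local Skoda-type integrability is interior, so it does not by itself give integrability up to $|z|=1$; the clean argument is fiberwise, namely $\int_X e^{-c\,\psi_t}\,\omega^n\le C_\alpha e^{c|c_\psi|t}$ for $c<\alpha(X,\omega)$ (using $\sup_X\psi_t=c_\psi t\le 0$), which integrated against $e^{-t}dt$ is finite once $c=p_3\tfrac{\beta m_0}{m+m_0}$ is small, i.e.\ $m\gg1$. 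With these two points repaired or rephrased, your argument is a valid proof, marginally tidier in the weight bookkeeping than the paper's and marginally more demanding on $\psi$.
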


\begin{proof}
    During the proof, $\beta>0$ and $\varepsilon>0$ will be fixed.    Since  $Ent\{\psi_t\}<\infty$, \cite[Theorem 1.2]{Li20-cscK} is applicable, and applying Demailly's regularization to $\{\psi_t\}$ gives a sequence of geodesic rays $\{w_{t,m}\}$ induced from test configurations such that $d_1^c(\{w_{t,m}\},\{\psi_t\}) \to 0$, hence
    $$
    I\{w_{t,m}\}\to I\{\psi_t\}\text{ as }m\to \infty.
    $$
    Moreover, for some sufficiently large $m_0>0$, we have that (recall \eqref{eq:int-ineq-of-Li} and $t=-\log|z|^2$)
    $$
    \int_0^\infty e^{-t}\int_X e^{(m+m_0)w_{t,m}-m\psi_t}\omega^n dt<\infty
    $$
    for all $m\gg 1$. We can and will assume that
    $
    m_0>\beta.
    $

    Put for simplicity that $Y:=X\times(0,\infty)$ and $d\mu:=\omega^n\wedge dt$. Let $\delta\in(0,\varepsilon)$, which will be fixed below. Then we can start writing
    \begin{equation*}
        \begin{split}
            \int_Y & e^{\beta(w_{t,m}-u_t)-4\varepsilon t}d\mu\leq \int_Y e^{\beta(w_{t,m}-(1-\delta)\psi_t)-\delta t}e^{\beta(1-\delta)(\psi_t-u_t)-2\varepsilon t}e^{-\beta\delta u_t-\varepsilon t}d\mu\\
            &\leq\left(\int_Y e^{\beta pw_{t,m}-\beta p (1-\delta)\psi_t-\delta p t} d\mu\right)^{1/p}\left(\int_Y e^{\beta q(1-\delta)(\psi_t-u_t)-2\varepsilon q t} d\mu\right)^{1/q}\left(\int_Y e^{-\beta\delta r u_t-\varepsilon r t} d\mu\right)^{1/r}.\\
        \end{split}
    \end{equation*}
Here $p,q,r>0$ are chosen such that
$$
\frac{1}{p}+\frac{1}{q}+\frac{1}{r}=1,\ 
\beta p=m+m_0,\ \beta p(1-\delta)=m,\ \beta q (1-\delta)=\frac{\beta}{1-\varepsilon}.
$$
These relations are satisfied by choosing
$$
p=\frac{m+m_0}{\beta},\ \delta=\frac{m_0}{m+m_0},\ q=\frac{m+m_0}{(1-\varepsilon)m},\ r=\frac{m+m_0}{\varepsilon m+m_0-\beta}.
$$
Therefore, for $m\gg 1$, we have $\delta <\varepsilon$ and
$$
\delta p=\frac{m_0}{\beta}>1,\ 2\varepsilon q=\frac{2\varepsilon}{1-\varepsilon}\frac{m+m_0}{m}>2\varepsilon,\ \beta\delta r=\frac{\beta m_0}{\varepsilon m+m_0-\beta}<\alpha(X,\omega).
$$
Here $\alpha(X,\omega)$ denotes Tian's alpha invariant \cite{Tian87}.
Then,
$$
\left(\int_Y e^{\beta pw_{t,m}-\beta p (1-\delta)\psi_t-\delta p t} d\mu\right)^{1/p}\leq \left(\int_Y e^{(m+m_0)w_{t,m}-m\psi_t-t} d\mu\right)^{1/p}<\infty,
$$
$$
\left(\int_Y e^{\beta q(1-\delta)(\psi_t-u_t)-2\varepsilon q t} d\mu\right)^{1/q}\leq \left(\int_Y e^{\frac{\beta}{1-\varepsilon}(\psi_t-u_t)-2\varepsilon t} d\mu\right)^{1/q}<\infty,
$$
$$
\left(\int_Y e^{-\beta\delta r u_t-\varepsilon r t} d\mu\right)^{1/r}\leq \left(C_\alpha\int_Y e^{-\varepsilon r t} d\mu\right)^{1/r}<\infty.
$$
So we conclude that
$
\int_Y e^{\beta(w_{t,m}-u_t)-4\varepsilon t}d\mu<\infty \text{ for all }m\gg 1, 
$
finishing the proof.
\end{proof}

Now we can prove the main result of this section.

\begin{proof}[Proof of Theorem \ref{thm:rad-Ent-beta=sup}]
    After possibly adding a linear term, we can assume that $\sup u_t=0$ \cite[Lemma 3.2]{DX20}. Due to the definition of $Ent^\beta$ \eqref{eq: Ent_beta_def-E1} it suffices to find a sequence of geodesic rays $\{w_{t,j}\}$ arising from   test-configurations such that
    $$
    Ent^\beta\{u_t\}=\lim_{j\to\infty} \left(L^\beta(\{w_{t,j}\},\{u_t\})+\beta(I\{w_{t,j}\}-I\{u_t\})\right).
    $$
    By  Corollary \ref{cor:rad-Ent-beta-approx-by-geod-rays}, we already found a sequence of geodesic rays $\{\psi_{t,k}\}$ with $Ent\{\psi_{t,k}\}<\infty,$
    $$
    \int_0^\infty e^{-2\varepsilon_k t}\int_Xe^{\frac{\beta}{1-\varepsilon_k}(\psi_{t,k}-u_t)}\omega^n dt<\infty, \ \textup{ and } \ 
    \lim_{k \to \infty}\beta(I\{\psi_{t,k}\}-I\{u_t\})= Ent^\beta\{u_t\}.
    $$
    Additionally, by Proposition \ref{prop:approx-psi-using-TC-rays}, for each $k>0$, we can find $m_k\gg 1$ and a geodesic ray $\{w_{t,m_k}\}$ arising from a test configuration such that
    $$
    \int_0^\infty e^{-4\varepsilon_k t}\int_Xe^{\beta(w_{t,m_k}-u_t)}\omega^n dt<\infty
\ \  \textup{and } \ \  
    |I\{\psi_{t,k}\}-I\{w_{t,m_k}\}|\leq 2^{-k}.
    $$

    Consequently, we have that
\begin{equation}\label{eq: I_limit_ent}
    \beta(I\{w_{t,m_k}\}-I\{u_t\})\to Ent^\beta\{u_t\}\text{ as }k\to\infty.
\end{equation}
    Moreover, for some  $\delta_k\to 0$, \eqref{eq: Ent_beta_def-E1} gives
    \begin{equation*}
        \begin{split}
            Ent^\beta\{u_t\}&\geq \liminf_{t\to\infty}\left(\frac{-1}{t}\log\int_X e^{\beta(w_{t,m_k}-u
    _t)}\frac{\omega^n}{V}\right)+\beta(I\{w_{t,m_k}\}-I\{u_t\})\\
    &\geq -4\varepsilon_k+Ent^\beta\{u_t\}-\delta_k,
        \end{split}
    \end{equation*}
   where we used Proposition \ref{prop: liminf}. This implies that 
   $
   \lim_{k\to\infty} L^\beta(\{w_{t,m_k}\},\{u_t\})=0.
   $
Coupled with \eqref{eq: I_limit_ent}, this finishes the proof.
\end{proof}

\begin{remark}
    In this section, the polarization assumption for $(X,\omega)$ is only used to invoke Demailly's approximation when proving Lemma \ref{lme:Li-int-est} and Proposition \ref{prop:approx-psi-using-TC-rays}. However, such an approximation procedure also holds for the general K\"ahler case, thanks to \cite[Proposition 6.3.1]{Pic24}. As a consequence, the transcendental version of Theorem \ref{thm:rad-Ent-beta=sup} holds as well, where the sup in \eqref{eq:rad-Ent-beta-formula} is then over all geodesic rays $\{v_t\}$ attached to K\"ahler test configurations.
\end{remark}

\section{Threshold type interpretation of $K^\beta$-stability}
\label{sec:slop-TC}

Assume that $(X,L)$ is a polarized manifold. Let $\omega\in c_1(L)$, so $V=L^n$. 
Assume that $\{u_t\}$ is a geodesic ray arising from a test configuration $\mathcal T = (\cX,\cL)$. Then we write
$$
Ent^\beta(\cX,\cL):=Ent^\beta\{u_t\},\ \cJ(\cX,\cL):=\cJ_{\Ric(\omega)}\{u_t\},\ 
$$
$$
K^\beta_\mathcal T:=K^\beta(\cX,\cL):=Ent^\beta(\cX,\cL)-\cJ(\cX,\cL).
$$

It is known from \cite{PS08,BHJ19} (see also \cite[Proposition 2.38]{Li20-cscK}) that
\begin{equation}\label{eq: J_Ric_test_conf}
\cJ(\cX,\cL)=-\frac{\bar S \cL^{n+1}}{(n+1)V}-\frac{K_X\cdot \cL^n}{V}.
\end{equation}
Here the intersection numbers are taken after a trivial compactification of $(\cX,\cL)$ over $\infty\in\PP^1$ and then pull back all the line bundles in question to a common normal model.
In what follows we will provide a purely algebraic description for $Ent^\beta(\cX,\cL)$ as well, in terms of log canonical thresholds and intersection numbers. 

 By Theorem \ref{thm:rad-Ent-beta=sup}, for any $\beta>1$,
\begin{equation}\label{eq: Ent_bet_form}
  Ent^\beta(\cX,\cL)=\sup_{\{v_t\}}\left(L^\beta(\{v_t\},\{u_t\})+\beta (I\{v_t\}-I\{u_t\})\right),
\end{equation}
where the sup is over all geodesic rays $\{v_t\}$ arising from test-configurations.
Let us therefore take an auxiliary test configuration $(\cX',\cL')$, with $\{v_t\}$ being the associated geodesic ray. 

We know from the slope formulas in \cite{PS08,BHJ19} that
\begin{equation}\label{eq: I_test_conf}
\beta(I\{v_t\}-I\{u_t\})=\frac{\beta(({\cL'})^{n+1}-\cL^{n+1})}{(n+1)V}.
\end{equation}
Therefore, it remains to deal with the term $L^\beta(\{v_t\},\{u_t\})$. We will express it in terms of certain log canonical threshold, as shown in the next result. It recovers Berman's formula for the Ding invariant in the Fano case \cite[(3.19)]{Ber16}, by taking $(\cX',\cL'):=(\cX,(1-\frac{1}{\beta})\cL+\frac{1}{\beta}L)$.

\begin{theorem}
\label{thm:L-beta-for-two-TC}
    Take a common normal model:
    $$
\begin{tikzcd}[column sep=small]
& \cZ \arrow[dl,"\mu" left, near start] \arrow[dr, "\sigma"] \arrow[d, "\tau"] & \\
  \cX'  &  X\times\PP^1    & \cX
\end{tikzcd}
$$
There exist $\QQ$-Cartier divisors $F$ and $G$ supported on $\cZ_0$ such that (cf. \cite[Propostion 3.10]{SD18})
$$
\tau^*p_1^*L-\sigma^*\cL\sim_\QQ F,\ \tau^*p_1^*L-\mu^*\cL'\sim_\QQ G.
$$
Define a $\QQ$-Weil divisor $\cD$ supported in $\cZ_0$ by
\begin{equation}\label{eq: D_def}
\cD:=\beta(F-G)-K_{\cZ}+\tau^*K_{X\times\CC}.
\end{equation}
Then
\begin{equation}\label{eq: L_beta_lct}
L^\beta(\{v_t\},\{u_t\})=\lct_{(\cZ,\cD)}(\cZ_0)-1.
\end{equation}
(See \cite[\S 1.5]{BHJ17} for the definition of log canonical threshold (lct) for a pair.)

Moreover, when $\cZ$ is smooth with simple normal crossing (snc) central fiber, write
$$
\cZ_0=\sum_i a_i E_i,\  \
F=\sum_i b_i E_i,\  \
G=\sum_i c_i E_i, \  \ K_\cZ-\tau^*K_{X\times\PP^1}=\sum_{i}d_i E_i.
$$
In this case, we can write more explicitly that
\begin{equation}
    \label{eq:L-beta-min}
    L^\beta(\{v_t\},\{u_t\})=\min_i \frac{(d_i+1-a_i)+\beta(c_i-b_i)}{a_i}.
\end{equation}
\end{theorem}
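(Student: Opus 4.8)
The plan is to reduce the asymptotic integrability threshold defining $L^\beta(\{v_t\},\{u_t\})$ to a purely algebraic log canonical threshold computation on the common model $\cZ$, using the comparison \eqref{eq:U-has-analytic-sing} between geodesic rays from (semi)ample test configurations and the analytic singularities of the corresponding flag ideals. First I would invoke Proposition \ref{prop: liminf}: since $\{u_t\}$ is a bounded geodesic ray (the boundedness hypothesis of Theorem \ref{thm:rad-Ent-beta=sup} is in force), we have
$$
L^\beta(\{v_t\},\{u_t\}) = \sup\Big\{\tau \in \RR : \int_0^\infty e^{\tau t}\int_X e^{\beta(v_t - u_t)}\omega^n\, dt < \infty\Big\}.
$$
Writing $t = -\log|z|^2$ and passing to $X\times\Delta^*$, the integral becomes $\int_{X\times\Delta} |z|^{-2(\tau+1)} e^{\beta(V_z - U_z)}\omega^n \wedge i\,dz\wedge d\bar z$ up to harmless factors, where $U_z = u^{\mathcal T}_{-\log|z|^2}$ and $V_z = v^{\mathcal T'}_{-\log|z|^2}$. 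By \eqref{eq:U-has-analytic-sing} the local singularity of $U_z$ near $X\times\{0\}$ is that of $\frac{1}{m}\log\sum|f_i|^2$, where the $f_i$ generate the flag ideal attached to $\cT$; pulling back to $\cZ$ via $\sigma$, this singularity is encoded exactly by the divisor $\sigma^*(\text{exceptional part})$, which by the relation $\tau^*p_1^*L - \sigma^*\cL \sim_\QQ F$ is comparable to $-F$ (on $\cZ_0$). Similarly $V_z$ pulls back to something comparable to $-G$. Hence on $\cZ$, near $\cZ_0$, $e^{\beta(V_z - U_z)}$ is comparable to $\prod_i |s_{E_i}|^{-2\beta(c_i - b_i)}$ in local equations $s_{E_i}$ of the $E_i$, and the measure $\omega^n\wedge i\,dz\wedge d\bar z$ pulls back with Jacobian $\prod_i|s_{E_i}|^{2d_i}$ where $K_{\cZ/X\times\CC} = \sum d_i E_i$, while $|z|^{-2(\tau+1)}$ pulls back to $\prod_i |s_{E_i}|^{-2a_i(\tau+1)}$ since $\cZ_0 = \sum a_i E_i$.

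The next step is to assemble these exponents. Finiteness of the pulled-back integral near a generic point of $E_i$ is governed by the standard one-variable criterion: $\int |s|^{2\alpha}\,i\,ds\wedge d\bar s < \infty$ iff $\alpha > -1$. Collecting the exponent of $|s_{E_i}|^2$ we get $d_i + \beta(c_i - b_i) - a_i(\tau+1)$, so local integrability along $E_i$ requires
$$
d_i + \beta(c_i - b_i) - a_i(\tau + 1) > -1, \quad\text{i.e.}\quad \tau + 1 < \frac{(d_i + 1) + \beta(c_i - b_i)}{a_i}.
$$
Taking the infimum over $i$ and using that $\cZ_0$ has snc so the global integrability is equivalent to local integrability along each component (no worse contributions from intersections), we obtain $\tau_0 + 1 = \min_i \frac{(d_i+1) + \beta(c_i - b_i)}{a_i}$, which rearranges to \eqref{eq:L-beta-min}. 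For the general (non-snc) statement \eqref{eq: L_beta_lct}, I would instead recognize that the sup defining $L^\beta$ plus $1$ is, by definition of the log canonical threshold of the pair $(\cZ, \cD)$ along $\cZ_0$, exactly $\lct_{(\cZ,\cD)}(\cZ_0)$: indeed $\cD = \beta(F - G) - K_{\cZ} + \tau^*K_{X\times\CC} = \beta(F-G) - K_{\cZ/X\times\CC}$, and the analytic interpretation of lct via integrability of $|z|^{-2s}$ against $e^{\beta(V-U)}$ and the relative canonical measure gives precisely the stated identity after the change of variables above; passing to a log resolution recovers \eqref{eq:L-beta-min} and shows the answer is independent of the chosen model.

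The main obstacle I anticipate is making rigorous the passage from the \emph{asymptotic} slope $L^\beta$ (a $\liminf$ of $-\frac1t\log\int_X$) to the \emph{global} integrability of a single function on $X\times\Delta$, and then from that to the algebraic lct. The first reduction is exactly Proposition \ref{prop: liminf}, so that is handled; the delicate part is that \eqref{eq:U-has-analytic-sing} only controls $U_z$ up to a bounded term $O(1)$, and one must check that bounded perturbations do not affect the integrability threshold — this is true because $e^{O(1)}$ is bounded above and below, but one should also confirm that the error is uniform as $z\to 0$, which follows from the construction of $\{u^{\cT}_t\}$ and standard properties of geodesic rays with algebraic singularities. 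A second subtlety is that $\{v_t\}$ (or $\{u_t\}$) need only be a semi-ample test configuration ray; here one replaces $\cL'$ by its ample model and uses that this does not change the induced geodesic ray (as recalled in \S\ref{sec:pre}), so \eqref{eq:U-has-analytic-sing} still applies after pulling back to $\cZ$. Finally, verifying that $F$ and $G$ can indeed be taken $\QQ$-Cartier and supported on $\cZ_0$, and that the resulting $\cD$ is independent of the model, is the content of the cited \cite[Proposition 3.10]{SD18} and a routine negativity/birational-invariance argument for lct's, which I would cite rather than reprove.
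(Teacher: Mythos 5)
Your proposal is correct and takes essentially the same route as the paper's proof: reduce via Proposition \ref{prop: liminf} and the substitution $t=-\log|z|^2$ to an integrability threshold on $X\times\Delta$, use that the rays pulled back to the snc model $\cZ$ have purely divisorial singularities along $F$ and $G$ (the paper cites \cite[Lemma 4.6]{SD18} and \cite[\S 3]{CTW18} instead of going through the flag-ideal description \eqref{eq:U-has-analytic-sing}), collect exponents along the $E_i$ to get \eqref{eq:L-beta-min}, and pass to a general normal model via the pull-back formula for pairs to get \eqref{eq: L_beta_lct}. Only a cosmetic remark: the sentence claiming $e^{\beta(V_z-U_z)}\sim\prod_i|s_{E_i}|^{-2\beta(c_i-b_i)}$ has the wrong sign in the exponent (it should be $+2\beta(c_i-b_i)$), but your subsequent exponent count and the resulting formula use the correct sign, so this is just a slip in the prose.
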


\begin{proof}
Put $U_z:=u_{-\log|z|^2}$ and $V_z:=v_{-\log|z|^2}$.
Applying the change of variables \(t = -\log|z|^2\) in the statement of Proposition \ref{prop: liminf}, we obtain
\begin{equation}
    \label{eq:L_beta=lct}
    L^\beta(\{v_t\},\{u_t\})=\sup\left\{\tau\in\RR: \int_{X\times \Delta^*} e^{\beta(V_z-U_z)-(1+\tau)\log|z|^2}\omega^n\wedge i dz\wedge d\bar z<\infty\right\}.
\end{equation}
    The result then simply follows from expanding all the terms appearing in the above integral.
    
    Indeed, when $\cZ$ is smooth with snc central fiber, the complexified geodesic rays $z \mapsto U_z, \ V_z$ when pulled back to $\cZ$ is $L^\infty$ compatible with the divisors $F$ and $G$, respectively (recall \eqref{eq:sing-type-of-alg-ray}).
Then the integrability of $e^{\beta(V_z-U_z)-(1+\tau)\log|z|^2}$ is equivalent to requiring that the coefficients of the divisor  
$
\beta(F-G) - K_{\mathcal Z / X \times \PP^1} + (1+\tau)\mathcal Z_0
= \sum_i \big( \beta(b_i - c_i) + a_i(1+\tau) - d_i \big) E_i
$
are all strictly less than \(1\), giving \eqref{eq:L-beta-min}.

Lastly, by the pull back formula of pairs (see \cite[\S 1.5]{BHJ17}), for a general normal model \(\mathcal Z\) dominating \(\mathcal X\),\ $X\times\PP^1$ and \(\mathcal X'\), the quantity \(L^\beta(\{v_t\},\{u_t\}) + 1\) can then be expressed as the lct of the pair  
$(\mathcal Z, \ \beta(F-G) - K_{\mathcal Z / X \times \mathbb C})$ with respect to $\cZ_0$, giving \eqref{eq: L_beta_lct}. So we conclude.
\end{proof}

Let $\mathcal T = (\mathcal X, \mathcal L)$ be a test configuration dominating $X\times\PP^1$. Let $Div_\mathcal X$ be the set of $\Bbb Q$-Cartier divisors $G$ contained in the central fiber $\mathcal Z_0$ of a $\Bbb C^*$-equivariant smooth model $\pi: \mathcal Z \to \mathcal X$, with $\pi$ bijective on $\mathcal X \setminus \mathcal X_0$, and $\mathcal L_G:=\pi^* p_1^* L - G$ relatively semi-ample on $\mathcal Z$. 

Thus defined, $(\mathcal Z, \mathcal L_G)$ is also a test configuration of $(X,L)$, and  we define the corresponding divisor $\mathcal D_G$ as in  \eqref{eq: D_def}:
\begin{equation}
    \label{eq:def-D-G}
    \cD_G:=\beta(F-G)-K_{\cZ}+\tau^*K_{X\times\PP^1}.
\end{equation}

Now we plug \eqref{eq: L_beta_lct} and \eqref{eq: I_test_conf} into \eqref{eq: Ent_bet_form}. Together with \eqref{eq: J_Ric_test_conf}, this gives a divisorial threshold interpretation for $K^\beta _\mathcal T = Ent^\beta(\mathcal X, \mathcal L) - \cJ(\mathcal X, \mathcal L)$:

\begin{corollary}\label{cor: Ent_beta_div_thresh} Let $\cT:=(\mathcal X, \mathcal L)$ be a test configuration dominating $X\times\PP^1$. With the notation introduced above, for any $\beta>1$ we have
\begin{equation}
    \label{eq:K-T-beta-express}
    K^\beta_\mathcal T = \sup_{G \in Div_\mathcal X} \Big(\lct_{(\mathcal \cZ, \mathcal D_G)}(\mathcal Z_0) - 1 +  \frac{\beta( {\mathcal L_G}^{n+1}-{\mathcal L}^{n+1})}{(n+1)V}\Big) + \frac{\bar S \cL^{n+1}}{(n+1)V}+ \frac{K_X\cdot\cL^n}{V}.
\end{equation}
\end{corollary}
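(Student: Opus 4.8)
The plan is to unwind the definition $K^\beta_\mathcal T = Ent^\beta(\cX,\cL) - \cJ(\cX,\cL)$ and substitute the algebraic formulas already at hand. The twisted-energy term is immediate: \eqref{eq: J_Ric_test_conf} gives $-\cJ(\cX,\cL) = \frac{\bar S\,\overline\cL^{n+1}}{(n+1)V} + \frac{K_X\cdot\overline\cL^n}{V}$, which accounts for the two trailing terms of \eqref{eq:K-T-beta-express}. It therefore remains to identify $Ent^\beta(\cX,\cL)$ with $\sup_{G\in Div_\mathcal X}\big(\lct_{(\cZ,\cD_G)}(\cZ_0)-1+\tfrac{\beta(\overline{\cL_G}^{n+1}-\overline\cL^{n+1})}{(n+1)V}\big)$.

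First I would invoke the slope formula \eqref{eq: Ent_bet_form} (a case of Theorem \ref{thm:rad-Ent-beta=sup}, applicable because the geodesic ray $\{u_t\}$ attached to a semi-ample test configuration is bounded; here $\beta>1$), which expresses $Ent^\beta(\cX,\cL)$ as the supremum of $L^\beta(\{v_t\},\{u_t\}) + \beta(I\{v_t\}-I\{u_t\})$ over geodesic rays $\{v_t\}$ coming from semi-ample test configurations. The key reduction is that this supremum may instead be taken over $G\in Div_\mathcal X$. Indeed, each $G\in Div_\mathcal X$ yields a semi-ample test configuration $(\cZ,\cL_G)$, hence a legitimate competitor. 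Conversely, given an arbitrary semi-ample test configuration $(\cX',\cL')$ with ray $\{v_t\}$, pick a smooth $\CC^*$-equivariant model $\cZ$ dominating $\cX$, $\cX'$ and $X\times\CC$; by \cite[Proposition 3.10]{SD18} there is a $\QQ$-divisor $G$ supported on $\cZ_0$ with $\tau^*p_1^*L-\mu^*\cL'\sim_\QQ G$, so that $\cL_G\sim_\QQ\mu^*\cL'$ is relatively semi-ample and $G\in Div_\mathcal X$. Since pullbacks of semi-ample test configurations induce the same geodesic ray (\S\ref{sec:pre}) and have the same top self-intersection numbers, the competitor attached to $G$ has exactly the same value as $(\cX',\cL')$, and the two suprema coincide.

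Next I would evaluate the summand for a fixed $G\in Div_\mathcal X$. Applying Theorem \ref{thm:L-beta-for-two-TC} with $(\cX',\cL'):=(\cZ,\cL_G)$ — in which case $\cZ$ itself is the common normal model, $\mu$ is the identity, the divisor $F$ is the one with $\tau^*p_1^*L-\sigma^*\cL\sim_\QQ F$, and the auxiliary divisor of that theorem is precisely our $G$, since $\tau^*p_1^*L-\cL_G=G$ — we obtain $L^\beta(\{v_t\},\{u_t\})=\lct_{(\cZ,\cD_G)}(\cZ_0)-1$ with $\cD_G$ as in \eqref{eq:def-D-G}. Moreover \eqref{eq: I_test_conf} gives $\beta(I\{v_t\}-I\{u_t\})=\tfrac{\beta(\overline{\cL_G}^{n+1}-\overline\cL^{n+1})}{(n+1)V}$. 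Summing over the reduced index set $Div_\mathcal X$ and adding the expression for $-\cJ(\cX,\cL)$ produces \eqref{eq:K-T-beta-express}.

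The only genuinely delicate step is the reduction of \eqref{eq: Ent_bet_form} to a supremum over $Div_\mathcal X$: one must check that passing to a common smooth model changes neither the associated geodesic ray nor the relevant intersection numbers. Both are standard facts about semi-ample test configurations and their pullbacks (\S\ref{sec:pre}, \cite{BHJ17}), so no new input is required; the remainder of the argument is the bookkeeping of matching $F$, $G$, $\cD_G$ and the compactified line bundles across the diagram of Theorem \ref{thm:L-beta-for-two-TC}.
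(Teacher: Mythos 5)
Your proposal is correct and follows essentially the same route as the paper: substitute the slope formula \eqref{eq: Ent_bet_form}, the lct formula of Theorem \ref{thm:L-beta-for-two-TC}, and the intersection formulas \eqref{eq: I_test_conf}, \eqref{eq: J_Ric_test_conf} into $K^\beta_\mathcal T = Ent^\beta(\cX,\cL)-\cJ(\cX,\cL)$. The only difference is that you spell out the reduction of the supremum from all semi-ample test configurations to $G\in Div_\mathcal X$ (via pulling back to a common smooth equivariant model), a bookkeeping step the paper leaves implicit, and your handling of it is fine.
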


Being the supremum of divisorial data, the above expression is reminiscent of other known divisorial threshold type expressions from the Fano case, like the delta and alpha invariants. 

\paragraph{$K^\beta$-stability vs. Ding stability.} When $(X,L)=(X,-K_X)$ is Fano, our $K^\beta(\cX,\cL)$ is directly related to $\mathrm{Ding}(\cX,\cL)$, the Ding invariant introduced in \cite{Ber16,BBJ18}. Indeed, choosing $G:=(1-\frac{1}{\beta})F$ in \eqref{eq:def-D-G} (i.e., taking $\cL_G:=(1-\frac{1}{\beta})\cL+\frac{1}{\beta}(-K_X)$), one then has (by \cite[(2.18)]{Xubook})
$
\mathrm{Ding}(\cX,\cL)=\lct_{(\mathcal \cZ, \mathcal D_G)}(\mathcal Z_0) - 1-\frac{ \cL^{n+1}}{(n+1)V}.
$
Thus, \eqref{eq:K-T-beta-express} implies that (using $\bar S=n$ here)
\begin{equation*}
    \begin{split}
        K^\beta(\cX,\cL)&\geq \mathrm{Ding}(\cX,\cL)+\frac{\beta({\cL_G}^{n+1}-{\cL}^{n+1})}{(n+1)V}+\frac{({\cL}-(-K_X))\cdot{\cL}^n}{V}\\
        &\geq \mathrm{Ding}(\cX,\cL)+\frac{\beta}{V}({\cL_G}-{\cL})\cdot{\cL_G}^n+\frac{({\cL}-(-K_X))\cdot{\cL}^n}{V}\\
        &= \mathrm{Ding}(\cX,\cL)+\frac{1}{V}(-K_X-{\cL})\cdot({\cL_G}^n-{\cL}^n),
    \end{split}
\end{equation*}
where in the second line we used the Hodge index type estimate \cite[Lemma 1.74]{Xubook} (cf. also (2.4) therein) and the last line follows from the relation $\beta({\cL_G}-\overline{\cL})=-K_X-{\cL}$. 

We further argue that the error term $\frac{1}{V}(-K_X-{\cL})\cdot({\cL_G}^n-{\cL}^n)$ appearing above can be conveniently controlled from below when $\beta\gg 1$. Indeed, up to adding some multiple $\cX_0$ to $\cL$, we can assume that $(-K_X-{\cL})$, when pulled back to a common normal model $\cZ$, is an effective $\QQ$-divisor supported in $\cZ_0$ and it does not contain any multiple of $\cZ_0$ (so that $\overline{\cL}\cdot (-K_X)^n=0$). Then using that ${\cL_G}-(1-\frac{1}{\beta}){\cL}$ is nef, we deduce that
\begin{equation*}
    \begin{split}
        \frac{1}{V}(-K_X-{\cL})\cdot({\cL_G}^n-{\cL}^n)\geq -\frac{(1-(1-\beta^{-1})^n)}{V}(-K_X-{\cL})\cdot{\cL}^n=-\varepsilon_\beta\mathbf{I}(\cX,\cL),
    \end{split}
\end{equation*}
where $\varepsilon_\beta:=(1-(1-\beta^{-1})^n)$ and $\mathbf{I}(\cX,\cL)$ denotes the $\mathbf{I}$-norm of $(\cX,\cL)$ (see \cite[(2.2)]{Xubook}). Using the relation $\mathbf{I}\leq (n+1)\mathbf{J}$ \cite[Proposition 2.9]{Xubook}, we finally arrive at $$K^\beta(\cX,\cL)\geq \mathrm{Ding}(\cX,\cL)-\varepsilon'_\beta \mathbf{J}(\cX,\cL).$$

Consequently, uniform Ding stability (meaning that $\mathrm{Ding}(\cX,\cL)\geq\gamma \mathbf{J}(\cX,\cL)$ for some uniform $\gamma>0$) implies uniform $K^\beta$-stability for $\beta\gg1$. To show the reverse implication, one can use that uniform $K^\beta$-stability implies uniform $K$-stability (as $K\geq K^\beta$) and then invoke the deep fact that the latter in turn implies uniform Ding stability \cite{BBJ18,Fuj19} (which relies on the MMP techniques of Li--Xu \cite{LX14}). Therefore, the above purely algebraic consideration shows that stability notions defined using $K$, $K^\beta$ and $\mathrm{Ding}$ are all equivalent for Fano manifolds. 

Beyond the Fano setting, we hope that \eqref{eq:K-T-beta-express} will lead to new valuative criterions for the existence of cscK metrics, generalizing those for KE metrics (cf. \cite{Tian87,Li17,Fuj19,FO18,BJ17,AZ20,Fano-3fold-book,Xubook}).

\section{Connections with non-Archimedean geometry}

\label{sec:NA}

We point out the relationship of our work with the non-Archimedean point of view and give an alternative argument of a recent remarkable result of Boucksom--Jonsson \cite{BJ25preprint}. 

Assume that $(X,L)$ is a polarized manifold with $\omega\in c_1(L)$. 
Let us first give a non-Archimedean interpretation for the radial $\beta$-entropy. In light of Theorem \ref{thm:rad-Ent-beta=sup}, it suffices to give a non-Archimedean formula for $L^\beta(\{v_t\},\{u_t\})$. We achieve this by adapting the treatment for the Ding energy in \cite[Theorem 5.4]{BBJ18}. 

\begin{proposition}
\label{prop:NA-L-beta}
     Let $\{u_t\}$ be a bounded geodesic ray,
    and let $\{v_t\}$ be a geodesic ray arising from a test configuration. Then for any $\beta>0$,
    $$
    L^\beta(\{v_t\},\{u_t\})=\inf_{w\in X_\QQ^{div}}(A_X(w)+\beta(u^{an}(w)-v^{an}(w))).
    $$
\end{proposition}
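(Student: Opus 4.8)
\emph{Proof proposal.} The plan is to reduce the computation of $L^\beta$ to a log canonical threshold already obtained in Theorem~\ref{thm:L-beta-for-two-TC}, and then to dualize that threshold to divisorial valuations using the dictionary of \cite[\S4]{BHJ17}, mirroring the treatment of the Ding functional in \cite[Theorem~5.4]{BBJ18}. The first step is to reduce to the trivial ray $\{u_t\}=\{0\}$. Since $\{u_t\}$ is bounded, say $|u_t|\le C$ on $X$ for all $t$, one has
\[
\Big|{-\tfrac1t}\log\int_Xe^{\beta(v_t-u_t)}\tfrac{\omega^n}{V}-\big({-\tfrac1t}\log\int_Xe^{\beta v_t}\tfrac{\omega^n}{V}\big)\Big|\le\frac{\beta C}{t}\xrightarrow{t\to\infty}0,
\]
so $L^\beta(\{v_t\},\{u_t\})=L^\beta(\{v_t\},\{0\})$. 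On the other hand, the complexification of a bounded geodesic ray extends to a bounded $\omega$-psh function on $X\times\Delta$, hence has vanishing generic Lelong numbers along every prime divisor over $X\times\{0\}$; under the conventions of \cite{BBJ18,DX20} this means $u^{an}\equiv 0$ on $X_\QQ^{div}$ for bounded rays. Thus it suffices to prove $L^\beta(\{v_t\},\{0\})=\inf_{w\in X_\QQ^{div}}\big(A_X(w)-\beta\, v^{an}(w)\big)$.

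Next I would apply Theorem~\ref{thm:L-beta-for-two-TC} to the pair $\{v_t\}$ and the trivial ray $\{0\}$, the latter arising from the trivial semi-ample test configuration $(X\times\CC,p_1^*L)$. Passing to a $\CC^*$-equivariant snc model $\cZ$ dominating $X\times\CC$ (twice) and $\cX'$, the divisor $F$ attached to $\{0\}$ is $0$, so with the notation of that theorem, $\cZ_0=\sum_i a_iE_i$, $G=\sum_i c_iE_i$, $K_{\cZ/X\times\CC}=\sum_i d_iE_i$, formula \eqref{eq:L-beta-min} reads
\[
L^\beta(\{v_t\},\{0\})=\min_i\frac{(d_i+1-a_i)+\beta c_i}{a_i}.
\]

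It then remains to identify this minimum with the asserted infimum. Each component $E_i$ of $\cZ_0$ determines (restrict $\ord_{E_i}$ to $\CC(X)$ and rescale by $a_i=\ord_{E_i}(\cZ_0)$) a point $w_i\in X_\QQ^{div}$, and under the Gauss-extension correspondence of \cite[\S4]{BHJ17} one has the standard identities $A_X(w_i)=\frac{d_i+1-a_i}{a_i}$ and $v^{an}(w_i)=-\frac{c_i}{a_i}$, so the $i$-th term above equals $A_X(w_i)-\beta\,v^{an}(w_i)$; hence $L^\beta(\{v_t\},\{0\})\ge\inf_{w}(A_X(w)-\beta v^{an}(w))$. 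For the reverse inequality I would use that $v^{an}$, being the non-Archimedean metric of a test configuration, is piecewise linear for the model $\cX'$ and a fortiori for $\cZ$, so $v^{an}=v^{an}\circ r_\cZ$ for the retraction $r_\cZ$ onto the dual complex $\Delta_\cZ$ of $\cZ_0$, while $A_X$ does not increase under $r_\cZ$. Therefore $A_X-\beta v^{an}\ge(A_X-\beta v^{an})\circ r_\cZ$, so the infimum over all of $X^{an}$ (hence over the subset $X_\QQ^{div}$) is attained on $\Delta_\cZ$, where $A_X-\beta v^{an}$ is affine on each face and thus minimized at a vertex, i.e.\ at some $w_i$. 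This gives $\inf_{w}(A_X(w)-\beta v^{an}(w))=\min_i(A_X(w_i)-\beta v^{an}(w_i))=L^\beta(\{v_t\},\{0\})$, completing the argument.

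The main obstacle is the last paragraph: the valuation-theoretic dualization. It requires (i) the precise matching of $A_X$ and $v^{an}$ with the discrepancy and intersection data $a_i,c_i,d_i$ in the correct normalization, and (ii) the retraction argument showing the infimum over all divisorial valuations is computed on a single snc model. Both ingredients are essentially contained in \cite[\S4]{BHJ17} and in the proof of \cite[Theorem~5.4]{BBJ18}, but transcribing them carefully to the $L^\beta$ setting (rather than the Ding case, where only $\phi$ a metric on $-K_X$ appears) is where the genuine work lies. A minor additional point is to fix the convention under which $u^{an}$ vanishes for bounded rays, so that the $u^{an}$-term in the statement may be carried along harmlessly.
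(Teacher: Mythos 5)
Your reduction to the trivial ray is where the argument breaks down. In this paper a geodesic ray $\{u_t\}$ is called \emph{bounded} when each potential satisfies $u_t\in\mathcal E^1\cap L^\infty$; by \cite{Dar17c} this only yields the linear bound $|u_t|\le Ct$, not a uniform bound $|u_t|\le C$ (a ray with $\sup_t\|u_t\|_{L^\infty}<\infty$ would have zero $d_1$-speed, hence be trivial). Consequently your estimate $\beta C/t\to0$ is not available: the discrepancy between $-\tfrac1t\log\int_X e^{\beta(v_t-u_t)}\omega^n$ and $-\tfrac1t\log\int_X e^{\beta v_t}\omega^n$ is of order $\beta C$, so in general $L^\beta(\{v_t\},\{u_t\})\neq L^\beta(\{v_t\},\{0\})$. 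Likewise the complexification $U_z=u_{-\log|z|^2}$ is only controlled by $-C\log|z|^2$ near $z=0$; it need not extend boundedly to $X\times\Delta$, and its Lelong numbers along divisors over $X\times\{0\}$ need not vanish. For instance the ($C^{1,1}$, hence bounded) ray of a nontrivial ample test configuration has $U_z$ with algebraic singularities as in \eqref{eq:U-has-analytic-sing}, so $u^{an}\not\equiv0$. Since the proposition is used precisely for rays with nontrivial $u^{an}$ (e.g.\ rays of model filtrations in Corollary \ref{cor: L_beta_L_NA}), your argument only treats a degenerate special case and misses the actual content of the statement.

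The second half of your plan (Theorem \ref{thm:L-beta-for-two-TC} together with the Gauss-extension dictionary of \cite{BHJ17} and minimization over the dual complex) is reasonable when \emph{both} rays come from test configurations, but it cannot be repaired to cover a general bounded $\{u_t\}$, because Theorem \ref{thm:L-beta-for-two-TC} needs $U_z$ to have divisorial singularities on a model. The paper keeps $\{u_t\}$ general: using Proposition \ref{prop: liminf} it writes $L^\beta(\{v_t\},\{u_t\})+1$ as the complex singularity exponent $c_\mu[\log|z|^2]$ of the $S^1$-invariant measure $\mu=e^{\beta(V_z-U_z)}\omega^n\wedge i\,dz\wedge d\bar z$, applies the valuative criterion \cite[Theorem B.7]{BBJ18} (which only requires $V_z$ to have algebraic singularities) to express this as
\[
\inf_E\frac{A_{X\times\Delta}(E)+\beta\big(\nu(V_z,E)-\nu(U_z,E)\big)}{b_E}
\]
over $\CC^*$-invariant prime divisors $E$ over $X\times\{0\}$, and then passes to $X^{div}_{\QQ}$ via the Gauss extension, where the terms $\nu(U_z,E)/b_E$ become $-u^{an}(w)$. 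That step, which handles arbitrary Lelong numbers of $U_z$, is exactly the piece your proposal is missing.
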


Here $u^{an}$ and $v^{an}$ are non-Archimedean potentials associated with $\{u_t\}$ and $\{v_t\}$ in the sense of \cite[Definition 4.2]{BBJ18} (see also \cite[Proposition 3.1]{DXZ25}). Moreover, $X^{div}_\QQ$ denotes the $\QQ$-valued divisorial valuations on $X$ (including the trivial one).

\begin{proof}
    Since both $\{u_t\}$ and $\{v_t\}$ are bounded rays, $L^\beta(\{v_t\},\{u_t\})$ is a finite number. Moreover, since the statement we aim to show is translation invariant, we can assume that $u_t\leq 0$ and $v_t\leq 0$ so that $U_z:=u_{-\log|z|^2}$ and $V_z:=v_{-\log|z|^2}$ are qpsh functions on $X\times\Delta$. By Proposition \ref{prop: liminf} and using $t=-\log|z|^2$ we have that
    $$
    L^\beta(\{v_t\},\{u_t\})=\sup\left\{\tau\in\RR: \int_{X\times \Delta} e^{\beta(V_z-U_z)-(1+\tau)\log|z|^2}\omega^n\wedge i dz\wedge d\bar z<\infty\right\}.
    $$

    Up to further subtracting $Ct$ from $\{v_t\}$ we can assume that $e^{\beta(V_z-U_z)}\in L^1(X\times\Delta)$. Put for simplicity
    $\mu:=e^{\beta(V_z-U_z)}\omega^n\wedge idz\wedge d\bar z$. Then, in terms of the complex singularity exponent (see \cite[\S 2.2]{DZ22}), we can write
    $$
    L^\beta(\{v_t\},\{u_t\})+1=c_\mu[\log|z|^2]:=\sup\left\{\lambda>0:\int_{X\times\Delta}e^{-\lambda\log|z|^2}d\mu<\infty\right\}.
    $$
    
    Noting that $V_z$ has algebraic singularities (recall \eqref{eq:U-has-analytic-sing}) and that $V_z$, $U_z$ and $\log|z|^2$ are all $S^1$-invariant and bounded away from $X\times\{0\}$, we can
    apply the valuative criterion \cite[Theorem B.7]{BBJ18}  in the same way as in the short proof of \cite[Theorem 2.3]{DZ22} to deduce that
$$
c_\mu[\log|z|^2]=\inf_E\frac{A_{X\times\Delta}(E)+\beta(\nu(V_z,E)-\nu(U_z,E))}{\nu(\log|z|^2,E)},
$$
where $E$ runs through all $\CC^*$-invariant prime divisors $E$ over $X\times\{0\}$.

    Consequently,
    $L^\beta(\{v_t\},\{u_t\})$ is equal to the largest $\tau\in\RR$ such that
\begin{equation}\label{eq: tau_cond}
A_{X\times\Delta}(E)+\beta\nu(V_z,E)-\beta\nu(U_z,E)\geq(1+\tau)\nu(\log|z|^2,E)
\end{equation}
holds for all $\CC^*$-invariant prime divisors $E$ over $X\times\{0\}$. 
Following \cite[Definition 4.4]{BHJ17}, put
$
b_E:=\nu(\log|z|^2,E).
$
By \cite[Theorem 4.6]{BHJ17}, all divisorial valuations $w\in X^{div}_{\QQ}$ (including the trivial valuation) are in one-to-one correspondence with $\frac{1}{b_E}\ord_E$, via the Gauss map/extension. In fact, the Gauss extension $\sigma(w)$ of any $w\in X^{div}_{\QQ}$ is of the form
$
\sigma(w)=\frac{1}{b_E}\ord_E
$
for some $E$ over $X\times\{0\}$ and we have the relation 
$
A_X(w)=\frac{1}{b_E}A_{X\times\Delta}(E)-1
$ (by \cite[Theorem A.10]{BJ22}).
Moreover, we have that (cf. \cite[Definition 4.2]{BBJ18})
$
u^{an}(w)=-\frac{1}{b_E}\nu(U,E)\text{ and } v^{an}(w)=-\frac{1}{b_E}\nu(V,E).
$
Revisiting \eqref{eq: tau_cond}, we obtain that $L^\beta(\{v_t\},\{u_t\})$ is equal to the largest $\tau\in\RR$ such that
$$
A_X(w)+\beta (u^{an}(w)-v^{an}(w))\geq \tau\text{ for all }w\in X^{div}_\QQ.
$$
Therefore,
$
L^\beta(\{v_t\},\{u_t\})=\inf_{w\in X^{div}_{\QQ}}(A_X(w)+\beta (u^{an}(w)-v^{an}(w))),
$
finishing the proof. 
\end{proof}

\begin{corollary}\label{cor: L_beta_L_NA}
Let $\{u_t\},\{v_t\}$ be geodesic rays induced by a model and a test configuration, respectively. Then for any $\beta > 0$:
\begin{equation}\label{eq: L_beta_L_NA}
    L^\beta(\{v_t\},\{u_t\})=L^{NA}(\beta({u}^{an}-v^{an})),
\end{equation}
where $L^{NA}(f) :=\inf_{w\in X^{div}}(A_X(w)+f(w))$ is the natural non-Archimedean functional from \cite[\S 2.6]{BoJ18}, defined for continuous non-Archimedean functions.
\end{corollary}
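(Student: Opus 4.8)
The plan is to show that the infimum formula for $L^\beta(\{v_t\},\{u_t\})$ obtained in Proposition \ref{prop:NA-L-beta}, which was stated there for a \emph{bounded geodesic ray} $\{u_t\}$ and a test-configuration ray $\{v_t\}$, continues to hold when $\{u_t\}$ is induced by a model filtration, and then to observe that the resulting quantity is exactly $L^{NA}(\beta(u^{an}-v^{an}))$. The key point is that rays induced by model filtrations (in the sense of \cite[Definition 2.7]{Li20-cscK}) remain bounded, so the integrability threshold characterization of Proposition \ref{prop: liminf} still applies verbatim; the only thing requiring care is that the non-Archimedean potential $u^{an}$ associated to such a ray still has the divisorial/valuative description used in the proof of Proposition \ref{prop:NA-L-beta}. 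For this I would invoke that model filtrations, being finitely generated, induce geodesic rays with algebraic singularities (comparable to a flag ideal $\mathfrak a^{1/m}$ as in \eqref{eq:U-has-analytic-sing}), so the Lelong-number computation $u^{an}(w)=-\frac{1}{b_E}\nu(U,E)$ over $\CC^*$-invariant prime divisors $E$ over $X\times\{0\}$ goes through unchanged, as does the valuative criterion \cite[Theorem B.7]{BBJ18}.

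Concretely, the first step is to note that by Proposition \ref{prop: liminf} (applicable since $\{v_t\}$ is a bounded geodesic ray, or alternatively since the model-filtration ray $\{u_t\}$ is bounded),
$$L^\beta(\{v_t\},\{u_t\})+1=c_\mu[\log|z|^2],\qquad \mu:=e^{\beta(V_z-U_z)}\omega^n\wedge idz\wedge d\bar z,$$
after the usual normalization $u_t,v_t\leq 0$ and a harmless subtraction of $Ct$ from $\{v_t\}$ to ensure $e^{\beta(V_z-U_z)}\in L^1(X\times\Delta)$. The second step is to apply the valuative criterion exactly as in the proof of Proposition \ref{prop:NA-L-beta}: since $V_z$ has algebraic singularities, and $U_z,V_z,\log|z|^2$ are $S^1$-invariant and bounded away from $X\times\{0\}$, one gets
$$c_\mu[\log|z|^2]=\inf_E\frac{A_{X\times\Delta}(E)+\beta(\nu(V_z,E)-\nu(U_z,E))}{\nu(\log|z|^2,E)},$$
the infimum over $\CC^*$-invariant prime divisors $E$ over $X\times\{0\}$. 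The third step is to translate via the Gauss extension \cite[Theorem 4.6]{BHJ17} and \cite[Theorem A.10]{BJ22}, using $A_X(w)=\frac{1}{b_E}A_{X\times\Delta}(E)-1$, $u^{an}(w)=-\frac{1}{b_E}\nu(U_z,E)$, $v^{an}(w)=-\frac{1}{b_E}\nu(V_z,E)$ (valid for model-filtration rays by the algebraic-singularity description of $u^{an}$, see \cite{Li20-cscK,BJ22}), to obtain
$$L^\beta(\{v_t\},\{u_t\})=\inf_{w\in X^{div}_\QQ}\bigl(A_X(w)+\beta(u^{an}(w)-v^{an}(w))\bigr).$$
Finally, since $L^{NA}(f)=\inf_{w\in X^{div}}(A_X(w)+f(w))$ agrees with its restriction to $X^{div}_\QQ$ for continuous $f$ (divisorial valuations being dense, and $A_X,u^{an},v^{an}$ being continuous in the relevant topology — here $u^{an}$ continuous as $\{u_t\}$ comes from a model filtration), the right-hand side is precisely $L^{NA}(\beta(u^{an}-v^{an}))$, giving \eqref{eq: L_beta_L_NA}.

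The main obstacle I anticipate is making rigorous that the proof of Proposition \ref{prop:NA-L-beta} truly extends to model-filtration rays, i.e.\ that $U_z$ for such a ray is still "tame enough" for the valuative criterion \cite[Theorem B.7]{BBJ18} and that its non-Archimedean potential $u^{an}$ is genuinely continuous (not merely a decreasing limit of continuous ones) so that passing from $X^{div}_\QQ$ to all of $X^{div}$ introduces no gap. This is where one must lean on the finite generation of model filtrations — which ensures the associated $C^{1,1}$ geodesic ray has algebraic singularities of the form \eqref{eq:U-has-analytic-sing} after passing to a suitable Veronese/ model — together with the density of $\QQ$-divisorial valuations and the fact that both $u^{an}$ and $v^{an}$ are continuous functions on $X^{an}$ in this setting. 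Everything else is a direct transcription of the argument already given for bounded geodesic rays.
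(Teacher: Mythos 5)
Your argument is correct in substance and uses the same ingredients as the paper (Proposition \ref{prop: liminf}, the valuative criterion, the Gauss extension, and continuity of $u^{an},v^{an}$), but it re-derives Proposition \ref{prop:NA-L-beta} rather than simply invoking it, and in doing so it manufactures an obstacle that is not there. Proposition \ref{prop:NA-L-beta} is stated for an \emph{arbitrary} bounded geodesic ray $\{u_t\}$: in its proof the valuative criterion of \cite{BBJ18} only requires the test-configuration ray $V_z$ to have algebraic singularities as in \eqref{eq:U-has-analytic-sing}, while $U_z$ merely needs to be $S^1$-invariant, qpsh and bounded away from $X\times\{0\}$, and the identity $u^{an}(w)=-\frac{1}{b_E}\nu(U,E)$ is not something to be verified for model-filtration rays --- it is the definition of $u^{an}$ in \cite[Definition 4.2]{BBJ18} for any such ray. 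Consequently, your ``main obstacle'' (that $U_z$ be tame enough, resolved by the claim that model filtrations are finitely generated and hence induce rays with algebraic singularities) is both unnecessary and unjustified: model filtrations in the sense of \cite[Definition 2.7]{Li20-cscK} need not be finitely generated, and no algebraic-singularity structure of $U_z$ is used anywhere. The paper's proof is exactly the shortcut you could have taken: model filtrations are linearly bounded, so the associated geodesic ray is bounded (via \cite{RWN14}) and Proposition \ref{prop:NA-L-beta} applies verbatim; then continuity of $u^{an}$ and $v^{an}$ on $X^{an}$ (\cite{Li20-cscK}) identifies the resulting infimum over $X^{div}_\QQ$ with $L^{NA}(\beta(u^{an}-v^{an}))$, which is your final step as well. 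So your proposal goes through once the spurious finite-generation claim is deleted, but you should state clearly that the only inputs beyond Proposition \ref{prop:NA-L-beta} are boundedness of the ray and continuity of the non-Archimedean potentials.
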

\begin{proof} 
Since $\{u_t\}$ is induced by the filtration in \cite[Definition 2.7]{Li20-cscK}, it must be bounded \cite[\S 9]{RWN14}, hence the previous result is applicable. 
Both $v^{an}$ and $u^{an}$ are continuous on $X^{an}$ by the sentence below \cite[formula (28)]{Li20-cscK}, hence \eqref{eq: L_beta_L_NA} holds by the definition of $L^{NA}$. 
\end{proof}

\begin{corollary}
Let $\{u_t\}\in\cR^1_\cI=\mathcal E^{1,NA}$ be a maximal geodesic ray.   
    Then for any $\beta>1$,
\begin{equation}\label{eq: Ent_ineq}
    Ent^\beta\{u_t\}\leq Ent^{NA}(\mathrm{MA}(u^{an})).
\end{equation}
\end{corollary}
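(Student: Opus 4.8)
The plan is to combine the radial slope formula for $Ent^\beta$ (Theorem~\ref{thm:rad-Ent-beta=sup}) with the non-Archimedean reformulation of $L^\beta$ just established (Corollary~\ref{cor: L_beta_L_NA}), and then compare the resulting expression to $Ent^{NA}(\mathrm{MA}(u^{an}))$ term by term. Concretely, let $\{u_t\}\in\cR^1_\cI$ be a maximal geodesic ray, so that $u^{an}\in\cE^{1,NA}$. By Theorem~\ref{thm:rad-Ent-beta=sup} (which applies since maximal rays are bounded — they arise as $d_1^c$-limits of test configuration rays, which are bounded by \cite{RWN14}, and boundedness is $d_1^c$-closed by \cite{Dar17c}), we have
$$
Ent^\beta\{u_t\}=\sup_{\{v_t\}}\left(L^\beta(\{v_t\},\{u_t\})+\beta(I\{v_t\}-I\{u_t\})\right),
$$
the sup over geodesic rays $\{v_t\}$ from semi-ample test configurations. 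For each such $\{v_t\}$ with non-Archimedean potential $v^{an}\in\cH^{NA}$, Corollary~\ref{cor: L_beta_L_NA} gives $L^\beta(\{v_t\},\{u_t\})=L^{NA}(\beta(u^{an}-v^{an}))$, while $I\{v_t\}=I^{NA}(v^{an})$ and $I\{u_t\}=I^{NA}(u^{an})$ by the standard dictionary between radial energies and their non-Archimedean counterparts (\cite[Section~2]{BoJ18}, cf.\ \cite{BHJ17}). Thus
$$
Ent^\beta\{u_t\}=\sup_{v^{an}\in\cH^{NA}}\left(L^{NA}(\beta(u^{an}-v^{an}))+\beta(I^{NA}(v^{an})-I^{NA}(u^{an}))\right).
$$

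The next step is to recognize the right-hand side as (a version of) the non-Archimedean $\beta$-entropy and to bound it by $Ent^{NA}(\mathrm{MA}(u^{an}))$. This is exactly the non-Archimedean analogue of the Archimedean chain of inequalities proving $Ent^\beta(u)\le Ent(u)$ from the introduction: using the formula $Ent^{NA}(\mu)=\sup_{f}\bigl(L^{NA}(f)+\langle \mathrm{MA}(u^{an}),f\rangle\bigr)$-type variational characterization of the non-Archimedean entropy from \cite{BoJ21a} (specifically \cite[(4.5)]{BoJ21a}, already invoked in \S\ref{sec:Intro} for this very purpose), together with the elementary inequality $I^{NA}(v^{an})-I^{NA}(u^{an})\le \langle\mathrm{MA}(u^{an}),v^{an}-u^{an}\rangle$ (the NA counterpart of $I(v)-I(u)\le \frac1V\int_X(v-u)\omega_u^n$, which holds by concavity of $I^{NA}$ along the relevant segments), one gets for each $v^{an}$
$$
L^{NA}(\beta(u^{an}-v^{an}))+\beta(I^{NA}(v^{an})-I^{NA}(u^{an}))\le L^{NA}(\beta(u^{an}-v^{an}))+\beta\langle\mathrm{MA}(u^{an}),v^{an}-u^{an}\rangle.
$$
Writing $f:=\beta(u^{an}-v^{an})$, the right-hand side is $L^{NA}(f)+\langle\mathrm{MA}(u^{an}),-f\rangle$, whose supremum over admissible $f$ (equivalently over $v^{an}$, noting $\cH^{NA}$ is cofinal enough) is precisely $Ent^{NA}(\mathrm{MA}(u^{an}))$ by the variational formula \cite[(4.5)]{BoJ21a}. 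This yields \eqref{eq: Ent_ineq}.

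The main obstacle I anticipate is bookkeeping the class of $v^{an}$ one ranges over and matching it to the admissible test functions in the variational formula for $Ent^{NA}$: Theorem~\ref{thm:rad-Ent-beta=sup} restricts to $v^{an}\in\cH^{NA}$ coming from genuine semi-ample test configurations, whereas \cite[(4.5)]{BoJ21a} may phrase the supremum over a larger (or differently normalized) class of continuous functions on $X^{an}$. One must check that enlarging or shrinking this class does not change the supremum — the upper bound direction needed here is the easy one (restricting to a smaller class can only decrease the sup, which is the correct direction for an inequality $\le$), so in fact this subtlety is harmless for \eqref{eq: Ent_ineq}; the genuine care is only needed to ensure the NA dictionary identities ($L^\beta=L^{NA}\circ(\text{scaling})$, $I\{\cdot\}=I^{NA}(\cdot)$) are applied to rays in the correct regularity class, which is guaranteed since every $\{v_t\}$ in the sup is a semi-ample test configuration ray and $\{u_t\}$ is maximal. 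A secondary point to verify is that the scaling $f\mapsto \beta(u^{an}-v^{an})$ stays within the domain where $L^{NA}$ and the pairing with $\mathrm{MA}(u^{an})$ are defined and finite — this follows from boundedness/continuity of $u^{an},v^{an}$ on $X^{an}$ (\cite{Li20-cscK}), exactly as recorded in Corollary~\ref{cor: L_beta_L_NA}.
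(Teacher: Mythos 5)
There is a genuine gap at the very first step. You apply Theorem \ref{thm:rad-Ent-beta=sup} and Corollary \ref{cor: L_beta_L_NA} directly to an arbitrary maximal ray $\{u_t\}\in\cR^1_\cI$, justifying this with the claim that maximal rays are bounded because they are $d_1^c$-limits of test configuration rays and ``boundedness is $d_1^c$-closed by \cite{Dar17c}''. That claim is false: \cite{Dar17c} only says that a ray which \emph{is} bounded is automatically linearly bounded; it says nothing about $d_1^c$-limits, and boundedness is not preserved under $d_1^c$-convergence. Indeed $\cR^1_\cI\cong\cE^{1,NA}$ contains rays whose potentials $u^{an}$ are merely of finite energy (not continuous, not bounded), which is exactly why the slope formula of Theorem \ref{thm:rad-Ent-beta=sup} is stated only for \emph{bounded} geodesic rays, and why Corollary \ref{cor: L_beta_L_NA} (via Proposition \ref{prop:NA-L-beta}) is stated only for rays coming from model filtrations, where $u^{an}$ is continuous and $L^{NA}(\beta(u^{an}-v^{an}))$ is defined. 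As written, your argument only proves the inequality for rays in this restricted class.

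The missing idea is the reduction step: one must first approximate a general maximal ray by geodesic rays arising from model filtrations and pass the inequality to the limit on \emph{both} sides. The left side is handled by the $d_1^c$-continuity of $Ent^\beta\{\cdot\}$ (Theorem \ref{thm:Ent-beta=lim-Ent-beta}); the right side requires controlling $Ent^{NA}(\mathrm{MA}(u^{an}))$ along the approximating sequence, which is where \cite[Proposition 6.3]{Li20-cscK} and \cite[Theorem 4.4.1]{Reb23} enter in the paper's proof — a point your proposal does not address at all, since you never approximate. Once $\{u_t\}$ is assumed to come from a model filtration, the rest of your argument (the identity $L^\beta=L^{NA}(\beta(u^{an}-v^{an}))$, the Legendre-type bound $L^{NA}(f)\leq Ent^{NA}(\mu)+\int f\,d\mu$ from \cite[(4.5)]{BoJ21a} together with \cite[Proposition 2.8]{BoJ18}, the energy-pairing inequality, and taking the supremum via Theorem \ref{thm:rad-Ent-beta=sup}) coincides with the paper's computation and is fine; but without the reduction the proof does not cover the statement as given.
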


Here $\mathrm{MA}(u^{an})$ denotes the non-Archimedean Monge--Amp\`ere measure of $u^{an}$ and $Ent^{NA}$ denotes the non-Archimedean entropy (see \cite[\S 6.7 and Definition 7.17]{BHJ17}).

\begin{proof}
Due to Theorem \ref{thm:Ent-beta=lim-Ent-beta}, \cite[Proposition 6.3]{Li20-cscK} and \cite[Theorem 4.4.1]{Reb23}, we can approximate both sides of \eqref{eq: Ent_ineq} using a sequence of geodesic rays arising from models.
So in what follows we simply assume that $\{u_t\}$ itself a geodesic ray arising from a model . 
Pick any geodesic ray $\{v_t\}$ arising from a test configuration.
Corollary \ref{cor: L_beta_L_NA} yields 
$
L^\beta(\{v_t\},\{u_t\})=L^{NA}(\beta({u}^{an}-v^{an})).
$ 
Applying \cite[Proposition 2.8]{BoJ18} and \cite[(4.5)]{BoJ21a},
\begin{equation*}
    \begin{split}
        L^\beta(\{v_t\},\{u_t\})=L^{NA}(\beta({u}^{an}-v^{an}))&\leq Ent^{NA}(\mathrm{MA}(u^{an}))+\frac{\beta}{V}\int_{X^{an}}({u}^{an}-v^{an})\mathrm{MA}(u^{an}) \\
        &\leq Ent^{NA}(\mathrm{MA}(u^{an}))+\beta(I\{u_t\}-I\{v_t\}).\\
    \end{split}
\end{equation*}
Taking supremum over all $\{v_t\}$, we conclude by Theorem \ref{thm:rad-Ent-beta=sup}.
\end{proof}
Consequently, we obtain an alternative proof of a recent result of Boucksom--Jonsson \cite{BJ25preprint}:
\begin{corollary}\label{cor: BJ_cor}
   For any maximal geodesic ray $\{u_t\}\in\cR^1_\cI =\mathcal E^{1,NA}$ one has
   \begin{equation}
   \label{eq:rad-K=NA-K}
       Ent\{u_t\}=Ent^{NA}(u^{an}).
   \end{equation}
\end{corollary}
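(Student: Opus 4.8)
The plan is to convert the non-Archimedean entropy bound \eqref{eq: Ent_ineq} into the inequality $K\{u_t\}\le K^{NA}(u^{an})$, and then to upgrade it to an equality by invoking the reverse inequality, already due to C. Li. Throughout, $\{u_t\}\in\cR^1_\cI=\cE^{1,NA}$ is a fixed maximal geodesic ray.

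The first step is to promote \eqref{eq: Ent_ineq} to a statement about $K^\beta$. Write $K^\beta\{u_t\}=Ent^\beta\{u_t\}-\cJ_{\Ric(\omega)}\{u_t\}$, where the radial functional $\cJ_{\Ric(\omega)}\{\cdot\}=nI_{\Ric(\omega)}\{\cdot\}-\bar S\,I\{\cdot\}$ carries no $\beta$-dependence; correspondingly $K^{NA}(u^{an})$ splits as $Ent^{NA}(\mathrm{MA}(u^{an}))$ minus a term built from the non-Archimedean energies $I^{NA}$ and $I^{NA}_{\Ric(\omega)}$. The key point is that these non-entropy parts coincide on $\cE^{1,NA}$: for a geodesic ray arising from a semi-ample test configuration, $I\{\cdot\}$ and $I_{\Ric(\omega)}\{\cdot\}$ are given by the intersection numbers \eqref{eq: J_Ric_test_conf}--\eqref{eq: I_test_conf}, which are precisely $I^{NA}$ and $I^{NA}_{\Ric(\omega)}$ at the associated non-Archimedean potential; since both the radial functionals (the twisted one by Lemma~\ref{I_Ric_cont}, together with the $d_1^c$-continuity of $I\{\cdot\}$) and their non-Archimedean analogues are $d_1^c$-continuous, and rays of test configurations are $d_1^c$-dense in $\cR^1_\cI$, the identity $\cJ_{\Ric(\omega)}\{u_t\}=\cJ^{NA}_{\Ric(\omega)}(u^{an})$ propagates to every maximal ray. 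Combining this with \eqref{eq: Ent_ineq} yields $K^\beta\{u_t\}\le K^{NA}(u^{an})$ for all $\beta>1$.

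It remains to let $\beta\to\infty$. By Proposition~\ref{prop:Ent-beta-increase} the map $\beta\mapsto Ent^\beta\{u_t\}$, and hence $\beta\mapsto K^\beta\{u_t\}$, is nondecreasing, and Theorem~\ref{thm: K_beta_conv} identifies the limit: $\lim_{\beta\to\infty}K^\beta\{u_t\}=K\{u_t\}$ (as elements of $(-\infty,+\infty]$). Passing to the limit in the previous inequality gives $K\{u_t\}\le K^{NA}(u^{an})$, while the reverse inequality $K^{NA}(u^{an})\le K\{u_t\}$ for maximal geodesic rays is exactly \cite[Theorem 1.2]{Li20-cscK}; together these give \eqref{eq:rad-K=NA-K}. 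I do not expect a serious obstacle at this stage: the analytic substance has already been spent on the entropy bound \eqref{eq: Ent_ineq} (resting on Corollary~\ref{cor: L_beta_L_NA}, the slope formula of Theorem~\ref{thm:rad-Ent-beta=sup}, and \cite[(4.5)]{BoJ21a}) and on the radial quantization limit Theorem~\ref{thm: K_beta_conv}; the only delicate bookkeeping is the density-and-continuity identification of $\cJ_{\Ric(\omega)}\{u_t\}$ with its non-Archimedean counterpart.
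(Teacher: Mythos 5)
Your proposal is correct and follows essentially the same route as the paper: let $\beta\to\infty$ in \eqref{eq: Ent_ineq} using Theorem \ref{thm: K_beta_radial_limit} to get $K\{u_t\}\leq K^{NA}(u^{an})$, then combine with the reverse inequality of C. Li. The only difference is that you make explicit the identification of the non-entropy (twisted Monge--Amp\`ere energy) terms with their non-Archimedean counterparts via density of test-configuration rays and $d_1^c$-continuity, a bookkeeping step the paper leaves implicit in the proof (it is indicated in the introduction).
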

\begin{proof} 
Letting $\beta \to \infty$ in \eqref{eq: Ent_ineq}, we obtain from Theorem \ref{thm: K_beta_conv} that
$
    Ent\{u_t\} \leq Ent^{NA}(u^{an}).
$
Combining with the reverse inequality \cite[Theorem 1.2]{Li20-cscK}, we conclude the proof.
\end{proof}

\section{Intersection theoretic interpretation of $K^\beta$-stability}
\label{sec:int-for-of-K-beta}

In this section $\cT=(\cX,\cL)$ will be a smooth test configuration dominating $X\times\PP^1$ and $\beta\in\QQ_{>1}$.
It might be too optimistic to expect that the supremum in \eqref{eq: Ent_bet_form} is realized by some test configuration. In this section, relying on the non-Archimedean expression of $L^\beta(\cdot,\cdot)$ (Proposition \ref{prop:NA-L-beta}), we show that the log discrepancy model $(\cX,\cL_\beta)$ of $(\cX,\cL)$ actually attains the supremum, yielding an intersection formula for $Ent^\beta(\cX,\cL)$ (see Theorem \ref{thm:K_beta_intersection'}):
\begin{equation}
    \label{eq:Ent-T-beta-int-form}
    Ent^\beta(\cX,\cL) = \frac{\beta \langle{\mathcal L}_\beta^{n+1}\rangle-\beta {\mathcal L}^{n+1}}{(n+1)V},
\end{equation}
where ${\mathcal L_\beta} :=\cL+\frac{1}{\beta}K^{\log}_{\cX/X\times\PP^1}$
and
$\langle{\mathcal L}_\beta^{n+1}\rangle$ is the top movable intersection product of the (relatively) big line bundle $\cL_\beta$, understood in the following way:
\begin{equation}
    \label{eq:def-mov-top-prod}
    \langle\cL_\beta^{n+1}\rangle:=\vol(\cL_\beta+c\cX_0)-c(n+1)V,
\end{equation}
where $c\gg 0$ is so that $\cL_\beta+c\cX_0$ is big on $\cX$, hence $\vol(\cL_\beta+c\cX_0)$ makes sense (cf. \cite[(8)]{Li21}). The right hand side of \eqref{eq:def-mov-top-prod} does not depend on the choice of $c$, which can be checked by taking derivative with respect to $c$ and applying \cite[Theorem A]{BFJ09}.

To prove \eqref{eq:Ent-T-beta-int-form} let us fix some notation for the rest of the section. Put $\cT_\beta:=(\cX,\cL_\beta)$.  Let $\{u_t^\cT\}$, $\{u_t^{\cT_\beta}\}$ be the geodesic rays associated to $\cT$ and $\cT_\beta$, respectively.
Let $u^\cT$ and $u^{\cT_\beta}$ be the non-Archimedean potentials associated to $\cT$ and $\cT_\beta$, respectively (recalled  in \S \ref{sec:pre}). Let $\pi: \cX\to X\times\PP^1$ denote the domination map.
Assume that
\begin{equation}
    \label{eq:def-F&G}
    \pi^*p_1^*L-\cL\sim_\QQ F\text{ and }\pi^*p_1^*L-\cL_\beta\sim_\QQ G
\end{equation}
for some $\QQ$-Cartier divisors $F$ and $G$ supported in $\cX_0$. Then we have the relation
\begin{equation}
    \label{eq:F=G+K-log}
    F=G+\frac{1}{\beta}K^{\log}_{\cX/X\times\PP^1}.
\end{equation}
By \eqref{eq:def-NA-u-T} and \eqref{eq:def-NA-of-Model}, for $w\in X^{an}$ the evaluations $u^\cT(w)$  and $u^{\cT_\beta}(w)$ are respectively given by $
u^{\cT}(w)=-\sigma(w)(F)$ and 
\begin{equation}\label{eq: NA_envelope_model}
u^{\cT_\beta}(w)=\sup\{v(w)\in PSH^{NA}: v(y)\leq -\sigma(y)(G),\ \forall\, y\in X^{an}\}.
\end{equation}

Our first observation is that $u^{\cT_\beta}$ does not depend on the choice of smooth models for $\mathcal T$. 

\begin{lemma}
\label{lem:u-T-beta-indpd-of-model}
let $\mu: \cX'\rightarrow \cX$ be another smooth model. Put $\cL'_\beta:=\mu^*\cL+\frac{1}{\beta}K^{\log}_{\cX'/X\times\PP^1}$ and $\cT_\beta':=(\cX',\cL'_\beta)$. Then $u^{\mathcal T'_\beta} = u^{\mathcal T_\beta}$ and $\{u^{\cT'_\beta}_t\}=\{u^{\cT_\beta}_t\}$.
\end{lemma}
\begin{proof}

By \eqref{eq:def-NA-of-Model} and the fact $\mu^*K^{\log}_{\cX/X\times\PP^1}\leq K^{\log}_{\cX'/X\times\PP^1}$ (see \cite[(4.4)]{BHJ17}) we obtain 
$u^{\cT_\beta}\leq u^{\cT'_\beta}$, implying that $u_t^{\cT_\beta}\leq u_t^{\cT'_\beta}$ by \cite[\S 6.4]{BBJ18}.
To show that they are actually equal, it suffices to show that $d_1^c(\{u_t^{\cT'_\beta}\},\{u_t^{\cT_\beta}\})=I\{u_t^{\cT'_\beta}\}-I\{u_t^{\cT_\beta}\}=0.$ For this, we can assume that both $\cL_\beta$ and $\cL_\beta'$ are big $\QQ$-line bundles (since what we aim to show is translation invariant). Then by the slope formula \cite[(27)]{Li20-cscK}, it suffices to show that $\vol(\cL_\beta')=\vol(\cL_\beta)$, which holds due to the relation $\cL_\beta'=\mu^*\cL_\beta+\frac{1}{\beta}(K^{\log}_{\cX'/X\times\PP^1}-\mu^*K^{\log}_{\cX/X\times\PP^1})$ and that the effective divisor $(K^{\log}_{\cX'/X\times\PP^1}-\mu^*K^{\log}_{\cX/X\times\PP^1})$ is $\mu$-exceptional (see \cite[(4.4)]{BHJ17} and \cite[Lemma 3.2]{FKL}).
\end{proof}

\begin{remark}
    For any $w\in X^{div}_\QQ$, one has
$
   \sigma(w)(K^{\log}_{\cX/X\times\PP^1})=A_X(r_\cX(w))
$
 (see \cite[(144)]{Li20-cscK}),
where $r_\cX$ denotes the retraction map to the dual complex of $\cX$ \cite{BoJ18}. By continuity, this identity holds for any $w\in X^{an}$ as well.
Then by \eqref{eq: NA_envelope_model} we obtain that
\begin{equation}
\label{eq:NA-u-beta=P}
       u^{\cT_\beta}=P\left( u^{\cT}+\frac{1}{\beta}A_X\circ r_\cX\right):=\sup\left\{v\in PSH^{NA}: v\leq  u^{\cT}+\frac{1}{\beta}A_X\circ r_\cX\right\}.
   \end{equation}
This explicit expression for $u^{\cT_\beta}$ also implies Lemma \ref{lem:u-T-beta-indpd-of-model}, in light of Xia's identity \cite[(7.2)]{X23}.
\end{remark}

Next we investigate the $L^\beta$ functional (recall \eqref{eq:def-L-beta}) for the pair $(\{u^{\cT_\beta}_t\},\{u^\cT_t\})$:

\begin{lemma}
\label{lem:L-beta=0}
    One has
    $
    L^\beta(\{u^{\cT_\beta}_t\},\{u^\cT_t\}= 0.
    $
\end{lemma}

\begin{proof}

Since $u^\cT$ itself is a candidate for the envelope \eqref{eq: NA_envelope_model}, we obtain that
$
u^{\cT}\leq u^{\cT_\beta}.
$
Then we conclude from the discussions in \cite[\S 6.4]{BBJ18} that
\begin{equation}
    \label{eq:u-t-beta>u-t}
    u_t^{\cT}\leq u_t^{\cT_\beta} \text{ for all }t\geq 0.
\end{equation}
By the definition \eqref{eq:def-L-beta} of $L^\beta(\cdot,\cdot),$ this immediately implies that
$
L^\beta(\{u^{\cT_\beta}_t\},\{u^\cT_t\}\leq0.
$

It remains to argue that
$
L^\beta(\{u^{\cT_\beta}_t\},\{u^\cT_t\}\geq0.
$
This statement is translation invariant in the sense that we can replace $\{u_t^{\cT_\beta}\},\{u_t^\cT\}$ with $\{u_t^{\cT_\beta}-ct\},\{u_t^\cT-ct\}$. As a result, after possibly subtracting certain multiple of $\cX_0$ from $\cL$, we can assume that both $F$ and $G$ are effective. This will imply that $u_t^{\cT_\beta}\leq 0$ and $u_t^{\cT}\leq 0$ for all $t>0$, so that both  $U^{\cT_\beta}_z:=u^{\cT_\beta}_{-\log|z|^2}$ and $U^\cT_z:=u^{\cT}_{-\log|z|^2}$ extend to qpsh functions on $X\times\Delta$. 

Note that the singularity type of the geodesic ray $\{u_t^\cT\}$, when viewed on $\cX$, is $L^\infty$ compatible with the divisor $F$ (see \eqref{eq:sing-type-of-alg-ray}). In other words, we can find some bounded qpsh function $f$ near $\cX_0$ such that
$$
dd^c(\pi^*(U^{\cT}_z)-f)=[F].
$$
This also implies that
\begin{equation}
    \label{eq:nu=ord-E-F}
    \nu(U^{\cT}_z,E)=\ord_E(F)
\end{equation}
for any irreducible component $E$ of $\cX_0$.

The singularity type of $\{u_t^{\cT_\beta}\}$ is more complicated. However, putting together \eqref{eq:nu-T-along-E}, \eqref{eq:def-NA-of-Model}, \eqref{eq:nu-u-M-along-E}, \eqref{eq:F=G+K-log} and \eqref{eq:nu=ord-E-F}, we observe that
$$
\nu(U^{\cT_\beta}_z,E)\geq \nu(U^{\cT}_z,E)-\frac{1}{\beta}\ord_E(K^{\log}_{\cX/{X\times\PP^1}})=\ord_{E}\left(F-\frac{1}{\beta}K^{\log}_{\cX/{X\times\PP^1}}\right)= \ord_{E}(G)
$$
holds for any irreducible component $E$ of $\cX_0$.
So Siu's decomposition (\cite{Siu74}, \cite[(2.18)]{Dem12a} implies the existence of a qpsh function $g$ in a neighborhood of $\cX_0$, where $g$ is bounded from above, such that
$$
\ddc (\pi^*(U^{\cT_\beta}_z)-g)=[G].
$$

Now we plug the above analysis into \eqref{eq:L_beta=lct}. As in the proof of Theorem \ref{thm:L-beta-for-two-TC}, writing $\cX_0=\sum_i a_i E_i$ and using that
$\beta(F-G)=K_{\cX/X\times\PP^1}+\sum_i(1-a_i)E_i$  (recall \eqref{eq:F=G+K-log}),
the integral
$
\int_{X\times\Delta}e^{\beta(U^{\cT_\beta}_z-U^{\cT}_z)-(1+\tau)\log|z|^2}\omega^n\wedge i dz\wedge d\bar z,
$
when pulled back to $\cX$, is comparable to
$
\int \frac{e^{\beta(g-f)}}{\prod_i|h_i|^{2(1+a_i\tau)}},
$
where $h_i$ is some local holomorphic function cutting out $E_i$. Since $g-f$ is bounded from above, we immediately see that this integral is finite whenever $\tau<0$. Thus $L^\beta(\{u_t^{\cT_\beta}\},\{u_t^\cT\})\geq 0$, finishing the proof.
\end{proof}

Now we show that the supremum in \eqref{eq: Ent_bet_form} is achieved by the ray $\{u_t^{\cT_\beta}\}$.

\begin{proposition}
\label{prop:Ent-beta-u-T=beta(I(u-T-beta)-I(u-T))}
    One has 
     $
     Ent^\beta\{u_t^{\cT}\}=\beta(I\{u_t^{\cT_\beta}\}-I\{u_t^\cT\}).
     $
\end{proposition}
   
\begin{proof}
    Let $\mathcal T' = (\cX',\cL')$ be any test configuration with geodesic ray $\{v^{\mathcal T'}_t\}$. We normalize $\{v_t^{\cT'}\}$ such that
    $
    L^\beta(\{v^{\mathcal T'}_t\},\{u^\cT_t\})=L^\beta(\{u^{\cT_\beta}_t\},\{u^\cT_t\})=0
    $
    (recall Lemma \ref{lem:L-beta=0}).
    Therefore, in light of Theorem \ref{thm:rad-Ent-beta=sup}, it suffices to show that $ I\{u^{\cT_\beta}_t\}\geq I\{v^{\mathcal T'}_t\}$, which follows from the following stronger statement that we now turn to show:
\begin{equation}\label{eq: I_ineq}
    u^{\cT_\beta}_t\geq v^{\mathcal T'}_t \text{ for any } t\geq 0.
\end{equation}

In view of Lemma \ref{lem:u-T-beta-indpd-of-model}, after passing to a common model we can assume that $\cX=\cX'$ is smooth. Moreover, after subtracting $ct$ from both $v_t^{\cT'}$ and $u_t^\cT$, we can further assume that we are in the position to apply the extremal characterization \eqref{eq: tau_cond} of $L^\beta(\{v^{\mathcal T'}_t\},\{u^\cT_t\})$. 
Since $L^\beta(\{v^{\mathcal T'}_t\},\{u^\cT_t\})=0$, we obtain that  
$$ A_{X\times\Delta}(E)+\beta\nu(V^{\mathcal T'}_z,E)-\beta\nu(U^\mathcal T_z,E)\geq\nu(\log|z|^2,E)
$$
for any irreducible component $E$ of $\cX_0$. Define $F$ and $G$ as in \eqref{eq:def-F&G}. Also define $F'\subset\cX_0$ such that $\pi^*p_1^*L-\cL'\sim_\QQ F'$. Using \eqref{eq:sing-type-of-alg-ray}, \cite[(4.4)]{BHJ17} and \eqref{eq:F=G+K-log}, we then derive that\begin{equation}
    \begin{split}
        \ord_E(F')&=\nu(V_z^{\cT'},E)\geq \nu(U_z^{\cT},E)-\frac{1}{\beta}\ord_E(K^{\log}_{\cX/X\times\PP^1})\\
        &=\ord_E\left(F-\frac{1}{\beta}K^{\log}_{\cX/X\times\PP^1}\right)=\ord_E(G)
    \end{split}
\end{equation}
for any irreducible component $E$ of $\cX_0$, so that $F'-G$ is effective.

Let $v^{\cT'}$ and $u^{\cT_\beta}$ be the non-Archimedean potentials of $\cT'$ and $\cT_\beta$, respectively.
Recalling \eqref{eq:def-NA-u-T} and the envelope interpretation \eqref{eq:def-NA-of-Model} of $u^{\cT_\beta}$, we immediately obtain that
$v^{\mathcal T'}$ is a candidate for $u^{\cT_\beta}$. This in turn implies $v^{\mathcal T'}_t \leq u^{\mathcal T_\beta}_t$ (see  \cite[\S 6.4]{BBJ18}), concluding \eqref{eq: I_ineq}.
\end{proof}

As a consequence of Proposition \ref{prop:Ent-beta-u-T=beta(I(u-T-beta)-I(u-T))}, we obtain an intersection theoretic formula for $K^\beta_\cT$. 

\begin{theorem}[Theorem \ref{thm: K_beta_intersection}]
\label{thm:K_beta_intersection'}
For any $\beta\in\QQ_{>1}$ we have
\begin{equation}
        \label{eq:Ent-T=beta(vol(L)-vol(L))}
        Ent^\beta_\cT:=Ent^\beta\{u_t^\cT\}=\frac{\beta(\langle\cL_\beta^{n+1}\rangle-\cL^{n+1})}{(n+1)V}.
    \end{equation}
and
\begin{equation}
\label{eq:K-beta-T-intersection-formula}
     K^\beta_\cT=\frac{\beta(\langle\cL_\beta^{n+1}\rangle-\cL^{n+1})}{(n+1)V}+\frac{\bar S\cL^{n+1}}{(n+1)V}+\frac{K_X\cdot\cL^{n}}{V}.
\end{equation}
    Moreover, $\beta\mapsto K^\beta_\cT$ is increasing and
    $$
    \lim_{\beta\to\infty}K^\beta_\cT=K_\cT=V^{-1}\cL^n\cdot K^{\log}_{\cX/X\times\PP^1}+\frac{\bar S\cL^{n+1}}{(n+1)V}+\frac{K_X\cdot\cL^{n}}{V}.
    $$
\end{theorem}

\begin{proof}
    The first identity follows from Proposition \ref{prop:Ent-beta-u-T=beta(I(u-T-beta)-I(u-T))} and the slope formula \cite[(27)]{Li21} (cf. also \eqref{eq:def-mov-top-prod}). The second identity then follows from \eqref{eq: J_Ric_test_conf}. 
    The last identity follows from Theorem \ref{thm: K_beta_conv} and \cite[(8)]{Li20-cscK}. One can also see it directly by sending $\beta\to\infty$ in \eqref{eq:K-beta-T-intersection-formula} and then using the derivative formula \cite[Theorem A]{BFJ09}  (thus giving an alternative proof for \cite[(8)]{Li20-cscK}). The monotonicity of $\beta\mapsto K^\beta_\cT$ is a consequence of Proposition \ref{prop:Ent-beta-increase} (one can also prove it directly by differentiating \eqref{eq:K-beta-T-intersection-formula}). 
\end{proof}

Theorem \ref{thm:K_beta_intersection'} yields the following algebraic analogue of Lemma \ref{lem: Ent_beta_first}(i).

\begin{corollary}
\label{cor:Ent-beta-geq-Ent-P-alg-proof}
    For any $\beta\in\QQ_{>1}$ one has
    $
    Ent(\cX,\cL)\geq Ent^\beta(\cX,\cL)\geq Ent(\cX,\cL_\beta).
    $
\end{corollary}

 Here  $Ent(\cX,\cL_\beta):=\langle\cL^{n}_\beta\rangle\cdot K^{\log}_{\cX/X\times\PP^1}
    $
   denotes the non-Archimedean entropy of the model $(\cX,\cL_\beta)$, and the movable intersection product is taken after adding a large multiple of $\cX_0$ to $\cL_\beta$ so that it becomes big on $\cX$ (see \cite[(25)]{Li21}).

\begin{proof}
We only argue the second inequality.
    The first inequality follows from a similar argument, or one can simply use  Lemma \ref{lem: Ent_beta_first} and \cite[(149)]{Li20-cscK} to conclude.  

The statement we aim to show is translation invariant, in the sense that we can add a large multiple of $\cX_0$ to both $\cL$ and $\cL_\beta$. Thus we may assume that $\cL$ (resp. $\cL_\beta$) is a semi-ample (resp. big) $\QQ$-line bundle on $\cX$. According to \cite[(8)]{Li21}
and that the base locus of $\cL_\beta$ is contained in $\cX_0$, we can find a sequence of smooth models $\cX_m\xrightarrow{\mu_m}\cX$ such that
    $
    \mu_m^*\cL_\beta=\cL_{\beta,m}+E_m,
    $
    where $\cL_{\beta,m}$ is a nef $\QQ$-line bundle on $\cX_m$ and $E_m$ is an effective $\QQ$-divisor supported in the central fiber of $\cX_m$. Moreover, we have that
    $
    \cL^{n+1}_{\beta,m}\to\langle\cL_\beta^{n+1}\rangle\text{ as }m\to\infty.
    $
    By the orthogonality estimate \cite[Theorem 4.1] {BDPP} we also have
    \begin{equation}
        \label{eq:bdpp}
    \lim_{m\to\infty}E_m\cdot\cL^n_{\beta,m}=0.
    \end{equation}

Then from \eqref{eq:Ent-T-beta-int-form} we can write
\begin{equation*}
    \begin{split}
        V\cdot Ent^\beta(\cX,\cL)&=\frac{\beta}{n+1}(\langle\cL_\beta^{n+1}\rangle-\cL^{n+1})=\lim_{m\to\infty}\frac{\beta}{n+1}(\cL_{\beta,m}^{n+1}-\mu^*_m\cL^{n+1})\\
        &\geq \beta \lim_{m\to\infty}(\cL_{\beta,m}-\mu^*_m\cL)\cdot\cL_{\beta,m}^n=\lim_{m\to\infty}(\mu^*_mK^{\log}_{\cX/X\times\PP^1}-E_m)\cdot\cL^n_{\beta,m}\\
        &=\lim_{m\to\infty}\mu^*_mK^{\log}_{\cX/X\times\PP^1}\cdot\cL^n_{\beta,m}=\langle\cL_\beta^n\rangle\cdot K^{\log}_{\cX/X\times\PP^1}.
    \end{split}
\end{equation*}
In the second line we used the Hodge index type estimate $(\mathcal L_{\beta,m} - \mu^*_m \mathcal L) \cdot \mathcal L^{i-1}_{\beta,m} \cdot \mu_m^* \mathcal L^{n-i+1}\geq(\mathcal L_{\beta,m} - \mu^*_m \mathcal L) \cdot \mathcal L^i_{\beta,m} \cdot \mu_m^* \mathcal L^{n-i}$ multiple times (see \cite[Lemma 1.74]{Xubook} and also (2.4) therein), and the last line follows from \eqref{eq:bdpp} and the definition of  $\langle\cL_\beta^n\rangle\cdot K^{\log}_{\cX/X\times\PP^1}$ (see \cite[Theorem 3.5]{BDPP} and \cite[(14)]{Li21}).
\end{proof}

\section{The Boucksom--Jonsson regularization conjecture}

Using our $\beta$-entropy, we prove the following refinement of Chi Li's \cite[Proposition 6.3]{Li20-cscK}, making progress on the Boucksom--Jonsson regularization conjecture \cite{BoJ18,BoJ21a,BJ22}.

\begin{theorem}[Theorem \ref{thm:BJ-conj_main}]\label{thm:BJ-conj_main1}
\label{thm:BJ-conj}
Let $(X,L)$ be a polarized manifold with $\omega\in c_1(L)$.
    Let $\{u_t\}\in\cR^1$ be a geodesic ray with $Ent\{u_t\}<\infty$. Then there exists a sequence of smooth test configurations $(\cX^{(k)},\cL^{(k)})$ dominating $X\times\PP^1$ and rational numbers $\beta_k\to\infty$ such that the geodesic rays $\{v_{t,k}\}$ attached to the log discrepancy models $(\cX^{(k)},\cL^{(k)}_{\beta_k})$ satisfy
    $$
    \lim_{k\to\infty}d_1^c(\{u_t\},\{v_{t,k}\})=0\text{ and }Ent\{u_t\}=\lim_{k\to\infty}Ent\{v_{t,k}\}.
    $$
\end{theorem}

\begin{proof}
    Let $\beta\in\QQ_{>1}$, which is momentarily fixed. Then by Lemma \ref{lem: Ent_beta_first}(i),
    $$
    Ent\{u_t\}\geq Ent^\beta\{u_t\}.
    $$
    By \cite[Theorem 1.2]{Li20-cscK}, $\{u_t\}$ is a maximal geodesic ray, so that we can apply Demailly's regularization to $\{u_t\}$ to find a sequence of test configurations whose associated geodesic rays converge to $\{u_t\}$ in $d_1^c$-topology. Therefore, by the $d_1^c$-continuity of $Ent^\beta\{\cdot\}$ (Theorem \ref{thm:Ent-beta=lim-Ent-beta}) we can pick a test configuration $\cT$ (depending on $\{u_t\}$ and $\beta$) whose associated geodesic ray $\{u_t^\cT\}$ satisfies
    \begin{equation}
      d_1^c(\{u_t\},\{u_t^\cT\})\leq \frac{1}{\beta} \text{ and }  \label{eq:Ent>Ent-beta>Ent-beta-1/beta}
         Ent\{u_t\}\geq Ent^\beta\{u_t\}\geq Ent^\beta\{u_t^\cT\}-\frac{1}{\beta}.
    \end{equation}
    Moreover, up to passing to a resolution (which does not change the attached geodesic ray), we can assume that $\cT$ is smooth, dominating $X\times\PP^1$.
    Consider the log discrepancy model $\cT_\beta$ of $\cT$ at level $\beta$ (recall \eqref{eq: L_beta_def}), whose associated geodesic ray is denoted by $\{u_t^{\cT_\beta}\}$. Then by Corollary \ref{cor:Ent-beta-geq-Ent-P-alg-proof} and the slope formula \eqref{eq:rad-K=NA-K}, 
    \begin{equation}
        \label{eq:ead-Ent>Ent-beta-model-1/beta}
    Ent\{u_t\}\geq Ent^\beta\{u_t\}\geq Ent^\beta\{u_t^{\cT}\}-\frac{1}{\beta}\geq Ent\{u_t^{\cT_\beta}\}-\frac{1}{\beta}.
    \end{equation}
    On the other hand, Proposition \ref{prop:Ent-beta-u-T=beta(I(u-T-beta)-I(u-T))} together with \eqref{eq:u-t-beta>u-t} imply that
    \begin{equation}
    \label{eq:d1c-u-and-u-T-beta}
        \begin{split}
              d_1(\{u_t\},\{u_t^{\cT_\beta}\})&\leq d_1(\{u^\cT_t\},\{u_t^{\cT_\beta}\})+d_1(\{u_t\},\{u_t^{\cT}\})\\
              &\leq \frac{1}{\beta}(Ent^\beta\{u_t^\cT\}+1)
              \leq \frac{1}{\beta}(Ent\{u_t\}+\frac{1}{\beta}+1),\\
        \end{split}
    \end{equation}
where we used \eqref{eq:Ent>Ent-beta>Ent-beta-1/beta} in the second line.

Now choosing a rational sequence $\beta_k\to\infty$ as $k\to\infty$ and applying the above construction to each $\beta_k$, we obtain a sequence of geodesic rays $\{v_{t,k}\}$ arising from log discrepancy models such that
$$
d^c_1(\{u_t\},\{v_{t,k}\})\leq\frac{1}{\beta_k}(Ent\{u_t\}+\frac{1}{\beta_k}+1)
\text{ and }
Ent\{u_t\}\geq Ent\{v_{t,k}\}-\frac{1}{\beta_k}.
$$
Therefore, we obtain that
$$
\lim_{k\to\infty}d_1^c(\{u_t\},\{v_{t,k}\})=0\text{ and }Ent\{u_t\}\geq\limsup_{k\to\infty}Ent\{v_{t,k}\}.
$$
Using that $Ent\{\cdot\}$ is $d_1^c$-lsc \cite[Proposition 5.9]{CC3}, we complete the proof.
\end{proof}

When  $\GG:=\mathrm{Aut}_0(X,L)$ is reductive, let  $\KK$ be a fixed maximal compact subgroup.
Let us recall the terminology from \S 2, introducing the maximal algebraic torus $\TT$ in the center $Z(\GG)$, and $N_\GG(\KK) \leq \GG$, the normalizer of $\KK$.

We recall how $\GG$ acts on the space of K\"ahler metrics. Using averaging one can easily show that $\mathcal H^\KK$, the space of $\KK$ invariant K\"ahler metrics on $(X,L)$, is non-empty. Thus we can assume that $\omega$ is $\KK$-invariant, and  the potentials associated to $\omega$ are

$$\mathcal H^\KK_\omega:= \{ u \in \mathcal H_\omega, \ u \textup{ is } \KK\textup{-invariant}\}.$$
For applications it is convenient to view $\mathcal H^\KK_\omega$ as the following product :
\begin{equation}\label{eq: HK_splitting}
\mathcal H^\KK_\omega= \Bbb R \oplus \mathcal H^{\KK,0}_\omega := \Bbb R \oplus \{ u \in \mathcal H_\omega^\KK, \textup{ and }  I(u)=0\}.
\end{equation}
Unlike in the case of the Mabuchi $d_2$ geometry, we note that this is not a $d_1$-isometric splitting. That being said, as pointed out in many places in the literature (e.g. \cite[Section 5.2]{DR17}),  the action of $\GG$ on K\"ahler metrics translates to both $\mathcal H^{\KK,0}_\omega$ and $\mathcal H^{\KK}_\omega$ using the following formula:
$$g.u = u \circ g + g.0, \ g \in \GG,$$
where $g^* \omega_u = \omega_{g.u}$ and $g.0$ is uniquely determined by $I(g.0)=0$. Thus defined, $\GG$ will act by $d_1$-isometries on both spaces $\mathcal H^{\KK}_\omega$ and $\mathcal H^{\KK,0}_\omega$. Additionally, the action of $\GG$ extends $d_1$-isometrically to $\mathcal E^{1,\KK}_\omega$ and $\mathcal E^{1,\KK,0}_\omega$, the corresponding spaces of $\KK$-invariant finite energy potentials. 

By $\mathcal R^{1,\KK}_\omega$ and $\mathcal R^{1,\KK,0}_\omega$ we denote  the space of $\KK$-invariant finite energy geodesic rays (emanating from $0 \in \mathcal H_\omega^\KK$), respectively the $\KK$-invariant geodesic rays with zero Monge--Amp\`ere slope.

\begin{remark}\label{rem: BJ_conj_equiv}  If $\GG:=\mathrm{Aut}(X,L)_0$ is reductive, then the approximation procedure of Theorem \ref{thm:BJ-conj} can be carried out in a $\GG$-equivariant manner. Namely, if $\{u_t\} \in \mathcal R^{1,\KK}_\omega$ then the same conclusion holds for $\{v_{t,k}\} \in \mathcal R^{1,\KK}_\omega$ induced by $\GG$-equivariant log discrepancy models.

The proof follows the same pattern and we only point out the differences.
We start the same way: $Ent\{u_t\}\geq Ent^\beta\{u_t\}$. Since $\{u_t\}$ is $\KK$-invariant, so are all the multiplier ideal sheaves $\mathcal I(m U_z)$.
Due to coherence, we obtain that for any $g \in \mathcal I(m U_z)$ the derivatives of $g$ in the $\KK$-directions also stay in $\mathcal I(m U_z)$. Since $\GG$ is the complexification of $\KK$ (see \eqref{eq: G_decomp}, \eqref{eq: K_decomp}),  the derivatives of $g$ in the $\GG$-directions also stay in $\mathcal I(k U_z)$, hence $\mathcal I(m U_z)$ is $\GG$-invariant as well. 
Thus, applying Demailly's regularization to $\{u_t\}$ gives a smooth $\GG$-equivariant test configuration $\cT=(\cX,\cL)$ satisfying \eqref{eq:Ent>Ent-beta>Ent-beta-1/beta}. Since $\cT$ is $\GG$-equivariant, its log discrepancy models $\cT_\beta$ are also $\GG$-equivariant. So \eqref{eq:ead-Ent>Ent-beta-model-1/beta} and \eqref{eq:d1c-u-and-u-T-beta} holds in the $\GG$-equivariant setting as well.
\end{remark}

Let $H \leq \GG$ be a subgroup. For $u \in \mathcal H^\KK_\omega$ we introduce
$$J^{H}(u) := \inf_{h \in H} J(h.u).$$
Similarly, for a geodesic ray $\{u_t\} \in \mathcal R^{1,\KK}$, we introduce
$$J^{H}\{u_t\} := \limsup_{t \to \infty }\inf_{h \in H} \frac{J(h.u_t)}{t}.$$
In case $H = \TT$, it can be shown that the above limsup is actually a limit. Moreover, in case of a $\TT$-equivariant test configuration (or model) $\mathcal T$ one can show that the corresponding invariant
$$J^\TT_\mathcal T:= J^\TT\{u^\mathcal T_t\}$$ 
can be computed using only the algebraic data of $\mathcal T$ and $\TT$ \cite{His16a, Li20-cscK, BJ25preprint}.

\medskip 

Since $H$ acts by $d_1$-isometries on $\mathcal H_\omega^\KK$ (and also $\mathcal E_\omega^{1,\KK}$),  we get a natural pseudo-metric on the quotient spaces:
$$d_1^H(Hu,Hv) = \inf_{h \in H} d_1(h.u,v), \ \ u,v \in \mathcal H_\omega^\KK, \textup { or }u,v \in \mathcal E^{1,\KK}_\omega.$$

Let us recall that the growth of the $d_1$ distance and the $J$ functional are the same on $\mathcal H_\omega^0 = \mathcal H_\omega \cap I^{-1}(0)$, the space of K\"ahler potentials with zero Monge--Amp\`ere energy \cite[Proposition 5.5]{DR17}. Namely, there exists $C>0$ such that
$J(u) - C \leq d_1(0,u) \leq J(u) + C, \ \ u \in \mathcal H^{0}_\omega.$
Since $J(u) = J(u-I(u))$, we conclude that
$$J(u) - C \leq d_1(0,u - I(u)) \leq J(u) + C, \ \ u \in \mathcal H_\omega.$$As a result, we get the following interpretation of $J^H(u)$ in terms of the quotient metric $d_1^H$:
\begin{equation}\label{eq: J_d_1_H_equiv}
J^H(u)-C \leq  \inf_{g \in H} d_1(h.(u-I(u)),0) =  d_1^H(H(u-I(u)), H0) \leq J^H(u)+C , \ \ u \in \mathcal H_\omega.
\end{equation}

\medskip Using \cite{CC1,CC2} and the properness/existence principle of \cite{DR17} (sharpened and expanded in \cite{DarvasSurvey}) we can prove the following standard result:

\begin{theorem}\label{thm: DR_properness} Let $(X,L)$ be a polarized manifold. The following are equivalent:\\
\noindent (i) There exists a cscK metric in $(X,L)$;\\
\noindent (ii) $\GG = \mathrm{Aut}_0(X,L)$ is reductive. If $\KK \leq \GG$ is maximally compact, then for some $\gamma,\delta>0$
$$K(u) \geq \gamma J^{\TT}(u) - \delta, \ u \in \mathcal H^\KK_\omega;$$
\noindent (iii) $\GG = \mathrm{Aut}_0(X,L)$ is reductive. If $\KK \leq \GG$ is maximally compact, then for some $\gamma>0,$
$$K\{u_t\} \geq \gamma J^{\TT}\{u_t\} , \ \  \{u_t\} \in \mathcal R^{1,\KK}_\omega,$$
where $\TT = (\Bbb C^*)^m$ is the maximal torus in the center of $\GG$.
\end{theorem}
\begin{proof} 
To start, a classical result of Matsushima and Lichnerowitz \cite[Theorem 3.6.1]{Gaubook} implies that $\GG = \mathrm{Aut}_0(X,L)$ is reductive when (i) holds.

By $N_\GG(\KK) \leq \GG$ we denote the normalizer of $\KK$.  The data $(\mathcal H_\omega^{\KK,0}, d_1, K, N_\GG(\KK))$ satisfies the conditions of the properness/uniqueness principle \cite[Theorem 4.7]{DarvasSurvey} (having its roots in \cite[Theorem 3.4]{DR17}). Since minimizers of $K$ in $\mathcal E^{1,\KK}_\omega$ are smooth csck metrics \cite{CC1,CC2}, most of the conditions $(P1)-(P6)$ and $(P2)^*$ are standard. Regarding $(P5)$, since $\KK$ is maximally compact, we get that $\KK$ is the isometry group of any $\omega_u, \ u\in \mathcal H^{\KK}_\omega$. This implies that $N_\GG(\KK)$ acts transitively on the cscK potentials of $\mathcal H^\KK_\omega$.

As a result, we obtain that (i) is equivalent with $\GG = \mathrm{Aut}_0(X,L)$ being reductive and either one of the following two conditions
\begin{equation}\label{eq: prop_stab}K(u) \geq \gamma d^{N_\GG(\KK)}_1(N_\GG(\KK)u,N_\GG(\KK)0) - \delta, \ u \in \mathcal H^{\KK,0}_\omega,
\end{equation}
\begin{equation}\label{eq: geod_stab}K\{u_t\} \geq \gamma \limsup_{t \to \infty} d^{N_\GG(\KK)}_1(N_\GG(\KK)u_t,N_\GG(\KK)0), \ \  \{u_t\} \in \mathcal R^{1,\KK,0}_\omega.
\end{equation}
However, when $\GG = \mathrm{Aut}_0(X,L)$ is reductive, then due to Lemma \ref{lem: CLi_App}, $N_\GG(\KK) = Z(\GG) \KK = \TT \KK$. As a result, using \eqref{eq: J_d_1_H_equiv} we get that for $ u \in \mathcal H^{\KK}_\omega$ we have
$$ J^\TT(u)-C \leq d^{N_\GG(\KK)}_1(N_\GG(\KK)(u-I(u)),N_\GG(\KK)0) \leq J^\TT(u)+C.$$ 
Using this, and the fact that $K(u) = K(u - I(u))$, we can straightforwardly refine the conditions \eqref{eq: geod_stab} and \eqref{eq: prop_stab} so that they correspond to (ii) and (iii) respectively, thereby concluding the argument. 
\end{proof}

When $\GG:=\mathrm{Aut}(X,L)_0$ is reductive,   we say that and $(X,L)$ is \emph{$\GG$-uniformly K-stable for test configrations/models/log discrepancy models} if there exists $\gamma>0$ such that for all $\GG$-equivariant test configurations/models/log discrepancy models $\mathcal T =(\mathcal X, \mathcal L)$ we have that
\begin{equation}\label{eq: def_K-stab_log_discrep}
K_\mathcal T := K\{u^\mathcal T_t\} \geq \gamma J^\TT_{\mathcal T}.
\end{equation}
Recall that thanks to formula \eqref{eq: BJ_slope_formula}, the quantity $K_{\mathcal T}$ admits a natural intersection theoretic (or non-Archimedean) interpretation.  
Using the above we immediately obtain the following $\GG$-uniform YTD correspondence.

\begin{theorem}[Theorem \ref{thm:YTD'''}] \label{thm:YTD'''1}Let $(X,L)$ be a polarized manifold. Then the following are equivalent. \smallskip \\
\noindent (i) $X$ admits a cscK metric in $c_1(L)$. \smallskip\\
\noindent  (ii) $\GG:=\mathrm{Aut}(X,L)_0$ is reductive and $(X,L)$ is $\GG$-uniformly K-stable for log discrepancy models.
\end{theorem}

\begin{proof} Suppose that (i) holds. Then (ii) follows automatically, from the equivalence between (i) and (iii) in Theorem \ref{thm: DR_properness}. Indeed,  the space of the rays induced by $\GG$-equivariant log discrepancy models  is a subset of $\mathcal R^{1,\KK}_\omega$.

Now suppose that (ii) holds. Let $\{u_t\} \in \mathcal R^{1,\KK}_\omega$. Due to Theorem \ref{thm: DR_properness}, to argue (i), we only need to show that 
$$K\{u_t\} \geq \gamma J^{\TT}\{u_t\} , \ \  \{u_t\} \in \mathcal R^{1,\KK}_\omega.$$

Using the $\GG$-equivariant version of Theorem \ref{thm:BJ-conj} (obtained in Remark \ref{rem: BJ_conj_equiv})  we can find a sequence of geodesic rays $\{v_{t,k}\}$ arising from $\GG$-equivariant log discrepancy models such that
    $$
    \lim_{k\to\infty}d_1^c(\{u_t\},\{v_{t,k}\})=0\text{ and }K\{u_t\}=\lim_{k\to\infty}K\{v_{t,k}\}.
    $$
Let $\{u'_t\} := \{u_t - I(u_t)\}, \ \{v'_{t,k}\} := \{v_{t,k} - I(v_{t,k})\} \in \mathcal R^{1,\KK}_\omega$. Since $I\{\cdot\}$ is $d_1^c$-continuous we obtain that 

    $$
    \lim_{k\to\infty}d_1^c(\{u'_t\},\{v'_{t,k}\})=0\text{ and }K\{u_t\}=K\{u'_t\}=\lim_{k\to\infty}K\{v'_{t,k}\} = \lim_{k\to\infty} K\{v_{t,k}\}.
    $$

Using \eqref{eq: J_d_1_H_equiv}, we will be done if we can show that
\begin{equation}\label{eq: J_limit}
\lim_{k \to \infty} \limsup_{t \to \infty} \frac{d_1^{\TT} (\TT v'_{k,t},\TT 0)}{t} = \lim_{k \to \infty} J^\TT\{v_{t,k}\} = J^\TT\{u_t\} = \limsup_{t \to \infty } \frac{d_1^{\TT}(\TT u'_t,\TT 0)}{t}.  
\end{equation}
But this simply follows from the triangle inequality of the pseudo-metric $d_1^\TT$:
$$\frac{1}{t}|d_1^{\TT} (\TT v'_{k,t},\TT0) - d_1^{\TT}(\TT u'_t,\TT 0)| \leq \frac{1}{t}d_1^{\TT}(\TT u'_t,\TT v'_{k,t}) \leq \frac{1}{t}d_1(u'_t, v'_{k,t}) \leq d_1^c(\{u'_t\},\{v'_{k,t}\}) \to 0,$$
as $k \to \infty$.
\end{proof}

\let\oldthebibliography\thebibliography
\let\endoldthebibliography\endthebibliography
\renewenvironment{thebibliography}[1]{
  \begin{oldthebibliography}{#1}
    \setlength{\itemsep}{0pt}
    \setlength{\parskip}{0pt}
    \setlength{\parsep}{0pt}
}{\end{oldthebibliography}}

\small{
\bibliography{ref.bib}
\bibliographystyle{abbrv}

\noindent {\sc University of Maryland, College Park, USA}\\
{\tt tdarvas@umd.edu}\vspace{0.1in}\\
\noindent {\sc Beijing Normal University, Beijing, China}\\
{\tt kwzhang@bnu.edu.cn}\vspace{0.1in}
}

\end{document}